\newtheorem{theo}{Theorem}
\numberwithin{theo}{section}
\newtheorem{prop}[theo]{Proposition} 
\newtheorem{coro}[theo]{Corollary}
\newtheorem{lem}[theo]{Lemma} 
\newtheorem{déf}[theo]{Definition}
\newtheorem{conj}[theo]{Conjecture} 
\newtheorem{question}[theo]{Question} 
\newtheorem{fact}[theo]{Fact} 
\newtheorem{défnot}[theo]{Definition/Notation} 
\newtheorem{exmp}[theo]{Example} 
\newtheorem*{nota}{Notation}
 \newtheorem{rem}[theo]{Remark} 
\newcommand{\R} {\ensuremath{\mathbb{R}}}
\newcommand{\Z} {\ensuremath{\mathbb{Z}}}
\newcommand{\h} {\ensuremath{\mathbb{H}}}
\newcommand{\F} {\ensuremath{\mathcal{F}}}
\newcommand{\U} {\ensuremath{\mathcal{U}}}
\newcommand{\borg} {\ensuremath{\partial \Gamma}}
\newcommand{\bori} {\ensuremath{\partial \Sigma}}
\newcommand{\N} {\ensuremath{\mathbb{N}}}
\newcommand{\s} {\ensuremath{\mathbb{S}}}
\newcommand{\bw} {\ensuremath{\mathcal{M}(\Sigma)}}
\newcommand{\ap} {\ensuremath{\mathcal{A}p(\Sigma)}}
\newcommand{\apo} {\ensuremath{\mathcal{A}p_0(\Sigma)}}
\newcommand{\dialgp} {\ensuremath{\mathcal{D}(\Sigma)}}
\newcommand{\demiesp}[1] {\ensuremath{\mathcal{H}(#1)}}
\newcommand{\proj}[2] {\ensuremath{\mathrm{proj}_{#1} (#2)}}
\newcommand{\apD} {\ensuremath{\mathcal{A}p(\Delta)}}
\newcommand{\dual} {\ensuremath{\mathcal{DG}(\Sigma)}}
\newcommand{\cox} {\ensuremath{(W,S)}}
\newcommand{\modcont}[2] {\ensuremath{\mathrm{Mod}_#1} (#2)}
\newcommand{\modcomb}[2] {\ensuremath{\mathrm{Mod}_#1} (#2)}
\newcommand{\modcombw}[2] {\ensuremath{\mathrm{Mod}^A_#1} (#2)}
\newcommand{\modcombg}[1] {\ensuremath{\mathrm{Mod}_p} (#1,G_k)}
\newcommand{\modcombapg}[1] {\ensuremath{\mathrm{mod}^A_p} (#1,G^A_k)}
\newcommand{\modcombwg}[1] {\ensuremath{\mathrm{Mod}^A_p} (#1,G^A_k)}
\newcommand{\modcombfo} {\ensuremath{\mathrm{Mod}_p} (\mathcal{F}_0,G_k)}
\newcommand{\modcombapfo} {\ensuremath{\mathrm{mod}^A_p} (\mathcal{F}^A_0,G^A_k)}
\newcommand{\modcombwfo} {\ensuremath{\mathrm{Mod}^A_p} (\mathcal{F}^A_0,G^A_k)}
\newcommand{\Cay}[1] {\mathrm{Cay}(\ensuremath{#1})}
\newcommand{\cay} {\ensuremath{\Cay{\Gamma}}}
\newcommand{\Int}[1] {\mathrm{Int}(\ensuremath{#1})}
\newcommand{\di}[1] {\mathrm{dist}(\ensuremath{#1})}
\newcommand{\confdim}[1] {\mathrm{Confdim}(\ensuremath{#1})}
\newcommand{\dc}[1] {\ensuremath{d_c}(\ensuremath{#1})}
\newcommand{\dia}[1] {\mathrm{diam} \ensuremath{\:#1}}
\newcommand{\cv}[1] {\mathrm{Conv} (\ensuremath{#1})}
\newcommand{\Ch}[1] {\mathrm{Ch} (\ensuremath{#1})}
\newcommand{\ch} {\ensuremath{\Ch{\Sigma}}}
   \newcommand{\numerote} [1] {
  \begin {enumerate}
  #1
  \end{enumerate}}
   \newcommand{\numeroti} [1] {
  \begin {enumerate} [i)]
  #1
  \end{enumerate}}
     \newcommand{\liste} [1] {
  \begin {itemize}#1
  \end{itemize}}
\title{Combinatorial Modulus on Boundary of Right-Angled Hyperbolic Buildings}
\author{Antoine Clais\\Laboratoire Paul Painlevé \\ Université Lille 1 \\ 59655 Villeneuve d'Ascq, France\\ 
\texttt{antoine.clais@math.univ-lille1.fr}}
\date{\today}
\begin{document}
   \maketitle

\begin{abstract} In this article, we discuss the quasiconformal structure of boundaries of right-angled hyperbolic buildings using combinatorial tools. In particular we  exhibit  some examples of buildings of dimension 3 and 4 whose boundaries  satisfy the combinatorial Loewner property.  This  property   is a weak version of the   Loewner property. This  is motivated by the fact that the quasiconformal structure of the boundary led to many results of rigidity in hyperbolic spaces since G.D. Mostow. In the case of buildings of dimension 2, many work have been done by M. Bourdon and H. Pajot. In particular, the Loewner property on the boundary  permitted them to prove the quasi-isometry rigidity of right-angled Fuchsian buildings. 
\end{abstract}
{\bf Keywords:} Boundary of hyperbolic space, building, combinatorial modulus, combinatorial Loewner property, quasi-conformal analysis.

\section{Introduction}

\subsection{Starting point}

The origin  of the theory of  modulus of curves in compact metric spaces must be found in the classical theory of quasiconformal maps in Euclidean spaces (see \cite{VaisalaLecture} or \cite{VurionenLecture}). Quasiconformal maps are maps between homeomorphisms and bi-Lipschitz maps. The aim of the classical theory is to describe the regularity of quasiconformal maps in $\R^d$ and to exhibit invariants under these maps. The notion of abstract Loewner space, introduced by J. Heinonen and P. Koskela (see \cite{HeinKoskQConf} or \cite{HeinonenLect}), intends to describe   metric measured spaces whose quasiconformal maps have a behavior of Euclidean flavor.

Moreover, since  G.D. Mostow it is known that the quasiconformal structure of the boundary of a hyperbolic space controls the geometry of the space. It turns out that this idea   extends to the case of Gromov hyperbolic spaces and groups.  Finding a Loewner space as the visual boundary of a Gromov hyperbolic group  has been useful to establish rigidity results about the group (see   \cite{HaissinskyGeomQConf} for a survey on those results). The idea that one wants to use to prove rigidity   is  that  the quasi-isometries of an hyperbolic space are given by the quasisymmetric homeomorphisms of the boundary. The Loewner property makes it possible because  the classes of quasi-Moebius, quasisymmetric and quasiconformal maps are equal in a Loewner space.

It is difficulty to prove that the boundary of a hyperbolic space is a Loewner space. To do so,  one needs to find a measure on the boundary that is optimal for the \emph{conformal dimension}. This quasiconformal invariant has been introduced by P. Pansu in \cite{PansuDimconf}. Finding a measure  that realizes the conformal dimension and even computing the conformal dimension, are very difficult questions   that we can solve in few examples for the moment.

An interesting example of this kind is the work done by M. Bourdon and H. Pajot in  Fuchsian buildings. They proved that the boundary of these buildings are Loewner spaces and then used this structure to prove the quasi-isometry rigidity of these buildings (see \cite{BourdonPajotRigi}).

Buildings are singular spaces introduced by J. Tits to study exceptional Lie groups. Currently  buildings became a topic of interest by themselves.   Among them, right-angled buildings  have been classified by F. Haglund and F. Paulin in \cite{HaglundPaulinImmeubles}. They are equipped with a wall structure and with a   simply transitive group action on the chambers that make them very regular objects.   Fuchsian buildings are right-angled hyperbolic buildings of dimension $2$. In light of the results by M. Bourdon and H. Pajot in dimension $2$, we  have the questions:

\begin{question}
Are higher dimensional right-angled hyperbolic buildings rigid? What are the quasiconformal properties of their boundaries?
\end{question} 

The geometry of  higher dimensional right-angled buildings is very close to the geometry of Fuchsian buildings. This gives hope that these questions may have interesting answers.
However, the methods used in Fuchsian buildings are very specific to the dimension 2. Thus these question are not easy.  In this article, we  will use \emph{combinatorial modulus} for a first approach of the conformal structure  of higher dimensional right-angled buildings.

A major rigidity question related to the    quasiconformal structure on the boundary is the following conjecture due to J.W. Cannon.  

\begin{conj}[{\cite[Conjecture 5.1.]{CanSwenCurv}}]\label{conj Cannon}
If $\Gamma$ is a hyperbolic group and $\borg$ is homeomorphic to $\s^2$, then $\Gamma$ acts geometrically on $\h^3$.
\end{conj}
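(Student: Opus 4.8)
The plan is to convert the topological hypothesis $\borg\cong\s^2$ into a statement about the quasiconformal gauge of $\borg$, and then to attack that statement with combinatorial modulus. The backbone is the theorem of Bonk and Kleiner on hyperbolic groups with $2$-sphere boundary: once $\borg$ is given a visual metric, $\Gamma$ acts geometrically on $\h^3$ if and only if $\borg$ is quasisymmetrically equivalent to the standard round sphere, and --- because $\borg$ with a visual metric is linearly locally connected, a quasisymmetric invariant --- their uniformization theorem reduces this to the existence of an Ahlfors $2$-regular metric somewhere in the quasisymmetric gauge of $\borg$. Equivalently, one must show that the Ahlfors-regular conformal dimension $\confdim{\borg}$, which is always at least the topological dimension $2$, equals $2$ and is attained. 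So the whole problem reduces to producing an optimal metric of exponent $2$.

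First I would install the combinatorial machinery of the preceding sections for $\borg$: fix a ball structure $\{G_k\}$ adapted to a visual metric and, for a pair of disjoint nondegenerate continua $A,B\subset\borg$, form the $p$-modulus $\modcont{p}{\F_k}$ of the family $\F_k$ of $G_k$-chains joining $A$ to $B$. The aim of this step is the combinatorial Loewner property at the critical exponent $p=2$: there should exist increasing homeomorphisms $\phi,\psi$ of $(0,\infty)$ with
\[
\phi\!\left(\tfrac1t\right)\;\le\;\liminf_{k\to\infty}\modcont{2}{\F_k}\;\le\;\psi\!\left(\tfrac1t\right)
\]
whenever $(A,B)$ has relative distance comparable to $t$. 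Since $\borg$ is the self-similar boundary of a hyperbolic group, the Bourdon--Kleiner reduction of the combinatorial Loewner property shows that the two inequalities --- the upper bound being the soft direction, the lower bound the content --- follow from uniform modulus estimates over the finitely many combinatorial types of configuration that can occur, so in principle this is a finite verification.

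Next I would pass from combinatorial to genuine modulus. Once the combinatorial Loewner property is known at $p=2$ --- equivalently, once the critical exponent of Carrasco Piaggio equals $2$ and the critical combinatorial modulus is bounded away from $0$ and $\infty$ --- the constructions of Keith--Laakso, Bonk--Kleiner and Carrasco Piaggio yield an Ahlfors $2$-regular metric $\rho$ in the gauge of $\borg$ which is honestly Loewner. Then $(\s^2,\rho)$ is Loewner and $2$-regular, so Bonk--Kleiner's uniformization theorem makes it quasisymmetric to the round sphere; feeding this back through the first paragraph produces the geometric action on $\h^3$.

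The hard part --- and the reason the statement is still a conjecture rather than a theorem --- is the lower modulus bound in the middle step: showing that $\confdim{\borg}$ equals $2$ rather than something strictly larger, and that it is attained. There is no soft mechanism forcing equality of the conformal and topological dimensions; boundaries of hyperbolic groups routinely have conformal dimension well above their topological dimension, and the known ways to pin it down at $2$ are either a direct verification of the combinatorial Loewner property --- feasible only when the combinatorial structure is rigid enough to bound the list of configurations, which is exactly what the chamber-transitive action buys for the right-angled hyperbolic buildings studied in this paper --- or the production of enough quasiconvex codimension-one (in practice, surface) subgroups, as in Markovic's approach, to force the critical $2$-modulus to be positive. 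It is this estimate that the combinatorial tools developed here are meant to make tractable in the building setting.
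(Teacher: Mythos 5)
This statement is Cannon's conjecture, and the paper does not prove it: it is stated as a conjecture (cited to Cannon--Swenson) and used only as motivation, together with its known reformulations (via Sullivan's theorem, as quasi-Moebius equivalence of $\borg$ with the standard $2$-sphere, and via Bonk--Kleiner, as attainment of $\confdim{\borg}$ by a metric in the gauge). Your text is therefore not a proof of the statement, and you say so yourself: the reduction you outline --- linear local connectedness plus Bonk--Kleiner uniformization reduces everything to showing that the Ahlfors-regular conformal dimension of $\borg$ equals $2$ and is attained, equivalently to a positive lower bound on the critical combinatorial $2$-modulus --- is the standard and correct framing, but the decisive step is precisely the one you leave open. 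Nothing in this paper, nor in the Bourdon--Kleiner machinery it builds on, supplies that lower bound for an arbitrary hyperbolic group with $\s^2$ boundary; the combinatorial Loewner property is only verified here for very special right-angled buildings (whose boundaries are Menger spaces, not spheres), and even then Conjecture \ref{conj CLPloewner} (CLP implies Loewner up to quasi-Moebius equivalence) is itself open, so the passage from combinatorial control to an honest Ahlfors $2$-regular Loewner metric is a second unproved step in your chain.

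Concretely, then, the gap is twofold: (i) the claim that the combinatorial $2$-modulus estimates reduce to ``a finite verification over combinatorial types'' has no justification for a general hyperbolic group --- self-similarity of $\borg$ gives Proposition \ref{propcritCLP}-type criteria but does not bound the configurations or force $\confdim{\borg}=2$, and boundaries with topological dimension $2$ can a priori have strictly larger conformal dimension; and (ii) even granting the CLP at exponent $2$, the implication to an attained, Ahlfors $2$-regular Loewner metric is conjectural (Kleiner's Conjecture 7.5, restated here as Conjecture \ref{conj CLPloewner}), not a theorem of Keith--Laakso or Carrasco. So what you have written is an accurate survey of why the conjecture is hard and how the tools of this paper are meant to bear on it, but it cannot be accepted as a proof of the statement, which remains open.
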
 
 
In particular, this conjecture implies Thurston's hyperbolization conjecture of 3-manifolds. Although Thurston's conjecture is now a theorem by G. Perelman, Cannon's conjecture remains very interesting as it is logically independent of Thurston's conjecture.
 
  The combinatorial modulus have been   introduced by J.W. Cannon in \cite{CannonCombiRiemMapTheo} and  by M. Bonk and B. Kleiner  in  \cite{BonkKleinerQuasiSymParamofSpheres} during the investigation of the quasiconformal structure of the 2-spheres  to approach the conjecture and by P. Pansu in a more general context in \cite{PansuDimconf}. Combinatorial modulus gave birth to a weak version of the Loewner property:   the \emph{Combinatorial Loewner Property} (CLP). One of the feature of  these modulus is that they can be used to characterize the conformal dimension as a critical exponent on the boundary.

Recently M. Bourdon and B. Kleiner (see \cite{BourdonKleinerCLP}) gave examples of boundaries of Coxeter groups that satisfy the CLP but that are not known for satisfying the Loewner property. They used this property to give a new proof of  Cannon's conjecture for Coxeter groups. Some of the methods they used for Coxeter groups can be adapted to the case of right-angled buildings. This was a motivation to investigate higher dimensional right-angled buildings using combinatorial modulus. 
 
\subsection{Main result}
  In this article, we use the combinatorial modulus to investigate the quasiconformal boundary of right-angled hyperbolic buildings. Thanks to  methods in \cite{BourdonKleinerCLP}, we obtain a control of the combinatorial modulus on the  boundary in terms of the curves contained in \emph{parabolic limit sets} (see Section \ref{seccurves inPLS}). Then we introduce a \emph{weighted modulus} on the boundary of the apartments. This allows us to control the modulus in the building by a modulus in the apartment (see Section \ref{secmoduleapartmoduleimmeuble}). For well chosen examples, the  boundary of the apartment  has a lot of symmetries that provide a strong control of the modulus. In particular, we exhibit some examples of hyperbolic buildings in dimension 3 and 4 whose boundaries satisfy the CLP.

\begin{theo}[{Corollary \ref{coroprincip}}]
Let  $D$ be the right-angled dodecahedron in $\h^3$ or the right-angled 120-cell in $\h^4$. Let  $W_D$ be the hyperbolic reflection group generated by  reflections about the faces of  $D$. Let $\Delta$ be the right-angled building of constant thickness $q\geq3$ and of  Coxeter group $W_D$.  Then $\partial \Delta$   satisfies the CLP.  
\end{theo}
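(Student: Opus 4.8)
The plan is to obtain the CLP of $\partial\Delta$ from the two reduction steps set up in the previous sections. Since $\partial\Delta$ is a doubling, linearly connected compact metric space, it suffices to verify, for a suitable exponent $Q$, the two inequalities defining the CLP: a lower bound for the combinatorial $Q$-modulus of families of curves joining two disjoint non-degenerate continua, and an upper bound for the $Q$-modulus of families joining a small ball $B(x,tr)$ to the complement of $B(x,r)$. Section~\ref{seccurves inPLS} controls the combinatorial modulus of an arbitrary curve family in $\partial\Delta$ by curve families carried by parabolic limit sets, and Section~\ref{secmoduleapartmoduleimmeuble} bounds the combinatorial modulus of those by a \emph{weighted} combinatorial modulus on the boundary of an apartment of $\Delta$. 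An apartment of $\Delta$ is the Davis complex of $W_D$, and since $W_D$ is the cocompact reflection group of the right-angled polytope $D\subset\h^d$, $d=3,4$, this complex is CAT($-1$) and quasi-isometric to $\h^d$; hence the boundary of an apartment is homeomorphic to $\s^{d-1}$ and, with its visual metric, quasisymmetric to the round sphere. The statement thus reduces to two estimates for a weighted combinatorial modulus on $\s^{d-1}$, the weights encoding the thickness $q$.

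First I would set $Q$ equal to the critical exponent of this weighted modulus, namely the infimum of the $p$ for which the weighted $p$-modulus of a generic curve family tends to $0$ with the scale; by the results of the previous sections $Q$ is finite, and by \cite{BourdonKleinerCLP} verifying the two CLP inequalities at any single exponent automatically identifies that exponent with the conformal dimension. For the upper inequality I would use the approximate self-similarity of the apartment boundary --- a consequence of the wall structure of $D$ and of the simply transitive action of $W_D$ on chambers --- to obtain a submultiplicative estimate of the relevant annular moduli across scales, forcing them to $0$; transporting it through Section~\ref{secmoduleapartmoduleimmeuble} yields the CLP upper inequality for $\partial\Delta$.

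The lower inequality is where the chosen polytopes matter. The round sphere $\s^{d-1}$ for $d=3,4$ is Ahlfors $(d-1)$-regular and \emph{Loewner}, so the ordinary combinatorial modulus of curves joining two continua there is bounded below in the Loewner fashion. The isometry group of $D$ --- of order $120$ for the dodecahedron and $14400$ for the $120$-cell --- acts on $D$, hence on the apartment and on $\s^{d-1}$, and, together with the self-similar refinement of the cell structure (which has only finitely many combinatorial types), it lets one symmetrize the modulus problem: averaging admissible functions over this group, using convexity of $t\mapsto t^Q$, reduces the weighted $Q$-modulus of a curve family to a quantity controlled below by the Loewner modulus of $\s^{d-1}$. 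This gives the weighted lower inequality, and transporting it back through Sections~\ref{secmoduleapartmoduleimmeuble} and~\ref{seccurves inPLS} gives the CLP lower inequality for $\partial\Delta$. With both inequalities in hand, \cite{BourdonKleinerCLP} yields that $\partial\Delta$ satisfies the CLP and that $Q$ is its conformal dimension.

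The main obstacle is precisely this lower inequality, and more specifically the transfer of a \emph{lower} modulus bound down the chain building $\to$ parabolic limit set $\to$ apartment: one has to guarantee at once that enough curves of a given family project to genuinely distinct curves in the apartment and that the weights --- which grow with the thickness $q$ --- do not kill the positivity of the modulus. This is exactly the point at which the rigidity of the right-angled combinatorics of $D$ and the abundance of its symmetries are used in an essential way, and it explains why the argument is limited to the right-angled dodecahedron and $120$-cell rather than applying to arbitrary right-angled hyperbolic buildings.
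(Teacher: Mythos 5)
Your overall architecture differs from the paper's, and the step you yourself flag as the main obstacle is a genuine gap, not a technicality. You propose to verify the two CLP inequalities directly at the critical exponent $Q$ of the weighted modulus, getting the lower one from the Loewner property of $\s^{d-1}$ after averaging admissible functions over the finite isometry group of $D$. This cannot work as stated: the Loewner lower bound on $\s^{d-1}$ lives at the exponent $d-1$ (the Ahlfors-regular dimension), whereas the relevant exponent here is $Q=Q_W$, which is strictly larger because the weights grow like $(q-1)^k$; at any $p>d-1$ the unweighted spherical modulus of a condenser tends to $0$, and there is no Loewner-type inequality for the weighted modulus — indeed the paper explicitly refrains from proving any analogue of Proposition \ref{propdouble} or Theorem \ref{theocourbedanspara} for the weighted modulus, precisely because it depends on the particular approximation and only serves as a comparison device at fixed scale. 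The paper never establishes the two CLP inequalities directly; instead it verifies the Bourdon--Kleiner criterion (Proposition \ref{propcritCLP}), i.e.\ unboundedness of the $1$-modulus of $\F_0$ plus the uniform comparison $\modcombfo \leq C\cdot\modcomb{p}{\mathcal{U}_\epsilon(\eta),G_k}$ for every non-constant curve $\eta$, and lets that criterion produce the CLP.

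Two further points where your mechanism does not match what is actually needed. First, the hypothesis that singles out the dodecahedron and the 120-cell is not the size of the finite isometry group of $D$ but the fact that $D/\mathrm{Ref}(D)$ is a simplex $T$: then $W_D$ has finite index in the simplex reflection group $W_T$, all proper parabolic subgroups of $W_T$ are finite, so $\partial W_T\simeq\partial A$ has no proper parabolic limit sets, and Bourdon--Kleiner's Corollary 6.2 gives the apartment estimate $\modcombapg{\F^A_0}\leq C\cdot\modcombapg{\mathcal{U}^A_\epsilon(\eta)}$ (Lemma \ref{lemcox}); the constant thickness then promotes this verbatim to the weighted modulus since both sides carry the same factor $(q-1)^k$. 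Second, the transfer of the lower bound from the apartment back to the building — the step you say must be forced by "rigidity" — is done in the paper by Theorem \ref{theocontrolimmappart}, whose proof averages an admissible function over the compact pointwise stabilizer $K_n<\mathrm{Aut}_\Sigma$ of a ball of chambers, with Haar measure, and uses the orbit-count estimate of Lemma \ref{lemcardiorbites} to see that these orbits reproduce the weights $q(w)$ up to a bounded factor. Averaging over the finite symmetry group of $D$ cannot play this role: it does not interact with the thickness at all, so it cannot compensate the $(q-1)^k$ discrepancy between $\#G_k$ and $\#G^A_k$. To repair your argument you would have to replace the Loewner/sphere step by the curve-following criterion of Proposition \ref{propcritCLP} (via Theorems \ref{theocourbedanspara}, \ref{theomajomodul} and \ref{theocontrolimmappart}) and replace the finite-group averaging by averaging over $K_n$.
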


Along with this result we also give in Theorem \ref{theomajomodulreciproque}, a characterization of the conformal dimension of the building using a critical exponent computed in an apartment.

 \subsection{Organization of the article} 
 In   Section \ref{secmodudef}, we introduce the combinatorial modulus of curves  in the general setting of compact metric spaces. Then  in Section \ref{secmoduhyper}, we restrict to the case of boundaries of hyperbolic spaces.
 
  After these reminders, we give the main steps and ideas of the proof of Theorem \ref{theoprincip} in Section \ref{sec stepsofproof}. This section is essentially a summary of the article.
  
  Then, in Section \ref{sec LFRAHB}, we describe the geometry  of locally finite right-angled hyperbolic buildings.  

The key notion of parabolic limit sets is introduced in Section \ref{seccurves inPLS} where we study the modulus of curves in parabolic limit sets. This section is based on ideas used in Coxeter groups in \cite[Sections 5 and 6]{BourdonKleinerCLP}. In particular, Theorem \ref{theocourbedanspara} is the first major step towards the proof of Theorem \ref{theoprincip}. As a consequence of this theorem, we obtain a first application to the CLP (Theorem \ref{theoapplic au imm de dim2}).
 
In Section \ref{sec bord topo met}, we describe the combinatorial metric on the boundary of the group in terms of the geometry of the building. This metric is useful in computing the combinatorial modulus. Then, in Section \ref{secmoduleapartmoduleimmeuble} we discuss how the modulus in the boundary of an apartment may be related to a modulus in the boundary of the building. In particular, Theorem \ref{theocontrolimmappart} is the second major step necessary to prove Theorem \ref{theoprincip}. We use this theorem to prove Theorem \ref{theomajomodulreciproque}  which relates the conformal dimension of the boundary of the building to a critical exponent computed in the boundary of an apartment. 

 In Section \ref{sec applictethickeness}, we add the constant thickness assumption for the building under which the results of the preceding section can be made more precise. In particular, we find that the conformal dimension of the boundary of the building is equal to a critical exponent computed in the boundary of an apartment (see Theorem \ref{theomajomodulreciproque}). Finally in Section \ref{sec result}, we gather these tools to obtain examples of right-angled-buildings of dimension 3 and 4 whose boundary satisfies the CLP (see Corollary \ref{coroprincip}).

\subsection{Terminology and notation}
\label{subsecterminota}
 Throughout this paper, we will use the following conventions. The identity element in a group will always be designated by $e$. For a set $E$,  the \emph{cardinality} of $E$ is designated by  $\# E$. A\index{Proper subset} \emph{proper} subset $F$ of $E$ is a subset $ F \varsubsetneq E$.
 
  If $\mathcal{G}$ is a graph then $\mathcal{G}^{(0)}$ is the \emph{set of vertices} of $\mathcal{G}$ and $\mathcal{G}^{(1)}$ is the \emph{set of edges} of $\mathcal{G}$. For $v,w \in \mathcal{G}^{(0)}$, we write $v\sim w$ if there exists an edge in $\mathcal{G}$ whose extremities are $v$ and $w$.  If $V \subset \mathcal{G}^{(0)}$, the \index{Full subgraph} \emph{full subgraph} generated by $V$ is the graph $\mathcal{G}_V$ such that $\mathcal{G}_V^{(0)}=V$ and an edge lies between two vertices $v$, $w$ if and only if there exists an edge between $v$ and $w$ in $\mathcal{G}$. A full subgraph is called a \index{Circuit} \emph{circuit} if it is a cyclic graph $C_n$ for $n\geq3$. A graph is called a \emph{complete graph} \index{Complete graph} if for any pair of distinct vertices $v$,$w$ there exists an edge between $v$ and $w$. 

A \emph{curve} \index{Curve} in a compact metric space $(Z,d)$ is a continuous map $\eta : [0,1] \longrightarrow Z$. Usually, we identify a curve with its image. If $\eta$ is a curve in $Z$, then  $\mathcal{U}_\epsilon(\eta)$ \index{$\mathcal{U}_\epsilon(\eta)$} denotes  the $\epsilon$-neighborhood of $\eta$ for the $C^0$-topology. This means that a curve $\eta' \in \mathcal{U}_\epsilon(\eta)$ if and only if there exists $s:t \in [0,1] \longrightarrow [0,1]$ a parametrization  of $\eta$ such that  for any $t\in  [0,1] $ one has $d(\eta(s(t)),\eta'(t))<\epsilon$.  

 In a metric space $Z$, if $A\subset Z$ then $N_r(A)$ is  the \emph{$r$-neighborhood} of $A$. The \emph{closure} of $A$ is designated by $\overline{A}$ and the \emph{interior} of $A$ by $\Int{A}$. 
 If $B=B(x,R)$ is an open ball  and $\lambda \in \R$ then $\lambda B$ is the ball of radius $\lambda R$ and of   center $x$. A ball of radius $R$ is called an \emph{$R$-ball}. The closed ball of center $x$ and radius $R$ is designated by $\overline{B(x,R)}$.
 
  A \emph{geodesic line} (resp. \emph{ray}) in a metric space $(Z,d)$ is an isometry from $(\R, \vert\cdot - \cdot \vert )$ (resp. $([0,+\infty),\vert\cdot - \cdot \vert ) $ into $(Z,d)$.   The real hyperbolic space (resp. Euclidean space) of dimension $d$ is denoted $\h^d$ (resp. $\mathbb{E}^d$).
 
 \subsection{Acknowledgement}
 This article is part of my Ph.D Thesis realized at Université Lille 1 under the direction of Marc Bourdon. I am very thankful to Marc for his support during these years. I thank Fréderic Haglund for the interest he demonstrated to this work and his fruitful comments. I am also grateful to the reviewers of my Ph.D thesis Mario Bonk and Pierre-Emmanuel Caprace for their attentive reading. Eventually, I would like to thank  María-Dolores Parrilla Ayuso who has the ability to turn mathematics into 3d pictures. 

\tableofcontents 

\section{Combinatorial modulus and the CLP}

\label{secmodudef}

The combinatorial modulus are tools that have been developed to compute modulus of curves  in a metric space without a natural measure. The idea   is to approximate the metric space  with a sequence of finer and finer approximations. Then with these approximations we can construct discrete measures and compute combinatorial modulus. Finally, for well chosen examples  we can check that this sequence of  modulus has a good asymptotic behavior.

In this first section, we present the general theory of combinatorial modulus in compact metric spaces.  We also recall basic definitions and facts about abstract Loewner spaces as they inspired the theory of combinatorial modulus. Most of this section can be found in \cite[Section 2]{BourdonKleinerCLP} to which we refer for details.

In  this section $(Z,d)$ denotes a compact metric space.

\subsection{General properties of combinatorial modulus of curves}
\label{subsecdefapproxcasgen} For $k\geq 0$ and $\kappa > 1$, a \emph{$\kappa$-approximation of $Z$ on scale $k$} \index{Approximation  on scale $k$} is a finite covering $G_k$ \index{$G_k$} by open subsets  such that for any $ v\in G_k$ there exists $z_v\in v$ satisfying the following properties:

\liste{ \item  $B(z_v,\kappa^{-1} 2^{-k}) \subset
v \subset B(z_v,\kappa 2^{-k}) $,
\item $\forall v , w \in G_k$ with $v\neq w$ one has $B(z_v,\kappa^{-1} 2^{-k})\cap B(z_w,\kappa^{-1} 2^{-k})= \emptyset$.}
A sequence $\{G_k \}_{k\geq 0}$ is called a \emph{$\kappa$-approximation of $Z$}.

\begin{exmp} For $k\geq 0$, a  \emph{$2^{-k}$-separated subset} of  $Z$ is a subset $E$ such that $d(z,z')\geq 2^{-k}$ for any $z\neq z'\in E_k$. Since $Z$ is compact any $2^{-k}$-separated subset of  $Z$ is finite. Let $E_k$ be a  $2^{-k}$-separated subset of  $Z$ of maximal cardinality. Then $E_k$ satisfies the following property:
\begin{center}for any $x \in Z$, there exists $z \in E_k$ such that $d(x,z)\leq 2^{-k}$.
\end{center}
The set $ \{B(z,2^{-k})\}_{z\in E_k}$ defines a $2$-approximation at scale $k$ of $Z$.
\end{exmp}

Now we fix the approximation $\{G_k \}_{k\geq 0}$. We construct a discrete measure based on each $G_k$ for $k\geq 0$. Let $\rho: G_k \longrightarrow [0,+\infty)$ be a positive function and $\gamma$ be a curve in $Z$. The \emph{$\rho$-length} of   $\gamma$   is \[L_\rho (\gamma) = \sum\limits_{\gamma\cap v \neq \emptyset } \rho (v).\] For $p\geq 1$, the \emph{$p$-mass} of $\rho$ is  \[M_p(\rho) = \sum\limits_{v\in G_k} \rho (v)^p.\]
Until the end of this subsection $p\geq 1$ is fixed. Let $\F$ be a non-empty set of curves in $Z$. We say that the function $\rho$ is \emph{$\F$-admissible} if $L_\rho(\gamma)\geq1$ 
for any curve $\gamma\in \F$.
 
\begin{déf} The \emph{$G_k$-combinatorial $p$-modulus of $\F$} \index{Combinatorial modulus} is \[ \modcombg{\F}= \inf \{ M_p(\rho)\}\] 
where the infimum is taken over the set of $\F$-admissible functions and with the convention $\modcombg{ \emptyset }=0$. \index{$\modcombg{\cdot}$}

\end{déf}

The following equality is an alternative definition of the modulus:
\[\modcombg{\F} = \inf_\rho \frac{M_p (\rho)}{L_\rho(\F)^p},\] where the infimum is taken over the set of  positive functions  on $G_k$ and  with $L_\rho (\F)=\inf_{\gamma \in \F} L_\rho (\gamma)$.

The next proposition allows us to see the $G_k$-combinatorial $p$-modulus as a weak outer measure on the set of curves of $Z$. Usually, for an outer measure the subadditivity must hold  over countable sets.
 This is useful to get intuition on these  tools.

\begin{prop}[{\cite[Proposition 2.1.]{BourdonKleinerCLP}}] \label{propmodbase}\text{   } 
\numerote{\item  Let $\F$ be a set of curves and $\F' \subset \F$. Then $\modcombg{\F'}\leq \modcombg{\F}$.

\item Let $\F_1, \dots , \F_n$ be  families of curves. Then $\modcombg{\bigcup\limits_{i=1}^n \F_i}\leq \sum\limits_{i=1}^n \modcombg{\F_i}$.}

\end{prop}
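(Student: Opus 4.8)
The plan is to read both assertions directly off the definition $\modcombg{\F}=\inf\{M_p(\rho)\}$, the only input being the single observation that admissibility is \emph{monotone} in the curve family: if $\F'\subset\F$, then every $\F$-admissible function $\rho$ is automatically $\F'$-admissible, since the requirement $L_\rho(\gamma)\ge 1$ is then imposed for fewer curves.

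For the first assertion I would argue as follows. The observation above shows that the set of $\F'$-admissible functions contains the set of $\F$-admissible functions, so the infimum defining $\modcombg{\F'}$ is taken over a larger collection of test functions than the one defining $\modcombg{\F}$; hence $\modcombg{\F'}\le\modcombg{\F}$. The degenerate case $\F'=\emptyset$ is dealt with by the convention $\modcombg{\emptyset}=0$ together with $M_p(\rho)\ge 0$.

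For the second assertion, fix $\varepsilon>0$. If some $\modcombg{\F_i}$ is infinite there is nothing to prove, so assume all of them finite and choose, for each $i$, an $\F_i$-admissible function $\rho_i$ with $M_p(\rho_i)\le\modcombg{\F_i}+\varepsilon$. The natural test function for the union is $\rho:=\max_{1\le i\le n}\rho_i$. Given a curve $\gamma\in\bigcup_{i=1}^n\F_i$, pick $j$ with $\gamma\in\F_j$; then $\rho(v)\ge\rho_j(v)$ for every $v\in G_k$, so $L_\rho(\gamma)\ge L_{\rho_j}(\gamma)\ge 1$, i.e. $\rho$ is admissible for $\bigcup_{i=1}^n\F_i$. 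On the other hand, since the $\rho_i$ take nonnegative values, for each $v$ one has $\rho(v)^p=\max_i\rho_i(v)^p\le\sum_{i=1}^n\rho_i(v)^p$; summing over $v\in G_k$ gives $M_p(\rho)\le\sum_{i=1}^n M_p(\rho_i)\le\sum_{i=1}^n\modcombg{\F_i}+n\varepsilon$. Thus $\modcombg{\bigcup_{i=1}^n\F_i}\le\sum_{i=1}^n\modcombg{\F_i}+n\varepsilon$, and letting $\varepsilon\to 0$ concludes.

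I do not expect a genuine obstacle here: the only real decision is which combination of the $\rho_i$ to feed into the modulus of the union, and $\max$ works precisely because the pointwise inequality $\max_i a_i^p\le\sum_i a_i^p$ for $a_i\ge 0$ is exactly what is needed to compare $p$-masses — the $\ell^p$-combination $\rho=\bigl(\sum_i\rho_i^p\bigr)^{1/p}$ would do just as well. I note also that the same argument, with $\max$ over finitely many functions replaced by a supremum and a short convergence check, upgrades the second assertion to countable subadditivity, although only the finite statement is needed in what follows.
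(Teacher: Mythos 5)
Your argument is correct and is essentially the standard one: the paper itself gives no proof, citing \cite[Proposition 2.1.]{BourdonKleinerCLP}, where the same reasoning (monotonicity of admissibility for part (1), and comparing masses of a pointwise combination such as $\max_i\rho_i$ or $(\sum_i\rho_i^p)^{1/p}$ for part (2)) is used. Note that since minimal functions exist here (the masses are finite sums and $\modcombg{\F}\leq \# G_k$), you could even dispense with the $\varepsilon$-approximation, but your version is perfectly fine.
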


A function $\rho:G_k \longrightarrow [0,+\infty)$ is called a \emph{minimal function} for a set of curves $\F$ if $\modcombg{\F}= M_p(\rho)$. Since we only compute finite sums, minimal functions always exist. Combining with a convexity argument, this also provides an elementary control of the modulus as follows. For $\F$  a non-empty set of curves in $Z$ and $k\geq 0$ 
\[ \frac{1}{(\# G_k)^{p-1}}\leq \modcombg{\F} \leq \# G_k.\]

In the rest of this article we   mainly discuss  the curves of $Z$ of diameter larger than a fixed constant. For these curves the following basic property is useful.

\begin{prop} \label{prop minofonctionmini}
Let $\F$ be a non-empty set of curves in $Z$. Assume that there exists $d>0$ such that $\dia{\gamma}\geq d$ for any $\gamma \in \F$. Then for any $\epsilon>0$, there  exists $k_0\geq 0$ such that for any $k\geq k_0$,  there exists an admissible function $\rho:G_k \longrightarrow [0,+\infty)$  such that $\rho (v)  \leq\epsilon$ for any $v\in G_k$.
\end{prop}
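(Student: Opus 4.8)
The plan is to take $\rho$ to be a suitably small \emph{constant} function on $G_k$. For a constant $\rho\equiv c$ one has, for every curve $\gamma$, that $L_\rho(\gamma)=c\cdot N(\gamma)$ where $N(\gamma):=\#\{v\in G_k : \gamma\cap v\neq\emptyset\}$; hence $\rho$ is $\F$-admissible as soon as $c\,N(\gamma)\geq 1$ for all $\gamma\in\F$. So it suffices to exhibit a lower bound $N(\gamma)\geq n_k$ that is \emph{uniform over} $\F$ and satisfies $n_k\to+\infty$: then, for $k$ large enough that $n_k\geq 1/\epsilon$, the function $\rho\equiv 1/n_k$ is $\F$-admissible and satisfies $\rho(v)=1/n_k\leq\epsilon$ for every $v\in G_k$.

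To produce such an $n_k$, I would fix $\gamma\in\F$ and argue as follows. By definition of a $\kappa$-approximation, each $v\in G_k$ lies in a ball of radius $\kappa 2^{-k}$, so $\dia v\leq 2\kappa 2^{-k}$; set $\delta_k:=3\kappa 2^{-k}$, so that no element of $G_k$ can contain two points at distance $\geq\delta_k$. Since $\gamma$ is compact and $\dia\gamma\geq d$, there exist $x,y\in\gamma$ with $d(x,y)\geq d$. The map $t\mapsto d(x,\gamma(t))$ is continuous on $[0,1]$ and attains the values $0$ (at a parameter mapping to $x$) and $d(x,y)\geq d$ (at a parameter mapping to $y$), so by the intermediate value theorem its image contains $[0,d]$. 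Put $n_k:=1+\lfloor d\,2^k/(3\kappa)\rfloor=1+\lfloor d/\delta_k\rfloor$; we may then pick points $p_0,p_1,\dots,p_{n_k-1}$ on $\gamma$ with $d(x,p_i)=i\,\delta_k$ for $0\leq i\leq n_k-1$. Any two of them satisfy $d(p_i,p_j)\geq|i-j|\,\delta_k\geq\delta_k$, hence lie in distinct elements of the covering $G_k$; therefore $\gamma$ meets at least $n_k$ elements of $G_k$, i.e.\ $N(\gamma)\geq n_k$. This bound depends only on $d$, $\kappa$ and $k$, and $n_k\to+\infty$ as $k\to\infty$, which closes the argument via the reduction above.

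The computations here are routine; the one point that really matters — and the only place the hypothesis is used — is the \emph{uniformity} of the estimate $N(\gamma)\geq n_k$ over the whole family $\F$, which is exactly what ``$\dia\gamma\geq d$ for every $\gamma\in\F$'' provides. Without a common lower bound on the sizes of the curves one could not keep $\rho$ admissible while simultaneously forcing $\sup_{v}\rho(v)$ to be small, since short curves meet only few elements of $G_k$ and hence demand large values of $\rho$.
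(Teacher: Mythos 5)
Your proof is correct and follows essentially the same route as the paper: both take a constant admissible function whose value is forced to $0$ as $k\to\infty$ by a lower bound, uniform over $\F$, on the number of elements of $G_k$ met by a curve of diameter at least $d$. The only difference is that you spell out the counting step (spaced points via the intermediate value theorem and the bound $\dia{v}\leq 2\kappa 2^{-k}$), which the paper asserts directly.
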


\begin{proof}
Let $\gamma \in \F$. We recall that $\kappa$ denotes the multiplicative constant of the approximation $\{G_k \}_{k\geq 0}$. For $k\geq \frac{\log{(\kappa/d)}}{\log{2}}$, as $\dia{\gamma}>d$ the following inequality holds 
\[\# \{v\in G_k : v\cap \gamma \neq \emptyset \}\geq  \frac{d}{\kappa 2^{-k}}.\]
Hence the constant function $\rho : v\in G_k \longrightarrow \frac{\kappa}{d} 2^{-k} \in [0,+\infty)$ is $\F$-admissible.  This finishes the proof.
\end{proof}

A metric space $Z$ is called \index{Doubling metric space} \emph{doubling} if there exists a uniform constant $N$ such that each ball $B$ of radius $r$ is covered by $N$ balls of radius $r/2$. In doubling spaces, the $G_k$-combinatorial $p$-modulus does not depend, up to a multiplicative constant, on the choice of the approximation.

\begin{prop}[{\cite[Proposition 2.2.]{BourdonKleinerCLP}}] \label{propdouble}
Let $(Z,d)$ be a compact doubling metric space. For each $p\geq1$, if $G_k$ and $G'_{k}$ are respectively $\kappa$ and $\kappa'$-approximations, there exists $D=D(\kappa,\kappa')$ such that for any $k\geq 0$\[D^{-1}\cdot \modcombg{\F} \leq  \modcomb{p}{\F, G'_k}\leq D \cdot       \modcombg{\F}.\]
\end{prop}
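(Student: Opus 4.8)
The plan is to compare the two moduli by producing, from an admissible function for one approximation, an admissible function for the other at a controlled cost. Fix $p\ge 1$, a family $\F$, a scale $k$, and let $G_k$ be a $\kappa$-approximation and $G'_k$ a $\kappa'$-approximation. Start from an $\F$-admissible $\rho:G_k\to[0,+\infty)$. The idea is to push $\rho$ forward to a function $\rho':G'_k\to[0,+\infty)$ by setting, for each $w\in G'_k$,
\[
\rho'(w)=\max\{\rho(v): v\in G_k,\ v\cap w\neq\emptyset\}
\]
(or the sum over such $v$; either works, the max is cleaner). First I would show $\rho'$ is $\F$-admissible: given $\gamma\in\F$ with $L_{\rho'}(\gamma)$ to be estimated, each $v\in G_k$ meeting $\gamma$ is contained in $B(z_v,\kappa 2^{-k})$, hence every point of $v$ lies within $2\kappa 2^{-k}$ of a point of $\gamma$; using that the $B(z_w,(\kappa')^{-1}2^{-k})$ for $w\in G'_k$ cover $Z$, one finds a $w\in G'_k$ with $w\cap\gamma\neq\emptyset$ and $w\cap v\neq\emptyset$, so $\rho'(w)\ge\rho(v)$. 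Summing over the $v$ meeting $\gamma$ gives $L_{\rho'}(\gamma)\ge c^{-1}L_\rho(\gamma)\ge c^{-1}$ for a constant $c$ depending only on $\kappa,\kappa'$ via the doubling constant $N$ — indeed a single $w$ can be ``charged'' by at most boundedly many $v$'s because all those $v$'s lie in a ball of radius $\sim(\kappa+\kappa')2^{-k}$ and contain disjoint balls of radius $\kappa^{-1}2^{-k}$, whence the doubling property of $Z$ bounds their number by a constant $N'=N'(N,\kappa,\kappa')$. Rescaling $\rho'$ by $c$ makes it admissible.

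Next I would bound the $p$-mass: $M_p(c\rho')=c^p\sum_{w\in G'_k}\rho'(w)^p\le c^p\sum_{w}\bigl(\max_{v\cap w\neq\emptyset}\rho(v)\bigr)^p\le c^p\sum_{w}\sum_{v\cap w\neq\emptyset}\rho(v)^p$. Swapping the order of summation, each $v\in G_k$ is counted once for every $w\in G'_k$ meeting it; by the same doubling argument as above (now with the roles of $\kappa$ and $\kappa'$ exchanged), the number of such $w$ is at most a constant $N''=N''(N,\kappa,\kappa')$. Hence $M_p(c\rho')\le c^pN''\,M_p(\rho)$. Taking the infimum over $\F$-admissible $\rho$ yields $\modcomb{p}{\F,G'_k}\le c^pN''\,\modcombg{\F}$. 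Setting $D=D(\kappa,\kappa')$ equal to the maximum of $c^pN''$ and the analogous constant obtained by running the argument with $G_k$ and $G'_k$ interchanged gives the two-sided bound, and $D$ depends only on $\kappa,\kappa'$ (and the fixed $p$ and the doubling constant $N$ of $Z$), not on $k$.

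The main obstacle is the uniform-in-$k$ counting: one must check that both the number of $v\in G_k$ that a fixed $w\in G'_k$ can meet, and the number of $w\in G'_k$ that a fixed $v\in G_k$ can meet, are bounded by a constant independent of $k$. This is exactly where the doubling hypothesis is used, and the bookkeeping is: the relevant sets all sit inside a ball of radius comparable to $2^{-k}$ (the comparison constant involving $\kappa+\kappa'$), while each carries a disjoint interior ball of radius comparable to $2^{-k}$ (constant $\min(\kappa,\kappa')^{-1}$), so iterating the doubling property a number of times depending only on $\log_2(\kappa\kappa')$ bounds the count. Everything else is routine manipulation of finite sums, and the convention $\modcombg{\emptyset}=0$ handles the degenerate case trivially.
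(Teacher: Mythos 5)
Your argument is correct and is essentially the standard transfer argument for this statement (which the paper itself does not prove but cites from Bourdon--Kleiner): push a near-optimal admissible function from one approximation to the other, check admissibility up to a bounded factor, and control the $p$-mass by the two doubling-based counting bounds on how many pieces of one approximation can meet a piece of the other, all uniformly in $k$. The only slip is inessential: the inner balls $B(z_w,(\kappa')^{-1}2^{-k})$ need not cover $Z$ (only the sets $w\in G'_k$ do), but the existence of a $w$ meeting both $v$ and $\gamma$ follows at once from the covering property of $G'_k$ applied to a point of $v\cap\gamma$, so the proof stands; note also that, as you say, the constant depends on the fixed $p$ and the doubling constant, which is consistent with the quantifiers in the statement.
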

Usually, we  work with $p\geq 1 $ fixed  and with approximately self-similar spaces (see Section \ref{secmoduhyper}). As these spaces  are doubling,   now  we refer to the \emph{combinatorial  modulus on scale $k$}, omitting $p$ and the approximation.

\subsection{Combinatorial Loewner property}
\label{subsec CLPdef}

In this subsection, we assume that  $(Z,d)$ is a compact arcwise connected doubling metric space. Let $\kappa >1$ and let $\{G_k\}_{k\geq 0}$ denote a $\kappa$-approximation of $Z$. Moreover we fix $p\geq 1 $.

 A compact and connected subset $A\subset Z$ is called a \index{Continuum} \emph{continuum}. Moreover, if  $A$ contains more than one point, $A$ is called a \emph{non-degenerate} continuum. The relative distance between two disjoint non-degenerate continua $A, B \subset Z$ is 
 
 \[ \Delta (A,B) = \frac{\di{A,B}}{\min \{ \dia{A}, \dia{B} \}}. \] 
 
If $A$ and $B$ are two such continua, $\F (A,B)$ denotes the set of curves in $Z$ joining $A$ and $B$ and we write $\modcombg{A,B}:=\modcombg{\F (A,B)}$.  

\begin{déf}

Let $p>1$. We say that $Z$ satisfies the \emph{Combinatorial $p$-Loewner Property} (CLP) \index{Combinatorial  Loewner Property (CLP)} if there exist two increasing functions $\phi$ and $\psi$ on $(0, + \infty)$ with $\lim_{t\rightarrow 0} \psi (t) =0$, such that

\numeroti{ \item for any pair  of disjoint non-degenerate continua $A$ and $B$ in $Z$ and for all $k\geq 0$ with $2^{-k}\leq \min\{\dia{A},\dia{B} \}$ one has:  
\[ \phi (\Delta(A,B)^{-1}) \leq \modcombg{ A,B}, \]
\item for any  pair of open balls $B_1$, $B_2$ in $Z$, with  same center and $B_1\subset B_2$, and for   all $k\geq 0$ with $2^{-k}\leq   \dia{B_1}$ one has:  
\[  \modcombg{\overline{B_1},Z\backslash B_2} \leq \psi (\Delta(\overline{B_1},Z\backslash B_2)^{-1}). \]
}

\end{déf}

As we assume that $Z$ is doubling, thanks to  Proposition \ref{propdouble}, the CLP is independent of the choice of the approximation. As we noticed, the modulus on scale $k$ is an outer measure (in a weak sense) over the set of curves in $Z$. With the previous remarks we can interpret intuitively the two inequalities of the definition as follows:

 \begin{center}
\begin{minipage}[c]{12cm}  
 
\textit{i) there are plenty of curves joining two continua,}

\textit{ii) the amount of curves joining two continua is a decreasing function of the relative distance.}
 
\end{minipage}
\end{center}
We present  examples and properties about the CLP in Subsection \ref{subsec propbaseclp}. 

\subsection{Loewner spaces}
\label{subsec loewnerdef}
Now we define the notion of Loewner space. This notion introduced in \cite{HeinKoskQConf} has inspired the definition of the CLP. Moreover, the proof of many basic properties of   combinatorial modulus are directly inspired by the classical theory of modulus (see \cite{BourdonKleinerCLP}). 

 Now we consider $(X,d,\mu)$ a metric measured space. For  simplicity, we assume that $X$ is compact and that $(X,d,\mu)$ is a \emph{$Q$-Ahlfors-regular space}  \index{Ahlfors-regular space} ($Q$-AR or AR)  for $Q>1$. This means that there exists a constant $C>1$ such that  for any $0<R\leq  \dia{X}$ and any $R$-ball $B\subset X$ one has  \[C^{-1}\cdot R^Q \leq \mu  (B) \leq C\cdot R^Q .\]
Note that under this assumption the measure $\mu$ is comparable to the Hausdorff measure $\mathcal{H}_d$.
 
 Let $\F$ be a set of curves in $X$. A measurable function $f:X\longrightarrow [0,+\infty ($ is said to be $\F$-admissible if for any rectifiable curve $\gamma \in \F$ \[ \int_{\gamma(t)} f(\gamma(t))dt\geq 1.\]
Note that the notion of admissibility  does not use the measure on $X$ but only the metric space structure.
 
\begin{déf} The \emph{$Q$-modulus of $\F$} \index{$Q$-modulus} is \[ \modcont{Q}{\F}= \inf \Big\{ \int_X f^{Q} d\mu\Big\}\] 
where the infimum is taken over the set of $\F$-admissible functions and with the convention that $\modcont{Q}{ \F }=0$ if $\F$ does not contain rectifiable curves. \index{$\modcont{Q}{\cdot}$}
\end{déf}

As before, if $A$ and $B$ are two disjoint non-degenerate continua, $\F (A,B)$ denotes the set of curves in $X$ joining $A$ and $B$. Moreover, we write $\modcont{Q}{A,B}:=\modcont{Q}{\F (A,B)}$. In the literature on quasiconformal maps the pair $(A,B)$ is called a \emph{condenser} and the modulus (with respect to the Lebesgue measure)   $\modcont{Q}{A,B }$ the \emph{capacity} of $(A,B)$ (see \cite{VurionenLecture}). 

In $X$, the classical modulus are comparable to the combinatorial modulus in the following sense.
 
\begin{prop}[{\cite[Prop B.2]{HaissinskyEmpilCercles}}]
Assume that   $X$ is  equipped with an approximation $\{G_k\}_{k\geq 0}$. For $d_0>0$, let $\F_{0}$ be the set of curves in $X$ of diameter larger than ${d_0}$. Then   for $k$ large enough one has 
\[\modcomb{Q}{\F_{0},G_k}\asymp \modcont{Q}{\F_{0}}, \]
if $\modcont{Q}{\F_{0}}>0$ and $\lim\modcomb{Q}{\F_{0},G_k} = 0$  if $\modcont{Q}{\F_{0}}=0$.

 In addition for any pair $A,B$ of non-degenerate disjoint continua and for $k$ large enough one has 
\[ \modcomb{Q}{A,B,G_k}\asymp \modcont{Q}{A,B}  \]
if $\modcont{Q}{A,B}>0$ and $\lim \modcomb{Q}{A,B,G_k} = 0$ if $\modcont{Q}{A,B}=0$.

\end{prop}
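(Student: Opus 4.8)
The plan is to establish both comparisons by a discretization procedure that passes between the measurable densities $f$ on $X$ computing $\modcont{Q}{\cdot}$ and the combinatorial densities $\rho\colon G_k\to[0,+\infty)$ computing $\modcomb{Q}{\cdot,G_k}$, in the spirit of Haïssinsky's Appendix~B. Throughout one writes $B_v:=B(z_v,2\kappa2^{-k})$ for the fattened tile and uses that $Q$-Ahlfors regularity gives $\mu(B_v)\asymp2^{-kQ}$ and $\#G_k\asymp2^{kQ}$, and that the doubling property together with the disjointness of the balls $B(z_v,\kappa^{-1}2^{-k})$ forces the family $\{B_v\}_{v\in G_k}$ to have bounded overlap. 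It suffices to treat $\F_0$: the condenser family $\F(A,B)$ is handled in exactly the same way, its curves having diameter at least $\di{A,B}>0$.

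For the inequality $\modcont{Q}{\F_0}\lesssim\modcomb{Q}{\F_0,G_k}$, valid once $k$ is large enough that $4\kappa2^{-k}<d_0$, I would take an $\F_0$-admissible $\rho$ and form the bump density $f_\rho:=(\kappa2^{-k})^{-1}\sum_{v\in G_k}\rho(v)\,\chi_{B_v}$. Since no curve of $\F_0$ can fit inside one $B_v$, each rectifiable $\gamma\in\F_0$ that meets a tile $v$ must travel from a point of $v$ out of $B_v$, so the length of $\gamma\cap B_v$ is at least $\kappa2^{-k}$; summing over the tiles met by $\gamma$ gives $\int_\gamma f_\rho\,ds\geq L_\rho(\gamma)\geq1$, so $f_\rho$ is admissible, while bounded overlap and $\mu(B_v)\asymp2^{-kQ}$ give $\int_Xf_\rho^Q\,d\mu\lesssim M_Q(\rho)$. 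Taking the infimum over $\rho$ gives the bound.

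For the converse $\modcomb{Q}{\F_0,G_k}\lesssim\modcont{Q}{\F_0}$, I would start from an $\F_0$-admissible $f$, assumed lower semicontinuous, and define $\rho_f(v)$ to be a suitably truncated infimum of $\int_\beta f\,ds$ over rectifiable curves $\beta$ running from the core ball $B(z_v,\kappa^{-1}2^{-k})$ to the complement of $B_v$. Subdividing a rectifiable $\gamma\in\F_0$ greedily at scale $\asymp2^{-k}$ yields $\gtrsim2^kd_0$ consecutive subarcs, each of which fully crosses the fattened tile about one of the $v\in G_k$ it meets, and since $\int_\gamma f$ is the sum of the integrals along these subarcs one gets $L_{\rho_f}(\gamma)\gtrsim\int_\gamma f\geq1$, so a fixed multiple of $\rho_f$ is admissible. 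The mass estimate is the point where $Q>1$ enters: one bounds $\rho_f(v)\lesssim2^{-k}\,\mathcal{M}f(z_v)$ with $\mathcal{M}$ a Hardy--Littlewood maximal operator at scales $\lesssim2^{-k}$, invokes the maximal inequality $\|\mathcal{M}f\|_{L^Q}\lesssim\|f\|_{L^Q}$ on the doubling space $(X,\mu)$, compares $\mathcal{M}f(z_v)$ with an $L^Q$-average over $B_v$ using doubling, and sums with bounded overlap to obtain $M_Q(\rho_f)\lesssim\int_Xf^Q\,d\mu$. The degenerate statement follows: if $\modcont{Q}{\F_0}=0$, choosing $f$ with $\int f^Q<\epsilon$ gives, for all $k$ large, an admissible $\rho_f$ with $M_Q(\rho_f)\lesssim\epsilon$, hence $\limsup_k\modcomb{Q}{\F_0,G_k}\lesssim\epsilon$.

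The hard part is the second inequality, and specifically the fact that the combinatorial modulus does not distinguish rectifiable from non-rectifiable curves while $\modcont{Q}{\cdot}$ ignores the non-rectifiable ones: one must know that the non-rectifiable curves of diameter $\geq d_0$ cannot force $\modcomb{Q}{\F_0,G_k}$ to be large, and that the greedy subdivision/chaining behind the admissibility of $\rho_f$ can be carried out uniformly. For the spaces relevant here --- boundaries of hyperbolic spaces, which are Ahlfors-regular, linearly locally connected and approximately self-similar --- this is handled by showing that such curve families have both $Q$-modulus and combinatorial modulus zero, so that they may be deleted from $\F_0$ without affecting either side; checking this, together with tracking the uniformity in $k$ of all the multiplicative constants, is where the work concentrates, the remaining steps being the routine interplay of Ahlfors regularity, bounded overlap, and the maximal inequality.
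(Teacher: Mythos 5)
The paper itself contains no proof of this statement: it is imported verbatim from Ha\"issinsky's appendix ({[}Prop.\ B.2{]} of \cite{HaissinskyEmpilCercles}), so your attempt can only be compared with that source, not with an internal argument. Your first direction, $\modcont{Q}{\F_{0}}\lesssim\modcomb{Q}{\F_{0},G_k}$ for $4\kappa 2^{-k}<d_0$, is correct and is the standard discretization: the bump density $f_\rho$ built from an admissible $\rho$ is admissible because a rectifiable curve of diameter $\geq d_0$ meeting a tile must traverse a definite portion of the fattened ball $B_v$, and the mass bound follows from bounded overlap, $\mu(B_v)\asymp 2^{-kQ}$ and convexity. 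The same remarks apply to the condenser families.

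The converse direction, which you yourself identify as the hard part, contains a genuine gap, and in fact the key inequality is asserted backwards. With $\rho_f(v)$ defined as an \emph{infimum} of $\int_\beta f\,ds$ over crossings of the annulus around $v$, each subarc $\beta_i$ of $\gamma$ produced by your greedy subdivision is a \emph{competitor} for the infimum at the tile it crosses, so what follows from $\int_\gamma f=\sum_i\int_{\beta_i}f$ is $\sum_i\rho_f(v_{j(i)})\leq\int_\gamma f$, an upper bound on the discrete length by the line integral; it does not yield $L_{\rho_f}(\gamma)\gtrsim\int_\gamma f$, hence admissibility of (a fixed multiple of) $\rho_f$ is not established. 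The tension is intrinsic: an infimum-type weight has controllable mass but need not be admissible (an admissible $f$ can carry its mass on thin sets along $\gamma$ while cheap crossings avoid them), whereas a supremum-type weight is admissible but its $\ell^Q$-mass is not controlled by $\int_X f^Q d\mu$. The mass estimate you propose is also unjustified as stated: to bound an infimum of line integrals by $2^{-k}\mathcal{M}f(z_v)$, and then by an $L^Q$-average over $B_v$, one must average $\int_\beta f$ over a family of rectifiable crossings whose occupation measure is comparable to $\mu$ on $B_v$ --- a Loewner/pencil-of-curves type input which a bare compact $Q$-regular space does not provide (it may contain no rectifiable curves at all, in which case $\rho_f$ degenerates while the proposition still makes a nontrivial claim, namely $\modcomb{Q}{\F_0,G_k}\to0$). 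Finally, your last paragraph correctly isolates the problem of non-rectifiable curves but resolves it by ``showing that such curve families have both moduli zero'', which in the degenerate case is precisely the statement to be proved, so the difficulty is deferred rather than treated. A correct treatment of this direction must either construct, from a curve with small discrete $\rho$-length, an actual rectifiable curve of the family with small $f$-integral (a chaining argument requiring quantitative connectivity by rectifiable curves), or otherwise use structure beyond Ahlfors regularity; your sketch supplies neither, so the second comparison and the vanishing statement do not follow from it.
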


Note that this connection between combinatorial and classical modulus is only valid for the dimension $Q$.

Now we can define Loewner spaces.

\begin{déf}
  We say that $(X,d,\mu)$ is a \emph{$Q$-Loewner space}\index{Loewner space} (or satisfies the \emph{$Q$-Loewner property}) if there exists an increasing function $\phi: (0, + \infty) \longrightarrow (0, + \infty) $   such that for any pair  of non-degenerate disjoint continua $A$ and $B$ in $X$   one has: 
\[ \phi (\Delta(A,B)^{-1}) \leq \modcont{Q}{ \F(A,B)}. \]
\end{déf}
We also say that $X$ satisfies the \emph{Loewner property} or the \emph{classical Loewner property} to avoid the confusion with the CLP.

The control of the modulus from above is not required  in this definition because it is automatically  provided by the structure of $Q$-AR space.
\begin{theo}[{\cite[Lemma 3.14.]{HeinKoskQConf}}]
There exists a constant $C>0$ such that the following property holds. Let $A$ and $B$ be two non-degenerate disjoint continua. Let $0<2r<R$ and $x\in X$ be such that $A\subset\overline{B(x,r)}$ and $B\subset X\backslash B(x,R)$. Then 
\[\modcont{Q}{A,B}\leq C\Big(\log\frac{R}{r}\Big)^{1-Q} .\]
\end{theo}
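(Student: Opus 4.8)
The plan is to produce a single explicit $\F(A,B)$-admissible function concentrated on the metric annulus $\overline{B(x,R)}\setminus B(x,r)$, estimate its $Q$-energy, and then invoke the definition of $\modcont{Q}{A,B}$ as an infimum. If $\F(A,B)$ contains no rectifiable curve then $\modcont{Q}{A,B}=0$ and there is nothing to prove, so assume it does. First I would fix the largest integer $N$ with $2^{N}r\le R$; since $2r<R$ one has $N\ge1$, and the inequalities $2^{N}r\le R<2^{N+1}r$ give $\tfrac12\log_2(R/r)<N\le\log_2(R/r)$, so $N\asymp\log(R/r)$ with absolute constants. For $j=0,\dots,N-1$ set $A_j=\overline{B(x,2^{j+1}r)}\setminus B(x,2^{j}r)$; these are pairwise disjoint Borel sets contained in $\overline{B(x,R)}\setminus B(x,r)$. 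Then I would define
\[
 f=\frac1N\sum_{j=0}^{N-1}\frac{1}{2^{j}r}\,\mathbf 1_{A_j},
\]
which is Borel measurable, being a finite combination of indicators of annuli.

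The next step is to check admissibility. Any rectifiable $\gamma\in\F(A,B)$ starts in $\overline{B(x,r)}$ and ends outside $B(x,R)$, so by connectedness of $\gamma$ and the intermediate value property of $t\mapsto d(x,\gamma(t))$, for each $j$ the curve $\gamma$ contains a subarc $\gamma_j\subset A_j$ joining the level set $\{d(x,\cdot)=2^{j}r\}$ to $\{d(x,\cdot)=2^{j+1}r\}$. The triangle inequality gives $\mathrm{length}(\gamma_j)\ge 2^{j+1}r-2^{j}r=2^{j}r$, hence
\[
 \int_\gamma f\,ds\ \ge\ \frac1N\sum_{j=0}^{N-1}\frac{\mathrm{length}(\gamma_j)}{2^{j}r}\ \ge\ \frac1N\cdot N\ =\ 1,
\]
so $f$ is $\F(A,B)$-admissible.

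Finally I would estimate the energy. By disjointness of the $A_j$ and $Q$-Ahlfors regularity, $\mu(A_j)\le\mu(B(x,2^{j+1}r))\le C_0\,(2^{j+1}r)^{Q}$, where $C_0$ denotes the Ahlfors-regularity constant, whence
\[
 \modcont{Q}{A,B}\ \le\ \int_X f^{Q}\,d\mu\ =\ \frac{1}{N^{Q}}\sum_{j=0}^{N-1}\frac{\mu(A_j)}{(2^{j}r)^{Q}}\ \le\ \frac{C_0\,2^{Q}}{N^{Q}}\cdot N\ =\ C_0\,2^{Q}\,N^{1-Q}.
\]
Since $1-Q<0$ and $N\asymp\log(R/r)$, the right-hand side is at most $C\big(\log(R/r)\big)^{1-Q}$ for a constant $C$ depending only on $Q$ and $C_0$, which is the claim. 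The only genuinely delicate point is the admissibility step — that every curve from $A$ to $B$ meets each dyadic annulus in a subarc whose length is at least the inner radius — and this rests entirely on the connectedness of $\gamma$ together with the continuity of the distance function; the remaining ingredients are the summation of a geometric series and a single application of the upper regularity bound.
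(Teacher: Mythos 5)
Your argument is correct, and there is nothing in the paper to compare it against: the paper does not prove this statement but imports it verbatim from Heinonen--Koskela (their Lemma 3.14), and what you wrote is precisely the classical dyadic-annulus test-function proof used there -- build the logarithmically weighted function on $\overline{B(x,R)}\setminus B(x,r)$, check admissibility via the intermediate value property of $t\mapsto d(x,\gamma(t))$, and sum the upper mass bounds. Two cosmetic points are worth fixing. First, the closed annuli $A_j=\overline{B(x,2^{j+1}r)}\setminus B(x,2^{j}r)$ are not literally pairwise disjoint: consecutive ones share the sphere $\{d(x,\cdot)=2^{j+1}r\}$, which in a general $Q$-regular space need not be $\mu$-null; either take the half-open annuli $\{2^{j}r\le d(x,\cdot)<2^{j+1}r\}$ (which changes nothing in the admissibility step, since your subarcs can be arranged to meet the boundary spheres only at their endpoints) or observe that the overlap has multiplicity at most $2$, costing only a factor $2^{Q-1}$ in the energy estimate. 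Second, in the admissibility computation the inequality $\int_\gamma f\,ds\ \ge\ \tfrac1N\sum_j \tfrac{1}{2^{j}r}\,\mathrm{length}(\gamma_j)$ is cleanest justified by linearity, $\int_\gamma f\,ds=\tfrac1N\sum_j\tfrac{1}{2^{j}r}\int_\gamma \mathbf{1}_{A_j}\,ds$ together with $\int_\gamma \mathbf{1}_{A_j}\,ds\ge \mathrm{length}(\gamma_j)$ for a subarc obtained by a first-entry/last-exit choice, so no disjointness of the subarcs is needed at all. Finally, note that the regularity bound $\mu(B(x,s))\le C_0 s^{Q}$ is indeed applicable for every radius you use, since $2^{j+1}r\le R\le \dia{X}$ (the continuum $B$ being nonempty forces $\dia{X}\ge R$). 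With these touch-ups the proof is complete and yields the constant $C=C(Q,C_0)$ as claimed.
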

 
As a consequence there exists  an increasing function  $\psi$ on $(0, + \infty)$ with $\lim_{t\rightarrow 0} \psi (t) =0$, such that for any pair  of disjoint non-degenerate continua $A$ and $B$ \[\modcont{Q}{A,B}\leq\psi(\Delta^{-1}(A,B)).\]
More precisely, there exist some constants $K, C>0$ such that $\psi(t)=K \Big(\log(\frac{1}{t}+C)\Big)^{1-Q}$ for any $t>0$. 

When $X$ is a Loewner space,  the asymptotic behavior of $\phi$ is described in \cite[Theorem 3.6.]{HeinKoskQConf}. For $t$ small enough $\phi(t)\approx \log\frac{1}{t}$, for $t$ large enough $\phi(t)\approx (\log t)^{1-Q}$. 

As we will see in the sequel an essential difference between the combinatorial and the classical modulus property is the importance  of the dimension $Q$ in the discussions about classical modulus.

\subsection{First properties and examples} 
\label{subsec propbaseclp}
In this section $Z$    is a compact arcwise connected doubling metric space  and $X$ is a compact $Q$-AR metric   space  ($Q> 1$).
First we recall a theorem and a conjecture that compare the CLP and the classical Loewner property.

\begin{theo}[{\cite[Theorem 2.6.]{BourdonKleinerCLP}}]\label{theo loewnervers clp}
 If $X$ is a compact  $Q$-AR and  Loewner metric space, then $X$ satisfies the combinatorial $Q$-Loewner property.
\end{theo}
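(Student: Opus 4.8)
The plan is to compare the classical $Q$-modulus with the $G_k$-combinatorial $Q$-modulus, and then transport the classical Loewner inequality (the lower bound on $\modcont{Q}{A,B}$) through this comparison to obtain the two inequalities required by the definition of the CLP. The crucial input is the proposition quoted from \cite{HaissinskyEmpilCercles} (``$\modcomb{Q}{A,B,G_k}\asymp \modcont{Q}{A,B}$ for $k$ large enough''), valid precisely because we work in the dimension $Q$ of the $Q$-AR space, together with the fact that a $Q$-AR space is doubling (so Proposition \ref{propdouble} makes the CLP independent of the approximation).

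First I would establish inequality (i) of the CLP. Fix a pair of disjoint non-degenerate continua $A,B$ in $X$. By the Loewner hypothesis there is an increasing $\phi_0$ with $\phi_0(\Delta(A,B)^{-1}) \leq \modcont{Q}{A,B}$; in particular $\modcont{Q}{A,B}>0$, so the comparison proposition gives a constant $c>0$ (independent of $A,B$) and an index $k_0=k_0(A,B)$ with $\modcomb{Q}{A,B,G_k} \geq c\cdot \modcont{Q}{A,B} \geq c\cdot\phi_0(\Delta(A,B)^{-1})$ for all $k\geq k_0$. The issue is that (i) must hold for \emph{all} $k$ with $2^{-k}\leq\min\{\dia A,\dia B\}$, not only for $k$ large. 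To bridge this gap I would use the elementary lower bound $\modcomb{Q}{A,B,G_k}\geq (\#G_k)^{1-Q}$ recorded in the excerpt, together with the doubling property, which bounds $\#G_k$ by $C\,(2^k\,\dia X)^{Q'}$ for some dimension $Q'$; combined with the constraint $2^{-k}\leq\min\{\dia A,\dia B\}$ and $\Delta(A,B)^{-1}\leq \min\{\dia A,\dia B\}/\di{A,B}$, this yields, for the ``small $k$'' range, a lower bound of the form $\modcomb{Q}{A,B,G_k}\geq \phi_1(\Delta(A,B)^{-1})$ for a suitable increasing $\phi_1$. Taking $\phi := \min\{c\,\phi_0,\phi_1\}$ (or rather an increasing function below both) gives (i). A cleaner alternative, if one prefers, is to invoke directly the fact that in a $Q$-AR space the combinatorial modulus $\modcomb{Q}{A,B,G_k}$ is, up to uniform constants and uniformly in $k$ in the admissible range, comparable to $\bigl(\log(1/\Delta(A,B)+C)\bigr)^{1-Q}$ when $\Delta$ is small and bounded below by a constant when $\Delta$ is bounded.

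Next, inequality (ii): for concentric balls $B_1\subset B_2$ with $2^{-k}\leq \dia B_1$, apply Theorem \ref{theoloewnervers... } — rather, the upper bound theorem \cite[Lemma 3.14]{HeinKoskQConf} — which gives $\modcont{Q}{\overline{B_1},X\setminus B_2}\leq C(\log(R/r))^{1-Q}$ and hence $\modcont{Q}{\overline{B_1},X\setminus B_2}\leq \psi_0(\Delta^{-1})$ for the explicit $\psi_0$ recorded in the excerpt. Applying the comparison proposition in the other direction, $\modcomb{Q}{\overline{B_1},X\setminus B_2,G_k}\leq C'\,\modcont{Q}{\overline{B_1},X\setminus B_2}\leq C'\psi_0(\Delta^{-1})$ for $k$ large; for the remaining small values of $k$ in the admissible range I again use $\#G_k\leq C\,(2^k\dia X)^{Q'}$ together with $2^{-k}\leq\dia B_1$ and the trivial bound $\modcomb{Q}{\cdot,G_k}\leq \#G_k$, rescaling by powers of $\dia B_1/\di{\overline{B_1},X\setminus B_2}$, to absorb that range into an increasing $\psi$ with $\psi(t)\to 0$ as $t\to 0$. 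Setting $\psi$ to dominate both pieces finishes (ii), and hence the theorem.

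The main obstacle is exactly the mismatch between ``for $k$ large enough'' in the classical-versus-combinatorial comparison and ``for all $k$ in the admissible range'' in the CLP definition: one must show that shrinking the admissible continua (which forces $k$ large) is the only regime that matters, and control the finitely many small-$k$ scales uniformly using only the doubling/AR structure. Everything else — the Loewner lower bound, the $Q$-AR upper bound, and the comparison of moduli — is quoted directly from the results stated above; the work is in packaging these into the two monotone functions $\phi,\psi$ with the required uniformity in $k$ and the correct limiting behavior of $\psi$ at $0$.
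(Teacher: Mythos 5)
This statement is quoted in the paper from \cite[Theorem 2.6]{BourdonKleinerCLP} and is not proved here, so your proposal can only be measured against the argument of Bourdon--Kleiner; against that, it has a genuine gap, and it is exactly the one you flag yourself. The comparison $\modcomb{Q}{A,B,G_k}\asymp \modcont{Q}{A,B}$ holds only for $k$ beyond a threshold $k_0(A,B)$ that depends on the pair of continua and is in no way controlled by $\Delta(A,B)$ alone, while the CLP demands an estimate, with functions $\phi,\psi$ of $\Delta(A,B)^{-1}$ only, at \emph{every} scale $k$ with $2^{-k}\leq\min\{\dia{A},\dia{B}\}$. Your patch for the intermediate scales does not close this: the trivial bound $\modcomb{Q}{A,B,G_k}\geq(\# G_k)^{1-Q}$, with $\# G_k\asymp(2^{k}\dia{X})^{Q}$ by Ahlfors regularity, depends on the \emph{absolute} size of the continua, not on their relative distance. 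For a pair of tiny continua with $\Delta(A,B)$ bounded, the admissible range of $k$ begins where $(\# G_k)^{1-Q}\approx(\min\{\dia{A},\dia{B}\}/\dia{X})^{Q(Q-1)}$ is already arbitrarily small, so no increasing $\phi_1$ of $\Delta^{-1}$ can sit below it; the same scale-dependence defeats the upper-bound patch via $\modcomb{Q}{\cdot,G_k}\leq\# G_k$ in (ii). Since $X$ is only assumed compact $Q$-AR and Loewner, there is no self-similarity available to rescale small configurations to a fixed size, so the ``finitely many bad scales'' are not uniformly controllable over the (infinitely many) pairs $(A,B)$.

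The way Bourdon--Kleiner avoid this is to prove scale-uniform inequalities directly rather than passing through the asymptotic comparison. For (i), from any $G_k$-admissible function $\rho$ one builds a classically admissible function $f\approx\sum_{v\in G_k}\rho(v)\,2^{k}\,\chi_{\lambda v}$, and bounded overlap plus $Q$-regularity give $\int_X f^{Q}d\mu\leq C\,M_Q(\rho)$ with $C$ depending only on $\kappa$ and the AR constants; hence $\modcont{Q}{A,B}\leq C\,\modcomb{Q}{A,B,G_k}$ for every admissible $k$, and the Loewner lower bound transports immediately. For (ii), one does not quote the classical capacity estimate at all but discretizes the logarithmic annular test function and estimates its $Q$-mass by summing over the dyadic annuli between $B_1$ and $B_2$, using only doubling/regularity; this yields $\modcomb{Q}{\overline{B_1},X\setminus B_2,G_k}\leq C\bigl(\log(R/r)\bigr)^{1-Q}$ uniformly in the admissible $k$. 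If you replace your use of the ``$k$ large enough'' comparison by these two uniform constructions, the rest of your packaging into $\phi$ and $\psi$ goes through.
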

 
The next conjecture is a main motivation for studying group boundaries that  satisfy the CLP.
\begin{conj} [{\cite[Conjecture 7.5.]{KleinerAsymptoticGeom}}]\label{conj CLPloewner}
Assume that $Z$ is quasi-Moebius homeomorphic to the boundary of a hyperbolic group. If $Z$ satisfies the CLP then it is quasi-Moebius homeomorphic to a Loewner space.
\end{conj}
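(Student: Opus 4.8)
Since Conjecture \ref{conj CLPloewner} is open, what follows is a research program rather than a proof. The strategy is to realise the conformal dimension of $Z$ by an Ahlfors-regular metric lying inside its quasi-Moebius gauge, and then to transfer the combinatorial Loewner estimates to classical ones. Replacing $Z$ by a quasi-Moebius copy, I may assume $Z=\borg$ for a hyperbolic group $\Gamma$, equipped with a visual metric $d$; since the CLP is a quasi-Moebius invariant \cite{BourdonKleinerCLP}, $(\borg,d)$ satisfies the CLP, say with exponent $p$ and control functions $\phi,\psi$. By the Bourdon--Kleiner analysis of the CLP \cite{BourdonKleinerCLP}, $p$ equals both the critical exponent of the combinatorial modulus and the Ahlfors-regular conformal dimension $\confdim{\borg}$; moreover, applying the lower bound in the definition of the CLP to two non-degenerate continua at mutual distance at least $d_0$ shows that the combinatorial $p$-modulus $\modcombfo$ --- where $\F_0$ is the family of curves of diameter at least $d_0$ --- stays bounded away from $0$ as $k\to\infty$.

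The first, and decisive, step is to prove that this conformal dimension is \emph{attained}: to produce a metric $\delta$ on $\borg$, quasi-Moebius equivalent to $d$, for which $(\borg,\delta)$ is Ahlfors $p$-regular. I would attempt a discrete uniformization in the spirit of Cannon's combinatorial Riemann mapping theorem \cite{CannonCombiRiemMapTheo} and of Bonk--Kleiner's parametrisation of $\s^2$ \cite{BonkKleinerQuasiSymParamofSpheres}: take minimal weight functions $\rho_k: G_k\to[0,+\infty)$ for the $p$-modulus of $\F_0$ at scale $k$, assemble them into path metrics $\delta_k$ on $\borg$, and pass to a subsequential Gromov--Hausdorff limit $\delta$. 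The uniform lower bound for $\modcombfo$ supplied by the CLP, together with the elementary ball estimates of Subsection \ref{subsecdefapproxcasgen} and the subadditivity of the modulus (Proposition \ref{propmodbase}), are exactly the ingredients needed to keep the $\rho_k$ under control and to prevent balls from collapsing or exploding in the limit, so that $\delta$ should turn out to be $p$-Ahlfors-regular and to lie in the gauge. An alternative route would be to invoke the analytic characterisations of when the Ahlfors-regular conformal dimension is attained (Keith--Laakso, Carrasco Piaggio) and to check that the CLP supplies the required combinatorial-Loewner-type hypothesis.

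Granting such a $\delta$, the remaining steps are comparatively soft. Being a quasi-Moebius invariant, the CLP holds for $(\borg,\delta)$ as well, with some control functions $\phi_\delta,\psi_\delta$. Since $(\borg,\delta)$ is $p$-Ahlfors-regular, Ha\"{\i}ssinsky's comparison (the proposition of Subsection \ref{subsec loewnerdef}, \cite[Prop.~B.2]{HaissinskyEmpilCercles}) applies: for every pair $A,B$ of disjoint non-degenerate continua one has $\modcomb{p}{A,B,G_k}\asymp\modcont{p}{A,B}$ for $k$ large, the hypothesis $\modcont{p}{A,B}>0$ being ensured by the CLP lower bound, which forbids $\modcomb{p}{A,B,G_k}\to0$. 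Combining this with $\phi_\delta(\Delta(A,B)^{-1})\leq\modcomb{p}{A,B,G_k}$ gives $\phi_\delta(\Delta(A,B)^{-1})\lesssim\modcont{p}{A,B}$, a classical Loewner lower bound (up to rescaling $\phi_\delta$ by the comparison constant), while the matching upper bound holds automatically on a $p$-Ahlfors-regular space by \cite[Lemma 3.14.]{HeinKoskQConf}. Hence $(\borg,\delta,\mathcal{H}^p_\delta)$ is a $p$-Loewner space, and as $\delta$ is quasi-Moebius equivalent to the original metric on $Z$, the given $Z$ is quasi-Moebius homeomorphic to a Loewner space.

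The main obstacle is the attainment step of the second paragraph; it is a genuine one, and is precisely where the conjecture stands open. The CLP delivers every combinatorial modulus estimate one could want, but converting these into an honest Ahlfors-regular metric --- equivalently, showing that the infimum defining the Ahlfors-regular conformal dimension is achieved inside the gauge --- seems to demand either a delicate compactness argument for the discrete metrics $\delta_k$ (the same difficulty that Cannon's program for $\s^2$ runs into) or an extra structural input. In the situations settled so far, such as certain Coxeter-group boundaries and the building boundaries studied in the present paper, additional symmetry forces the attainment; no general mechanism is known.
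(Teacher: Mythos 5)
This statement is Kleiner's Conjecture 7.5, which the paper only cites as motivation; there is no proof of it in the paper and none is claimed, so there is nothing to compare your argument against. What you have written is, as you yourself acknowledge, a program rather than a proof, and the gap in it is exactly the content of the conjecture: producing an Ahlfors $p$-regular metric in the quasi-Moebius gauge of $Z$ that realizes the conformal dimension. Nothing in your sketch closes that step. The CLP gives a uniform positive lower bound on $\modcombfo$ and on $\modcombg{A,B}$, but this is far from the control needed to make your limiting construction work: to pass from minimal admissible weights $\rho_k$ on $G_k$ to a genuine metric you need comparability of the weights of neighbouring pieces and a multiplicative (quasi-self-similar) compatibility across scales, so that the candidate ``diameters'' $\rho_k(v)$ behave like relative sizes of balls; the CLP by itself does not forbid the $\rho_k$ from being wildly non-doubling at individual pieces, which is precisely where Cannon-type programs stall. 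The alternative route you mention (Keith--Laakso, Carrasco Piaggio) characterizes attainment by conditions that are not known to follow from the CLP --- again, that implication is the conjecture.

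The second half of your plan is essentially sound but also not new: once one has a $p$-AR metric in the gauge with $p=\confdim{Z}$, the Loewner property follows (for boundaries of hyperbolic groups this is a theorem of Bonk--Kleiner/Keith--Laakso, and your transfer via Ha\"{\i}ssinsky's comparison is a reasonable substitute, provided you note that the comparability constants there depend only on the approximation and the AR data, not on the pair $(A,B)$, so that the function $\phi$ you extract is uniform). Two smaller points to tighten if you keep this write-up: the identification of the CLP exponent with the conformal dimension uses the approximate self-similarity of $\borg$ (Bourdon--Kleiner), so it should be invoked after transporting the problem to a self-similar metric; and the lower bound on $\modcombfo$ should be stated via the inclusion $\F(A,B)\subset\F_0$ for continua at distance at least $d_0$, which you implicitly use. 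In short: your assessment of where the difficulty lies is correct, but the proposal does not prove the statement, because the decisive attainment step is assumed rather than established.
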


As announced we want to find and use the CLP on boundaries of hyperbolic groups. Quasi-isometries between hyperbolic spaces extend to quasi-Moebius homeomorphisms between the boundaries, so it is fundamental to know how the Loewner property and the CLP behave under quasi-Moebius maps. These maps have been introduced by J. Vaïsälä in \cite{VaisalaQMmaps}. We recall that in a metric space $(Z,d)$ the \emph{cross-ratio} of four distinct points  $a,b,c,d \in Z$ is \[[a:b:c:d] = \frac{d(a,b)}{d(a,c)}\cdot\frac{d(c,d)}{d(b,d)}.\]
For  $Z,Z'$  two metric spaces, an homeomorphism $f : Z \longrightarrow Z'$ is \emph{quasi-Moebius} if there exists an homeomorphism $\phi :[0,+\infty) \longrightarrow [0,+\infty)$ such that for any quadruple of distinct points  $a,b,c,d \in Z$  \[[f(a):f(b):f(c):f(d)] \leq \phi( [a:b:c:d ]).\]
If $f$ is quasi-Moebius, as   $[a:c:b:d]=[a:b:c:d]^{-1}$,    $f^{-1}:Z'\longrightarrow Z$ is also quasi-Moebius.

\begin{theo}[{\cite[Theorem 2.6.]{BourdonKleinerCLP}}]\label{theo CLP QM}
 If $Z'$ is quasi-Moebius homeomorphic to a compact space $Z$ satisfying the  CLP, then $Z'$ also satisfies the  CLP (with the same exponent).
\end{theo}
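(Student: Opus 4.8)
The plan is to verify the two defining inequalities of the CLP for $Z'$ by pulling the relevant configurations back through the quasi-Moebius homeomorphism $f : Z\to Z'$ and invoking the CLP for $Z$; note first that since $Z$ is compact, arcwise connected and doubling and $f$ is a homeomorphism, $Z'$ is compact, arcwise connected, and --- doubling being a quasi-Moebius invariant of compact spaces --- doubling, so the CLP is meaningful for it. The argument rests on two ingredients. The first is the metric fact that relative distances of pairs of disjoint non-degenerate continua are quasi-preserved by $f$: there is an increasing homeomorphism $\theta$ of $(0,+\infty)$, depending only on the distortion function of $f$, with $\theta^{-1}(\Delta(A,B))\le\Delta(f(A),f(B))\le\theta(\Delta(A,B))$ for all disjoint non-degenerate continua $A,B\subset Z$. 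I would prove this by picking in each of $A$, $B$ two points realizing its diameter up to a fixed factor together with one point in each realizing $\di{A,B}$ up to a fixed factor, rewriting $\Delta(A,B)$ up to multiplicative constants as a cross-ratio of these finitely many points, and applying the quasi-Moebius inequality, once for $f$ and once for the (again quasi-Moebius) map $f^{-1}$.

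The second, and main, ingredient is a quasi-Moebius quasi-invariance of the combinatorial $p$-modulus. The key observation is that if $\rho'$ is an admissible weight on a $\kappa'$-approximation $G'_k$ of $Z'$ for a curve family $\F'$, then the sets $f^{-1}(v')$, $v'\in G'_k$, form a covering $f^{-1}G'_k$ of $Z$, and the weight $\hat\rho$ on it defined by $\hat\rho(f^{-1}(v'))=\rho'(v')$ is admissible for $f^{-1}\F'$ (since $f$ carries curves to curves and $L_{\hat\rho}(\gamma)=L_{\rho'}(f\circ\gamma)$) and has the same $p$-mass as $\rho'$; thus $\modcomb{p}{f^{-1}\F',f^{-1}G'_k}=\modcomb{p}{\F',G'_k}$ exactly. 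One then wants to compare this with $\modcomb{p}{f^{-1}\F',G_j}$ for a genuine $\kappa$-approximation $G_j$ of $Z$. I expect this to be the main obstacle: although $f^{-1}G'_k$ is a covering by quasi-balls of uniformly controlled eccentricity and multiplicity that becomes arbitrarily fine as $k\to\infty$, its sets have genuinely non-comparable diameters over $Z$ when $f$ fails to be quasisymmetric (the conformal factor of $f$ varying without bound), so $f^{-1}G'_k$ need not be comparable to any single standard approximation and Proposition \ref{propdouble} does not apply directly. The way around this is to first weaken the CLP, by a restriction/chaining argument on curve families of the kind familiar from classical Loewner theory, to its defining inequalities evaluated only on configurations in a controlled relative position --- on which $f$ does behave quasisymmetrically, with function depending on that position --- and to let the distortion produced by these reductions enter only the new control functions; this reduction together with the attendant modulus estimates is the technical core.

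Granting the two ingredients, the conclusion is bookkeeping. For condition (i): given disjoint non-degenerate continua $A',B'$ in $Z'$ and $k\ge0$ with $2^{-k}\le\min\{\dia{A'},\dia{B'}\}$, put $A=f^{-1}(A')$ and $B=f^{-1}(B')$; the second ingredient gives $\modcomb{p}{A,B,G_j}\lesssim\modcomb{p}{A',B',G'_k}$ at a suitable scale $j$ (with $2^{-j}\le\min\{\dia{A},\dia{B}\}$), and then the CLP for $Z$ together with the first ingredient give $\phi(\theta(\Delta(A',B'))^{-1})\lesssim\modcomb{p}{A',B',G'_k}$, i.e. condition (i) holds for $Z'$ with the increasing function $\phi'(t):=C^{-1}\phi(\theta(1/t)^{-1})$. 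For condition (ii): given concentric balls $B_1'\subset B_2'$ in $Z'$, the sets $f^{-1}(\overline{B_1'})\subset f^{-1}(B_2')$ are concentric quasi-balls, so I would circumscribe $f^{-1}(\overline{B_1'})$ by a closed ball $\overline{\widetilde B_1}$ and inscribe a concentric ball $\widetilde B_2$ in $f^{-1}(B_2')$, with the ratio of the two radii bounded below by a positive function of $\Delta(\overline{B_1'},Z'\setminus B_2')$ and the distortion of $f$; every curve joining $\overline{B_1'}$ to $Z'\setminus B_2'$ then pulls back to a curve joining $\overline{\widetilde B_1}$ to $Z\setminus\widetilde B_2$, and combining the modulus comparison, the CLP upper bound for $Z$, and the relative-distance control yields the required increasing $\psi'$ with $\lim_{t\to 0}\psi'(t)=0$. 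In every step only $\phi$ and $\psi$ are altered while the exponent $p$ is untouched, which gives the assertion "with the same exponent".
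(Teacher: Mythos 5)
The paper itself gives no proof of this statement --- it is imported verbatim from Bourdon--Kleiner --- so your proposal has to stand on its own. Your overall plan is the natural one: quasi-preservation of relative distances of continua (your cross-ratio argument for the first ingredient is fine), a comparison of combinatorial moduli across the map at corresponding scales, and then bookkeeping. The problem is that the second ingredient, which is the actual content of the theorem, is never established.

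Concretely: the identity $\modcomb{p}{f^{-1}\F',f^{-1}G'_k}=\modcomb{p}{\F',G'_k}$ is trivial and, as you acknowledge, worthless until the pulled-back covering $f^{-1}G'_k$ is compared with a genuine $\kappa$-approximation $G_j$ of $Z$ at a single scale $j$, with constants independent of $k$. At exactly that point you write that a ``restriction/chaining argument \dots is the technical core'' and stop. That core consists of at least three unproven steps: (a) a V\"ais\"al\"a-type statement that $f$ is quasisymmetric, with controlled distortion, on the configurations retained after your reduction (your earlier assertion that doubling is a quasi-Moebius invariant of compact spaces already presupposes this); (b) the push-forward of admissible functions between $G'_k$ and $G_j$ with a bounded-multiplicity count, using doubling and the quasisymmetric distortion, yielding $\modcomb{p}{f^{-1}\F',G_j}\leq C\cdot\modcomb{p}{\F',G'_k}$ for a scale correspondence $k\mapsto j$ that is compatible with the constraint $2^{-j}\leq \min\{\dia{A},\dia{B}\}$ and covers all sufficiently large $k$ with a uniform $C$; and (c) the reduction of the CLP to configurations in bounded relative position, which for combinatorial modulus at a fixed scale is itself a nontrivial lemma --- the finite subadditivity of Proposition \ref{propmodbase} does not by itself give the chaining you invoke from classical Loewner theory, and the coupling of the scale $k$ to the configuration in the definition of the CLP has to be tracked through the reduction. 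Since the ``bookkeeping'' in your final paragraph rests entirely on (a)--(c), the argument as written is a plan rather than a proof; these comparisons are precisely what Bourdon and Kleiner prove to obtain their Theorem 2.6, so the route is right but the decisive estimates are missing.
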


The Loewner property does not behave so well under quasi-Moebius maps. In particular, 
it is perturbed by a change of dimension whereas the CLP is not.

\begin{theo}[\cite{TysonQCandQS}]
Let $X$ and $X'$ be respectively $Q$-Loewner and $Q'$-AR compact metric spaces. Assume that $X'$ is quasi-Moebius homeomorphic to $X$. Then $X'$ is a Loewner space if and only if $Q=Q'$.
\end{theo}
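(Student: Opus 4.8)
The plan is to prove both directions using the interplay between quasi-Moebius invariance of combinatorial modulus (Theorem~\ref{theo CLP QM}) and the comparison between combinatorial and classical $Q$-modulus available on $Q$-AR spaces.

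\textbf{The forward direction} ($Q=Q'$ implies $X'$ Loewner). First I would note that by Theorem~\ref{theo loewnervers clp}, since $X$ is $Q$-Loewner and $Q$-AR it satisfies the combinatorial $Q$-Loewner property; then Theorem~\ref{theo CLP QM} transports the CLP to $X'$ with the \emph{same} exponent $Q$, so $X'$ satisfies the combinatorial $Q$-Loewner property. Now I would invoke the comparison proposition (the $\asymp$ statement, \cite[Prop B.2]{HaissinskyEmpilCercles}) which applies precisely because $X'$ is $Q'$-AR and $Q'=Q$: the combinatorial $Q$-modulus on scale $k$ of a condenser is comparable, for $k$ large, to the classical $Q$-modulus. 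Combining the lower bound $\phi(\Delta(A,B)^{-1})\le \modcomb{Q}{A,B,G_k}$ from the CLP with $\modcomb{Q}{A,B,G_k}\asymp \modcont{Q}{A,B}$ yields a lower bound $\phi'(\Delta(A,B)^{-1})\le \modcont{Q}{A,B}$ with $\phi'$ still increasing, which is exactly the $Q$-Loewner property for $X'$. One subtlety: the comparison proposition requires $\modcont{Q}{A,B}>0$ to give $\asymp$, but if it were $0$ then $\modcomb{Q}{A,B,G_k}\to 0$, contradicting the uniform positive lower bound from the CLP; so $X'$ being $Q$-AR, the $Q$-modulus of any nondegenerate condenser is automatically positive.

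\textbf{The converse} ($X'$ Loewner implies $Q=Q'$). Here I would argue by contradiction, and this is where the scaling/dimension sensitivity of the classical modulus enters. Suppose $X'$ is $Q'$-Loewner and $Q'$-AR with $Q'\neq Q$. By the forward-type argument applied to $X'$ (or directly), both $X$ and $X'$ satisfy the CLP with exponents $Q$ and $Q'$ respectively. But a compact space cannot satisfy the CLP with two different exponents: the combinatorial $p$-modulus as a function of $p$ has a single critical exponent, and the lower Loewner bound $\phi_p(\Delta^{-1})\le \modcomb{p}{A,B,G_k}$ forces $p$ to be at or below the critical value while the upper bound $\modcomb{p}{\overline{B_1},Z\setminus B_2,G_k}\le \psi_p(\cdots)$ with $\psi_p\to 0$ forces $p$ at or above it. More concretely, I would compare $\modcomb{Q}{A,B,G_k}\asymp \modcont{Q}{A,B}$ in $X$ with $\modcomb{Q'}{A',B',G_k}\asymp \modcont{Q'}{A',B'}$ in $X'$ across the quasi-Moebius identification, and use that $\modcomb{p}{\F,G_k}$ is monotone (in a controlled way) in $p$: if $Q<Q'$, the $Q'$-modulus controlled from below by $\phi'$ would force the $Q$-modulus (which is larger up to the change of $p$ when masses are small, via H\"older) to blow up on shrinking condensers, contradicting the upper Loewner bound $\psi$ that $X$ enjoys as a $Q$-AR Loewner space; the symmetric case $Q>Q'$ is handled by reversing the roles of $X$ and $X'$.

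\textbf{Main obstacle.} The delicate point is the converse: making rigorous that a compact AR space cannot carry two genuine Loewner/CLP structures with distinct exponents. The cleanest route is to transfer everything to the combinatorial side, where the approximations $\{G_k\}$ can be taken the same for $X$ and $X'$ (they are quasi-Moebius homeomorphic, hence quasisymmetric on these AR spaces by the Loewner hypothesis, so an approximation of one pushes to an approximation of the other by Proposition~\ref{propdouble}), and then exploit that for a \emph{fixed} sequence of condensers with relative distance $\to\infty$ the combinatorial $p$-modulus at the critical scale behaves monotonically in $p$ in a way incompatible with simultaneous two-sided Loewner bounds at two exponents. I expect this comparison-of-exponents step, rather than the forward implication, to require the most care, and it is essentially the content of \cite{TysonQCandQS}: the conformal dimension is the unique exponent at which a Loewner structure can exist, and it is a quasi-Moebius (indeed quasisymmetric) invariant, whereas the Ahlfors regularity exponent $Q'$ of any given metric must coincide with it for the Loewner property to hold.
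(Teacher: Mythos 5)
The paper offers no proof of this statement at all (it is imported from \cite{TysonQCandQS}), so your proposal can only be judged on its own merits. Your forward direction is a workable sketch: Theorem \ref{theo loewnervers clp} gives the combinatorial $Q$-Loewner property for $X$, Theorem \ref{theo CLP QM} transports it to $X'$ with the same exponent, and on the $Q$-AR space $X'$ the comparison \cite[Prop.\ B.2]{HaissinskyEmpilCercles} converts the combinatorial lower bound into the classical one; the only point you should make explicit is that the comparability constant there depends only on the structural data (AR and approximation constants), not on the condenser $(A,B)$ — otherwise you would only obtain $\modcont{Q}{A,B}>0$ rather than a uniform function $\phi$.

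The converse, however, contains a genuine gap, precisely where you flag it. Everything rests on the claim that a compact space cannot satisfy the CLP at two distinct exponents, and neither of your justifications establishes it. Since $p\mapsto \modcomb{p}{\F,G_k}$ is non-increasing, if the CLP held at $p<p'$ the only chain you can form is $\phi_{p'}(t)\le \modcomb{p'}{A,B,G_k}\le \modcomb{p}{A,B,G_k}\le \psi_{p}(t)$ for annular condensers, i.e.\ $\phi_{p'}\le\psi_p$; both sides tend to $0$ as $t\to 0$, so there is no contradiction, and pairing the bounds the other way the monotonicity inequality points in the wrong direction to chain at all. The sharp Loewner asymptotics $\approx(\log\Delta)^{1-Q}$ even show these constraints are mutually consistent at this qualitative level, so "the $Q$-modulus blows up on shrinking condensers" does not follow. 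The identification of the CLP exponent with a critical exponent (Theorem \ref{theocarrasco}, \cite{BourdonKleinerCLP}) is only available for approximately self-similar spaces, which $X$ and $X'$ are not assumed to be; and your closing appeal to "the conformal dimension is the unique exponent at which a Loewner structure can exist, and it is a quasi-Moebius invariant" is essentially the statement you are supposed to prove. The repair is cheap and uses only results the paper quotes: since $X$ is $Q$-AR and $Q$-Loewner, Theorem \ref{theo Tyson} gives $\confdim{X}=Q$; if $X'$ is Loewner it is $Q'$-Loewner (the Loewner exponent of an AR space is by definition its regularity exponent), so $\confdim{X'}=Q'$; and since $X$ and $X'$ are quasi-Moebius homeomorphic they have the same Ahlfors-regular conformal gauge, hence $Q=\confdim{X}=\confdim{X'}=Q'$.
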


If we apply to $X$ a snowflake transformation  $f_\epsilon:(X,d)\longrightarrow (X,d^\epsilon)$, $0<\epsilon<1$ then $\dim_\mathcal{H} (X,d^\epsilon) =  \frac{1}{\epsilon} \dim_\mathcal{H} (Z,d)$. Such a transformation is a quasi-Moebius homeomorphism and Combining with the previous theorem we get the following fact.

\begin{fact}
The Loewner property is not invariant under quasi-Moebius homeomorphisms.
\end{fact}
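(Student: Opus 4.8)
The statement to prove is:

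\begin{proof}[Proof proposal]
The plan is to combine the preceding theorem of Tyson on the non-invariance of the Loewner property under change of dimension with the elementary observation that snowflaking is a quasi-Moebius homeomorphism that does change the Hausdorff dimension. First I would fix a concrete Loewner space to work with, for instance $X = \s^1$ (or any $Q$-Loewner, $Q$-AR space with $Q>1$), equipped with its standard metric $d$; by classical theory $(X,d)$ is $Q$-Ahlfors-regular and $Q$-Loewner for the appropriate $Q$. Then I would pick $\epsilon \in (0,1)$ and consider the snowflaked space $X' = (X, d^\epsilon)$.

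The two things to check are: (1) the snowflake map $f_\epsilon : (X,d) \to (X,d^\epsilon)$ is a quasi-Moebius homeomorphism, and (2) $X'$ is $Q'$-AR with $Q' = Q/\epsilon \neq Q$. For (1), note that cross-ratios transform by raising to the power $\epsilon$: $[f_\epsilon(a):f_\epsilon(b):f_\epsilon(c):f_\epsilon(d)] = [a:b:c:d]^{\epsilon}$, so the defining inequality holds with the homeomorphism $\phi(t) = t^{\epsilon}$. (This is exactly the remark stated just before the Fact in the excerpt, so I may simply cite it.) For (2), from $C^{-1}R^Q \le \mu(B_d(x,R)) \le C R^Q$ and the fact that a $d^\epsilon$-ball of radius $r$ is a $d$-ball of radius $r^{1/\epsilon}$, one gets $C^{-1} r^{Q/\epsilon} \le \mu(B_{d^\epsilon}(x,r)) \le C r^{Q/\epsilon}$, so $X'$ is $(Q/\epsilon)$-AR. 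Since $0<\epsilon<1$ we have $Q' = Q/\epsilon > Q$, in particular $Q' \neq Q$.

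Now I would invoke Tyson's theorem as quoted in the excerpt: $X$ is $Q$-Loewner, $X'$ is $Q'$-AR, and $X'$ is quasi-Moebius homeomorphic to $X$, so $X'$ is a Loewner space if and only if $Q = Q'$. As $Q \neq Q'$, the space $X'$ is \emph{not} a Loewner space, while it is quasi-Moebius homeomorphic to the Loewner space $X$. This exhibits a quasi-Moebius homeomorphism that does not preserve the Loewner property, which is precisely the assertion.

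The only mildly delicate point — and the place where one must be slightly careful rather than truly obstructed — is confirming that a snowflake of a Loewner space is still an AR space of the expected dimension (so that Tyson's theorem is applicable at all); this is routine once one tracks how balls and the measure rescale under $d \mapsto d^\epsilon$. Everything else is a direct citation of results already stated above.
\end{proof}
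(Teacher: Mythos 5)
Your argument is correct and is essentially the paper's own proof: apply a snowflake transformation, observe it is quasi-Moebius while it raises the Ahlfors-regularity exponent from $Q$ to $Q/\epsilon$, and conclude via Tyson's theorem. The only small caveat is your sample choice $\mathbb{S}^1$, which is $1$-regular and thus falls outside the paper's standing hypothesis $Q>1$ (and outside Tyson's statement as quoted); your parenthetical ``any $Q$-Loewner, $Q$-AR space with $Q>1$'' (e.g.\ the standard $2$-sphere) is the right choice to make.
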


  However quasi-Moebius maps are the appropriate  homeomorphisms to discuss between Loewner spaces.
  
\begin{theo}[\cite{HeinKoskQConf}]
Let $X$ and $X'$ be compact $Q$-regular Loewner spaces
 and let $f:X\longrightarrow X'$ be a homeomorphism. The following are equivalent
\numerote{
\item $f$ is quasi-Moebius,

\item there exists $C>1$ such that 

\[C^{-1}\cdot\modcont{Q}{\F} \leq \modcont{Q}{f(\F)} \leq C\cdot \modcont{Q}{\F} \]

for any set of curves $\F$ in $X$.

} 

\end{theo}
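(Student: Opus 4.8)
The plan is to go through the tight two-sided estimate that, in any compact $Q$-regular $Q$-Loewner space, relates the $Q$-modulus of a condenser to its relative distance. By the Loewner property one has $\phi(\Delta(A,B)^{-1})\le\modcont{Q}{A,B}$ for all disjoint non-degenerate continua $A,B$, and by the Ahlfors-regularity upper bound recalled above (\cite[Lemma 3.14]{HeinKoskQConf}) one has $\modcont{Q}{A,B}\le\psi(\Delta(A,B)^{-1})$, with $\phi$ increasing on $(0,+\infty)$ and $\psi$ increasing with $\psi(0^{+})=0$; both hold in $X$ and in $X'$. Since Loewner spaces are linearly locally connected and quasiconvex, this makes the $Q$-modulus of condensers essentially the same data as their relative distances, hence --- via suitably chosen pairs of continua --- as cross-ratios of quadruples of points. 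Both implications will be obtained by transporting this dictionary through $f$.

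To prove $(1)\Rightarrow(2)$: as $X$ and $X'$ are compact and connected, they are bounded and uniformly perfect, so the quasi-Moebius homeomorphism $f$ (and hence $f^{-1}$) is quasisymmetric by Väisälä's comparison of the two classes \cite{VaisalaQMmaps}. A quasisymmetric homeomorphism between compact $Q$-regular Loewner spaces ($Q>1$) is geometrically quasiconformal, i.e.\ it distorts the $Q$-modulus of every curve family by at most a fixed multiplicative factor; this is exactly the equivalence of the geometric and metric definitions of quasiconformality established in \cite{HeinKoskQConf}, whose proof runs precisely through the condenser estimate of the first paragraph used simultaneously in $X$ and $X'$. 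This is statement (2).

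To prove $(2)\Rightarrow(1)$: fix four distinct points $a,b,c,d\in X$. Using the linear local connectivity of $X$, attach to this quadruple two disjoint non-degenerate continua $A$ and $B$ --- for instance $A$ a continuum through $a$ and $c$ of diameter comparable to $d(a,c)$, and $B$ a continuum through $b$ and $d$ of diameter comparable to $d(b,d)$ --- in such a way that $\Delta(A,B)$, and therefore by the first paragraph $\modcont{Q}{A,B}$, is squeezed between two increasing functions of the cross-ratio $[a:b:c:d]$, with constants depending only on the structure of $X$ (permuting the four points if necessary so that the small pair of distances is the relevant one). Hypothesis (2) controls $\modcont{Q}{f(A),f(B)}$ in terms of $\modcont{Q}{A,B}$, and applying the two-sided estimate in $X'$ to the condenser $(f(A),f(B))$ --- whose first component contains $f(a),f(c)$ and whose second contains $f(b),f(d)$ --- one recovers a bound on $\Delta(f(A),f(B))$, hence on $[f(a):f(b):f(c):f(d)]$, by an increasing function of $[a:b:c:d]$. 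As the quadruple was arbitrary, $f$ is quasi-Moebius; as observed in the text, $f^{-1}$ is then automatically quasi-Moebius too. (Alternatively, $(2)\Rightarrow(1)$ follows from the same circle of equivalences in \cite{HeinKoskQConf}: geometric quasiconformality implies quasisymmetry, which implies quasi-Moebius.)

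The main obstacle is the construction in the last step: producing, for every quadruple, test continua $A$ and $B$ with \emph{uniform} control on their diameters and on their separation --- this is exactly where linear local connectivity and quasiconvexity of Loewner spaces are used --- and then carrying out the bookkeeping that converts $\Delta(A,B)$ into a function of the cross-ratio. One must also treat separately the regime where the cross-ratio is large, where the estimate comes from the upper function $\psi$ together with $\psi(0^{+})=0$, as well as the configurations in which some of the four points nearly coincide.
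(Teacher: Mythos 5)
The paper itself gives no proof of this statement: it is quoted verbatim from Heinonen--Koskela, so there is no internal argument to compare yours against; what can be judged is whether your sketch reproduces a sound proof. Your direction $(1)\Rightarrow(2)$ does: compact connected spaces are bounded and uniformly perfect, so a quasi-Moebius homeomorphism (and its inverse) is quasisymmetric by V\"ais\"al\"a, and a quasisymmetric homeomorphism between Ahlfors $Q$-regular ($Q>1$) Loewner spaces distorts $Q$-moduli of curve families by a bounded factor (Heinonen--Koskela, Tyson); applying this to $f$ and $f^{-1}$ gives the two-sided inequality, with $C$ allowed to depend on $f$, which is all statement $(2)$ asks.

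In $(2)\Rightarrow(1)$, however, the direct cross-ratio argument as you describe it does not close, and the gap is precisely in the step ``one recovers a bound on $\Delta(f(A),f(B))$, hence on $[f(a):f(b):f(c):f(d)]$''. From smallness of $\modcont{Q}{f(A),f(B)}$ the Loewner bound in $X'$ gives $\Delta(f(A),f(B))$ large, i.e.\ $\mathrm{dist}(f(A),f(B))\geq \Delta'\cdot\min(\mathrm{diam}\, f(A),\mathrm{diam}\, f(B))$; but since $f$ is only a homeomorphism you control $\mathrm{diam}\, f(A)$ and $\mathrm{diam}\, f(B)$ solely from below (by $d'(f(a),f(c))$ and $d'(f(b),f(d))$ respectively), and the resulting inequality bounds the image cross-ratio only up to the uncontrolled ratio of these two image ``diagonal'' distances. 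A single condenser attached to the quadruple cannot control that ratio; this is exactly why the actual proofs pass through a three-point (quasisymmetry-type) estimate, where one point is shared and the offending ratio disappears, or use several condensers together with the permutation symmetries of the cross-ratio. So the ``bookkeeping'' you defer is not routine but is the heart of the matter. Your parenthetical fallback --- $(2)$ means $f$ is geometrically quasiconformal, hence (both spaces being $Q$-regular Loewner, so linearly locally connected) quasisymmetric by the Heinonen--Koskela equivalence theorem, hence quasi-Moebius --- is the correct and standard route, and is in effect the proof of the cited result; if you rely on it, the preceding cross-ratio construction should either be completed carefully or dropped.
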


The next proposition gives examples of spaces that do not satisfy the CLP.
 
\begin{prop}[{\cite{HeinKoskQConf} or \cite[Theorem 2.6.]{BourdonKleinerCLP}}]
Assume that $Z$ satisfies the CLP   then it has no local cut point, \emph{i.e} no connected open subset is disconnected by removing a point. \end{prop}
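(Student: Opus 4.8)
I would argue by contradiction, following the classical pattern that shows a Loewner space is linearly locally connected and hence has no local cut point (see \cite[Section 3]{HeinKoskQConf}), with condition (ii) of the CLP playing the role of the upper Loewner estimate. So suppose $Z$ has a local cut point: there are a connected open set $U\subset Z$ and a point $p\in U$ with $U\setminus\{p\}=V_1\sqcup V_2$, where $V_1,V_2$ are non-empty, open and disjoint. First I would note that $p\in\overline{V_1}\cap\overline{V_2}$, since otherwise, say if $p\notin\overline{V_1}$, the set $V_1$ would be clopen in $U$, contradicting connectedness of $U$. Fix $\delta_0>0$ with $\overline{B(p,\delta_0)}\subset U$. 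The plan is to produce, for every small $\epsilon>0$, a pair of disjoint non-degenerate continua $A_\epsilon\subset V_1$ and $B_\epsilon\subset V_2$ with $A_\epsilon\cup B_\epsilon\subset B(p,\epsilon)$ and $\Delta(A_\epsilon,B_\epsilon)\leq C_0$ for a universal constant $C_0$, and to show that $\modcombg{A_\epsilon,B_\epsilon}$, computed at a suitably large scale $k=k(\epsilon)$, tends to $0$ as $\epsilon\to 0$. Since condition (i) of the CLP would force
\[0<\phi(C_0^{-1})\leq\phi\bigl(\Delta(A_\epsilon,B_\epsilon)^{-1}\bigr)\leq\modcombg{A_\epsilon,B_\epsilon}\]
whenever $2^{-k}\leq\min\{\dia{A_\epsilon},\dia{B_\epsilon}\}$, this is the desired contradiction.

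Producing such continua is a point-set matter: using the arcwise connectedness of $Z$, the openness of the $V_i$, and the fact that $p\in\overline{V_1}\cap\overline{V_2}$, one takes a short arc through $p$ joining a point of $V_1$ to a point of $V_2$ near $p$, and lets $A_\epsilon$, $B_\epsilon$ be its two halves, slightly truncated so as to stay inside $V_1$ and $V_2$. The care needed in the truncation — to keep the diameters of $A_\epsilon,B_\epsilon$ comparable to their mutual distance, so that $\Delta(A_\epsilon,B_\epsilon)$ stays bounded — is what I expect to be the main obstacle; it amounts to certifying that $V_1$ and $V_2$ both occupy a definite amount of room near $p$, and it is immediate for the linearly connected boundaries of hyperbolic groups that are the ultimate objects of interest.

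Granting the continua, I would set $d_\epsilon:=\di{A_\epsilon,p}>0$ (positive because $A_\epsilon$ is compact and disjoint from $\{p\}$), fix an integer $N\geq 2$, and introduce
\[\F_a=\F\bigl(\overline{B(p,d_\epsilon 2^{-N})},\ Z\setminus B(p,d_\epsilon/2)\bigr),\qquad\F_b=\F\bigl(\overline{B(p,\epsilon)},\ Z\setminus B(p,\delta_0)\bigr).\]
The key point is that every curve $\gamma$ joining $A_\epsilon$ to $B_\epsilon$ contains a subcurve lying in $\F_a\cup\F_b$: if $\gamma\subset B(p,\delta_0)\subset U$, then, because $U\setminus\{p\}$ is disconnected and $\gamma$ runs from $V_1$ to $V_2$, the curve $\gamma$ must pass through $p$, and truncating it at the first time it reaches $p$ gives a subcurve from $A_\epsilon$ (which lies at distance $\geq d_\epsilon$ from $p$) to $p$, hence an element of $\F_a$; otherwise $\gamma$ leaves $B(p,\delta_0)$, and since $\gamma$ starts in $A_\epsilon\subset B(p,\epsilon)$ an initial subcurve of $\gamma$ lies in $\F_b$. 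Because admissibility of a function depends only on subcurves, Proposition~\ref{propmodbase} then gives
\[\modcombg{A_\epsilon,B_\epsilon}\ \leq\ \modcombg{\F_a}+\modcombg{\F_b}.\]

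Finally, $\F_a$ and $\F_b$ are families of curves joining two concentric balls about $p$, so condition (ii) of the CLP applies once $k$ is large enough that $2^{-k}$ is below the diameters of the inner balls, yielding $\modcombg{\F_a}\leq\psi\bigl(\Delta(\overline{B(p,d_\epsilon 2^{-N})},Z\setminus B(p,d_\epsilon/2))^{-1}\bigr)$ and $\modcombg{\F_b}\leq\psi\bigl(\Delta(\overline{B(p,\epsilon)},Z\setminus B(p,\delta_0))^{-1}\bigr)$. A short computation, using that $Z$ is connected with finite diameter, shows that $\Delta(\overline{B(p,r)},Z\setminus B(p,R))$ is bounded below by a fixed multiple of $(R-r)/r$ for $r$ small, which tends to $+\infty$ as $r/R\to 0$; hence the first relative distance tends to $+\infty$ as $N\to\infty$, and the second as $\epsilon\to0$ (recall $d_\epsilon\leq\epsilon$). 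Choosing $N=N(\epsilon)\to\infty$ and then $k=k(\epsilon)$ large enough for both conditions (i) and (ii), I would get $\modcombg{A_\epsilon,B_\epsilon}\to 0$, contradicting the lower bound displayed above. Therefore $Z$ has no local cut point.
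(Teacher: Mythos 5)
The paper offers no proof of this proposition at all — it is quoted from \cite{HeinKoskQConf} and \cite[Theorem 2.6]{BourdonKleinerCLP} — so your argument can only be measured against those sources and on its own terms. Its overall shape is the standard one: split the curves joining $A_\epsilon$ to $B_\epsilon$ into those that pass through $p$ and those that escape $B(p,\delta_0)$, bound each subfamily by condition (ii) of the CLP applied to a pair of concentric balls, and contradict condition (i). The modulus bookkeeping in your write-up is correct: the overflowing step (every curve of $\F(A_\epsilon,B_\epsilon)$ has a subcurve in $\F_a\cup\F_b$, hence an admissible function for $\F_a\cup\F_b$ is admissible for $\F(A_\epsilon,B_\epsilon)$) combined with subadditivity, the estimate $\Delta(\overline{B(p,r)},Z\setminus B(p,R))\gtrsim (R-r)/r$ for small $r$, and the order of choices ($N$ and $\epsilon$ first, then $k$ large) are all fine.

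The genuine gap is the step you flag and then wave off: producing, for arbitrarily small $\epsilon$ and with a constant $C_0$ \emph{independent} of $\epsilon$, non-degenerate continua $A_\epsilon\subset V_1\cap B(p,\epsilon)$ and $B_\epsilon\subset V_2\cap B(p,\epsilon)$ with $\Delta(A_\epsilon,B_\epsilon)\leq C_0$, i.e.\ with diameters comparable to $\epsilon$. Arcwise connectedness of $Z$ does not give this. There is no reason a ``short arc through $p$'' exists at all (no local or linear connectivity is assumed), and even truncating a long arc at its first hitting of $\{p\}\cup(Z\setminus B(p,\epsilon))$ can fail: the arc may reach $p$ while the piece lying in $V_1$ still has arbitrarily small diameter, and beyond $p$ it may continue into $V_2$, so a priori $V_1\cap B(p,\epsilon)$ could be ``thin at scale $\epsilon$''. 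Without a uniform $C_0$ the contradiction evaporates, since $\phi(C_0^{-1})$ may degenerate as $\epsilon\to0$. Closing this requires a quantitative connectivity input — in the cited sources this is exactly where one first proves that the CLP (resp.\ the Loewner property) forces linear connectedness and then runs the truncation with that in hand, or inserts an additional modulus argument. Appealing to the fact that the property is ``immediate for the boundaries of hyperbolic groups that are the ultimate objects of interest'' does not discharge it, because the proposition is stated for an arbitrary compact, arcwise connected, doubling space satisfying the CLP.
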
 

Combining with the theorem of  Bowditch (see \cite{BowditchCutPoints}) this proposition  says that the boundary of a one-ended hyperbolic group that splits along a two-ended subgroup does not satisfy the CLP.

The first examples of spaces that satisfy the CLP are provided by   Theorem \ref{theo loewnervers clp} and by known examples of Loewner spaces. The next examples are provided by \cite{BourdonKleinerCLP}.

\begin{exmp} \label{ex loewner}

\text{ }

\numerote{

\item The following spaces are Loewner spaces \numeroti{
\item the Euclidean space  $\R ^d$ for $d\geq 2$, this result is due to C. Loewner for $d\geq3 $ (see \cite{LoewnerConfCapa}), 
\item any compact Riemannian manifold modeled by  $\R ^d$ for $d\geq 2$ (see \cite{HeinKoskQConf}),  
\item visual boundaries of right-angled Fuchsian buildings (see \cite{BourdonPajotPoinc}).

}

\item The following spaces satisfy the CLP (see \cite{BourdonKleinerCLP})
\numeroti{ 

\item the Sierpiński carpet and the $n$-dimensional Menger  sponge embedded in the Euclidean space,
\item  boundaries of Coxeter groups of various type: simplex groups, some prism groups, some highly symmetric groups and some groups with planar boundary.

}

 }

\end{exmp}

\begin{figure}[h]
 
\centering
\includegraphics[scale=0.17]{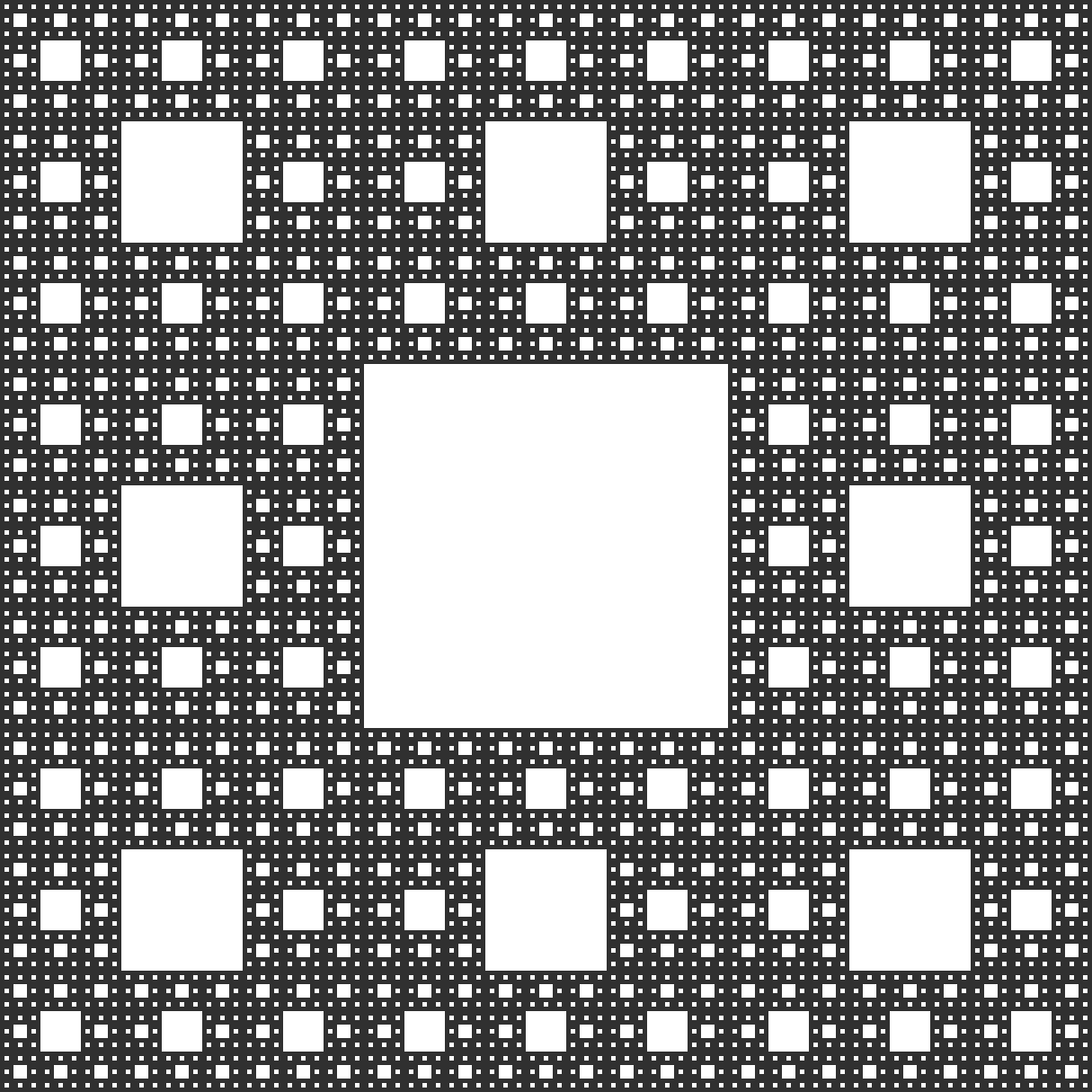}
\caption{The Sierpiński carpet in $\mathbb{E}^2$ satisfies the CLP}
\label{fig:D18}
\end{figure}

For   Examples \ref{ex loewner}.2, we do not know   if they are   Loewner spaces. This provides a first kind of interesting questions.  

\begin{question}
 Is any of the examples in \ref{ex loewner}.2. quasi-Moebius homeomorphic to a Loewner space?
\end{question}

Among these examples the Sierpiński carpet is the one that should be studied first as it should be the easiest one.

Note that Example \ref{ex loewner}.$1.ii)$ provides many examples of hyperbolic groups whose boundaries are Loewner spaces. Indeed  consider a group $\Gamma$ that is acting geometrically (see Subsection \ref{subsechypergroups}) on the standard hyperbolic  space $\h^d$ for $d\geq 3$. Then  $\borg$ is quasi-Moebius equivalent to $\s^{d-1}$ equipped with the standard spherical metric. Hence with Example \ref{ex loewner}$.1.ii)$, $\borg$ is a Loewner space.

Now we consider the following general question: how the geometry of a hyperbolic space is determined by its boundary? The Cannon's conjecture (see Conjecture \ref{conj Cannon}) is a question of this type. The notions of Loewner property and CLP   have been fruitfully used by M. Bonk and B. Kleiner  to approach this conjecture.   By a theorem of D. Sullivan in \cite{SullivanDiscConfGrpsandDyna}, Cannon's conjecture is equivalent to the following in which  the quasiconformal structure of the boundary is the main point.
 
\begin{conj}[{\cite[Conjecture 1.3.]{BonkKleinerQuasiSymParamofSpheres}}]
If $\Gamma$ is a hyperbolic group and $\borg$ is homeomorphic to $\s^2$, then it is quasi-Moebius homeomorphic to the standard 2-sphere.
\end{conj}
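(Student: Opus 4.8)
The plan is to follow the combinatorial-modulus program for rigidity, reducing the statement to a quasisymmetric uniformization theorem. It should be said at the outset that this statement is equivalent, by Sullivan's theorem, to Cannon's Conjecture \ref{conj Cannon} and is open; what follows is the strategy one would attempt, specialized to the $\s^2$ case. Fix a visual metric $d$ on $\borg$. Since $\Gamma$ is a hyperbolic group, $(\borg,d)$ is compact, approximately self-similar (hence doubling), and Ahlfors $Q$-regular for some $Q\geq 1$ depending on $d$; and since $\borg$ is homeomorphic to $\s^2$ it is arcwise connected, has no local cut point, and is therefore linearly locally connected. Equip $\borg$ with a $\kappa$-approximation $\{G_k\}_{k\geq 0}$ as in Section \ref{secmodudef}.

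The first main step would be to prove that $(\borg,d)$ satisfies the CLP. If it does, the exponent must be $p=2$: the CLP exponent equals the Ahlfors-regular conformal dimension of $\borg$, which is never smaller than the topological dimension $2$, and the $\s^2$ topology is expected to forbid it from being larger (as for $\s^2\subset\mathbb{E}^3$, where a family of essentially disjoint annuli around a point rules out an upper modulus bound $\modcombg{\overline{B_1},\borg\setminus B_2}\leq\psi(\cdot)$ at any $p>2$). Establishing the CLP itself is the heart of the matter: one needs two-sided control of $\modcombg{A,B}$ for pairs of non-degenerate disjoint continua, the delicate inequality being the lower bound $\phi(\Delta(A,B)^{-1})\leq\modcombg{A,B}$. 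As in \cite{BourdonKleinerCLP} and as in the present article, one would obtain it by exhibiting enough curves joining $A$ and $B$ whose combinatorial $2$-modulus can be bounded below, which requires a fine description of the ``shadows'' of $\Gamma$ at scale $k$. Here there is no ambient building, only the group, so the parabolic-limit-set machinery of Section \ref{seccurves inPLS} is unavailable and would have to be replaced by an argument internal to $\borg$.

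Granting the CLP at exponent $2$, the second step would be to upgrade it to a genuine Loewner structure: produce a metric $d'$ on $\borg$, quasi-Moebius equivalent to $d$, that is Ahlfors $2$-regular. In the present situation this is precisely the content of Conjecture \ref{conj CLPloewner}; for an $\s^2$-boundary one expects it to follow from the combinatorial $2$-modulus estimates by a maximal-function/averaging construction of the metric, the point being that the combinatorial $2$-modulus is comparable to the classical $2$-modulus exactly in the critical dimension $Q=2$. Finally, once $(\borg,d')$ is a compact, linearly locally connected, Ahlfors $2$-regular sphere, the quasisymmetric uniformization theorem of Bonk and Kleiner \cite{BonkKleinerQuasiSymParamofSpheres} applies and yields a quasisymmetric, hence quasi-Moebius, homeomorphism onto the standard $\s^2$; composing with the quasi-Moebius identity $(\borg,d)\to(\borg,d')$ gives the conclusion.

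The main obstacle is the first step: proving the CLP at exponent $2$, equivalently that the Ahlfors-regular conformal dimension of $\borg$ equals $2$ and is attained. No general method is known; it is verified only in special, highly symmetric cases — Coxeter groups in \cite{BourdonKleinerCLP}, right-angled buildings in this article — and for an arbitrary hyperbolic group with $\s^2$ boundary the required lower bounds on combinatorial modulus are exactly the difficulty that has kept Cannon's conjecture open.
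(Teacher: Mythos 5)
You have correctly identified the situation: the statement is stated in the paper as a conjecture — it is Bonk and Kleiner's reformulation of Cannon's Conjecture \ref{conj Cannon}, to which it is equivalent by Sullivan's theorem \cite{SullivanDiscConfGrpsandDyna} — and the paper offers no proof of it, so there is nothing to compare your argument against. Your outline is an accurate account of the standard program: fix a visual (hence Ahlfors regular, doubling, approximately self-similar, and for an $\s^2$ boundary linearly locally connected) metric, try to establish the CLP, argue the exponent must be $2$ since the conformal dimension is at least the topological dimension and the sphere topology should cap it at $2$, upgrade to an Ahlfors $2$-regular Loewner metric in the same quasi-Moebius gauge, and conclude with the Bonk--Kleiner quasisymmetric uniformization theorem \cite{BonkKleinerQuasiSymParamofSpheres}.

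To be explicit about why this cannot be accepted as a proof (as you yourself acknowledge): both pivotal steps are precisely the open problems. Step one — producing the lower modulus bounds $\phi(\Delta(A,B)^{-1})\leq \modcombg{A,B}$ for an arbitrary hyperbolic group with $\s^2$ boundary, equivalently showing the Ahlfors-regular conformal dimension is $2$ and is realized — has no known general method; the techniques of \cite{BourdonKleinerCLP} and of this article depend on special structure (Coxeter symmetry, the building-wall and parabolic-limit-set machinery), which, as you note, is unavailable for a bare group. Step two — passing from the CLP to a genuine Loewner metric — is exactly Conjecture \ref{conj CLPloewner}, also open; the closest unconditional statement in this direction is that attainment of the conformal dimension suffices, which is the equivalent form of Cannon's conjecture recorded in the paper after \cite{BonkKleinerConfDimGromHypergrps}. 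So your write-up is a faithful description of the strategy and of where it breaks down, not a proof, and no proof should have been expected here.
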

 
 If Conjecture \ref{conj CLPloewner} is true, the CLP would provide many interesting examples of Loewner spaces. This motivates our second question.  

\begin{question}
 Can we find new examples of compact metric spaces satisfying the CLP?
\end{question}
 
In this article, we exhibit examples of boundaries of right-angled buildings of dimension 3 and 4 that satisfy the CLP. These buildings  have been studied by  J. Dymara and D. Osajda who described the topology of the boundary. 
 \begin{theo}[\cite{DymOsaj}]
Let $\Delta$ be a right-angled thick building whose associated Coxeter group is a cocompact reflection group in $\h^d$. Then $\partial \Delta$ is homeomorphic to the universal $(d-1)$-dimensional Menger space $\mu^{d-1}$.
\end{theo}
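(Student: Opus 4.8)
The plan is to apply Bestvina's topological characterization of the universal $(d-1)$-dimensional Menger compactum: a compact metric space $X$ is homeomorphic to $\mu^{d-1}$ if and only if $\dim X = d-1$, $X$ is $(d-2)$-connected and locally $(d-2)$-connected, and $X$ has the disjoint $(d-1)$-cells property, i.e.\ any two maps $I^{d-1}\to X$ can be approximated by maps with disjoint images. So I would verify these properties for $X=\partial\Delta$. First, equip $\Delta$ with the piecewise-hyperbolic metric in which every chamber is isometric to $D\subset\h^d$; since $D$ is a compact right-angled polytope and $W_D$ is word-hyperbolic, $\Delta$ is a complete $\mathrm{CAT}(-1)$ space, in particular Gromov hyperbolic, and it carries a cocompact action (for instance by a graph product of finite groups over the defining graph of $W_D$ realising the constant thickness $q$). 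Hence $\partial\Delta$ is compact metrizable and coincides with the visual boundary of any such cocompact group; being a cocompact hyperbolic boundary it is approximately self-similar, which will let us upgrade global connectivity statements to local ones.

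For the dimension, the lower bound is immediate: each apartment is a copy of $\h^d$ tiled by $D$, so its boundary is a round sphere $\s^{d-1}\subset\partial\Delta$. For the upper bound I would invoke the Bestvina--Mess estimate $\dim\partial\Gamma\leq\mathrm{vcd}(\Gamma)-1$ for a cocompact group $\Gamma$ as above, which has a torsion-free finite-index subgroup acting freely cocompactly on the contractible $d$-dimensional complex $\Delta$, so that $\mathrm{vcd}(\Gamma)\leq d$. Alternatively, and without any group, one presents $\partial\Delta$ as the inverse limit of the nerves of its coverings by shadows of balls of radius $2^{-k}$; each such nerve is modelled on the $(d-1)$-dimensional wall pattern of $\Delta$ and so has dimension $\leq d-1$, and the inverse limit does not raise the dimension.

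For connectivity I would use the combinatorial retractions $\rho_c\colon\Delta\to\Sigma$ folding the building onto an apartment based at a chamber $c$; these are $1$-Lipschitz and induce continuous retractions $\partial\Delta\to\partial\Sigma\cong\s^{d-1}$. Given $\varphi\colon\s^i\to\partial\Delta$ with $i\leq d-2$, its image meets only finitely many apartment boundaries; pushing by the retractions and running a Mayer--Vietoris argument over the tree-like pattern in which apartments share walls at infinity, one contracts $\varphi$ inside $\partial\Delta$, using that each $\s^{d-1}$ is itself $(d-2)$-connected. (Equivalently, one may use the computation of $H^*_c(\Delta)$, which is concentrated in degree $d$ because this already holds for the Davis complex $\h^d$, together with the Bestvina--Mess isomorphism $\tilde H_i(\partial\Delta)\cong H^{i+1}_c(\Delta)$ and the Hurewicz theorem.) Local $(d-2)$-connectedness then follows from the global statement by approximate self-similarity, since small spheres can be rescaled to large ones.

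The heart of the matter is the disjoint $(d-1)$-cells property, and this is where $q\geq3$ enters decisively. Two $(d-1)$-cells cannot be made disjoint inside a single sphere $\s^{d-1}$, so a competing cell must be moved into apartments that the first cell avoids; this room is available precisely because every wall of $\Delta$ bounds at least three chambers, so $\partial\Delta$ branches into at least three sheets along every $\s^{d-2}$ at infinity. Concretely, after approximating $g_1,g_2\colon I^{d-1}\to\partial\Delta$ by maps landing in a finite union of apartment boundaries and putting them in general position wall by wall, one homotopes $g_2$ across walls into sheets not used by $g_1$, organising the pushes by induction on the combinatorial size of the subcomplex at infinity involved; the rank-one prototype of this step is the classical fact that the boundary of a thick tree is a Cantor set. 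I expect this branching general-position argument, carried out compatibly with the wall structure in all dimensions, to be the principal obstacle, with the dimension upper bound the second most delicate point. Once the three conditions are verified, Bestvina's characterization gives $\partial\Delta\cong\mu^{d-1}$.
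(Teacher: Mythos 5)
The paper offers no proof of this statement: it is quoted directly from Dymara--Osajda \cite{DymOsaj}, so there is nothing internal to compare you with. Your outline does follow the strategy of that original source --- verify Bestvina's recognition criteria for $\mu^{d-1}$ (dimension $d-1$, $(d-2)$-connectedness, local $(d-2)$-connectedness, disjoint $(d-1)$-cells property) for $\partial\Delta$ with its $\mathrm{CAT}(-1)$ piecewise-hyperbolic realization --- and several ingredients you cite are sound as stated (the sphere $\s^{d-1}$ inside an apartment boundary for the lower bound on dimension, the virtually torsion-free cocompact action on the contractible $d$-dimensional complex together with Bestvina--Mess for the upper bound, and the general principle that approximate self-similarity upgrades global to local connectivity).

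However, as a proof the proposal has genuine gaps exactly where the theorem is hard. First, the disjoint $(d-1)$-cells property is only a heuristic: even the preliminary claim that an arbitrary map $I^{d-1}\to\partial\Delta$ can be approximated by one landing in a finite union of apartment boundaries is not justified, and the subsequent ``push $g_2$ across walls into sheets not used by $g_1$, by induction'' is precisely the content that needs a careful construction compatible with the wall structure; nothing in your sketch rules out the two cells being forced to share an entire $\s^{d-2}$ at infinity after the pushes. Second, the connectivity step is not established: the assertion that $H^*_c(\Delta)$ is concentrated in degree $d$ ``because this already holds for the Davis complex $\h^d$'' does not follow --- compactly supported cohomology of a thick building is not inherited from an apartment, and its vanishing below top degree is essentially equivalent to the connectivity at infinity you are trying to prove; moreover, even granted $\tilde H_i(\partial\Delta)=0$ for $i\le d-2$, Hurewicz requires simple connectivity of $\partial\Delta$, which must be proved separately (homology plus the Bestvina--Mess isomorphism does not give $\pi_1=0$), and the alternative ``Mayer--Vietoris over the tree-like pattern of apartments'' has no meaning in dimension $d\ge 3$, where the nerve of the family of apartment boundaries is far from a tree. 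The nerve-dimension alternative for the upper bound on $\dim\partial\Delta$ is likewise unsubstantiated, though your vcd argument makes it unnecessary. So the plan reproduces the correct skeleton of the known proof, but the two decisive steps (disjoint cells and $(d-2)$-connectedness, including $\pi_1$) are asserted rather than proved.
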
 
Our interest in these buildings is inspired by  Examples \ref{ex loewner}$.1.iii)$ and \ref{ex loewner}$.2.ii)$. We will use some ideas from  \cite{BourdonPajotRigi} and \cite{BourdonKleinerCLP} where these examples are studied. 
 
\section{Combinatorial modulus of curve families on boundaries of hyperbolic groups}
\label{secmoduhyper}
Boundaries of hyperbolic groups are naturally endowed with   metric structures that satisfy a property of self-similarity. 
This property implies Proposition \ref{propscale} that will be used several times in proving the main theorem. Intuitively, we can say that this proposition is a tool to enlarge sets of small curves while controlling the modulus.

In this section,  we explain how the  boundary of a hyperbolic group can be seen  as approximately self-similar spaces. Then, we recall the connection between the combinatorial modulus and the conformal dimension of the boundary. Finally, we give a sufficient condition for the boundary   to satisfy the CLP. 

 Most of this section is a review of \cite[Section 3 and 4]{BourdonKleinerCLP} to which we refer for details.

\subsection{Boundaries of hyperbolic groups and  approximately self-similar spaces}
\label{subsechypergroups}
For  details concerning hyperbolic groups and spaces, we refer to \cite{CoorDelPapa}, \cite{GhysHarpe} or \cite{KapoBenak}.
Let $(X,d)$ be a proper geodesic metric space. We say that a finitely generated  group  $\Gamma$ \emph{acts geometrically} on $X$, if:\liste{
\item $\Gamma < \text{Isom}(X)$,
\item $\Gamma$ acts cocompactly,
\item $\Gamma$ acts properly discontinuously.
}

 We say that $X$ is \emph{hyperbolic} (in the sense of Gromov) if there exists a constant $\delta>0$ such that for every geodesic triangle $ [x,y ] \cup  [y,z ] \cup   [z,x ] \subset X$ and every $p\in  [x,y ]$, one has \[\di{p,  [y,z ] \cup   [z,x ]}\leq \delta.\]
A  finitely generated group that acts geometrically on a   hyperbolic space $X$   is called a \emph{hyperbolic group}. In this case, the Cayley graph of a hyperbolic group is a hyperbolic space. 
 
 From now on, let $X$ be a hyperbolic space with a fixed base point $x_0$ and let $\Gamma$ be a hyperbolic group acting geometrically on $X$. Let $\partial X$ be the set of equivalence classes of geodesic rays where two geodesic rays $\gamma, \gamma' : [0,+\infty) \longrightarrow X$ are equivalent if and only if: 
 \begin{center}
\begin{minipage}[c]{12cm}
   there exists $K>0$ such that $d(\gamma(t),\gamma'(t)) \leq K$ for any $t\in [0,+\infty) $.
\end{minipage}
\end{center}
   Thanks to the hyperbolicity condition, we can restrict to the set of geodesic rays starting from $x_0$. We can equip $\partial X$ and $X\cup \partial X$ with   topologies which make them compact spaces. In this setting  $X$, is dense in $X\cup \partial X$ and $\partial X$ is called the \emph{boundary of $X$}. Moreover we can equip  $\partial X$ with a  family of \emph{visual metric}. A metric $\delta(\cdot,\cdot)$  is visual if there exist two constants $A\geq 1$ and $\alpha >0$ such that for all  $\xi,\xi' \in \partial X$: \[A^{-1}e^{-\alpha \ell}\leq \delta (\xi,\xi') \leq A\:e^{-\alpha \ell},\] where $\ell$ is the distance from $x_0$ to a geodesic line $(\xi,\xi')$. In such a situation we also write \[ \delta (\xi,\xi') \asymp e^{-\alpha \ell}. \index{$\asymp$}\]

 The action of $\Gamma$ on $X$ extends naturally to $(\partial X, \delta)$ and elements of $\Gamma$ are uniform quasi-Moebius homeomorphisms of the boundary.
  Moreover, if $\partial \Gamma$ is also equipped with a visual metric, the homeomorphism $\partial \Gamma \longrightarrow \partial X$ induced by the orbit map $g\in \Gamma \longrightarrow g x_0 \in X $ is quasi-Moebius.

 The following definition is a generalization of the classical notion of self-similar space.

 \begin{déf}
\label{defselfsimgene} 
  A compact metric space $(Z,d)$ is called \emph{approximately self-similar} \index{Approximately self-similarity} if there exists a constant $L\geq 1$ such that for every $r$-ball $B$ with $0<r< \dia{Z}$, there exists an open subset $U\subset Z$ which is $L$-bi-Lipschitz homeomorphic to the rescaled ball $(B,\frac{1}{r}d)$. 
 \end{déf}
  
 The definition and proposition that follow  say that the boundary of a hyperbolic group is an approximately self-similar space   and that this structure is linked to the action of the group on its boundary.

\begin{déf} \label{defselfsimbord}
Let  $\Gamma$ be a hyperbolic group.  A metric $d$ on $\partial \Gamma$ is called a \emph{self-similar metric} \index{Self-similar metric} if there exists a hyperbolic space $X$ on which $\Gamma$ acts geometrically, such that $d$ is the preimage of a visual metric on $\partial X$ by the canonical quasi-Moebius homeomorphism $\partial \Gamma \longrightarrow \partial X$.
\end{déf}

\begin{prop}[{\cite[Proposition 3.3.]{BourdonKleinerCLP}}] \label{propdefselfsim}
The space $\borg$ equipped with a self-similar metric is doubling and is an approximately self-similar space, the partial bi-Lipschitz maps being restrictions of group elements. Moreover, $\Gamma$ acts on $(\partial \Gamma,d)$ by (non-uniformly) bi-Lipschitz homeomorphisms. 
\end{prop}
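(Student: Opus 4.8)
The plan is to reduce the statement to the boundary $\partial X$ equipped with the visual metric $\delta$ out of which $d$ is built. The canonical map $f\colon\partial\Gamma\to\partial X$ induced by the orbit map $g\mapsto gx_{0}$ is a $\Gamma$-equivariant homeomorphism, and $d$ is by definition the pullback $f^{*}\delta$; hence $f$ is a $\Gamma$-equivariant isometry from $(\partial\Gamma,d)$ to $(\partial X,\delta)$ and it suffices to prove the three assertions for $(\partial X,\delta)$, with $\Gamma$ acting by isometries of $X$. I will freely use the standard coarse facts: $\delta(\xi,\xi')\asymp e^{-\alpha(\xi|\xi')_{x_{0}}}$ for the Gromov product $(\xi|\xi')_{x_{0}}$; the product $(\xi|\xi')_{x_{0}}$ agrees with $\di{x_{0},(\xi,\xi')}$ up to a uniform additive constant; two geodesic rays $\gamma_{\xi},\gamma_{\xi'}$ from $x_{0}$ fellow-travel up to the level equal to their Gromov product; and along them $d(\gamma_{\xi}(t),\gamma_{\xi'}(t))$ equals $2(t-(\xi|\xi')_{x_{0}})$ up to a uniform additive constant once it is positive. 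As a first, easy consequence, for a \emph{fixed} $g\in\Gamma$ one has $\delta(g\xi,g\xi')\asymp e^{-\alpha\,\di{x_{0},(g\xi,g\xi')}}=e^{-\alpha\,\di{g^{-1}x_{0},(\xi,\xi')}}$, and $\di{g^{-1}x_{0},(\xi,\xi')}$ differs from $\di{x_{0},(\xi,\xi')}$ by at most $\di{x_{0},g^{-1}x_{0}}$; so $g$ is bi-Lipschitz with a constant depending on $g$, which is the third assertion and the reason for the qualification ``non-uniformly''.

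For approximate self-similarity, fix an $r$-ball $B=B(\xi_{0},r)$ with $0<r<\dia{\partial X}$, set $n:=\lfloor\tfrac1\alpha\log(1/r)\rfloor$, and let $\gamma$ be a geodesic ray from $x_{0}$ to $\xi_{0}$. Since $\Gamma x_{0}$ is a net in $X$, choose $g\in\Gamma$ with $\di{gx_{0},\gamma(n)}\le R_{0}$, where $R_{0}$ is the cocompactness radius. The claim is that $g^{-1}$ restricts to an $L$-bi-Lipschitz homeomorphism from $\big(B,\tfrac1r\delta\big)$ onto the open set $U:=g^{-1}(B)$, with $L$ depending only on $X$, $\Gamma$, the visual constants $A,\alpha$ and the hyperbolicity constant. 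Indeed, for $\xi,\xi'\in B$ both $(\xi|\xi_{0})_{x_{0}}$ and $(\xi'|\xi_{0})_{x_{0}}$ are $\ge n-O(1)$, hence $(\xi|\xi')_{x_{0}}\ge n-O(1)$; a short case analysis --- the geodesic $(\xi,\xi')$ passes within a uniformly bounded distance of $\gamma(n)$ when $\delta(\xi,\xi')\gtrsim r$, while $\di{\gamma(n),(\xi,\xi')}=(\xi|\xi')_{x_{0}}-n+O(1)$ when $\delta(\xi,\xi')\ll r$ --- gives in all cases $\di{gx_{0},(\xi,\xi')}=\di{x_{0},(\xi,\xi')}-n+O(1)$. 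Therefore $\delta(g^{-1}\xi,g^{-1}\xi')\asymp e^{-\alpha\,\di{gx_{0},(\xi,\xi')}}\asymp e^{\alpha n}\,\delta(\xi,\xi')\asymp\tfrac1r\,\delta(\xi,\xi')$ with uniform constants, which is exactly the required $L$-bi-Lipschitz equivalence, with the partial maps restrictions of group elements.

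It remains to prove that $(\partial X,\delta)$ is doubling. It is enough to bound, independently of $r$, the cardinality of a maximal $(r/2)$-separated subset $S\subset B(\xi_{0},r)$, since the balls $B(s,r/2)$, $s\in S$, then cover $B(\xi_{0},r)$. For distinct $s,s'\in S$ one has $(s|\xi_{0})_{x_{0}},(s'|\xi_{0})_{x_{0}}\ge n-O(1)$ and $(s|s')_{x_{0}}\le n+O(1)$, so the rays $\gamma_{s}$ fellow-travel $\gamma$ up to level $\approx n$, and at a suitably chosen level $t=n+O(1)$ the points $\gamma_{s}(t)$ are pairwise at distance $\ge 1$ while all lying in a ball of uniformly bounded radius about $\gamma(n)$. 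As $X$ is proper and $\Gamma$ acts cocompactly, $X$ has bounded geometry --- after translating by a group element, a $1$-separated set in a ball of fixed radius lies in a fixed compact set, hence has uniformly bounded cardinality --- so $\#S$ is bounded and $(\partial X,\delta)$ is doubling.

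The only genuinely delicate step is the uniformity in the self-similarity argument: one has to verify the comparison $\delta(g^{-1}\xi,g^{-1}\xi')\asymp\tfrac1r\,\delta(\xi,\xi')$ simultaneously for the ``spread-out'' pairs, whose $\delta$-distance is comparable to $r$, and for arbitrarily close pairs, whose joining geodesic sits far out toward $\xi_{0}$, and to check that each additive error accumulated along the way is absorbed into the multiplicative visual constants and into $R_{0}$, uniformly in $r$ and in the center $\xi_{0}$. Everything else is routine bookkeeping with Gromov products.
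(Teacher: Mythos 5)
Your argument is correct, but note that the paper itself gives no proof of this statement: it is quoted directly from Bourdon--Kleiner (\cite[Proposition 3.3]{BourdonKleinerCLP}). What you wrote is essentially the standard argument from that reference — reduce to $(\partial X,\delta)$ via the definition of a self-similar metric, use cocompactness to pick $g$ with $gx_0$ near $\gamma(n)$ so that $g^{-1}$ rescales $B(\xi_0,r)$ by $e^{\alpha n}\asymp 1/r$ up to uniform visual and hyperbolicity constants, and get doubling from separated sets pushed to bounded-geometry balls near $\gamma(n)$ — so there is nothing to add.
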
 

\subsection{Combinatorial modulus and conformal dimension}
\label{subsecdimconf}
In this subsection we present  the connection between combinatorial modulus and the conformal dimension in approximately self-similar spaces.

Let  $Z$ be an arcwise connected approximately self-similar metric space. For us $Z$ will be the boundary of a hyperbolic group. Let $\{G_k\}_{k\geq0}$ be a $\kappa$-approximation of $Z$ and $d_0$ be a small constant compared with $\dia{Z}$ and with the constant of self-similarity.

 Let  $\F_0$ \index{$\F^0$} denote the set of all the curves in $Z$ of diameter larger than $d_0$.    In \cite{BourdonKleinerCLP}, it is proved that   the properties  of the combinatorial modulus are contained in the asymptotic behavior of $\modcombfo$. This point is explained in Subsection \ref{subsec howtoprove}. Here we write   $M_k:=\modcombfo$.
 
 The following proposition allows to define a critical exponent that is related to the conformal dimension of $Z$.

\begin{prop} \label{propdefexpocrit}
There exists $p_0\geq 1$ such that  for $p\geq p_0$ the modulus $M_k$ goes to zero as $k$ goes to infinity.
\end{prop}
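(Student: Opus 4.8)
The plan is to exhibit, for each large enough $p$, a single admissible function whose $p$-mass is small, and then iterate a self-similarity argument to force the mass to zero as $k\to\infty$. First I would fix $p$ and use Proposition \ref{prop minofonctionmini}: since every curve in $\F_0$ has diameter at least $d_0$, for $k$ large there is an $\F_0$-admissible function $\rho$ which is bounded pointwise by a small constant $\epsilon$. Then $M_p(\rho)=\sum_{v\in G_k}\rho(v)^p\leq \epsilon^{p-1}\sum_{v\in G_k}\rho(v)$, so the $p$-mass is controlled by $\epsilon^{p-1}$ times the $1$-mass of $\rho$. If one can bound $\sum_v\rho(v)$ polynomially in $2^{-k}$ — which is exactly what the construction in Proposition \ref{prop minofonctionmini} gives, since $\rho(v)\asymp 2^{-k}$ on a covering whose cardinality $\#G_k$ grows like $2^{kQ}$ for some dimension exponent $Q$ (the space is doubling, hence Ahlfors-regular-type bounds on $\#G_k$ hold) — then $M_k\leq \modcombfo\leq M_p(\rho)\lesssim \epsilon^{p-1}2^{-k}2^{kQ}=\epsilon^{p-1}2^{k(Q-1)}$. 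Choosing $\epsilon$ small enough relative to $2^{k}$ does not immediately work because $\epsilon$ itself depends on $k$; this is the crux.

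The cleaner route, and the one I would actually carry out, is a submultiplicativity argument. Using approximate self-similarity (Proposition \ref{propdefselfsim}) one shows there is a constant $C_0$ and an integer $m$ such that $M_{k+m}\leq C_0\,\lambda\,M_k$ for a factor $\lambda=\lambda(p)$ coming from subdividing each ball of $G_k$ into comparably many balls of $G_{k+m}$ and using that a curve of diameter $\geq d_0$ crossing a small ball restricts to a curve of definite relative diameter inside the rescaled copy; the key input is that the modulus of $\F_0$ at a fixed small scale, computed in a rescaled ball, is bounded by the modulus $M_m$ of $\F_0$ at scale $m$, and Proposition \ref{prop minofonctionmini} plus the elementary bound $M_p(\rho)\leq \epsilon^{p-1}\#G_m\cdot(\text{const})$ makes $M_m$ as small as we like by taking $p$ large with $m$ fixed. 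Concretely: fix $m$ so that $\#G_m$ is some finite number $N$; for $p\geq p_0$ large, Proposition \ref{prop minofonctionmini} yields an admissible $\rho$ on $G_m$ with $\rho\leq\epsilon$, hence $M_m\leq N\epsilon^p$, which can be made $<1/(2C_0)$ by taking $\epsilon$ small (at the cost of enlarging $k_0$, not $p$) — so the per-step factor $C_0 M_m<1/2$. Then $M_{k+m}\leq \tfrac12 M_k$ along the arithmetic progression $k\equiv r\pmod m$, and since $M_k\leq \#G_k<\infty$ for every $k$, iterating gives $M_k\to 0$.

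The main obstacle is establishing the submultiplicative inequality $M_{k+m}\leq C_0 M_m\, M_k$ (or a one-sided version $M_{k+m}\le C_0 M_m M_k$ suffices), i.e. showing that an $\F_0$-admissible function at scale $k$ can be combined with locally-rescaled $\F_0$-admissible functions at scale $m$ to produce an $\F_0$-admissible function at scale $k+m$ whose $p$-mass is controlled by the product. This requires: (i) that any curve $\gamma\in\F_0$, after passing to scale $k$, meets some ball $v\in G_k$ in a subcurve that, transported by the bi-Lipschitz self-similarity map $g_v$ into the model space, still has diameter bounded below by a fixed constant — so that $g_v(\gamma\cap v)$ lies in (a uniformly quasi-Moebius image of) $\F_0$ and is therefore caught by the scale-$m$ admissible function; and (ii) controlling the bounded geometry distortion from the constants $L$ (self-similarity) and $\kappa$ (approximation), which is routine but must be tracked. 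Everything else — the existence of minimal functions, the polynomial bound $\#G_k\asymp 2^{kQ}$ from doubling, and the finiteness $M_k\leq\#G_k$ — is already available in the excerpt. I would present the argument as: (1) reduce to showing $M_m<\delta_0$ for some fixed small $\delta_0$ and $p$ large, via Proposition \ref{prop minofonctionmini}; (2) prove the submultiplicative step using self-similarity; (3) conclude by iteration.
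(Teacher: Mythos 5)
There is a genuine gap, and it sits exactly where you flagged it: the submultiplicative inequality $M_{k+m}\leq C_0\,M_m\,M_k$ is the entire content of your chosen route, and you do not prove it — you only list the two things a proof would need (a lower diameter bound for the rescaled subcurve $g_v(\gamma\cap v)$, and tracking of the constants $L,\kappa$). Neither of these is in the paper's toolbox at this point (Proposition \ref{propscale} compares moduli of the \emph{same} family at two scales; it does not by itself glue a scale-$k$ admissible function with locally rescaled scale-$m$ admissible functions, and the "subcurve of definite diameter" claim fails for curves that merely graze a ball of $G_k$, so one must pass to annuli or to a covering multiplicity argument). There is also a slip in your step (1): at the \emph{fixed} scale $m$ you cannot make $\epsilon$ small "at the cost of enlarging $k_0$, not $p$" — Proposition \ref{prop minofonctionmini} gives small admissible values only at large scales, so at scale $m$ the admissible function has the fixed value $\asymp 2^{-m}$ and smallness of $M_m$ must come from taking $p$ large (which is admissible here, but is not what you wrote).

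The irony is that the route you abandoned is the paper's proof, and the "crux" you identified there is not a problem. The admissible function produced in Proposition \ref{prop minofonctionmini} is the constant $\rho(v)=\frac{\kappa}{d_0}2^{-k}$, so the bound $\epsilon$ is itself $\asymp 2^{-k}$; there is no free parameter to tune and no circularity. Combining this with the doubling bound $\#G_k\leq K\cdot N'^k$ gives directly
\begin{equation*}
\modcombfo \;\leq\; \#G_k\cdot\Bigl(\tfrac{\kappa}{d_0}2^{-k}\Bigr)^p \;\leq\; C\cdot\Bigl(\tfrac{N'}{2^{p}}\Bigr)^{k},
\end{equation*}
which tends to $0$ as soon as $2^p>N'$. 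That is the whole argument: no minimal functions, no self-similarity, no iteration. If you do want the submultiplicativity statement (it is useful elsewhere), it must be proved in its own right — as is done in \cite{BourdonKleinerCLP} — and cannot be cited as routine within this proposition.
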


\begin{proof}
Let   $\{G_k\}_{k\geq0}$ be a $\kappa$-approximation of $Z$. According to the doubling condition and the definition of an approximation, there exists an integer $N'$ such that each element $v\in G_k$ is covered by at most $N'$ elements of $G_{k+1}$. As a consequence, if   $K>0$ is the cardinality $G_0$, then 
\[\# G_k \leq K \cdot N'^k \text{ for any } k \geq 1.\] 

Moreover, as we saw in the proof of Proposition \ref{prop minofonctionmini}, there exists a constant $K'>0$ such that the constant function $\rho : v\in G_k \longrightarrow \rho(v)=K'\cdot 2^{-k}\in [0,+\infty)$ is $\F_0$-admissible.

As a consequence \[M_k \leq C \cdot (\frac{N'}{2^p})^k,\]
where $C$ is a positive constant. Thus, for $p$ large enough, $M_k$ goes to zero.
\end{proof}

According to this proposition the next definition makes sense.

\begin{déf} \label{defexpocrit}
The \emph{critical exponent $Q$} \index{Critical exponent}  \index{$Q$} associated with the curve family $\F_0$ is defined as follows  \[Q = \inf \{p\in [1, +\infty) : \lim_{k\rightarrow  + \infty} \modcombfo = 0\}.\]
\end{déf}

We notice that for $k\geq  0$ fixed  the function  $:p \longmapsto \modcombfo$ is non-increasing. Hence $ \modcombfo$ goes to zero  for $p>Q$.

 This critical exponent   is related to the conformal dimension, which provides another motivation to study combinatorial modulus. The conformal dimension has been introduced by P. Pansu in \cite{PansuDimconf}. It is an important property of the conformal structure of the boundary of a hyperbolic group. In particular, it is invariant under quasi-Moebius maps.
 
In the following, $\mathcal{H}_d(\cdot)$  and  $\dim_\mathcal{H} (Z,d)$ respectively  denote the Hausdorff measure and  the Hausdorff dimension of  $Z$ equipped with $d$. 
The \emph{Ahlfors-regular conformal gauge} of $(Z,d)$ is defined as follows
\[\mathcal{J}_c(Z,d):= \{ (Z',\delta) : (Z',\delta) \text{ is AR and quasi-moebius homeomorphic to }  (Z,d)\}. \]

\begin{déf}
Let $(Z,d)$ be a compact metric space. The \emph{Ahlfors-regular conformal dimension} \index{Conformal dimension (Ahlfors-regular)} of  $(Z,d)$ is  
\[\confdim{Z,d} := \inf\{\dim_\mathcal{H}(Z',\delta) : (Z',\delta) \in   \mathcal{J}_c(Z,d)  \}  . \]
\end{déf}
In the rest of the  paper we will simply call it the \emph{conformal dimension}. 

In  \cite{KeithKleiner}, S. Keith and B. Kleiner proved that the combinatorial modulus are related to the conformal dimension. The proof of the following theorem may be found in \cite{Carra}.

\begin{theo}[{\cite{KeithKleiner} or \cite[Theorem 4.5.]{Carra}}] \label{theocarrasco}
The critical exponent $Q$ (see Definition  \ref{defexpocrit}) is equal to $\confdim{Z,d}$.
\end{theo}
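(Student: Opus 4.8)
The plan is to prove the two inequalities $Q \leq \confdim{Z,d}$ and $\confdim{Z,d} \leq Q$ separately, both of which pass through a comparison between combinatorial modulus at scale $k$ and the classical modulus in a metric from the conformal gauge. For the inequality $Q \leq \confdim{Z,d}$, I would take any Ahlfors-regular metric $(Z',\delta) \in \mathcal{J}_c(Z,d)$ of Hausdorff dimension $q$, so that $(Z',\delta)$ is $q$-AR. By Theorem \ref{theo CLP QM} and the quasi-Moebius invariance machinery already recalled, the combinatorial modulus $\modcombfo$ computed with respect to an approximation of $(Z,d)$ is comparable, up to a multiplicative constant, to the one computed with respect to an approximation of $(Z',\delta)$; this uses Proposition \ref{propdouble} together with the fact that a quasi-Moebius homeomorphism sends a $\kappa$-approximation to a $\kappa'$-approximation for some $\kappa'$, and sends $\F_0$ into a comparable curve family (of curves of diameter bounded below). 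Then in $(Z',\delta)$, since it is $q$-AR, Proposition \ref{HaissinskyEmpilCercles} (the one labelled ``Prop B.2'') tells us $\modcomb{q}{\F_0,G_k} \to 0$ if $\modcont{q}{\F_0} = 0$, and in any $q$-AR space $\modcont{q}{\F_0}$ is finite; combined with the standard fact that the $q$-modulus of a fixed condenser of curves of diameter bounded below is controlled (using the AR upper bound on measure), one deduces $\lim_k \modcomb{q'}{\F_0,G_k} = 0$ for every $q' > q$. Hence $Q \leq q$, and taking the infimum over the gauge gives $Q \leq \confdim{Z,d}$.

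For the reverse inequality $\confdim{Z,d} \leq Q$, the idea is to build, for each $p > Q$, an Ahlfors-regular metric in the gauge of dimension at most $p$ (or at least arbitrarily close to $p$). This is the substantive direction and follows the Keith--Kleiner strategy: one uses the minimal (extremal) functions $\rho_k$ for $\modcombfo$, which exist because all sums are finite, and assembles from the sequence $\{\rho_k\}$ a ``combinatorial conformal weight'' on $Z$. The fact that $p > Q$ forces $M_k = \modcombfo \to 0$, and a submultiplicativity/quasi-subadditivity estimate for the sequence $M_k$ — obtained by subdividing curves of $\F_0$ into pieces of controlled diameter and using Proposition \ref{propmodbase} together with the approximate self-similarity of $Z$ (Proposition \ref{propdefselfsim}, via Proposition \ref{propscale}) — gives exponential decay $M_k \lesssim \lambda^k$ for some $\lambda < 1$. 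One then defines a new metric $\delta_p$ on $Z$ by declaring, roughly, that the $\delta_p$-distance across an edge/ball $v \in G_k$ is $\rho_k(v)$ (made precise by a chain/infimum construction over paths in the incidence graph of the approximations), and checks that $\delta_p$ is quasi-Moebius equivalent to $d$ and is $p'$-AR for $p'$ arbitrarily close to $p$; the $p$-mass bound $\sum_v \rho_k(v)^p = M_k$ being small is exactly what controls the upper Ahlfors-regularity, while admissibility $L_{\rho_k}(\gamma) \geq 1$ controls the lower bound and the non-degeneracy of the metric. Letting $p \downarrow Q$ yields $\confdim{Z,d} \leq Q$.

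The main obstacle is the second direction: constructing the metric $\delta_p$ from the extremal functions and verifying simultaneously that it lies in the Ahlfors-regular conformal gauge (quasi-Moebius equivalence with $d$, both AR bounds) and that its Hausdorff dimension is close to $p$. This requires the exponential decay of $M_k$, a uniform control of the extremal functions across scales coming from the approximate self-similarity, and a careful ``patching'' of the scale-$k$ weights into a single metric — this is precisely the technical heart of Keith--Kleiner and of Carrasco's exposition, and I would invoke it rather than reprove it, since the excerpt explicitly cites \cite{KeithKleiner} and \cite[Theorem 4.5.]{Carra} for exactly this statement. In the write-up I would therefore give the easy inequality $Q \leq \confdim{Z,d}$ in full, sketch the decay estimate for $M_k$ and the shape of the metric construction for the reverse inequality, and refer to \cite{KeithKleiner} and \cite{Carra} for the remaining verifications.
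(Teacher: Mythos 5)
The paper does not actually prove this statement: it is imported verbatim from the literature, with the reader sent to \cite{KeithKleiner} and to \cite[Theorem 4.5.]{Carra} for the proof. Your proposal is therefore not in conflict with the paper, but it also does not go beyond it where it matters: the entire content of the hard inequality $\confdim{Z,d}\leq Q$ (the construction of an Ahlfors-regular metric in the gauge out of the extremal weights $\rho_k$, the submultiplicativity and exponential decay of $M_k$, and the verification of quasi-Moebius equivalence and of both AR bounds) is exactly the Keith--Kleiner/Carrasco argument, which you invoke rather than prove — so in effect your write-up would be a citation plus a sketch, which is what the paper already does. One soft spot in the part you do argue: for the easy inequality $Q\leq\confdim{Z,d}$ you route through the comparison between combinatorial and classical modulus (the proposition quoted from \cite{HaissinskyEmpilCercles}), but as the paper itself stresses, that comparison is only valid at the exponent equal to the AR dimension $q$ of $(Z',\delta)$, so it does not directly give the vanishing of $\modcomb{q'}{\F_0,G_k}$ for $q'>q$. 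The cleaner and fully elementary argument is the one already implicit in Proposition \ref{propdefexpocrit}: in a $q$-AR space one has $\#G_k\asymp 2^{kq}$, and the constant admissible function $\rho\asymp 2^{-k}$ of Proposition \ref{prop minofonctionmini} gives $\modcomb{q'}{\F_0,G_k}\lesssim 2^{k(q-q')}\to 0$ for every $q'>q$; combined with the quasi-Moebius invariance of the combinatorial modulus up to constants (Proposition \ref{propdouble} together with the transport of approximations and of $\F_0$, as in Theorem \ref{theo CLP QM}), this yields $Q\leq q$ and hence $Q\leq\confdim{Z,d}$ after taking the infimum over the gauge. With that repair the easy direction is complete, and for the reverse direction the appropriate course — both for you and for the paper — is indeed to cite \cite{KeithKleiner} and \cite{Carra}.
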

The definition of the conformal dimension, Combining with basic topology give the following inequalities:
\[\dim_T (Z)\leq \confdim{Z,d}\leq \dim_\mathcal{H}(Z,d),\]
where $\dim_T (Z)$ is the topological dimension of $Z$. 

The following theorem due to J. Tyson makes a connection between the conformal dimension and the Loewner property.

 \begin{theo}[{\cite[Corollary 4.2.2.]{MackayTysonConfDim}}] \label{theo Tyson} Let $Q>1$ and  $X$ be a $Q$-regular and $Q$-Loewner space, then $\confdim{X}=Q$. 
 \end{theo}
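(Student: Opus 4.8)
Proof plan for Theorem \ref{theo Tyson} (Tyson: if $X$ is $Q$-regular and $Q$-Loewner, then $\confdim{X}=Q$).

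The plan is to prove the two inequalities separately. The inequality $\confdim{X}\leq Q$ is immediate: since $X$ is $Q$-Ahlfors-regular, its Hausdorff dimension equals $Q$, and $X$ lies in its own conformal gauge $\mathcal{J}_c(X,d)$, so by definition $\confdim{X}\leq \dim_\mathcal{H}(X,d)=Q$. The content is therefore the reverse inequality $\confdim{X}\geq Q$, i.e.\ no Ahlfors-regular metric space quasi-Moebius homeomorphic to $X$ can have Hausdorff dimension strictly less than $Q$.

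For the lower bound, I would argue by contradiction. Suppose $(X',\delta)\in \mathcal{J}_c(X,d)$ is $Q'$-Ahlfors-regular with $Q'<Q$, and let $f:X\to X'$ be a quasi-Moebius homeomorphism. The key is that the classical $Q$-modulus detects the Loewner property while, simultaneously, the $Q'$-modulus on a $Q'$-regular space behaves like a Loewner-type modulus with exponent $Q'$. Concretely, first I would fix a condenser: a pair $A,B$ of disjoint non-degenerate continua in $X$ with $\Delta(A,B)$ controlled, so that the Loewner hypothesis gives a uniform lower bound $\modcont{Q}{A,B}\geq \phi(\Delta(A,B)^{-1})>0$. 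Passing through $f$, one gets a pair $f(A),f(B)$ in $X'$ with comparable relative distance (quasi-Moebius maps distort cross-ratios, hence relative distances, in a controlled way). Then I would invoke the upper bound valid in any $Q'$-AR space — the analogue of the Loewner upper estimate $\modcont{Q'}{A',B'}\leq C(\log\frac{R}{r})^{1-Q'}$ quoted in the excerpt after the Loewner definition — together with a Gehring-type comparison relating $\modcont{Q}{\cdot}$ and $\modcont{Q'}{\cdot}$ under a change of exponent. The crucial mechanism is that for a fixed condenser one may decompose it into many nested condensers (a telescoping chain of annuli) and use superadditivity/Loewner estimates so that the $Q$-modulus stays bounded below while the $Q'$-modulus of the image, being an integral of $f^{Q'}$ with $Q'<Q$, can be forced to be arbitrarily small by choosing the chain long. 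Playing the lower bound in $X$ against the upper bound in $X'$ over such chains yields the contradiction.

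The technical heart is the interpolation between exponents: one needs that on a $Q'$-regular space the $Q'$-modulus of a well-separated condenser is bounded above by a quantity that decays in the ``combinatorial length'' of the condenser, and to transport admissible functions across the quasi-Moebius map $f$ while keeping track of how $f$ distorts curve families and metric balls. This is where Ahlfors-regularity of $X'$ is used in an essential way — it controls the measure $\mu'$ appearing in $\modcont{Q'}{\cdot}$ in terms of $\delta$-radii, so that the upper estimate is genuinely available. The main obstacle I anticipate is making the transport of modulus under $f$ quantitative: quasi-Moebius maps are only controlled on quadruples of points, so one must pass from cross-ratio control to a Löwner-style control on continua and curve families (for instance via the characterization of quasi-Moebius maps between Loewner spaces as those inducing two-sided modulus comparisons, stated earlier in the excerpt), and then combine this with the dimension-sensitive modulus estimates. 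Once these ingredients are in place, the contradiction — a fixed positive lower bound for $\modcont{Q}{A,B}$ versus an arbitrarily small upper bound for the corresponding $\modcont{Q'}{f(A),f(B)}$ — closes the argument.
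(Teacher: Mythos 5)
You should first note that the paper offers no proof of this statement at all: it is quoted verbatim from Mackay--Tyson \cite{MackayTysonConfDim}, so your argument has to stand on its own, and as written it has a genuine gap at its core. The easy half, $\confdim{X}\leq Q$, is fine. For the lower bound, however, there is no contradiction between the Loewner lower bound $\phi(\Delta(A,B)^{-1})\leq \modcont{Q}{A,B}$ in $X$ and the upper bound $\modcont{Q'}{f(A),f(B)}\leq C\big(\log\frac{R}{r}\big)^{1-Q'}$ in the $Q'$-regular space $X'$: these concern different exponents in different spaces and are perfectly compatible, so "playing one against the other" proves nothing by itself. The bridge you invoke --- a "Gehring-type comparison relating $\modcont{Q}{\cdot}$ and $\modcont{Q'}{\cdot}$ under a change of exponent", or the two-sided modulus comparison for quasi-Moebius maps quoted in Section 2.4 --- is not available here: that characterization requires \emph{both} spaces to be $Q$-regular and $Q$-Loewner with the \emph{same} exponent, whereas $X'$ is only $Q'$-regular with $Q'<Q$ and is not known to be Loewner. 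Producing exactly this quantitative transfer of modulus across a quasisymmetric/quasi-Moebius map between spaces of different regular dimensions is the content of Tyson's theorem, so your plan assumes the very point to be proven. Moreover the telescoping mechanism is backwards: for a chain of $n$ nested annuli, every curve joining the innermost to the outermost continuum crosses all $n$ annuli, and the serial rule gives $\modcont{Q}{\cdot}\lesssim n^{1-Q}$ for that family, so the $Q$-modulus in $X$ does \emph{not} stay bounded below over long chains --- it tends to $0$ --- while the Loewner bound only applies to a single condenser of bounded relative distance.

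What an actual proof needs is a discretization argument rather than the Loewner upper estimate in the target. One route (Tyson's): pull back a fine cover of the $Q'$-regular space $X'$ through $f$, use quasisymmetry together with the $Q'$-regularity to control the $Q$-mass of the resulting admissible functions, and show that $Q'<Q$ would force $\modcont{Q}{\F_0}=0$ for the family $\F_0$ of curves of diameter bounded below, contradicting the Loewner lower bound. Another route, entirely within the toolkit quoted in this paper: the Loewner property gives $\modcont{Q}{\F_0}>0$; the comparison between classical and combinatorial modulus on a $Q$-regular space (the proposition from \cite{HaissinskyEmpilCercles} quoted in Section 2.3) then keeps the combinatorial $Q$-modulus of $\F_0$ bounded away from zero along scales; and the Keith--Kleiner/Carrasco theorem (Theorem \ref{theocarrasco}) identifies the critical exponent with $\confdim{X}$, giving $\confdim{X}\geq Q$. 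Your outline contains neither of these ingredients, so the lower bound is not established.
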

 
 \begin{exmp}
 It has been proved   independently by  B. Kleiner and in \cite{KeithLaaksoConfAssDim}  that the Euclidean metric on the Sierpiński carpet does not realize the conformal dimension. As a consequence the Sierpiński carpet equipped with this metric  does not satisfy the Loewner property. However it satisfies the CLP (see Example \ref{ex loewner}).
 \end{exmp}

Again, Cannon's conjecture has been an important motivation for studying the conformal dimension of the boundary of a hyperbolic group. In particular in \cite{BonkKleinerConfDimGromHypergrps} it is proved that  Conjecture \ref{conj Cannon} is equivalent to the following.

\begin{conj}
If $\Gamma$ is a hyperbolic group and $\borg$ is homeomorphic to $\s^2$, then $\confdim{\borg}$ is attained by a metric in $\mathcal{J}_c(\borg)$.

\end{conj}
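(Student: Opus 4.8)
\emph{Proof strategy.} As the paragraph above recalls, this statement is the Bonk--Kleiner reformulation (see \cite{BonkKleinerConfDimGromHypergrps}) of Cannon's conjecture~\ref{conj Cannon}, and it is also equivalent, through Sullivan's rigidity theorem \cite{SullivanDiscConfGrpsandDyna} and the quasisymmetric uniformisation of \cite{BonkKleinerQuasiSymParamofSpheres}, to the assertion that $\borg$ is quasi-Moebius homeomorphic to the round $2$-sphere; so one should not expect a short argument. The plan would be to produce a member $(Z',\delta)$ of the gauge $\mathcal{J}_c(\borg)$ of Hausdorff dimension exactly $2$. The inequality $\confdim{\borg}\geq 2$ holds for purely topological reasons, since $\borg\cong\s^2$ forces $\dim_T(\borg)=2$; so only the attainment of the value $2$ requires work, and any $(Z',\delta)$ attaining it is automatically $2$-Ahlfors-regular and homeomorphic to $\s^2$. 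The final move would then be to recognise such a $Z'$, through a quasisymmetric parametrisation, as the standard sphere; since the round sphere is $2$-regular and $2$-Loewner, it attains its conformal dimension by Theorem~\ref{theo Tyson}.

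Following the machinery of the present article, I would run this through combinatorial modulus. Fix a self-similar metric $d$ on $\borg$ (Definition~\ref{defselfsimbord}) and a $\kappa$-approximation $\{G_k\}_{k\geq 0}$; by Proposition~\ref{propdefselfsim} this makes $(\borg,d)$ approximately self-similar with $\Gamma$ acting by bi-Lipschitz maps, and by Theorem~\ref{theocarrasco} the critical exponent $Q$ of the curve family $\F_0$ equals $\confdim{\borg}$. The first step would be to prove $Q=2$, that is, that the modulus $M_k=\modcombfo$ does not tend to $0$ when $p=2$; the natural route is to establish the Combinatorial $2$-Loewner Property for $\borg$, which needs (i) the lower bound $\phi(\Delta(A,B)^{-1})\leq\modcombg{A,B}$ for disjoint non-degenerate continua $A,B$, forced by the abundance of curves on a sphere together with self-similarity, and (ii) the complementary upper bound, which is the delicate half and which in this article is obtained for buildings by transferring the estimate to a weighted modulus in a highly symmetric apartment. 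Granting the CLP with exponent $2$, I would then invoke Conjecture~\ref{conj CLPloewner} to upgrade $\borg$ to a Loewner member of $\mathcal{J}_c(\borg)$, necessarily $2$-regular since $2=Q=\confdim{\borg}$; because $\borg$ has no local cut points this $2$-regular sphere is linearly locally connected, hence quasi-Moebius to the round $\s^2$ by \cite{BonkKleinerQuasiSymParamofSpheres}, which together with Theorem~\ref{theo Tyson} closes the loop.

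The hard part will be that, in this generality, essentially every step above is as difficult as the conjecture itself. There is no known way to prove the CLP with exponent exactly $2$ for an arbitrary hyperbolic group with spherical boundary: the techniques developed here (parabolic limit sets, transfer to an apartment, exploitation of the Coxeter symmetry) are tailored to right-angled buildings, and moreover those buildings have Menger-space boundary rather than $\s^2$, so they do not even bear on this particular conjecture; and the passage from the CLP to a Loewner metric rests on the still-open Conjecture~\ref{conj CLPloewner}. The realistic content one can hope to extract is therefore not a proof but a verification in structured families, exhibiting explicit hyperbolic groups with boundary $\s^2$ for which the combinatorial $2$-modulus can be estimated by hand, in the spirit of the Coxeter-group examples of \cite{BourdonKleinerCLP}. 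A genuine proof would instead require a direct quasisymmetric parametrisation in the style of Bonk--Kleiner, manufacturing the optimal Ahlfors-regular metric from the dynamics of $\Gamma$ acting by uniform quasi-Moebius homeomorphisms of $\borg$, or else the resolution of Conjecture~\ref{conj CLPloewner}.
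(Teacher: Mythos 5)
The statement you were asked about is not a theorem of the paper: it is stated there as a \emph{conjecture}, recorded as the Bonk--Kleiner reformulation of Cannon's conjecture~\ref{conj Cannon} (the paper simply cites \cite{BonkKleinerConfDimGromHypergrps} for the equivalence and offers no argument). So there is no proof in the paper to compare yours against, and your write-up is correct in refusing to claim one. Your assessment of the situation is accurate: the statement is equivalent, via \cite{SullivanDiscConfGrpsandDyna} and \cite{BonkKleinerQuasiSymParamofSpheres}, to quasi-Moebius uniformisation of $\borg$ by the round $\s^2$, and it remains open.

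As for the strategy you sketch, it is consistent with the paper's framework (critical exponent of $\F_0$ equals $\confdim{\borg}$ by Theorem~\ref{theocarrasco}, Loewner implies attainment by Theorem~\ref{theo Tyson}), but both of its load-bearing steps are themselves open: establishing the combinatorial $2$-Loewner property for an arbitrary hyperbolic group with sphere boundary, and passing from the CLP to an actual Loewner metric, which is exactly Conjecture~\ref{conj CLPloewner}. You are also right that the techniques of this paper do not bear on the sphere case: by the Dymara--Osajda theorem quoted in Section~2, the thick right-angled hyperbolic buildings treated here have boundaries homeomorphic to Menger spaces $\mu^{d-1}$, not to $\s^2$, and the transfer-to-an-apartment machinery is tailored to that setting. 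In short: no gap to flag in the sense of an erroneous claim, but nothing is proved either --- which matches the paper, where the statement stands as motivation rather than as a result.
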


\subsection{How to prove the CLP}
\label{subsec howtoprove}

Now we give the sufficient condition that will be used in   this  article to exhibit some examples of groups with a boundary that satisfies the CLP. 
 
Let  $Z$ be an arcwise connected approximately self-similar metric space and let $\{G_k\}_{k\geq0}$ be a $\kappa$-approximation of $Z$.

 The following proposition says that the combinatorial modulus are preserved by the bi-Lipschitz homeomorphism coming from the approximately self-similar structure. This proposition will be used several times in   Sections \ref{seccurves inPLS} to \ref{sec result} to compare the modulus of a set of small curves with the modulus of a set of   curves of diameter larger than a fixed constant.  
\begin{prop} \label{propscale}
 Let $B$ be a ball in $\borg$ such that $\dia{B}<1$. Let $g\in \Gamma$ be the local $L$-bi-Lipschitz homeomorphism that rescales $B$ (given by   Definition \ref{propdefselfsim}). Let $\F$ be a set of curves contained in $\lambda B$ for $\lambda<1$. Then there exist  $\ell \in \N$, and $D>1$  depending only on $L$ and on the doubling constant of $\borg$ such that the following property holds. 
 \begin{center}
\begin{minipage}[c]{12cm}
 If $k\geq 0$ is large enough so that \[\{v \in G_k : \gamma \cap v \neq \emptyset \text{ for some } \gamma\in \F \} \subset \{v \in G_k :   v \subset B \},\] 
 then  
 \[D^{-1} \cdot \modcomb{p}{g\F,G_{k}}\leq \modcomb{p}{\F,G_{k+\ell}} \leq D \cdot \modcomb{p}{g\F,G_{k}},\] where $g\F=\{g\gamma : \gamma \in \F\}$.
\end{minipage}
\end{center}

\end{prop}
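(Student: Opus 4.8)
The plan is to exploit the approximately self-similar structure directly: the homeomorphism $g$ is an $L$-bi-Lipschitz map from the rescaled ball $(B,\frac1r d)$ onto an open subset $U=g(B)$, and on $U$ it essentially carries the approximation $\{G_{k+\ell}\}$ restricted to $B$ onto an approximation of $U$ comparable to $\{G_k\}$ restricted to $U$. First I would fix $r=\dia{B}$ and choose $\ell\in\N$ so that $2^{-\ell}$ is comparable to $r$ (more precisely, $2^{-(k+\ell)}\asymp r\cdot 2^{-k}$); this is exactly the scale at which $g$ matches the two approximations. Since $g$ is $L$-bi-Lipschitz on $(B,\frac1r d)$, the image family $\{g(v): v\in G_{k+\ell},\ v\subset B\}$ consists of open sets satisfying the ball conditions of a $\kappa'$-approximation of $U$ on scale $k$, with $\kappa'$ depending only on $\kappa$ and $L$. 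The hypothesis on $k$ guarantees that every curve $\gamma\in\F$ and every $v$ it meets is genuinely contained in $B$, so no boundary effects interfere.

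Next I would transfer admissible functions back and forth. Given a $g\F$-admissible function $\rho$ for the approximation $\{G_k\}$, I push it to a function $\tilde\rho$ on $G_{k+\ell}$ by setting $\tilde\rho(v) = \rho(v')$ for the element $v'\in G_k$ that best covers $g(v)$ (and $0$ for the $v\not\subset B$, which are irrelevant). Because $g$ is bi-Lipschitz, an element $v'\in G_k$ can cover the $g$-image of only a bounded number $N_0=N_0(L,\kappa,\text{doubling})$ of elements $v\in G_{k+\ell}$, and conversely each $g(v)$ is covered by a bounded number of $v'$. This gives $L_{\tilde\rho}(\gamma)\gtrsim L_\rho(g\gamma)\geq 1$ after rescaling $\tilde\rho$ by a bounded factor, so a bounded multiple of $\tilde\rho$ is $\F$-admissible; and $M_p(\tilde\rho)\leq N_0\cdot M_p(\rho)$. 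Taking the infimum over $\rho$ yields $\modcomb{p}{\F,G_{k+\ell}}\leq D\cdot\modcomb{p}{g\F,G_k}$. The reverse inequality is symmetric: start from an $\F$-admissible function on $G_{k+\ell}$ supported on $\{v\subset B\}$ (which by Proposition~\ref{prop minofonctionmini} and the structure of minimal functions may be assumed, up to adjusting constants), push it forward through $g^{-1}$ on $U$, extend by a large enough constant outside $U$ so that curves in $g\F$ leaving $U$ still have $\rho$-length $\geq 1$ — but in fact curves in $g\F=g(\F)$ stay in $U$ since $\F\subset\lambda B$ — and conclude $\modcomb{p}{g\F,G_k}\leq D\cdot\modcomb{p}{\F,G_{k+\ell}}$.

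The main obstacle I anticipate is the bookkeeping of the combinatorial-to-geometric dictionary: the approximations $\{G_k\}$ and the $g$-images of $\{G_{k+\ell}\}$ are only \emph{comparable} approximations of $U$, not equal, so one must invoke Proposition~\ref{propdouble} (quasi-invariance of the modulus under change of approximation in a doubling space) to pass between $\modcomb{p}{g\F,G_k}$ and $\modcomb{p}{g\F, g(G_{k+\ell}|_B)}$, and then identify the latter with $\modcomb{p}{\F,G_{k+\ell}}$ via the bijection induced by $g$. Keeping track that every multiplicative constant ($\ell$, $\kappa'$, $N_0$, the doubling constant of $U$ which equals that of $\borg$ by self-similarity, the constant $D(\kappa',\kappa)$ from Proposition~\ref{propdouble}) depends only on $L$ and the doubling constant of $\borg$ — and not on $B$, $\F$, or $k$ — is the delicate point, but it is forced by the uniformity built into the approximately self-similar structure. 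Everything else is the routine admissible-function transfer familiar from the classical theory of modulus.
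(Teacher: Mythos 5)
Your proposal follows essentially the same route as the paper: choose $\ell$ with $2^{-\ell}\asymp\dia{B}$, observe that $\{gv : v\in G_{k+\ell},\ v\subset B\}$ is a $\kappa'$-approximation of $gB$ at scale $k$ with $\kappa'=\kappa'(\kappa,L)$, identify $\modcomb{p}{\F,G_{k+\ell}}$ with the modulus of $g\F$ relative to this transported approximation via the bijection $v\mapsto gv$ (which preserves incidences since the curves lie strictly inside $B$), and then invoke Proposition \ref{propdouble} to compare with $\modcomb{p}{g\F,G_k}$ — this is exactly the paper's argument, and your final paragraph states it correctly. The only flaw is in your intermediate hand-transfer: setting $\tilde\rho(v)=\rho(v')$ for a single ``best covering'' $v'\in G_k$ does not guarantee admissibility (the $v'$ carrying the mass of $\rho$ along $g\gamma$ need not be the chosen one for any $v$ meeting $\gamma$); one must instead sum $\rho$ over all $v'\in G_k$ meeting $g(v)$, as in the standard proof of Proposition \ref{propdouble} — but this step is redundant anyway once you pass through the exact bijection and Proposition \ref{propdouble}, so the proposal as a whole is sound.
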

\begin{proof}

Let $k\geq 0$ be large enough so that, if $\gamma \cap v \neq \emptyset$ with $\gamma\in \F$ and $v\in G_k$, then $v\subset B$. Let $d=\dia{B}$ and let $\ell\in \N$ denote the  integer satisfying  $2^{-(\ell+1)}<d\leq 2^{-\ell}$. Let $v\in G_{k+\ell}$ such that $v \subset B$ and assume
\[B(\xi,\kappa^{-1} 2^{-(k+\ell)}) \subset v \subset B(\xi,\kappa 2^{-(k+\ell)}). \]
Then \[B(g \xi,(L\kappa)^{-1} 2^{-k}) \subset g v \subset B(g \xi,4L\kappa 2^{-k}). \]
We write $G'_k\cap gB =\{gv : v\in G_{k+\ell}, v \subset B \}$. Then  $G'_k\cap gB$  is a $\kappa'$-approximation of $gB$  on scale $k$, with $\kappa'$ depending only on $\kappa$ and $L$. As the curves of $\F$ are strictly contained in $B$ we obtain the following equality 

\[\modcomb{p}{\F, G_{k+\ell} }= \modcomb{p}{g\F,G'_k\cap gB}. \]    Thanks to   Proposition \ref{propdouble}, there exists $D>1$ such that 
  \[D^{-1} \cdot \modcomb{p}{g\F,G_{k}}\leq \modcomb{p}{g\F,G'_k\cap gB} \leq D \cdot \modcomb{p}{g\F,G_{k}},\]
  and the proposition follows.
\end{proof}

Now we fix  $d_0>0$ a small constant compared with $\dia{Z}$ and with the constant of self-similarity. More precisely it must be small enough so that any non-constant curve in $Z$ may be rescaled to a curve of diameter larger than $d_0$ by self-similarity.  For us, $Z$ will be the boundary of a hyperbolic group and $d_0$ will depend on the hyperbolicity constant. In the following,  $\F_0$ \index{$\F^0$} is the set  of all the curves in $Z$ of diameter larger than $d_0$.   

 Again, we use the letter $Q$ to designate the critical exponent of Definition \ref{defexpocrit}. We recall that if $\eta$ is a curve in $Z$, then  $\mathcal{U}_\epsilon(\eta)$ \index{$\mathcal{U}_\epsilon(\eta)$} denotes  the $\epsilon$-neighborhood of $\eta$ for the $C^0$-topology. This means that a curve $\eta' \in \mathcal{U}_\epsilon(\eta)$ if and only if there exists $s:t \in [0,1] \longrightarrow [0,1]$ a parametrization  of $\eta$ such that  for any $t\in  [0,1] $ one has $d(\eta(s(t)),\eta'(t))<\epsilon$.  
 
The following proposition gives the sufficient conditions for $Z$ to satisfy the CLP that will be used in proving  the main theorem. 
 
\begin{prop}[{\cite[Proposition 4.5.]{BourdonKleinerCLP}}] \label{propcritCLP}
Let  $Z$ be an arcwise connected approximately self-similar metric space.   Let $\{G_k\}_{k\geq0}$ be a $\kappa$-approximation of $Z$ and $d_0$ be a small constant compared with $\dia{Z}$ and with the constant of self-similarity. Let  $\F_0$ \index{$\F^0$} denote the set of all the curves in $Z$ of diameter larger than $d_0$.

For $p=1$, we assume that $\modcombfo$ is unbounded. For $p\geq 1$, we assume that for every non-constant curve $\eta \subset Z$ and every $\epsilon>0$, there exists $C=C(p,\eta,\epsilon)$ such that for every $k \in \N$: \[ \modcombfo \leq C\cdot \modcomb{p}{\mathcal{U}_\epsilon(\eta),G_k}. \] 
Suppose furthermore when $p$ belongs to a compact subset of $[ 1, + \infty)$ the constant  $C$ may be chosen independent of $p$. Then $Z$ satisfies the combinatorial $Q$-Loewner property.
\end{prop}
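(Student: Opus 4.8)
The plan is to verify the two defining inequalities of the CLP with exponent $p=Q$: the lower bound $\phi(\Delta(A,B)^{-1})\le\modcomb{Q}{\F(A,B),G_k}$ for all disjoint non-degenerate continua $A,B$ and all $k$ with $2^{-k}\le\min\{\dia{A},\dia{B}\}$, and the upper bound $\modcomb{Q}{\F(\overline{B_1},Z\setminus B_2),G_k}\le\psi(\Delta(\overline{B_1},Z\setminus B_2)^{-1})$ for concentric balls $B_1\subset B_2$; by Proposition \ref{propdouble} one $\kappa$-approximation may be fixed once and for all. The whole argument rests on the uniform estimate that $M_k:=\modcomb{Q}{\F_0,G_k}\asymp1$ with constants independent of $k$, which I would establish first; write $M_{p,k}:=\modcomb{p}{\F_0,G_k}$.

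Approximate self-similarity yields, through the rescaling of Proposition \ref{propscale} applied cell by cell, a submultiplicativity estimate $M_{p,k+\ell}\le C\,M_{p,k}M_{p,\ell}$ with $C$ depending only on the self-similarity and doubling constants (cut a curve of $\F_0$ and rescale its pieces). Together with the definition of $Q$ this gives $\inf_k M_k>0$: otherwise one picks $k_0$ with $C\,M_{Q,k_0}<1$, and since $p\mapsto M_{p,k_0}$ is a finite sum, hence continuous, $C\,M_{p,k_0}<1$ for some $p<Q$; iterating the submultiplicativity then forces $M_{p,nk_0}\to0$, hence $M_{p,m}\to0$ for all $m$, contradicting Definition \ref{defexpocrit}. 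The same reasoning at $p=1$, using the hypothesis that $\modcomb{1}{\F_0,G_k}$ is unbounded, shows $Q>1$, so the CLP statement is not vacuous. For the reverse inequality $\sup_k M_k<\infty$ I would use the neighbourhood hypothesis, applied to non-constant curves $\eta$ nearly filling small balls $B$ and combined with Proposition \ref{propscale} for the self-similar maps $g$ rescaling those balls (so that $g\mathcal{U}_\epsilon(\eta)\subset\F_0$ once $\epsilon$ is small): it prevents $M_{Q,k}$ from carrying any non-trivial exponential growth rate, which, together with the submultiplicativity and with $M_{Q,k}\not\to0$, forces $M_{Q,k}\asymp1$. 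The clause that the hypothesis constant be uniform for $p$ in a compact set is exactly what makes this comparison with nearby exponents quantitative. I expect this upper bound on $M_{Q,k}$ to be the hardest part: self-similarity and criticality pin down $M_{Q,k}$ only up to an exponential rate, and it is the quantitative ``spreading of modulus'' carried by the neighbourhood hypothesis that removes it.

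Granting $M_k\asymp1$, I would obtain the upper CLP inequality by a serial law. Given $B_1=B(x,r_1)\subset B_2=B(x,r_2)$ with $2^{-k}\lesssim r_1$, insert concentric dyadic annuli at radii between $\asymp r_1$ and $r_2$; every curve from $\overline{B_1}$ to $Z\setminus B_2$ crosses all of them. Keeping a sufficiently sparse subfamily so that the resulting crossing families $\F_1,\dots,\F_m$ (with $m\asymp\log(r_2/r_1)$) meet pairwise disjoint sets of elements of $G_k$, the function $m^{-1}\sum_i\rho_i$, $\rho_i$ minimal for $\F_i$, is admissible for every family crossing all the $\F_i$, whence
\[
\modcomb{Q}{\F(\overline{B_1},Z\setminus B_2),G_k}\;\le\;m^{\,1-Q}\,\max_{1\le i\le m}\modcomb{Q}{\F_i,G_k}.
\]
By self-similarity and Proposition \ref{propscale} each $\modcomb{Q}{\F_i,G_k}$ is comparable to the combinatorial $Q$-modulus of a family of curves of definite diameter at some scale, hence $\lesssim M_{k'}\asymp1$. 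Since $\Delta(\overline{B_1},Z\setminus B_2)^{-1}$ is small precisely when $r_2/r_1$ is large and $Q>1$, this furnishes an admissible increasing $\psi$ with $\psi(0^+)=0$, of the shape $\psi(t)=K(\log(1/t)+C)^{1-Q}$ near $0$ and constant away from $0$.

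The remaining lower CLP inequality requires only a uniform lower bound $\modcomb{Q}{\F(A,B),G_k}\ge c_0>0$ over all valid triples $(A,B,k)$, because the modulus is in any case bounded above at a fixed scale, so the CLP's $\phi$ may be taken bounded. For scales $k$ with $2^{-k}\gtrsim\di{A,B}$ this is elementary: a curve from $A$ to $B$ of diameter $\asymp\di{A,B}$ (such exists by the local connectivity of $Z$) meets only boundedly many elements of $G_k$, which forces any admissible function to be large on one of them. For the remaining scales I would reduce, via the rescalings of Proposition \ref{propscale} — first a ball containing $A\cup B$, then a ball of radius $\asymp\di{A,B}$ around a pair of nearest points of $A$ and $B$ — to a \emph{model configuration}: two continua of diameter $\asymp\dia{Z}$ at relative distance $\asymp1$, at a shifted scale $k'\ge0$. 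For a model configuration the neighbourhood hypothesis applied to a curve $\eta$ joining the two continua gives, after slightly enlarging them to closed neighbourhoods (which alters the modulus by a bounded factor), $\mathcal{U}_{\epsilon'}(\eta)$ inside the relevant curve family, hence the modulus $\gtrsim M_{k'}\asymp1$; a compactness argument over the Hausdorff-compact set of model configurations makes the constant uniform. Interpolating then yields the required increasing $\phi$. Besides the upper bound on $M_{Q,k}$, this uniformity step (the compactness argument and the comparison of $\F(A,B)$ with families for slightly enlarged continua) is where I would expect the remaining difficulty to lie.
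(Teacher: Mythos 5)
Two preliminary remarks: the paper itself gives no proof of this proposition (it is imported from \cite{BourdonKleinerCLP}), and the broad architecture of your outline --- pinching $\modcombfo\asymp 1$ at the critical exponent by sub/supermultiplicativity and the uniformity of the constant in $p$, a serial-law estimate for axiom (ii), and a rescaling to ``model configurations'' for axiom (i) --- is indeed the architecture of the cited proof. Your Step 1 and your axiom (ii) argument are sketchy in places (for instance $p\mapsto\modcombfo$ is an infimum, not a finite sum: its continuity should be obtained from a fixed near-minimal admissible function truncated at $1$; and the supermultiplicativity supplied by the neighbourhood hypothesis, plus the transfer between nearby exponents, needs to be written out), but the ideas there are the right ones.

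There is, however, a genuine gap in your treatment of the lower bound, axiom (i). First, your stated reduction target is false: a uniform bound $\modcomb{Q}{\F(A,B),G_k}\ge c_0>0$ over \emph{all} pairs of disjoint non-degenerate continua and all admissible $k$ cannot hold, and is contradicted by the very axiom (ii) you prove. Take $A$ to be the connected component of a point $x$ in $\overline{B(x,r)}$ (a continuum of diameter between $r$ and $2r$) and $B$ a fixed non-degenerate continuum far from $x$: every curve of $\F(A,B)$ crosses all dyadic annuli centered at $x$ between radii $\asymp r$ and $\asymp \di{x,B}$, so your own serial-law estimate gives $\modcomb{Q}{A,B,G_k}\lesssim(\log(1/r))^{1-Q}\to 0$ as $r\to 0$ (using $Q>1$), uniformly over the admissible scales $2^{-k}\le\dia{A}$. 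Hence $\phi$ cannot be taken constant; what must be proved is a positive bound $c(t)$ valid whenever $\Delta(A,B)\le t$, for every $t$, and your model-configuration reduction as written only covers $\Delta(A,B)\asymp 1$: after rescaling the ball of radius $\asymp\di{A,B}$, the smaller continuum has diameter $\asymp\Delta(A,B)^{-1}$, not $\asymp\dia{Z}$, and ``interpolating'' cannot manufacture positive bounds for large $\Delta$ from the case $\Delta\asymp1$. Second, and more seriously, the step ``slightly enlarging them to closed neighbourhoods (which alters the modulus by a bounded factor)'' is unjustified, and it is exactly where the difficulty of axiom (i) lives: the hypothesis applied to a curve $\eta$ joining $A$ to $B$ controls $\modcomb{Q}{\mathcal{U}_\epsilon(\eta),G_k}$, but curves of $\mathcal{U}_\epsilon(\eta)$ merely pass $\epsilon$-close to $A$ and $B$ and need not belong to $\F(A,B)$; what you obtain is a lower bound for $\modcomb{Q}{N_\epsilon(A),N_\epsilon(B),G_k}$, and the comparison $\modcomb{Q}{N_\epsilon(A),N_\epsilon(B),G_k}\le C\cdot\modcomb{Q}{A,B,G_k}$ is itself a non-trivial statement of Loewner type (the elementary inequality goes the other way). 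So axiom (i) is not actually established by the proposal; in the same vein, the joining curve of diameter $\asymp\di{A,B}$ invoked at coarse scales requires an LLC-type property that is not among the hypotheses and must first be derived from approximate self-similarity and arcwise connectedness.
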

 
\section{Steps in the proof of Theorem \ref{theoprincip}} 
\label{sec stepsofproof}
 
Before going into details about boundaries of right-angled buildings, we give a sketch of proof of the main theorem. In this  section, $D$ is the right-angled dodecahedron in $\h^3$ or the right-angled 120-cell in $\h^4$. We write  $W_D$ for the hyperbolic reflection group generated by the faces of  $D$. The main theorem of this  article may be stated as follows.

\begin{theo}[{Corollary \ref{coroprincip}}]\label{theointrotheoprincip}
 Let  $\Gamma$ be the graph product of constant thickness   $q\geq3$  and of  Coxeter group $W_D$.  Then $\partial \Gamma$ equipped with a visual metric satisfies the CLP.  
\end{theo}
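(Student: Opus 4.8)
The plan is to verify the hypotheses of Proposition \ref{propcritCLP} for $Z = \partial\Delta$ equipped with a visual metric, so that the CLP follows immediately from that proposition together with Theorem \ref{theocarrasco} identifying the exponent $Q$ with the conformal dimension. Thus the entire argument reduces to two things: first, a lower bound showing that $\modcombfo$ is unbounded when $p=1$; and second, the ``curve enlargement'' estimate saying that for every non-constant curve $\eta\subset\partial\Delta$ and every $\epsilon>0$ there is a constant $C=C(p,\eta,\epsilon)$, locally uniform in $p$, with $\modcombfo \le C\cdot\modcomb{p}{\mathcal{U}_\epsilon(\eta),G_k}$ for all $k$. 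All the building-theoretic work in Sections \ref{sec LFRAHB}--\ref{sec result} is aimed at producing these two estimates in the specific cases where $W_D$ is the symmetry group of the right-angled dodecahedron or the $120$-cell.

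The key steps, in order, are as follows. First I would set up the geometry of the locally finite right-angled hyperbolic building $\Delta$ of type $W_D$ and constant thickness $q$: its wall structure, apartments, chambers, the residues, the retractions onto apartments, and a convenient visual metric on $\partial\Delta$ described combinatorially (Sections \ref{sec LFRAHB} and \ref{sec bord topo met}). Second, I would invoke Theorem \ref{theocourbedanspara} (the parabolic limit set estimate, adapted from \cite[Sections 5--6]{BourdonKleinerCLP}): any curve in $\partial\Delta$ can, up to bounded error and self-similarity, be pushed into a parabolic limit set $\partial\Delta_T$ associated to a spherical (here, a ``small'') subset $T\subset S$, and the modulus of curves in such a limit set controls the modulus of $\F_0$. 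Third, via Theorem \ref{theocontrolimmappart}, I would transfer the computation from $\partial\Delta$ to the boundary $\partial\Sigma$ of an apartment, replacing the combinatorial modulus in the building by a suitable weighted modulus in the apartment boundary; the thickness-$q$ hypothesis of Section \ref{sec applictethickeness} makes this comparison sharp and gives $\confdim{\partial\Delta}$ as a critical exponent computed in $\partial\Sigma$ (Theorem \ref{theomajomodulreciproque}). Fourth, I would exploit the large symmetry group of $\partial\Sigma$ in these two special cases --- coming from the fact that $D$ is a regular right-angled polytope, so $W_D$ and the symmetries of $D$ act very transitively on walls and vertices --- to average a minimal function for $\mathcal{U}_\epsilon(\eta)$ over the symmetry group, obtaining a function whose mass is controlled and which is admissible (up to a constant) for all of $\F_0$. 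This averaging is exactly what yields the inequality $\modcombfo\le C\cdot\modcomb{p}{\mathcal{U}_\epsilon(\eta),G_k}$ with $C$ locally uniform in $p$. The $p=1$ unboundedness is obtained along the way from the same symmetry/combinatorial input, since for $p=1$ the modulus of $\F_0$ grows like the number of ``disjoint parallel'' curves one can pack, which is unbounded in scale.

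The main obstacle I anticipate is the symmetry-averaging step in the apartment boundary: one needs the group acting on $\partial\Sigma$ (reflections in $W_D$ together with the linear symmetries of $D$) to be rich enough that averaging a function which is admissible only for curves near $\eta$ produces a function comparable to an $\F_0$-admissible one, with multiplicative losses that do not blow up as $k\to\infty$ and that stay bounded for $p$ in a compact set. This is precisely where the dimensions $3$ and $4$ and the specific polytopes enter: for the dodecahedron and the $120$-cell the combinatorics of the wall arrangement on $\partial\Sigma\cong\s^{d-1}$ is homogeneous enough to push the estimate through, whereas for a generic right-angled hyperbolic polytope it need not be. A secondary difficulty is bookkeeping the scale shifts: each passage (self-similar rescaling of a curve into $\F_0$, retraction building $\to$ apartment, change of approximation) costs a bounded shift in $k$ and a bounded multiplicative factor by Propositions \ref{propdouble}, \ref{propscale} and \ref{prop minofonctionmini}, and one must check these constants can be chosen uniformly over $p$ in a compact interval, as required by Proposition \ref{propcritCLP}. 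Once these are in place, applying Proposition \ref{propcritCLP} gives the CLP and Theorem \ref{theocarrasco} identifies the critical exponent with $\confdim{\partial\Delta}$, completing the proof.
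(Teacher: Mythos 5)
Your overall architecture coincides with the paper's: verify Proposition \ref{propcritCLP}, control $\modcombfo$ via curves in parabolic limit sets (Theorem \ref{theocourbedanspara}), transfer to the boundary of an apartment via the weighted modulus (Theorems \ref{theomajomodul} and \ref{theocontrolimmappart}, made sharp by the constant thickness as in Theorem \ref{theomajomodulreciproque}), and conclude using the extra symmetries of $D$. The gap is in your fourth step, which is exactly where the dodecahedron and the $120$-cell enter and which you leave as an ``anticipated obstacle.'' Averaging (or summing translates of) an admissible function for $\mathcal{U}^A_\epsilon(\eta)$ over a group of bi-Lipschitz maps of $\partial A$ produces an $\F_0^A$-admissible function only if that group lets you \emph{follow} every curve of $\F_0^A$ starting from $\eta$; and the obstruction to doing this with $W$ alone is precisely the presence of proper connected parabolic limit sets in $\partial A\simeq\partial W$ (Example \ref{ex translation PLS}): a curve trapped in the boundary of a wall cannot be followed by translates, and ``transitivity on walls and vertices'' of the symmetry group of $D$ does not by itself remove this obstruction. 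So, as written, the decisive inequality $\modcombapg{\F_0^A}\leq C\cdot \modcombapg{\mathcal{U}^A_\epsilon(\eta)}$ is asserted rather than proved, and nothing in your sketch explains why it would fail for a generic right-angled polytope but hold here.

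The paper closes this gap by a precise mechanism that should replace your ``homogeneity'' heuristic: since $D$ is regular enough that $T=D/\mathrm{Ref}(D)$ is a hyperbolic simplex, the reflection group $W_T$ of $T$ contains $W=W_D$ with finite index and acts on the apartment $A$; because $T$ is a simplex, every proper parabolic subgroup of $W_T$ is finite, so $\partial W_T\simeq \partial A$ has \emph{no} proper parabolic limit set for the $W_T$-action, and the Coxeter-group version of Theorem \ref{theocourbedanspara} (\cite[Corollary 6.2]{BourdonKleinerCLP}) then yields Lemma \ref{lemcox}, i.e.\ the control of $\modcombapg{\F_0^A}$ by $\modcombapg{\mathcal{U}^A_\epsilon(\eta)}$ with constants locally uniform in $p$. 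Constant thickness converts this into the same inequality for the weighted modulus, and the chain with Theorems \ref{theomajomodul} and \ref{theocontrolimmappart} finishes as you describe. Two smaller points: you must first reduce to the case $\eta\subset\partial A$, which the paper does via Theorem \ref{theocourbedanspara}; and the $p=1$ unboundedness does not come ``along the way'' from symmetry but from the thickness $q\geq3$, which produces, for every $N$, $N$ apartments whose boundaries give $N$ disjoint curves of diameter at least $d_0$, as in the proof of Theorem \ref{theoapplic au imm de dim2}.
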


As announced,  we will  verify that $\borg$ satisfies the hypothesis of Proposition \ref{propcritCLP}.   To prove that  $\modcomb{1}{\F_0,G_k}$  is unbounded, it is enough to see that for every $N \in \N$ there exist $N$ disjoint curves in $\borg$. Indeed, this implies that for $k\geq 0$ large enough $\modcomb{1}{\F_0,G_k}>N$. 

\paragraph{To follow curves to control the modulus.}

 For $p>1$, we want to prove that the curves of $\borg$ satisfy the following property.
 
\begin{center}\begin{minipage}[c]{12cm}
$(P)  $ : For  $\epsilon>0$,  there exists a finite set $F$ of bi-Lipschitz maps $f :\borg \longrightarrow \borg$  such that for   any curve $\gamma\in \F_0$ and any   curve $\eta$ in $\borg$, the subset $\bigcup_{f\in F} f(\gamma)$ of $\borg$ contains a curve that belongs to $\mathcal{U}_\epsilon (\eta)$.
\end{minipage}
\end{center}
Where $\mathcal{U}_\epsilon (\eta)$ denotes the $\epsilon$-neighborhood of $\eta$ for the $C^0$ distance (see Subsection \ref{subsecterminota} for details).  Intuitively, $(P)$ holds if from  any curve $\gamma$ we can \index{Follow curve} \emph{follow} any other curve  $\eta$ using bi-Lipschitz maps. The following computation shows that  property $(P)$ implies the desired inequality.  
\begin{prop}
If $\modcomb{1}{\F_0,G_k}$  is unbounded, then property $(P)$ implies the CLP.
\end{prop}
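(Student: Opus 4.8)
The plan is to verify the hypotheses of Proposition~\ref{propcritCLP} directly, using property $(P)$ together with the subadditivity and scaling properties of the combinatorial modulus collected in Section~\ref{secmodudef}. The case $p=1$ is already covered by the assumption that $\modcomb{1}{\F_0,G_k}$ is unbounded, so fix $p>1$. Given a non-constant curve $\eta\subset\borg$ and $\epsilon>0$, we must produce a constant $C=C(p,\eta,\epsilon)$, uniform for $p$ in a compact subset of $[1,+\infty)$, such that $\modcombfo\leq C\cdot\modcomb{p}{\mathcal{U}_\epsilon(\eta),G_k}$ for all $k$.

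First I would apply property $(P)$ to the curve $\eta$ and the number $\epsilon/2$ (shrinking $\epsilon$ is harmless), obtaining a finite set $F$ of bi-Lipschitz maps $f:\borg\to\borg$ with the stated follow-the-curve property: for every $\gamma\in\F_0$ the union $\bigcup_{f\in F}f(\gamma)$ contains a curve in $\mathcal{U}_{\epsilon/2}(\eta)$. The key observation is that this forces, for \emph{every} $\gamma\in\F_0$, that at least one $f\in F$ sends $\gamma$ to a curve meeting $\mathcal{U}_{\epsilon/2}(\eta)$; more precisely, since the curve inside $\bigcup_f f(\gamma)$ that lies in $\mathcal{U}_{\epsilon/2}(\eta)$ is assembled from pieces of the $f(\gamma)$'s, a standard covering/pigeonhole argument (using that $\mathcal{U}_{\epsilon/2}(\eta)$ is open in the $C^0$-topology and that there are only $\#F$ maps) shows that one of the $f(\gamma)$ already contains a subcurve in $\mathcal{U}_\epsilon(\eta)$. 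Concretely, I would argue: if $\rho$ is $\mathcal{U}_\epsilon(\eta)$-admissible for $G_k$, then for each $f\in F$ the pulled-back weight $\rho_f(v):=L\cdot\rho(f(v))$ (with $L$ the bi-Lipschitz constant, and $f(v)$ replaced by a $G_{k}$-element it is comparable to, via Proposition~\ref{propdouble}) is admissible up to a bounded factor for those $\gamma\in\F_0$ that $f$ carries into $\mathcal{U}_\epsilon(\eta)$. Summing $\rho_1:=\sum_{f\in F}\rho_f$ gives a function that is admissible up to a bounded constant for \emph{all} of $\F_0$, whence
\[
\modcombfo\leq C_1\cdot M_p(\rho_1)\leq C_1\cdot(\#F)^{p-1}\sum_{f\in F}M_p(\rho_f)\leq C_2\cdot M_p(\rho),
\]
using the elementary convexity inequality $(\sum a_i)^p\leq n^{p-1}\sum a_i^p$ and the fact that each $f$, being bi-Lipschitz with constant $L$, distorts the $p$-mass by at most a factor depending only on $L$, $\kappa$ and the doubling constant (again Proposition~\ref{propdouble}). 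Taking the infimum over $\mathcal{U}_\epsilon(\eta)$-admissible $\rho$ yields the required inequality with $C=C_2$.

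The main point to get right — and the place where one has to be a little careful rather than the place of genuine difficulty — is the extraction step: turning "$\bigcup_{f\in F}f(\gamma)$ contains a curve in $\mathcal{U}_{\epsilon/2}(\eta)$'' into "some single $f(\gamma)$ contains a subcurve in $\mathcal{U}_\epsilon(\eta)$,'' since a priori the good curve could weave back and forth between the images. I would handle this by noting that it is not actually needed in this strong form: what the modulus estimate uses is only that $L_{\rho_1}(\gamma)$ is bounded below for every $\gamma\in\F_0$, and since the good curve $\eta'\in\mathcal{U}_{\epsilon/2}(\eta)$ lies in $\bigcup_f f(\gamma)$, every $G_k$-element meeting $\eta'$ meets some $f(\gamma)$, so $L_\rho(\eta')\le\sum_{f\in F}L_{\rho}(f(\gamma))\le\sum_{f\in F}\sum_{f(\gamma)\cap w\neq\emptyset}\rho(w)$, which up to bi-Lipschitz distortion is exactly $\sum_f L_{\rho_f^{-1}}(\gamma)$ for the push-forward weights; admissibility of $\rho$ on $\mathcal{U}_\epsilon(\eta)$ then bounds this below by $1$, giving $L_{\rho'}(\gamma)\gtrsim 1$ for $\rho':=\sum_f f_*\rho$. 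This is the argument I would write out. Finally, all constants produced — $\#F$, $L$, $\kappa$, the doubling constant, and the convexity factor $(\#F)^{p-1}$ — depend on $p$ only through $(\#F)^{p-1}$, which is bounded on any compact subset of $[1,+\infty)$; hence $C$ may be taken uniform there, and Proposition~\ref{propcritCLP} applies to give the CLP with exponent $Q$.
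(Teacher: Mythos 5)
Your proposal is correct and takes essentially the same route as the paper's proof: you transport the $\mathcal{U}_\epsilon(\eta)$-admissible weight $\rho$ by the finite family $F$, prove $\F_0$-admissibility of the summed weight via $1\le L_\rho(\theta)\le\sum_{f\in F}L_\rho(f\gamma)\lesssim L_{\rho'}(\gamma)$ for the curve $\theta\subset\bigcup_{f\in F}f\gamma$ lying in $\mathcal{U}_\epsilon(\eta)$, and control $M_p(\rho')$ by convexity with a bounded-overlap constant depending only on $\# F$, the bi-Lipschitz constants and the doubling constant, so the $p$-dependence sits in a factor of the form $N^{p-1}$, bounded on compact sets of exponents. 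You are also right to discard the pigeonhole claim that a single $f(\gamma)$ must contain a subcurve in $\mathcal{U}_\epsilon(\eta)$ (the good curve may indeed weave between the images); the summed-length estimate you use instead is exactly the paper's argument.
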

\begin{proof}
Let $\eta$ be a curve in $\borg$ and $\epsilon>0$. Fix $\rho$ a  $\mathcal{U}_\epsilon (\eta)$-admissible function. The inequality required by the hypothesis of Proposition \ref{propcritCLP} is obtained if we find a constant $K>0$ independent of the scale $k$ and   a $\F_0$-admissible function $\rho'$ such that $M_p(\rho')\leq K \cdot M_p(\rho)$.

 Let $F$ be the set of bi-Lipschitz maps given by the property $(P)$. We  assume, without loss of generality that $F$ contains $F^{-1}$. We  define   the function $\rho' : G_k \longrightarrow [0,+\infty)$ by: 

\[(*) \ \ \ \ \ \rho' ( v) = \sum\limits_{f\in F}\sum\limits_{fw\cap  v \neq \emptyset} \rho(w).   \] Let $\gamma \in \F_0$ and $\theta$ be a curve contained in $\bigcup\limits_{f\in F} f \gamma \subset \borg$ such that $\theta \in \U_\epsilon(\eta)$. Then 
\[L_{\rho'}(\gamma) = \sum\limits_{f\in F}\sum\limits_{v\cap\gamma\neq \emptyset}\sum\limits_{fw\cap  v \neq \emptyset} \rho(w)\geq  \sum\limits_{f\in F} \sum\limits_{w\cap f\gamma\neq \emptyset} \rho(w).\]On the other hand \[L_{\rho}(\theta) \leq \sum\limits_{f\in F} L_\rho (f\gamma) = \sum\limits_{f\in F} \sum\limits_{w\cap f\gamma\neq \emptyset} \rho(w).\] Hence $L_{\rho'}(\gamma)\geq L_\rho (\theta)$ and $\rho'$ is $\F_0$-admissible.

Then the number of terms in the right-hand side of the definition $(*)$ is bounded by a constant $N$ depending only on $\# F$, the bi-Lipschitz constants of the elements of $F$, and  the doubling constant of $\borg$. Therefore by convexity \[M_p(\rho')= \sum\limits_{v\in G_k} \Big(\sum\limits_{f\in F}\sum\limits_{fw\cap   v \neq \emptyset} \rho(w)\Big)^p \leq N^{p-1} \cdot \sum\limits_{v\in G_k} \sum\limits_{f\in F}\sum\limits_{w\cap f v \neq \emptyset} \rho(w)^p  \leq N^p \cdot \# F \cdot M_p(\rho) . \] 
 \end{proof}
 
Note that this idea of following curves may be used to obtain an inequality between the modulus of any pair of sets of curves.

\paragraph{The issue of parabolic limit sets.} Since $\Gamma$ acts on $\borg$ by bi-Lipschitz homeomorphisms, it is natural to try to follow curves  by using the elements of $\Gamma$. However, in right-angled buildings some curves may be contained in parabolic limit sets (boundaries of residues). As we will see in Example \ref{ex translation PLS}, these curves are  obstacles to proving the property $(P)$ by using the elements of $\Gamma$.

To solve this problem we start by showing that    $\modcombfo$ is determined by the combinatorial modulus of the sets of all the  curves contained in a parabolic limit set.   This is done at the beginning of the proof of Theorem \ref{theomajomodul}.

\paragraph{Following curves inside parabolic limit sets.}
 Then inside the parabolic limit set $\partial P$ it is possible to follow curves. An analogue of property $(P)$ inside the parabolic limit sets is proved in Proposition \ref{propdupli}. From this property we can obtain Theorem \ref{theocourbedanspara}. This theorem is the first major step toward the proof. Essentially it says that the combinatorial modulus of $\F_{\delta,r}(\partial P)$  is controlled by any curve contained in $\partial P$.

\paragraph{Controlling the modulus in $\borg$ by the modulus in the boundary of an apartment.}

The second major step in the proof is to use the building structure to reduce the problem in  $\borg$ to a problem in the boundary of an apartment $i.e$ in $\partial W_D$.  
This is done by Theorem \ref{theocontrolimmappart}. Essentially, this theorem says that the modulus of a curve family in $\borg$ is controlled by a weighted modulus defined in the boundary of an apartment. The idea used to prove this is that, from the point of view of the modulus, $\borg$ can be seen as the direct product of the boundary of an apartment by a finite set whose cardinality depends on the scale.

\paragraph{Conclusion of the proof thanks to the symmetries of $D$.}
Now, thanks to Theorems \ref{theocourbedanspara} and \ref{theocontrolimmappart},   we arrive at a point where the modulus of $\F_{\delta,r}(\partial P)$, and thus of $\F_0$, is controlled by some modulus of the parabolic limit sets of $W_D$. The idea we use to conclude is that the symmetries of $D$ extends to the boundary of $W_D$. Combining with the elements of the groups, these symmetries permit us to follow curves in $\partial W_D$. As a consequence we obtain   a strong control of the modulus of the parabolic limit sets in $\partial W_D$ and we can  complete the proof. 
 
\section{Locally finite right-angled hyperbolic buildings}
\label{sec LFRAHB}
The aim of this  article is to study combinatorial modulus on boundaries of hyperbolic buildings. Below we  set up the context about hyperbolic buildings that will be used until the end of this  article. In particular, we  discuss the geometry of locally finite right-angled hyperbolic buildings.   

 For details concerning the notions recalled in this section, we refer  to  \cite{TitsBuildingsLectureNotes}, \cite{RonanBuildings}, or \cite{AbramBrown}. Concerning the Davis realization, we  can   refer to \cite[Chapter 8]{DavisBook} or to \cite{MeierWhen} for an example of the Davis construction along with suggestive pictures.
Bellow $S=\{s_1,\dots, s_n\}$ is a fixed finite set.  

\subsection{Chamber systems}
 \label{subsec sys of chambers}
 
Following the definition of J. Tits, a \emph{chamber system} $X$ \index{Chamber system} over  $S$ is a set endowed with a family of partitions indexed by $S$. The elements of $X$ are called  \emph{chambers}.
 
 Hereafter $X$ is a chamber system over $S$.  For $s\in S$, two chambers $c, c'\in X$ are said to be \emph{$s$-adjacent} if they belong to the same subset of $X$ in the partition associated with $s$. Then we write $c \sim_s c'$. Usually, omitting the type of adjacency  we refer to \emph{adjacent} chambers and we write $c \sim c'$. Note that any chamber is adjacent to itself.
 
 A \emph{morphism   $f : X \longrightarrow X'$} \index{Morphism of chamber system} between two chamber systems $X,X'$ over $S$ is a map that preserves the adjacency relations. A bijection of $X$ that preserves the adjacency relations is called an \emph{automorphism} and we designate by $\mathrm{Aut}(X)$ the \emph{group of automorphisms} of $X$.
A \emph{subsystem of chamber} $Y$ of $X$ is a subset $Y\subset X$ such that the inclusion map is a morphism of chamber systems.
 
   We call   \emph{gallery},\index{Gallery} a finite sequence  $\{c_k \}_{k=1, \dots , \ell}$  of chambers  such that  $c_k \sim c_{k+1}$  for  $k=1, \dots , \ell-1$. The galleries induce a \emph{metric on $X$}.
   
   \begin{déf} The \emph{distance between two chambers} $x$ and $y$ is the length of the shortest gallery connecting $x$ to $y$.
\end{déf}
  
  We   use the notation \index{$\dc{\cdot, \cdot }$} $\dc{\cdot, \cdot }$ for this metric over $X$.  A shortest gallery   between two chambers is called \emph{minimal}.  
  
   Let $I \subset S$. A subset $C$ of  $X$ is said to be \emph{$I$-connected} if for any pair of chambers $c,c' \in C$ there exists a gallery   $c=c_1 \sim \dots \sim c_\ell=c'$ such that for any $k=1, \dots , \ell-1$, the chambers  $c_k$ and $c_{k+1}$ are $i_k$-adjacent for some  $i_k \in I$. 

   \begin{déf} \label{def residus}
    The $I$-connected components are called the \emph{$I$-residues} or the \emph{residues of type $I$}\index{Residues}. The cardinality of $I$  is called the \emph{rank} of the residues of type $I$. The residues of rank $1$ are called \emph{panels}.\index{Panels}
    \end{déf}  
   The following notion of convexity is used in chamber systems.
   
\begin{déf} \label{def convexe}
A subset $C$ of $X$ is called \emph{convex} if every minimal gallery whose extremities belong to $C$ is entirely contained in $C$.
\end{déf}    
The convexity is stable by intersection and for $A\subset X$,  the \emph{convex hull} \index{Convex hull}of  $A$ is the smallest convex subset  containing $A$. In particular, convex subsets of $X$ are subsystems. The following example is crucial because it will be used to equip Coxeter groups and graph products with structures of chamber systems (see Definition \ref{def CoxSyst} and Theorem \ref{theo defbuildinggp}).  

\begin{exmp} 

\label{ex systeme chambre}
Let  $G$ be a group, $B$ a subgroup and $\{H_i\}_{i\in I}$ a family of subgroups of $G$ containing~$B$. The set of left cosets of $H_i/B$ defines a partition of $G/B$. We denote by $C(G,B,\{H_i\}_{i\in I})$ \index{$C(G,B,\{H_i\}_{i\in I})$}this chamber system over  $I$. This chamber system comes with a natural action of $G$. The group $G$ acts by automorphisms and transitively on the set of chambers. 
\end{exmp}

\subsection{Coxeter systems} 
 \label{subsect Coxeter}

 A  \emph{Coxeter matrix over $S$} is a symmetric matrix $M=\{m_{r,s}\}_{r,s\in S}$ whose entries are elements of $\N \cup \{\infty\}$ such that  $m_{s,s}=1$ for any $s\in S$ and  $\{m_{r,s}\}\geq 2$ for any $r, s \in S$ distinct. Let $M$ be a Coxeter matrix. The \emph{Coxeter group} \index{Coxeter group} of type $M$ is the group given by the following presentation
\[W=\left\langle s \in S \vert (rs)^{m_{r,s}}=1 \text{ for any } r,s\in S  \right\rangle .\]We call \emph{special subgroup} a subgroup of $W$  of the form \[W_I= \left\langle s \in I \vert (rs)^{m_{r,s}}=1 \text{ for any } r,s\in I  \right\rangle \text{ with } I\subset S. \]
 \begin{déf} \label{def ssgrpeparacox} We call \emph{parabolic subgroup}  \index{Parabolic subgroup} a subgroup of $W$  of the form $w W_I w^{-1}$ where $w\in W$ and $I\subset S$. An involution of the form $w s w^{-1}$ for $w\in W$ and $s\in S$ is called a \emph{reflection}.\index{Reflection}
 \end{déf}

\begin{exmp} \label{ex cox poly} 
Let $\mathbb{X}^d= \s^d, \mathbb{E}^d$ or $\h^d$. A \emph{Coxeter polytope} \index{Coxeter polytope}is a convex polytope of $\mathbb{X}^d$ such that any dihedral angle  is of the form $\frac{\pi}{k}$ with $k$ not necessarily constant. Let $D$ be a Coxeter polytope and  let  $\sigma_1, \dots, \sigma_n$ be the codimension 1 faces of $D$.  We set $M= \{m_{i,j}\}_{i,j=1,\dots, n}$ the matrix defined by $m_{i,i}=1$, $m_{i,j}= \infty$ if $\sigma_i$ and $\sigma_j$ do not meet in a codimension 2 face, and $m_{i,j}=k$ if $\sigma_i$ and $\sigma_j$  meet in a codimension 2 face  and $\frac{\pi}{k}$ is the dihedral angle between $\sigma_i$ and $\sigma_j$. 

Then a theorem of H. Poincaré (see \cite[Theorem 1.2.]{GabPauImmeubles})  says that the reflection group  of $\mathbb{X}^d$  generated by the  codimension 1 faces of $D$ is  a discrete subgroup of $\mathrm{Isom}(\mathbb{X}^d)$ and is isomorphic to the Coxeter group of type $M$.
\end{exmp}

\begin{déf} \label{def CoxSyst}
With the notation introduced in Example \ref{ex systeme chambre}, the\index{Coxeter system} \emph{Coxeter system} associated with $W$ is the chamber system over $S$ given by $C(W,\{e\},\{W_{\{s\}}\}_{s\in S})$.
We use the notation $\cox$ \index{$\cox$} to denote this chamber system.
\end{déf}

 The chambers of $\cox$ are the elements of $W$ and two distinct chambers $w,w' \in W$ are $s$-adjacent if and only if  $w=w's$. 
For $I\subset S$,  notice that for  any $I$-residue $R$  in $\cox$ there exists $w \in W$ such that, as a set $R=wW_I$.  
 Again  $W$ is a group of automorphisms of $\cox$ that acts transitively on the set of chambers.

Hereafter $\cox$ is a fixed Coxeter system.

\begin{exmp} \label{ex cox poly pavage} 
In the case of Example  \ref{ex cox poly}, the chamber system associated with $W$ is realized geometrically by the tilling of $\mathbb{X}^d$ by copies of the polytope $D$. Two chambers  are adjacent in $\cox$ if and only if the corresponding copies of $D$ in $\mathbb{X}^d$ share  a codimension 1 face.
\end{exmp}

\subsection{The Davis chamber of $\cox$}
 \label{subsec davis chcox}
  
 To a Coxeter group $W$, M.W. Davis associates a cellular complex $D$ called the  \index{Davis chamber}\emph{Davis chamber}.  In the particular case  of reflection groups (see Examples \ref{ex cox poly} and \ref{ex cox poly pavage}) the Davis Chamber is the Coxeter polytope.

 We recall that  $S=\{s_1, \dots , s_n\}$ is a set of  generators of $W$. Let $\mathcal{S}_{\neq S}$ be the collection of proper subsets of $S$. We denote by 
\begin{center}
 $\mathcal{S}_f$ the set of proper subsets $F\varsubsetneq S$ such that $W_F$ is  finite. 
 \end{center}

 By \cite[Appendix A]{DavisBook}, a poset admits a geometric realization which is a simplicial complex. This complex is such that the inclusion relations between cells represent  the partial order. We denote by $D$  the \emph{Davis chamber} which is the geometric realization of the poset  $\mathcal{S}_f$. In the following we give details of this construction.

Let  $\Delta^{n-1}$ be the standard $(n-1)$-simplex and label the codimension 1 faces of $\Delta^{n-1}$ with distinct elements of $S$.  Then  a codimension $k$ face $\sigma$   of $\Delta^{n-1}$ is associated with a \emph{type} \emph{i.e} a subset $I\subset S$ of cardinality $k$. In this case, we write \index{$\sigma_I$} $\sigma_I$ for the \emph{face of    type $I$}\index{Type}. Equivalently, we can say that each vertex of the barycentric subdivision of $\Delta^{n-1}$ is associated with a subset of $S$.  Combining with the fact that the empty set is associated with the barycenter of the whole simplex,  we get a bijection between the vertices of the barycentric subdivision and $\mathcal{S}_{\neq S}$. Hence a vertex in the barycentric subdivision is designated by  $(s_i)_{i\in K}$ for $K \subset \{1, \dots , n\}$.  Using this identification, let $\mathcal{T}$ be the subgraph of the $1$-skeleton of the barycentric subdivision of $\Delta^{n-1}$ defined as follows:
\begin{itemize}
\item $\mathcal{T}^{(0)}=\mathcal{S}_{\neq S}$,
\item the vertices $(s_i)_{i\in I}$ and $(s_j)_{j\in J}$, with $\# J  \geq \# I$, are adjacent if and only if  $I\subset J$ and $\# I   =\# J  -1$.
\end{itemize}

In the following definition, for $k\geq 1$ we call a $k$-cube, a CW-complex that is isomorphic, as a cellular complex, to the Euclidean $k$-cube $[0,1 ]^k$. In particular, it is not necessary to equip these cubes with a metric  for the  purpose of this chapter.

\begin{déf}

The $1$-skeleton $D^{(1)}$ of the Davis chamber  is the full subgraph of $\mathcal{T}$ generated by the elements of   $\mathcal{S}_f$. The \emph{Davis chamber} is obtained from $D^{(1)}$ by attaching  a   $k$-cube to every  subgraph that is isomorphic to the $1$-skeleton of a $k$-cube.  

\end{déf}
  
By construction, $D\subset \Delta^{n-1}$. We call \emph{maximal faces} of $D$ the subsets of the form $\sigma \cap D$ where $\sigma$ is a codimension 1 face of $\Delta^{n-1}$. Likewise, for $I\subset S$, the \emph{  face of $D$ of type $I$} is $D\cap \sigma_I$. 
\begin{exmp}
 In the case of Example \ref{ex cox poly}, the Davis chamber is combinatorially identified with the Coxeter polytope. So if we equip $D$ with the appropriate metric (Euclidean, spherical, or hyperbolic) we recover  the Coxeter polytope.
\end{exmp}
 
\subsection{Buildings} 
\label{sec buildings}

Buildings are singular spaces defined by J. Tits. We may view them as  higher dimensional analogues of trees.   Hereafter $(W,S)$ is a fixed Coxeter system.

\begin{déf} [{\cite[Definition 3.1.]{TitsBuildingsLectureNotes}}]

A chamber system $\Delta$ over $S$ is a \emph{building of type $(W, S)$} \index{Building of type $(W, S)$} if it admits a maximal family $\apD$ of subsystems isomorphic to $(W, S)$, called apartments, such that 
\begin{itemize}
\item any two chambers lie in a common apartment,
\item for any pair of apartments $A$ and $B$, there exists an isomorphism from $A$ to $B$  fixing $A\cap B$.
\end{itemize}
\end{déf}

An immediate consequence of this definition is the existence of retraction maps of the building onto the apartments.\begin{déf}
Let $x\in \Delta$ and $A\in \apD$. Assume that $x$ is contained in $A$. We call \index{Retraction} \emph{retraction onto $A$ centered at $x$} the map $\pi_{A,x} : \Delta \longrightarrow A$ defined by the following property. \begin{center}\begin{minipage}[c]{12cm}For $c\in \Delta$, there exists a chamber $\pi_{A,x}(c) \in A$ such that for any apartment $A'$  containing $x$ and $c$,  for any  isomorphism  $f:A'\longrightarrow A$  that fixes $A \cap A'$, we have $f(c)=\pi_{A,x}(c)$\end{minipage}\end{center}
\end{déf}
 
Hereafter, $\Delta$ is a fixed building of type $(W,S)$. The building $\Delta$ is called a \emph{thin} (resp. \emph{thick})  \index{Thin building}   \index{Thick building} building if any panel contains exactly two (resp. at least three) chambers. Note that thin buildings are Coxeter systems.

\subsection{Graph products and right-angled buildings}
\label{subsec def gp rab}

Let $\mathcal{G}$ denote a \emph{finite simplicial graph} \emph{i.e} $\mathcal{G}^{(0)}$ is finite, each edge has two different vertices, and $\mathcal{G}$ contains no double edge. We denote  by  $\mathcal{G}^{(0)}=\{v_1, \dots, v_n\}$  the vertices of $\mathcal{G}$.  If for $i\neq j$, the corresponding vertices $v_i, v_j$ are connected by an edge, we write $v_i \sim v_j$.  A finite cyclic group $ G_i=\left\langle s_i\right\rangle$ of order $q_i\geq 2$ is associated with each $v_i \in \mathcal{G}^{(0)}$ and we set $S=\{s_1, \dots , s_n\}$.  
Throughout this article, we assume that $n\geq 2$ and that $\mathcal{G}$ has at least one edge.  

\begin{déf} \label{def graphprod}
The \emph{graph product} \index{Graph product}  given by $(\mathcal{G},\{G_i\}_ {i=1,\dots , n} )$ is the group defined by the following presentation\[ \Gamma = \left\langle s_i\in S \vert  s_i^{q_i}=1,  s_is_j=s_js_i \text{ if } v_i \sim v_j  \right\rangle  . \] 
\end{déf}

\begin{exmp}

If the graph $\mathcal{G}$ is fixed with $q_i \in \{2,3,\dots ,\}$, graph products are groups between \emph{right-angled Coxeter groups} (see \cite{DavisBook}) and \emph{right-angled Artin groups} (see \cite{CharneyRAAG}). If the integers $\{q_i\}_ {i=1,\dots , n} $ are fixed and we add edges to the graph starting from a graph with no edge, those groups are groups between free products  and direct products of  cyclic groups.
\end{exmp}
From now on, we fix a graph product $\Gamma$ associated with the pair $(\mathcal{G},\{G_i\}_ {i=1,\dots , n} )$. By analogy with Definition \ref{def ssgrpeparacox},  we define parabolic subgroups in $\Gamma$.
 \begin{déf}
 The subgroup of $\Gamma$ generated by a subset $I\subset S$ is denoted by \index{$\Gamma_I$} $\Gamma_I$ and a subgroup of the form $g \Gamma_I g^{-1}$, with $g \in \Gamma$, is called a \emph{parabolic subgroup}.  \index{Parabolic subgroup}
    \end{déf}

Let $W$ be  the graph product  defined by the pair $(\mathcal{G},\{\Z/2\Z \}_ {i=1,\dots , n})$. This graph product is isomorphic to the right-angled Coxeter group  of type $M=\{m_{i,j}\}_{i,j=1,\dots, n}$ defined by : $m_{i,j}=2$ if $v_i\sim v_j$ and  $m_{i,j}=\infty$ if $v_i\nsim v_j$.

 Throughout  this article, $W$  denotes this Coxeter group canonically associated with $\Gamma$ and $(W,S)$ is the Coxeter system associated with $W$.

\begin{theo}[{\cite[Theorem 5.1.]{DavisCAT}}] \label{theo defbuildinggp}
Let $\Delta$ be the chamber system $C(\Gamma,\{e\}, \{\Gamma_{\{s\}}\}_{s\in S})$  (see Example  \ref{ex systeme chambre}). Then $\Delta$ is a building of type $(W,S)$.

\end{theo}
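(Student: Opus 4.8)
The plan is to verify that $\Delta = C(\Gamma,\{e\},\{\Gamma_{\{s\}}\}_{s\in S})$ satisfies the two axioms of a building of type $(W,S)$: every two chambers lie in a common apartment, and any two apartments are isomorphic by a map fixing their intersection. The chambers of $\Delta$ are the elements of $\Gamma$, and two chambers $g,g'$ are $s$-adjacent precisely when $g^{-1}g' \in \Gamma_{\{s\}} = G_{\{s\}} = \langle s\rangle$, i.e. $g'=gs^k$ for some $k$. The essential combinatorial input is the normal form theorem for graph products (Green's thesis): every element of $\Gamma$ has a reduced expression as a word $s_{i_1}^{k_1}\cdots s_{i_\ell}^{k_\ell}$ with syllables in distinct-type consecutive positions being non-commuting, and any two reduced expressions for the same element differ by a sequence of "shuffles" (swapping adjacent commuting syllables) and merges of same-type syllables; the syllable length $\ell$ is well defined. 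Since $W$ is the graph product with all $G_i$ replaced by $\Z/2\Z$, there is a canonical surjection $\Gamma \to W$ sending $s_i \mapsto$ the corresponding generator, and it is length-preserving on syllables; this is the device that transports the Coxeter structure $(W,S)$ into $\Delta$.

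First I would construct the apartments. Fix for each $i$ a section of $\langle s_i\rangle \to \Z/2\Z$, or more cleanly: choose a set-theoretic section $\phi: W \to \Gamma$ of the surjection $\pi:\Gamma\to W$ that is "reduced", meaning $\phi$ sends a reduced word in $W$ to the corresponding reduced word in $\Gamma$ with each Coxeter generator replaced by $s_i$ itself (with exponent $1$). More generally, to each $g\in\Gamma$ one associates the subset $A_g := g\cdot \phi(W) \subset \Gamma$, and I would declare these to be the candidate apartments. One then checks that $A_g$, with the induced adjacency from $\Delta$, is isomorphic as a chamber system to $(W,S)$: the map $gw\mapsto w$ (for $w\in W$, using $\phi(w)$ as representative) is a bijection onto $W$, and $s$-adjacency of $g\phi(w)$ and $g\phi(w')$ in $\Delta$ translates, via the normal form, to $w^{-1}w' \in \langle s\rangle$ in $W$, which is exactly $s$-adjacency in $(W,S)$. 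The nontrivial point here is that $\phi(W)$ is a transversal behaving well with respect to adjacency; this follows because a reduced word over $S$ in $\Gamma$ maps under $\pi$ to a reduced word in $W$ (no cancellation or merging can be forced), using that each $s_i$ has order $q_i\ge 2$.

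Next I would verify the first axiom. Given two chambers $g, g'$, after left-translating by $g^{-1}$ (which is a chamber-system automorphism of $\Delta$, by Example \ref{ex systeme chambre}), it suffices to show $e$ and $h:=g^{-1}g'$ lie in a common apartment. Take a reduced expression for $h$; each syllable is $s_{i_j}^{k_j}$ with $k_j \in \{1,\dots,q_{i_j}-1\}$. The point is that $h$ lies in the "chamber subsystem" $\phi(W)$ only if all $k_j=1$, which need not hold, so instead one produces an apartment containing both $e$ and $h$ by a more careful choice: for the finitely many types $i$ appearing in $h$, replace the chosen section-generator from $s_i$ to $s_i^{k}$ appropriately along the reduced word — concretely, use that $\langle s_i \rangle$ is cyclic so $s_i^{k}$ also generates it, hence the subsystem $\{x \cdot (\text{products of the } s_{i_j}^{\pm 1})\}$ built from a consistent choice of generators still models $(W,S)$ and contains the whole minimal gallery from $e$ to $h$. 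I would make this precise by showing: for any element $h$ with reduced expression $w=s_{i_1}^{k_1}\cdots s_{i_\ell}^{k_\ell}$, the set of all elements obtained by taking sub-products (in the shuffle-equivalence class) is contained in an apartment, because it injects into $W$ compatibly with adjacency. This is where I expect the main obstacle: one must ensure the chosen "twisted transversal" is genuinely a subsystem isomorphic to $(W,S)$ and not merely a subset, i.e. that adjacency within it is governed exactly by the Coxeter relations of $W$ with no extra identifications — and this requires invoking the normal form / solution to the word problem for graph products carefully, controlling how syllables of the same type interact.

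Finally, for the second axiom I would take two apartments $A$ and $B$ containing a common chamber (reduce to this by translating, or handle the disjoint case by first passing through a common apartment of two chambers one from each), realize both as copies of $(W,S)$ via the identifications above, and observe that the "retraction" $\pi_{A}$ restricted to $B$ — i.e. the map that reads off the $W$-coordinate of a chamber of $B$ relative to $B$'s identification and writes down the corresponding chamber of $A$ — is a chamber-system isomorphism $B\to A$. It fixes $A\cap B$ because on the common chambers both identifications agree (again by uniqueness of reduced forms modulo shuffles: a chamber in $A\cap B$ has the same $W$-address computed from either side). Assembling these two verifications completes the proof that $\Delta$ is a building of type $(W,S)$.
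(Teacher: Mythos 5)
The paper gives no argument for this statement at all (it is quoted from Davis), so your proposal must stand on its own, and it has a genuine gap exactly at the step you yourself flag: producing an apartment through two arbitrary chambers. Your base transversal $\phi(W)$ and its translates are fine, but the repair you sketch for a general $h=s_{i_1}^{k_1}\cdots s_{i_\ell}^{k_\ell}$ does not work as described. Read per type (``replace the chosen section-generator from $s_i$ to $s_i^{k}$''), it is false: if $T_c$ denotes the transversal built from one fixed exponent $c_i$ per type (note also that $s_i^{k}$ generates $\langle s_i\rangle$ only when $\gcd(k,q_i)=1$), then a short normal-form computation shows that every element of $T_c^{-1}T_c$ has all its reduced-syllable exponents in $\{c_i,\,q_i-c_i\}$; hence for two non-commuting generators $s,t$ of order $5$ and $h=sts^{2}$ (whose $s$-syllables have exponents $1$ and $2$, and $\{c,5-c\}$ is $\{1,4\}$ or $\{2,3\}$) no translate $g\cdot T_c$ contains both $e$ and $h$. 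So the exponent must be chosen per wall crossed, not per generator. Read per occurrence, the construction is simply not carried out, and the one justification offered --- the set of sub-products ``is contained in an apartment, because it injects into $W$ compatibly with adjacency'' --- is circular: containment in a copy of $(W,S)$ is precisely what has to be produced. What is missing is the actual content of the theorem: assign to \emph{every} wall $H$ of the Coxeter complex of $W$ (not only those met by the gallery from $e$ to $h$) an exponent $\varepsilon(H)\in\{1,\dots,q_{s(H)}-1\}$, equal to the corresponding syllable exponent of $h$ on the $\ell$ walls separating $e$ from the $W$-image of $h$ and, say, $1$ elsewhere; define the map $W\to\Gamma$ inductively along galleries; and prove well-definedness and that the image is a subsystem isomorphic to $(W,S)$, using Tits' commutation moves together with the graph-product normal form.

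The rest of the outline is essentially sound: $\phi$ is well defined and $\phi(W)$ is a genuine copy of $(W,S)$, and your axiom-two argument is correct provided the address of $x$ relative to a common chamber $c$ is taken to be the type-sequence of the reduced syllable form of $c^{-1}x$ (uniqueness of reduced forms, as you say). Be careful, though, with the homomorphism $\pi:\Gamma\to W$: it kills even syllables ($\pi(s^{2})=e$ for $q\geq 3$), so it is not ``length-preserving on syllables'' and cannot serve as the address inside a twisted apartment; the correct invariant is the $W$-valued distance $\delta(g,h)=$ type-sequence of a reduced expression of $g^{-1}h$. That is in fact how Davis's cited proof proceeds: he checks directly that $\delta$ satisfies the $W$-metric building axioms via the normal form, which sidesteps the apartment construction (and the wall-by-wall bookkeeping above) entirely. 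You also omit the (routine) maximality of the apartment family required by the paper's definition.
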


Hereafter, $\Delta$ denotes the right-angled building associated with $\Gamma$ by the preceding theorem. In Subsection \ref{subsec davis complx rab}, we describe the Davis complex associated with this  building. 

We notice that $\Gamma$ is infinite if and only if the graph $\mathcal{G}$ contains  two distinct vertices $v_i,v_j$ such that $v_i\nsim v_j$. A criterion of M. Gromov allows  J. Meier to  prove  that an infinite graph product  $\Gamma$ is hyperbolic  if and only if in $\mathcal{G}$ any circuit of length $4$ contains a chord (see \cite{MeierWhen}). For an infinite hyperbolic graph product, a necessary  and sufficient condition is given in   \cite{DavisMeierTopo} for $\borg$ to be arcwise connected (see Theorem \ref{theo davismeier} bellow). This condition involves only the graph $\mathcal{G}$. In the rest of this paper, we will assume that $\Gamma$ is infinite  hyperbolic with arcwise connected boundary $\borg$.

\subsection{The Davis complex associated with $\Gamma$}

\label{subsec davis complx rab}

 To a graph product $\Gamma$,  M.W. Davis associates a cellular complex $\Sigma$ called the  \index{Davis complex} \emph{Davis complex}. This complex is a metric space on which $\Gamma$ acts geometrically.   In the particular case  of reflection groups (see Examples \ref{ex cox poly} and \ref{ex cox poly pavage}) the Davis Complex is   $\mathbb{X}^d$ tilled by the  Coxeter polytopes. This complex is also called the geometric realization of the building $\Delta$.
Below we introduce the Davis complex associated with $\Gamma$.  Again we refer   to \cite{MeierWhen} for an example  along with suggestive pictures.

 Let $D$ be the Davis chamber associated with $W$ as in Subsection \ref{subsec davis chcox}. Again  a face of $D$ is associated with a type $I\subset S$.  For $x \in D$, if $I$ is the type of the face containing $x$ in its interior, we set $\Gamma_x:=\Gamma_I$. To the interior points of $D$  we associate the trivial group $\Gamma_\emptyset$. 

Now we can define the \emph{Davis complex} \index{Davis complex}:  $ \Sigma = D \times \Gamma / \sim $   with 
 \[(x,g)\sim (y,g') \text{ if and only if }x=y \text{ and }g^{-1}g' \in \Gamma_x.\]

 We study  the building $\Delta$ through it geometric realization $\Sigma$ and we briefly recall what this means. 

A chamber of $\Sigma$ is a subset of  the form $[D\times\{g\} ]$ with $g\in \Gamma$.  Two chambers  are adjacent if and only if they share a maximal face. For a subset  $E\subset \Sigma $ we  designate by $\Ch{E}$ \index{$\Ch{E}$} the set of chambers contained in $E$. Equipped with this chamber system structure, $\Sigma $ is isomorphic to  $\Delta$.  In particular, the set of apartments in $\Sigma$ is designated by $\ap$\index{$\ap$}.   Then the left action of $\Gamma$ on itself induces an action on $\Sigma$. For  $\gamma \in \Gamma$ and $ [(x,g)]\in \Sigma$ we set  $\gamma [(x,g)] :=[(x,\gamma g)]$. Moreover this action induces a simply transitive action of $\Gamma$ on $\ch$.   Naturally this action is also isometric for  $\dc{\cdot , \cdot}$.  
 
\begin{exmp} 
In the case of Example \ref{ex cox poly}, if we equip  $D$ with the appropriate metric we see that the Davis complex is realized by the tilling of $\mathbb{X}^d$ by $D$.  
\end{exmp}

 \subsection{Building-walls and residues in the Davis complex}
\label{subsec bw and dials}

We call \emph{base chamber} of $\Sigma$, denoted by $x_0$, the chamber  $[D \times \{ e\}]$. For $g\in \Gamma$, as $[D \times \{ g\}]$ is the image of $x_0$ under $g$,  we  designate this chamber by $g x_0$.   Below we present some basic tools used to describe the structure of $\Sigma$. In particular, we extend to $\Sigma$ some definitions and properties that have been  used in Coxeter systems.

The notion of walls in a Coxeter system  extends to right-angled buildings.

\begin{déf}
\text{ }
\numerote{\item We call \emph{building-wall} \index{Building-wall} in  $\Sigma$ the subcomplex $M$ stabilized by a non-trivial isometry  $r=g s^\alpha g^{-1}$ with  $g \in \Gamma$, $s \in S$, $\alpha \in \mathbb{Z}$ and $s^{\alpha}\neq e$. The isometry $r$  is called a \index{Rotation around a building-wall} \emph{rotation around $M$}. We denote by $\bw$ the set of all the building-walls of $\Sigma$.  
 
\item  Let $M$ be a building-wall associated with a rotation $r\in \Gamma$. For   $x \in \ch$ we say that $M$ is \emph{along $x$} if $r(x)$ is adjacent to $x$.
 } 
\end{déf}

With the   notation of  the definition , the term $s\in S$ is called the \emph{type of the building-wall $M$ and of the rotation $r$}. We remark that in the graph product $\Gamma$ two elements of $S$ that are conjugate are equal so the type is uniquely defined. Indeed if $s=g s'g^{-1}$  with  $g \in \Gamma$, $s,s' \in S$, let $M$ be the building-wall associated with the rotation $s$. Then we observe that from the base chamber $x_0$ we can reach the chamber  $gx_0$ by successive   rotations about faces that are all orthogonal to $M$. In other word $g$ is a product of elements of $S$ that all commute with $s$. Thus $s=s'$.
 Clearly with the notation of the definition,  the building-wall $M$ is fixed by any rotation $g s^{\alpha'} g^{-1}$ with $s^{\alpha'}\neq e$.  


We say that the building-wall $M$  is non-trivial if it contains more than one point. A non-trivial   building-wall $M$  may be equipped with a building structure. Indeed, if  $s_i$ is the type of $M$,  associated with $v_i  \in \mathcal{G}^{(0)}$, we write $I  =\{ j : v_j\sim v_i, v_j\neq v_i \}$ and $V=\{v_j\in \mathcal{G}^{(0)} : j\in I\}$. Then if  $\mathcal{G}_V$ is the full subgraph generated by $V$, we can check that, $M$ is isomorphic to the geometric realization of the graph product $(\mathcal{G}_V,\{\Z/q_i\Z\}_{i\in I})$. The Davis chamber of this geometric realization is the maximal face of type $s_i$ of $D$.  Moreover building-walls also divide $\Sigma$ in isomorphic connected components.
In the case of infinite dimension 2 buildings, the building-walls are trees and thus they have been called \emph{trees-walls} by M. Bourdon and H. Pajot in \cite{BourdonPajotRigi}. These explain  our terminology.
 \begin{déf}
Let  $M$ be a building-wall of type $s$ and let  $r\in \Gamma$ be a rotation around $M$. 
 A \index{Dial of building} \emph{dial of building bounded by $M$} is the closure in $\Sigma$ of a connected component of $\Sigma\backslash M$. We denote by   $\dialgp$ \index{$\dialgp$} the set of all the dials of building of $\Sigma$. 

  \end{déf}

  This definition implies the following fact. 
  
\begin{fact} \label{prop defcadrangp}
Let  $M$ be a building-wall of type $s$. Assume that   $s$ is of finite  order $q$.  Then $\Sigma\backslash M$ consists of $q$   connected components.   We  designate by $D_0(M), D_1(M), \dots, D_{q-1}(M)$ these dials of building, with  the convention that $x_0 \subset D_0(M)$. 
In this setting, for any $i=0,\dots ,q-1$, if $y\in \Ch{D_i(M)}$ then 
  \[\Ch{D_i(M)}=\{x \in \Ch{\Sigma} : \dc{y,x}<\dc{y,rx} \}.\]
 Finally $r$ permutes $D_0(M), D_1(M), \dots, D_{q-1}(M)$.

For a building-wall associated with a type $s\in S$ of infinite order, the analogous property holds. 
\end{fact}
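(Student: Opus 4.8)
The plan is to reduce the statement to the case where $r=s\in S$ and $M$ is the building-wall through the base chamber $x_0$ of type $s$, fixed pointwise by the left action of $\langle s\rangle$. This is legitimate: for a general rotation $r=gs^{\alpha}g^{-1}$ the automorphism $g$ of the chamber system $\Sigma$ preserves $\dc{\cdot,\cdot}$, $\bw$, and $\dialgp$, so after translating by $g^{-1}$ we are reduced to $r=s^{\alpha}$; and once the case $\alpha=1$ is understood, $r=s^{\alpha}$ simply permutes the dials by $D_i(M)\mapsto D_{i+\alpha}(M)$, just as $s$ permutes them by $D_i(M)\mapsto D_{i+1}(M)$. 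Let $P_0=\langle s\rangle x_0=\{x_0,sx_0,\dots,s^{q-1}x_0\}$ be the $s$-panel through $x_0$: geometrically its $q$ chambers are glued along one and the same maximal face of type $s$, and that face is the top cell of $M$ near $x_0$. Since $\Sigma$ is assembled from copies of the Davis chamber glued along faces and $M$ is a subcomplex, the connected components of $\Sigma\setminus M$ correspond bijectively to the classes of the equivalence relation on the chamber set $\Ch{\Sigma}$ generated by declaring $x\equiv x'$ whenever $x$ and $x'$ are adjacent through a panel whose supporting face does not lie in $M$. Thus the whole statement becomes a combinatorial statement about galleries avoiding $M$.

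The engine of the proof is the gate (projection) property of residues and, more generally, of convex chamber subsets of a building, together with the fact that each dial $D_i(M)$ is convex in the sense of Definition~\ref{def convexe}: a minimal gallery crosses $M$ at most once --- this is checked, after the reduction, exactly as for walls of Coxeter complexes by restricting to an apartment through the two endpoints --- hence such a gallery cannot leave a dial and come back. Granting this, one introduces for every chamber $x$ the index $\iota(x)\in\{0,\dots,q-1\}$ defined by $\mathrm{proj}_{P_0}(x)=s^{\iota(x)}x_0$, that is, $s^{\iota(x)}x_0$ is the chamber of $P_0$ nearest to $x$. The two points to prove are, first, that $\iota$ is constant across any panel whose face is not contained in $M$ and jumps across every panel contained in $M$ --- so that its level sets are exactly the connected components of $\Sigma\setminus M$, there are precisely $q$ of them, the one through $s^i x_0$ being $\{\iota\equiv i\}$, and one labels them $D_0(M),\dots,D_{q-1}(M)$ so that $x_0\subset D_0(M)$; and second, that for $y\in\Ch{D_i(M)}$ and any chamber $x$ one has $\dc{y,x}<\dc{y,sx}$ if and only if $x\in\Ch{D_i(M)}$. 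The second point follows from the first together with the gate property by an elementary telescoping estimate: using $\dc{y,sx}=\dc{s^{-1}y,x}$, that $s$ acts on $\Ch{\Sigma}$ by $\dc{\cdot,\cdot}$-isometries with $sD_i(M)=D_{i+1}(M)$ (indices read modulo $q$), and that the gate of $P_0$ seen from $x$ lies in the dial of $x$, one shows that a chamber is strictly closer to $y$ than to $s^{-1}y$ precisely when it lies in the dial of $y$; the strictness for chambers of $D_i(M)$ reflects that a minimal gallery from such a chamber to $s^{-1}y$ must cross $M$, while for chambers of the other dials the symmetric computation yields the reverse, non-strict inequality.

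The remaining assertions are immediate: $r=s$ stabilizes $M$, hence permutes the connected components of $\Sigma\setminus M$, and by construction it carries $D_i(M)$ to $D_{i+1}(M)$; for $r=gs^{\alpha}g^{-1}$ one obtains the permutation $D_i(M)\mapsto D_{i+\alpha}(M)$. The case of a type $s\in S$ of infinite order is identical word for word, with $\langle s\rangle\cong\Z$, the dials indexed by $\Z$, $P_0$ an infinite panel, and no use anywhere of the finiteness of $q$. I expect the real work to be concentrated in the first point above, and within it in the claim that $\iota$ jumps exactly across the panels contained in $M$: this is precisely where the $q$-ary structure of the right-angled building replaces the binary ``side of a wall'' alternative of the Coxeter case, and it has to be extracted from the explicit description of a non-trivial building-wall $M$ as the geometric realization of the graph product $(\mathcal{G}_V,\{\Z/q_i\Z\}_{i\in I})$ recalled just above, together with the way the chambers of $\Sigma$ lying along $M$ sit over the maximal faces of that realization. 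Everything else is a transcription of the classical wall combinatorics of Coxeter complexes and buildings.
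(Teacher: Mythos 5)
The paper itself offers no argument for this Fact: it is stated as an immediate consequence of the definition of dials and of the structure of right-angled buildings, so there is no written proof to compare yours with, and you can only be judged on whether your outline closes. Its architecture is the standard one and is workable: translate by $g^{-1}$ to put $M$ through $x_0$ with rotation $s^{\alpha}$; get gallery-convexity of the dials from ``a minimal gallery crosses $M$ at most once'', checked inside an apartment containing the endpoints; define $\iota$ via the gate on the $s$-panel $P_0=\langle s\rangle x_0$; and compare $\dc{y,x}$ with $\dc{y,rx}$ by folding a minimal gallery at its unique crossing of $M$. (One aside: the gate property holds for residues but \emph{not} for general convex chamber sets in a thick building --- the paper itself points this out --- though this is harmless here since you only project onto the residue $P_0$.)

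Three steps, however, do not follow as written. (1) The reduction to $\alpha=1$ is valid for the conjugation by $g$ but not for the exponent: knowing $\Ch{D_i(M)}=\{x:\dc{y,x}<\dc{y,sx}\}$ does not formally give the same statement for $s^{\alpha}$, because for $x\notin\Ch{D_i(M)}$ the $\alpha=1$ statement only yields $\dc{y,s^{a}x}\le \dc{y,s^{a-1}x}$, which chains in the wrong direction; the folding comparison must simply be run for $s^{\alpha}$ itself. (2) From ``$\iota$ is constant across panels not in $M$ and jumps across panels in $M$'' you conclude that the level sets of $\iota$ \emph{are} the components; this only shows each component lies in a level set. Connectivity of each level set needs an argument, e.g.\ that a minimal gallery from $x$ to its gate $s^{\iota(x)}x_0$ cannot pass through a panel of $M$ (folding such a step by a suitable power of $s$ would produce a strictly shorter gallery from $x$ to $P_0$, contradicting the gate property). (3) Most importantly, both the count of exactly $q$ components and the \emph{strict} inequality for $x$ in the dial of $y$ rest on the sharper statement that every $s$-panel contained in $M$ meets each component in exactly one chamber, with $\langle s\rangle$ shifting the labels: without it, a gallery could a priori pass through a panel of $M$ without changing component, and the folding only gives $\dc{y,x}\le\dc{y,rx}$. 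You correctly flag this as ``the real work'', but you leave it unproved; it is exactly here that the identification of the chambers along $M$ with the residue of type $\{s\}\cup\{t\in S: ts=st\}$, isomorphic to $\Gamma_{J}\times\langle s\rangle$ with the rotation acting by shifting the $\langle s\rangle$-factor (or, alternatively, the wall-separation distance formula of Proposition \ref{prop distcombigrap}, which is quoted from an external reference and so may be used without circularity), has to be brought in. Until that lemma is supplied, the displayed equality --- the heart of the Fact --- is not established.
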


In thin right-angled buildings, in particular in apartments,   building-walls are called \emph{walls}  and   dials  of building   are called \emph{half-spaces}. For $A\in \apD$ we write  $\mathcal{M}(A)$ for the set of all the walls and  $\demiesp{A}$ for the set of all the half-spaces of $A$. \index{$\demiesp{A}$} \index{$\mathcal{M}(A)$} 
 
The building-walls  in $\Sigma$ and the walls  in the apartments are closely related.
 \begin{fact}  
 \label{fact bwstructure}
\numeroti{\item For $A\in \apD$  and $x\in \Ch{A}$ we have  
\begin{align*}
\mathcal{M}(A)&=\{M\cap A : M \in \bw \text{ and } M\cap A \neq \emptyset\}, \\
 &=\{\pi_{A,x}(M): M \in \bw \text{ and } M\cap A \neq \emptyset\},\\
  &=\{\pi_{A,x}(M): M \in \bw\}. 
\end{align*}
\item   For $A\in \apD$ and $x\in \Ch{A}$ we have \[\bw=\bigcup_{m \in \mathcal{M}(A)} C({\pi^{-1}_{A,x}(m)}),\] where $C({\pi^{-1}_{A,x}(m)})$ denotes the set of all the connected components of $\pi^{-1}_{A,x}(m)$.
} 
\end{fact}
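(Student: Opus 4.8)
Here is how I would go about proving Fact~\ref{fact bwstructure}.

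\medskip

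\noindent\textbf{Overall strategy.} The plan is to reduce everything to a short list of standard properties of the retraction $\pi:=\pi_{A,x}$ --- it is a type-preserving morphism of chamber systems, it fixes $A$ pointwise, it preserves the gallery distance $\dc{x,\cdot}$ from its center, and its restriction to any apartment \emph{containing} $x$ is an isomorphism onto $A$ (see \cite{AbramBrown}) --- together with the local structure of dials recorded in Fact~\ref{prop defcadrangp}, and one genuinely ``right-angled'' input: two building-walls of $\Sigma$ of the same type $s$ are disjoint (the Davis complex is a CAT(0) cube complex, building-walls are its hyperplanes, and every $2$-cell has its two pairs of opposite edges of distinct types, so no $2$-cell is crossed by two building-walls of type $s$). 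I will also use repeatedly that in a cube complex each maximal face lies in a \emph{unique} building-wall, and in the thin building $A$ each maximal face lies in a unique wall.

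\medskip

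\noindent\textbf{Equality (1) of (i).} First I would attach to every maximal face $\tau$ of $\Sigma$, say of type $s$ and lying between the $s$-adjacent chambers $gx_0$ and $gsx_0$, the building-wall $M_\tau$ fixed by the rotation $gsg^{-1}$ (acting by left translation, which cyclically permutes the $s$-panel through $\tau$); one checks that $M_\tau$ does not depend on which chamber of that panel is used, and that $\tau\subset M_\tau$. Now if $M\in\bw$ has $M\cap A\neq\emptyset$, then $M\cap A$ is a subcomplex of $A$ containing a type-$s$ maximal face $\tau$ of $A$ (a hyperplane meeting a convex subcomplex meets it in a hyperplane of that subcomplex), and by Fact~\ref{prop defcadrangp} the two chambers of $A$ along $\tau$ lie in distinct dials of $M$; hence $M\cap A$ is a wall of $A$ through $\tau$, so by uniqueness it is the wall of $A$ through $\tau$, giving $M\cap A\in\mathcal{M}(A)$. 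Conversely, given $m\in\mathcal{M}(A)$, choose a type-$s$ maximal face $\tau$ of $A$ inside $m$; then $M_\tau\cap A\ni\tau$ is nonempty, hence (by what was just proved) a wall of $A$ through $\tau$, so $M_\tau\cap A=m$.

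\medskip

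\noindent\textbf{Equalities (2), (3) of (i) and part (ii).} The key remaining point is that $\pi(M)\in\mathcal{M}(A)$ for \emph{every} $M\in\bw$. Since $\pi$ is type-preserving, $\pi(M)$ is a connected union of type-$s$ maximal faces and their faces, so by disjointness of type-$s$ walls of $A$ it is contained in a single wall $m$ of $A$. To see $\pi(M)=m$, pick a chamber $c$ along which $M$ lies and an apartment $A'$ containing both $c$ and the center $x$; since $A'$ is thin, the chamber of $A'$ that is $s$-adjacent to $c$ is one of the images of $c$ under a rotation around $M$, hence is separated from $c$ by $M$, so $M\cap A'\neq\emptyset$, and by equality~(1) applied in $A'$ it is a wall $m'$ of $A'$; as $\pi|_{A'}$ is an isomorphism onto $A$, $\pi(m')$ is a wall of $A$, and $m\supseteq\pi(M)\supseteq\pi(m')$ forces $\pi(M)=m$. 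Equalities (2) and (3) of (i) now follow at once: when $M\cap A\neq\emptyset$, $\pi$ fixes $A$ so $\pi(M)\supseteq M\cap A$, whence $\pi(M)=M\cap A$; and every $m\in\mathcal{M}(A)$ equals $\pi(M_\tau)$ for $\tau$ a type-$s$ maximal face of $A$ inside $m$. For part (ii): by (i), each $M\in\bw$ satisfies $\pi(M)=m$ for some $m\in\mathcal{M}(A)$, so $M\subset\pi^{-1}(m)$ and, being connected, lies in one connected component of $\pi^{-1}(m)$; conversely every type-$s$ maximal face contained in $\pi^{-1}(m)$ lies in some $M_\tau$ with $\pi(M_\tau)=m$, hence $M_\tau\subset\pi^{-1}(m)$, so $\pi^{-1}(m)=\bigcup\{M\in\bw:\pi(M)=m\}$; since these building-walls all have type $s$ they are pairwise disjoint, so this union is exactly the decomposition of $\pi^{-1}(m)$ into connected components.

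\medskip

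\noindent\textbf{Main obstacle.} The formal skeleton above is routine; the real work is in the two structural inputs that make it run. One is that $\pi$ folds each $s$-panel of $\Sigma$ \emph{onto} an $s$-panel of $A$ --- this rests on the gate property of residues combined with distance-preservation from $x$, and is what legitimizes treating $\pi$ cellwise on building-walls. The other, which is where right-angledness is genuinely used, is that two building-walls of the same type are disjoint; without it the preimage $\pi^{-1}(m)$ could a priori be connected while being a union of several building-walls, and $\pi(M)$ might fail to be a \emph{full} wall of $A$. Verifying these two facts carefully (e.g.\ from the CAT(0) cube-complex structure of the Davis complex and Fact~\ref{prop defcadrangp}) is the crux; the rest is bookkeeping with galleries and apartments.
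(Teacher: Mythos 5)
The paper states Fact \ref{fact bwstructure} without proof, so there is no argument of the authors to compare yours against; judged on its own terms, your skeleton is the natural one and works: show that a nonempty $M\cap A$ is a wall of $A$, deduce that $\pi_{A,x}(M)$ is always a wall by passing through an auxiliary apartment $A'$ containing both the center chamber $x$ and a chamber along $M$ (on which the retraction restricts to an isomorphism onto $A$), and then sort the building-walls into the fibres $\pi^{-1}_{A,x}(m)$ using disjointness of same-type building-walls.

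The one genuine flaw is the structural input you invoke for your two key facts, namely ``the Davis complex is a CAT(0) cube complex and building-walls are its hyperplanes.'' This is false as soon as some $q_i\geq 3$: a hyperplane of a CAT(0) cube complex separates it into exactly two halves, whereas a building-wall of type $s_i$ separates $\Sigma$ into $q_i$ dials (Fact \ref{prop defcadrangp}); building-walls are unions of type-$s_i$ mirrors (fixed sets of rotations), not mid-cube hyperplanes, so cube-complex hyperplane theory does not apply to them. The two consequences you extract are nevertheless true, but must be re-derived from the building structure: (a) two distinct building-walls of the same type are disjoint, because two distinct intersecting building-walls are orthogonal and orthogonal building-walls have distinct types --- this is exactly the observation made in Example \ref{ex translation PLS}; (b) if $M\cap A\neq\emptyset$, then $M$ meets some closed chamber $c$ of $A$ only in faces whose type contains $s$, and then contains the whole type-$s$ mirror of $c$, which is a maximal face of $A$ --- this follows from the description of $M$ as the union of the geodesic rays issuing from a type-$s$ mirror, not from a ``hyperplane meets a convex subcomplex in a hyperplane'' principle. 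Finally, your conclusion that $M\cap A$ \emph{is} the wall of $A$ through $\tau$ silently uses that $M\cap A$ is contained in a single wall of $A$; a word is needed here (for instance: $M$ and $A$ are both convex, so $M\cap A$ is connected, and being a union of faces of type containing $s$ it lies in a single type-$s$ wall of $A$ by (a)). With these repairs, the argument is complete and is presumably the routine verification the authors had in mind when stating this as a Fact.
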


Building-walls (resp. walls) bound dials of buildings (resp. half-spaces) so a similar fact holds for dials of building and half-spaces.

We use the following terminology  to describe a building-wall relatively to some chambers.

\begin{déf} \label{defcross}
Let $M\in \bw$ and $E,F\subset \Sigma$.
 \begin{enumerate}[i)]
   \item We say that   \emph{$M$ crosses $E$} if $E\backslash M$  has at least two connected components.
  
 \item We say that \emph{the building-wall $M$ separates $E$ and $F$} if the interior of $E$ and $F$ are entirely contained in two distinct connected components of $\Sigma\backslash M$. 
 \end{enumerate}
 \end{déf}  

The metric over the chambers is determined by the building-wall structure.

\begin{prop}[{\cite[Proposition 5.8]{ClaParaResidinBuildings}} ] \label{prop distcombigrap}
Let $x_1$ and $x_2$ be two chambers. If we denote  $\dc{\cdot,\cdot}$ the metric on the chamber system, then \begin{center}$\dc{x_1,x_2}=\#\{M \in \bw : M \text{ seperates } x_1 \text{ and } x_2\}.$
\end{center}
\end{prop}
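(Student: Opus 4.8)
The plan is to reduce the statement to the corresponding classical fact inside a single apartment, and then transfer it back to $\Sigma$ using the dictionary between building-walls and walls recorded in Fact~\ref{fact bwstructure}. First, since $\Delta$ is a building, there is an apartment $A \in \apD$ containing both $x_1$ and $x_2$. I would use the retraction $\pi_{A,x_1} : \Sigma \longrightarrow A$ centered at $x_1$: it is distance-nonincreasing on chambers and preserves distances measured from $x_1$, so that $\dc{x_1,x_2}$ equals the gallery distance $d_A(x_1,x_2)$ computed inside the thin building $A$. As $A$ is a Coxeter complex, the classical fact about Coxeter systems (see e.g. \cite{AbramBrown} or \cite{DavisBook}) yields
\[ d_A(x_1,x_2) = \#\{m \in \mathcal{M}(A) : m \text{ separates } x_1 \text{ and } x_2\}, \]
equivalently: a minimal gallery in $A$ crosses each such wall exactly once and crosses no other wall.

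It then remains to build a bijection between the walls of $A$ separating $x_1$ from $x_2$ and the building-walls of $\Sigma$ separating them. By Fact~\ref{fact bwstructure}(i), the map $M \mapsto M \cap A$ sends building-walls meeting $A$ onto $\mathcal{M}(A)$; by Fact~\ref{fact bwstructure}(ii), together with the fact that $\pi_{A,x_1}$ restricts to the identity on $A$, for each $m \in \mathcal{M}(A)$ exactly one connected component of $\pi^{-1}_{A,x_1}(m)$ meets $A$, and it meets $A$ precisely in $m$; hence $M \mapsto M \cap A$ is a bijection from $\{M \in \bw : M \cap A \neq \emptyset\}$ onto $\mathcal{M}(A)$. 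Next, a building-wall $M$ with $M \cap A = \emptyset$ cannot separate $x_1$ from $x_2$, since $A$ is connected and contained in $\Sigma \setminus M$, hence lies in a single connected component of $\Sigma \setminus M$, while $x_1,x_2 \in A$. Finally, for $M$ with $m := M \cap A \neq \emptyset$, the dials of building bounded by $M$ partition $\Sigma$ (Fact~\ref{prop defcadrangp}) and their traces on $A$ are exactly the half-spaces of $A$ bounded by $m$; therefore $M$ separates $x_1$ and $x_2$ in the sense of Definition~\ref{defcross} if and only if $m$ separates them in $A$. Combining these three points, the bijection $M \mapsto M \cap A$ restricts to a bijection between $\{M \in \bw : M \text{ separates } x_1, x_2\}$ and $\{m \in \mathcal{M}(A) : m \text{ separates } x_1, x_2\}$, and the proposition follows from the displayed equality.

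The main obstacle is the bijection step: one must check carefully that two distinct building-walls never share the same nonempty trace on $A$, and that separation in $\Sigma$ is genuinely detected by separation in $A$; both rest on the description of building-walls as connected components of preimages of walls under retractions (Fact~\ref{fact bwstructure}) and on the dial structure (Fact~\ref{prop defcadrangp}). The apartment-level statement and the distance-preservation property of retractions are standard and can simply be quoted. An alternative, more hands-on route would bypass retractions: show directly that along a minimal gallery from $x_1$ to $x_2$ each step crosses exactly one building-wall, that this building-wall separates $x_1$ from $x_2$, and that the building-walls obtained at the different steps are pairwise distinct and exhaust the separating ones; this variant again relies on Fact~\ref{prop defcadrangp} to decide on which side of a building-wall a given chamber lies.
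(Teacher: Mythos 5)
Your argument is sound, but note that this paper does not actually prove the proposition: it is quoted from the author's companion article (\cite[Proposition 5.8]{ClaParaResidinBuildings}), so there is no in-paper proof to measure you against. Your route — reduce to an apartment $A$ containing $x_1,x_2$ via convexity of apartments/the retraction $\pi_{A,x_1}$, invoke the classical Coxeter fact that gallery distance equals the number of separating walls, and then transfer through the bijection $M \mapsto M \cap A$ between building-walls meeting $A$ and $\mathcal{M}(A)$ — is correct, and the transfer step is adequately supported by Facts \ref{fact bwstructure} and \ref{prop defcadrangp} together with the geometric description of a (building-)wall as the union of rays exiting through a fixed panel, which gives the uniqueness you need (a wall, resp.\ building-wall, is determined by any panel it contains). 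One small overstatement: for thickness $q\geq 3$ the traces on $A$ of the $q$ dials bounded by $M$ are \emph{not} all half-spaces of $A$ — only the two dials containing the two chambers of $A$ adjacent along $m=M\cap A$ have full traces, the remaining $q-2$ dials meet $A$ only in $m$ — but the equivalence you actually use ($M$ separates $x_1,x_2$ in $\Sigma$ iff $m$ separates them in $A$) survives, since the two half-spaces of $A$ lie in distinct dials and each open half-space is a connected subset of $\Sigma\backslash M$. It is worth remarking that your sketched alternative (each step of a minimal gallery crosses exactly one building-wall, these building-walls are pairwise distinct and exhaust the separating ones, with the rotation-folding trick ruling out double crossings) is closer in spirit to the techniques this paper itself deploys, e.g.\ in the proof of Proposition \ref{prop deffrojdial}, and avoids quoting the apartment-level statement; either route is legitimate.
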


In a right-angled building it appears that two distinct building-walls are either orthogonal or do not intersect. This explains the following terminology and notation.

\begin{nota}Let $M$ and $M'$ be two distinct building-walls. \numeroti{\item if $M\cap M' \neq \emptyset$ we write $M \perp M'$ and we say that \emph{$M$ is orthogonal to $M'$},\item if $M\cap M' = \emptyset$ we write $M \parallel M'$ and we say that \emph{$M$ is parallel to $M'$}.}\end{nota} 
 
Clearly,  if $D,D' \in \dialgp$ are bounded by    $M , M'\in \bw$ with if  $M \perp M'$, then $D\cap D'$ contains a chamber. On the other hand, if $M\parallel M'$ then there exists $D$ bounded by $M$ and $D'$ bounded $M'$ such that $M'\subset D$ and $M \subset D'$.

   \subsection{Geometric characterization of parabolic subgroups} 
   \label{subsec geom carac cox}

Now we discuss residues in $\Sigma$, parallel to the discussion at the end of Subsection  \ref{subsect Coxeter}.

\begin{nota}
For $I\subset S$ and $g \in \Gamma$, let \index{$g \Sigma_I$}  $g \Sigma_I$ denote the union of the chambers of the $I$-residue containing $g x_0$. 
\end{nota}

 Notice  that $g \Sigma_I = g \Gamma_I x_0$ and $\Ch{g\Sigma_I} =g\Gamma_I$. For  simplicity, in the following  we  also call a subset $g\Sigma_I\subset \Sigma$ a \emph{residue}.\index{Residue}   Notice that a  rotation  around a building-wall  that crosses   $g\Sigma_I$ is of   the form $g\gamma  s^\alpha \gamma^{-1}g^{-1}$ with $s\in I$, $s^\alpha\neq e$  and  $\gamma \in \Gamma_I$. By the definitions of the action and   the residues we obtain the following fact.

\begin{fact} \label{fact residuebasegp} Let $R=g \Sigma_I$ be a residue.  Then   \liste{\item $R$ is stabilized by the rotations around the building-walls that cross it, \item  $\mathrm{Stab}_\Gamma(R)=g \Gamma_I g^{-1}$ is generated by these rotations,
\item   The type $I$ of $R$ is equal to the set of types of all the building-walls $M$ satisfying 
\[M \text{ crosses } R \text{ and } M \text{ is  along  } g x_0.\]  }
\end{fact}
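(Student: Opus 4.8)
\textbf{Proof plan for Fact~\ref{fact residuebasegp}.}

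The plan is to unwind the definitions of the group action and of residues, and to match them against the building-wall structure already established. Let $R = g\Sigma_I$, so that $\Ch{R} = g\Gamma_I$ by the remark immediately preceding the statement. Without loss of generality I would first reduce to the case $g = e$: conjugating everything by $g$ translates the base chamber $x_0$ to $g x_0$, sends $\Sigma_I$ to $g\Sigma_I$, and sends a rotation $r$ around a building-wall $M$ to $grg^{-1}$ around $gM$; since all three assertions are phrased $\Gamma$-equivariantly, it suffices to treat $R = \Sigma_I$ with $\Ch{R} = \Gamma_I$.

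For the first two bullets, the key observation is the one recorded just before the statement: a rotation around a building-wall crossing $R$ has the form $\gamma s^\alpha \gamma^{-1}$ with $s \in I$, $s^\alpha \neq e$, and $\gamma \in \Gamma_I$. Such an element visibly lies in $\Gamma_I = \mathrm{Stab}_\Gamma(R)$, which proves that $R$ is stabilized by the rotations around the building-walls crossing it. Conversely, for the generation statement I would argue that $\Gamma_I$ is generated by $\{s^\alpha : s \in I,\ s^\alpha \neq e\}$ (these are the rotations along $x_0$ of type in $I$, which do cross $R$ since they move chambers of $R$ within $R$), and hence a fortiori by the larger set of all rotations around building-walls crossing $R$; combined with the previous inclusion this gives equality. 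Here I would invoke Fact~\ref{prop defcadrangp} to see that each $s^\alpha$ with $s \in I$ permutes the dials bounded by its building-wall and moves $x_0$ to an adjacent chamber, so the corresponding building-wall does cross $R$.

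For the third bullet, one inclusion is the remark preceding the statement: any building-wall $M$ crossing $R$ has type in $I$, so in particular this holds for those also along $g x_0$. For the reverse inclusion I would take $s \in I$ and exhibit a building-wall of type $s$ that both crosses $R$ and is along $x_0$: namely the building-wall fixed by the rotation $s$ itself. It is along $x_0$ because $s x_0$ is $s$-adjacent to $x_0$, and it crosses $R$ because $x_0$ and $s x_0 = s\cdot x_0$ both lie in $\Ch{R} = \Gamma_I$ (as $s \in I$) while being in distinct dials of that building-wall by Fact~\ref{prop defcadrangp}; thus $R \setminus M$ is disconnected in the sense of Definition~\ref{defcross}.

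The step I expect to require the most care is the generation claim in the second bullet, i.e.\ verifying that $\Gamma_I$ is exactly generated by the rotations around building-walls crossing $R$ and not merely contained in the group they generate. This amounts to checking that every such rotation already lies in $\Gamma_I$ (handled by the preceding-remark normal form) together with the fact that the ``obvious'' rotations $s^\alpha$, $s \in I$, which generate $\Gamma_I$ by the presentation in Definition~\ref{def graphprod}, are themselves among the rotations around building-walls crossing $R$ — the latter being a direct consequence of Fact~\ref{prop defcadrangp} applied inside $R$. Everything else is a matter of transcribing the definitions.
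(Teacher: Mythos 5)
Your proposal is correct and follows essentially the same route as the paper: the paper states this as a Fact without proof, asserting it follows "by the definitions of the action and the residues" together with the observation, made immediately beforehand, that any rotation around a building-wall crossing $g\Sigma_I$ has the form $g\gamma s^\alpha\gamma^{-1}g^{-1}$ with $s\in I$, $\gamma\in\Gamma_I$. Your write-up simply makes explicit this unwinding (simple transitivity of the action on chambers for $\mathrm{Stab}_\Gamma(R)=g\Gamma_I g^{-1}$, the walls of type $s\in I$ along $gx_0$ crossing $R$ via Fact \ref{prop defcadrangp}), which is exactly what the paper leaves to the reader.
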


The following result gives a converse to the preceding fact. We recall that a set of chambers is convex if it is convex for the combinatorial metric over the chambers (see Definition \ref{def convexe}).
 \begin{theo} \label{theo caracresidurab}
 Let $C\subset \ch$ be a convex set of chambers. Let $R= \bigcup_{x\in C} x \subset \Sigma$ and let $P_R$ denote the group generated by the rotations  around the building-walls that cross $R$. If $P_R$ stabilizes $R$, then $R$ is a residue in $\Sigma$. 
 \end{theo}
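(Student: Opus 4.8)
The plan is to show that $R$ is an $I$-residue for a suitable $I\subset S$, by first identifying a natural candidate for the type $I$ and a natural candidate for the base chamber, and then proving that $R$ coincides with the corresponding residue using convexity and the characterization of $\dc{\cdot,\cdot}$ via separating building-walls (Proposition \ref{prop distcombigrap}). Fix a chamber $x\in C$; after translating by an element of $\Gamma$ (which sends convex sets to convex sets, residues to residues, and building-walls to building-walls, and conjugates $P_R$ accordingly), I may assume $x=x_0$. Let $I$ denote the set of types of the building-walls that both cross $R$ and are along $x_0$; this is the candidate type, mimicking the third item of Fact \ref{fact residuebasegp}. The goal is then to prove $R=\Sigma_I$, equivalently $\Ch{R}=C=\Gamma_I$.

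First I would prove the inclusion $\Ch{\Sigma_I}\subset C$. Here the hypothesis that $P_R$ stabilizes $R$ is the crucial input: the rotations around the building-walls crossing $R$ generate $P_R$, and by construction those rotations along $x_0$ have types in $I$, so one checks that $P_R$ actually contains $\Gamma_I$ (every generator $s\in I$ arises as the type of a building-wall along $x_0$ crossing $R$, hence the corresponding rotation $s^\alpha$ lies in $P_R$; an induction on word length in $\Gamma_I$, using that $P_R$ is a group stabilizing $R$, upgrades this to $\Gamma_I\subset P_R$ and to $\Gamma_I x_0\subset \Ch{R}$). Since $P_R$ stabilizes $R$ and $x_0\in C$, the orbit $\Gamma_I x_0=\Ch{\Sigma_I}$ is contained in $C$.

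Next I would prove the reverse inclusion $C\subset \Gamma_I$ using convexity. Let $y\in C$ and let $x_0=c_0\sim c_1\sim\cdots\sim c_m=y$ be a minimal gallery; by convexity of $C$ it lies in $C$, so it suffices to show each $c_k\in\Gamma_I$, and by induction it is enough to show that if $c_k\in\Gamma_I$ and $c_{k+1}\in C$ is adjacent to $c_k$, then $c_{k+1}\in\Gamma_I$. The building-wall $M$ separating $c_k$ and $c_{k+1}$ crosses $R$ (both chambers lie in $C=\Ch{R}$ on opposite sides of $M$), so by minimality and the fact that $\Gamma_I x_0\subset \Ch{R}$ together with $P_R\supset \Gamma_I$, one argues that $M$ is $P_R$-conjugate into a building-wall along $x_0$, hence has type in $I$; therefore the adjacency $c_k\sim c_{k+1}$ is of a type in $I$ and $c_{k+1}\in\Gamma_I$. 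Combining the two inclusions gives $\Ch{R}=\Gamma_I$, i.e. $R=\Sigma_I$ is the $I$-residue containing $x_0$, and undoing the initial translation yields the theorem.

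The main obstacle I anticipate is the bookkeeping in the inclusion $C\subset\Gamma_I$: one must control the type of an arbitrary building-wall $M$ crossing $R$ and relate it, via the action of $P_R$, to a building-wall that is along $x_0$. This requires knowing that $P_R$ acts transitively enough on the relevant building-walls — essentially that every building-wall crossing $R$ is the image under an element of $\Gamma_I$ of one that is along some chamber of $C$, and then of one along $x_0$ — which is where the convexity of $C$ and the already-established identity $\Gamma_I x_0\subset\Ch{R}$ must be used carefully together with Proposition \ref{prop distcombigrap}. The orthogonality dichotomy for building-walls (a building-wall crossing $R$ is either orthogonal to, or parallel to, a given one along $x_0$) should streamline this step.
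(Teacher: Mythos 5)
Your proposal is correct and follows essentially the same route as the paper: the same type set $I$ (types of the building-walls crossing $R$ along $x_0$), convexity applied to minimal galleries, and the stabilization hypothesis on $P_R$; the only organizational difference is that you prove $C=\Gamma_I x_0$ by two inclusions, whereas the paper first shows $P_R$ acts freely and transitively on $C$ and then identifies $P_R=\Gamma_I$ by an induction stripping letters at $x_0$. The "transitivity" obstacle you flag is not a real one: in your induction step the wall $M$ is along $c_k=gx_0$ with $g\in\Gamma_I\subset P_R$, so $g^{-1}M$ is a building-wall along $x_0$ that crosses $g^{-1}R=R$, and since the $\Gamma$-action preserves types, $M$ has type in $I$ as needed.
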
  
\begin{proof} Up to a translation on $R$ and a conjugation on $P_R$, we can assume that $x_0\subset R$.
 We start by  proving that  $P_R$ acts freely  and transitively on the set of chambers $C$.  The action of $\Gamma$ is free thus the action of $P_R$ is free. For  $x \in C$, by convexity of $C$, there exists a gallery \[x_0\sim x_1\sim \dots \sim x_\ell=x\]   of distinct chambers  in  $C$. Let  $M_i$ be the  building-wall  of type $s_i\in S$   between $x_{i-1}$ and $x_i$.  Then  \[s_1^{\alpha_1} x_0=x_1,\ s_1^{\alpha_1} s_2^{\alpha_2} x_0= x_2 , \ \dots, \ s_1^{\alpha_1} \dots s_\ell^{\alpha_\ell} x_0= x,\]for some exponent $\alpha_i \in \Z$.

We notice that  $s_1^{\alpha_1} \dots s_{i-1}^{\alpha_{i-1}} s_i (s_{1}^{\alpha_1} \dots s_{i-1}^{\alpha_{i-1}} )^{-1}$ is a rotation around $M_i$. Therefore $x$ may be obtained from $x_0$ by  successive rotations around the building-walls $M_i$. These building-walls cross  $R$, thus the action is transitive. This proves that $R=P_Rx_0$ and $\mathrm{Stab}_\Gamma(R)=P_R$. It remains to prove that $P_R$ is of the form $\Gamma_I$ for a certain $I\subset S$.
 
We set $I\subset S$   the set of all the types of the building-walls that cross $R$ along  $x_0$ and  we identify  $P_R$ with $\Gamma_I$. The inclusion $\Gamma_I < P_R$ comes from the definitions of $P_R$ and $I$.  We proceed by induction on  $\dc{x_0,gx_0}=\ell$ to check that every element  $g$ of $P_R$ is a product of elements of $\Gamma_I$. If $\ell=0$, there is nothing to say.  If $\ell>0$ we choose  $g=s_1^{\alpha_{ 1}} \dots s_\ell^{\alpha_{\ell}} $ such that $\dc{x_0,gx_0 }=\ell$. By convexity,  $s_1 ^{\alpha_{ 1}} x_0 \in C$ so $s_1\in \Gamma_I$. Indeed  $s_1$ is a rotation around a building-wall that crosses  $R$ along $x_0$. Then $\dc{x_0, s_1 ^{-\alpha_{ 1}}gx_0}=\ell-1$ and $s_1 ^{-\alpha_{ 1}} g \in P_R$. The induction assumption allows us to conclude. 
\end{proof}

In particular, this last theorem is used in Subsection \ref{subsecpara} when we discuss the boundaries of the residues in the hyperbolic case.  Finally we recall that the intersections of parabolic subgroups  in $\Gamma$ (resp. in $W$) is again a parabolic subgroup.

\subsection{$\Sigma$ as a metric space}

  A natural geodesic metric on $\Sigma$ is obtained as follows. We designate by $D$ the Davis chamber of $\Gamma$.  
   We recall that $D$  is obtained from $D^{(1)}$ by attaching  a  $k$-cube to every subgraph   that is isomorphic to the $1$-skeleton of a $k$-cube. Now, for any $k$,  we equip each $k$-cube of $D$ with the Euclidean metric of the $[0,1]^k$.

The polyhedral metric $d(\cdot,\cdot)$ induced on $\Sigma$ by this construction is geodesic and complete. Moreover, any automorphism of  $\Delta$ is an isometry of $(\Sigma,d)$. In particular, $\Gamma$ acts geometrically on $(\Sigma,d)$ and, as $\Gamma$ is assumed to be hyperbolic,  $(\Sigma,d)$ is a hyperbolic metric space.

In $(\Sigma,d)$ the building-walls are convex and connected subsets and we can precise this descriptions using geodesic rays. 
Let $M\in \bw$ be of type $s$, let $x\in \ch$ such that $M$ is along $x$ and let $\sigma_s$ be  the maximal face of type $s$ of $x$. We denote by $\mathrm{Ext}(\sigma_s)$ the set of all the geodesic rays such that there exists $\epsilon>0$ with $r([0,\epsilon)) \subset \sigma_s$. Then  $M=\{r(t) : r\in \mathrm{Ext}(\sigma_s) \text{ and } t\in [0,+\infty)\}$. 
  
However in the case when $W$ is a reflection group of the hyperbolic space $\h^d$ it seems more natural to equip $D$ with the hyperbolic metric. Then $D$ is isometric to the Coxeter polytope provided by $W$. We designate by $d'(\cdot,\cdot)$ the piecewise hyperbolic metric on $\Sigma$ induced by this construction. This metric satisfies the same properties stated above (geodesic, complete, hyperbolic and admits a geometric action of $\Gamma$).
The  two metrics $(\Sigma,d)$ and $(\Sigma,d')$ are quasi-isometric. Since  our goal is to study $\borg$,  it makes no difference to consider $(\Sigma,d)$ or $(\Sigma,d')$.  However, the arguments presented in   Sections \ref{seccurves inPLS}, \ref{sec bord topo met}, \ref{secmoduleapartmoduleimmeuble}, and \ref{sec applictethickeness}  hold in the generic case so we consider  $(\Sigma,d)$ in those sections.   For  Section \ref{sec result}, which focuses on hyperbolic reflection groups, it will be more convenient to consider $(\Sigma,d')$.

 \subsection{Boundary of the building}
  \label{subsec BofLFRAHB}

 In this subsection we describe some basic properties of $\borg$. In the sequel, we use the geometric action of $\Gamma$ on $(\Sigma,d)$ to identify $\borg$ and $\partial \Sigma$. Now consider a building-wall  $M$  of type $s$. We have  $ \partial M\simeq \partial \mathrm{Stab}_\Gamma(M)$ using the previous identification. Consider a subgroup $P<\Gamma$, as $\Gamma$ is hyperbolic, either   $\partial P= \partial \Gamma$, or  $\partial P$ is of  empty interior. Hence, here there are two possible cases:
 \liste{\item    $\partial M\simeq \partial \Gamma$,
  \item    $\Int{\partial M}$ is  empty.}

Moreover, the hyperbolicty assumption implies the following lemma.
 \begin{lem} \label{lem intersec bord des murs}Let $M$ and $M'$ be two distinct building-walls. If $M\parallel M'$ then $\partial M \cap \partial M' =\emptyset$.  
\end{lem}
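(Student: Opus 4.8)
The statement asserts that two parallel building-walls have disjoint boundaries. The natural approach is to suppose, for contradiction, that $\xi \in \partial M \cap \partial M'$ with $M \parallel M'$, and to derive a contradiction with hyperbolicity. The key geometric input is that $M$ and $M'$ are convex subsets of $(\Sigma, d)$ (recalled just before the lemma), so each carries a well-defined boundary at infinity, and a point $\xi$ common to both boundaries is represented by a geodesic ray staying within bounded distance of each. First I would fix a geodesic ray $r$ from the base point $x_0$ representing $\xi$; by convexity of $M$ and $M'$ there are geodesic rays $r_M \subset M$ and $r_{M'} \subset M'$ also representing $\xi$, hence $r_M$ and $r_{M'}$ remain at bounded Hausdorff distance of one another and of $r$. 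The plan is to show this forces $M$ and $M'$ to intersect, contradicting $M \parallel M'$.

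The heart of the argument is a separation/coarse-geometry observation. Since $M \parallel M'$, by the remark following the orthogonality notation there exist dials $D$ bounded by $M$ and $D'$ bounded by $M'$ with $M' \subset D$ and $M \subset D'$; in particular $M$ and $M'$ lie on opposite sides in the sense that a whole ``slab'' of $\Sigma$ separates them. More precisely, there is a building-wall configuration and a chamber-metric estimate (via Proposition \ref{prop distcombigrap}) showing that the number of building-walls separating a chamber near $M$ from a chamber near $M'$ grows, or at least that there is a uniform gap: any chamber of $M$ and any chamber of $M'$ are separated by at least one building-wall (namely $M$ itself, or $M'$, depending on which side one argues from). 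The point I would make rigorous is that a ray $r_M$ lying in $M$ and a ray $r_{M'}$ lying in $M'$ that converge to the same boundary point would have to cross this separating wall structure in a way incompatible with both staying inside $M$ (resp.\ $M'$) forever: one of the rays would eventually have to enter the dial containing the other wall, forcing $M \cap M' \neq \emptyset$ or forcing the rays to diverge. Concretely, if $M \subset D'$ where $D'$ is a dial bounded by $M'$, then $r_M$ lies entirely in $D'$, so its endpoint $\xi$ lies in $\overline{D'}$'s boundary; but $r_{M'} \subset M' = \partial D'$ in the relevant sense, and a boundary point of $M'$ that is also a limit of a ray strictly inside $D'$ would contradict that $M'$ has empty interior in its boundary together with the fact (from the case analysis preceding the lemma, or from hyperbolicity) that $\partial M'$ cannot ``absorb'' an interior direction of $D'$.

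The cleanest route, and the one I would try first, uses hyperbolicity directly: in a $\delta$-hyperbolic space, if two convex subsets $M, M'$ have a common boundary point, then they are within finite Hausdorff distance along a sub-ray toward that point (this is standard: two rays to the same point are asymptotic, and convexity lets us pull them into $M$ and $M'$). So there is a constant $K$ and sequences $x_n \in M$, $y_n \in M'$ with $d(x_n, y_n) \leq K$ and $d(x_0, x_n) \to \infty$. Now translate this into the chamber system: each $x_n$ lies in a chamber $c_n$ meeting $M$, each $y_n$ in a chamber $c_n'$ meeting $M'$, with $\dc{c_n, c_n'}$ bounded (the polyhedral metric and the chamber metric are quasi-isometric). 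But $M \parallel M'$ means $M$ separates $c_n'$ from $M'$ or $M'$ separates $c_n$ from $M$ — in any case, every gallery from $c_n$ to $c_n'$ must cross a fixed separating wall, and because $M$ and $M'$ are genuinely distinct parallel walls, one shows the separation persists, which is fine with bounded $\dc{c_n,c_n'}$; the contradiction instead comes from noting that the common boundary point $\xi$ then lies in $\partial M$, and since $\Int{\partial M}$ is empty or $\partial M = \partial \Gamma$, combined with $\xi \in \partial M'$ and the parallelism, one forces $M$ and $M'$ to bound overlapping dials. \textbf{The main obstacle} I anticipate is making the last step precise: turning ``asymptotic rays inside two parallel walls'' into a genuine contradiction requires carefully using that parallel building-walls bound a pair of dials $D, D'$ with $M' \subset D$, $M \subset D'$ — i.e.\ there is a wall strictly between them — and showing a ray cannot stay on the ``far side'' of a wall it is asymptotic to from the other side. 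This is essentially the statement that a geodesic ray asymptotic to a point on a wall $M'$, but lying entirely in a dial not containing $M'$ on its boundary, cannot exist; I would prove it by a $\delta$-thin-triangle argument comparing the ray $r_M$, the ray $r_{M'}$, and a geodesic segment realizing the distance across the separating wall.
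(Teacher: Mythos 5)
There is a genuine gap: your argument never brings in the rotations around $M$ and $M'$, and without them no contradiction can be reached. All the metric information you assemble --- a ray $r_M\subset M$ and a ray $r_{M'}\subset M'$ asymptotic to the same point $\xi$, at bounded Hausdorff distance, with $M$ and $M'$ disjoint (even at distance bounded below) --- is perfectly consistent with Gromov hyperbolicity and convexity alone: two disjoint asymptotic geodesics in $\mathbb{H}^2$ share a boundary point, and the two convex sets $L\times\{0\}$ and $L\times\{1\}$ in the hyperbolic space $\mathbb{H}^2\times[0,1]$ (with $L$ a geodesic line) are disjoint, at distance exactly $1$, and have the same boundary. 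Hence no $\delta$-thin-triangle comparison of $r_M$, $r_{M'}$ and a connecting segment can by itself produce a contradiction; some structure specific to building-walls must be used. Moreover, several of your intermediate claims fail: parallelism does not give a wall strictly between $M$ and $M'$ (take the walls through two non-adjacent faces of a single chamber --- no wall separates them); a geodesic ray lying in a dial bounded by $M'$ can certainly converge to a point of $\partial M'$, so the ``cannot exist'' statement you isolate as the main obstacle is false as stated; and ``$M$ and $M'$ bound overlapping dials'' is not a contradiction, since for parallel walls the dial of $M$ containing $M'$ and the dial of $M'$ containing $M$ always overlap in the slab between them.

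The paper's proof supplies exactly the missing ingredient. Assuming $\partial M\cap\partial M'\neq\emptyset$, it takes a geodesic ray $\gamma\subset M$ with $\mathrm{dist}(\gamma(t),M')\leq K$ for all $t$, and notes that $M\cap M'=\emptyset$ together with the chamber structure gives a lower bound $K'\leq\mathrm{dist}(\gamma(t),M')$. The decisive step is then to consider the group $\Gamma'$ generated by a rotation $r$ around $M$ and a rotation $r'$ around $M'$: this group is (or contains) an infinite dihedral group, and the orbit $\Gamma'\gamma$ is (or contains) a subset quasi-isometric to a Euclidean half-space, which is impossible in the hyperbolic space $(\Sigma,d)$. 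It is this unfolding of the band between $M$ and $M'$ by the two rotations that converts the bounded-distance configuration into a quasi-flat and hence into a contradiction; your proposal stops at the band and therefore cannot conclude.
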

\begin{proof}
Let $\gamma : [0,+\infty) \longrightarrow \Sigma$ be a geodesic ray contained in $M$. For  simplicity we denote by $\gamma$ the image of $\gamma$. Assume that there exists $K >0$ such that $\di{\gamma(t), M'}\leq K$ for every $t\geq 0 $. Since $M\cap M' =\emptyset $ and because of the chamber structure, there exists $K' >0$ such that $K'\leq \di{\gamma(t), M'}$ for every $t\geq 0 $.

Now let $\Gamma'$ be the group generated by a rotation $r$ around $M$ and a rotation $r'$ around $M'$. If the rotations are of order two, then $\Gamma'$ is an infinite dihedral group and    the subset $\Gamma'  \gamma \subset \Sigma$ is quasi-isometric to a Euclidean half space. If the rotations are of order larger than two   then $\Gamma' \gamma$ contains a proper subset quasi-isometric to a Euclidean half space.
\end{proof}
 
Now we can precise the description of the two cases above. In the first case, $s$ commutes with every   generator $r\in S$. In the Davis complex this means that all the other building-walls are orthogonal to $M$.  Then $\mathrm{Stab}_\Gamma(M)=\Gamma$.

 In the second case, there exists $r\in S$ that does not commute with $s$. In the Davis complex this means that there exists a building-wall $M'$ parallel to $M$. This implies that $\partial M\varsubsetneq \borg$. In this case, $\borg \backslash \partial M$ is the disjoint union $\Int{\partial D_0(M) }\sqcup \dots\sqcup \Int{\partial D_{q-1}(M)}$ where $D_0(M),\dots,D_{q-1}(M)$  are the dials of building bounded by $M$. Naturally a rotation  around $M$ extends to the boundary as an homeomorphism that permutes $\partial D_0(M),\dots, \partial D_{q-1}(M)$  and  fixes $\partial M$.
Moreover $\Int{\partial M}=\emptyset$, $\Int{\partial D_{i}(M)}\neq\emptyset$,  and the topological  boundary of $\partial D_{i}(M)$ in $\borg$ is $\partial M$ for any $i=0,\dots,q-1$.  
Concerning the dials of building, the following alternative holds. Let $D$ be a dial of building bounded by the building-wall $M$, then: \liste{\item either $\partial M=\partial D=\borg$,
\item or the topological boundary of $\partial D$ in $\borg$ is $\partial M$. In this case, $\Int{\partial D(M)}\neq\emptyset$ and $\Int{\partial M}=\emptyset$.}

 In \cite[Proposition 5.2.]{BourdonKleinerCLP}, it is proved that the boundaries of half-spaces in a hyperbolic Coxeter group form a basis for the visual topology on the visual boundary. In the case of right-angled building  the analogous statement holds.

 \begin{fact} \label{fact equivdestopo}
 The topology generated by $\{\partial D : D \in \dialgp   \}$ is equivalent to the topology induced by a visual metric on $\borg$.
 \end{fact}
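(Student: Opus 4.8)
The plan is to show that the two topologies have comparable neighborhood bases at every point $\xi \in \borg$, exploiting the fact that the collection $\{\partial D : D \in \dialgp\}$ is already known to behave well with respect to the building-wall combinatorics. First I would fix a visual metric $\delta$ on $\borg$ coming from the geometric action of $\Gamma$ on $(\Sigma, d)$, so that $\delta(\xi,\xi') \asymp e^{-\alpha \ell}$ where $\ell$ is the distance from the base chamber $x_0$ to a geodesic line $(\xi,\xi')$. The goal is then twofold: (i) every set $\Int{\partial D}$ is open for $\delta$, and (ii) every $\delta$-ball around a point $\xi$ contains a set of the form $\Int{\partial D}$ containing $\xi$, and conversely each $\Int{\partial D}$ containing $\xi$ contains a $\delta$-ball around $\xi$.

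Claim (i) is the easy half and follows from the structural description recalled just above the statement: for a building-wall $M$ with non-trivial boundary, $\borg \setminus \partial M = \Int{\partial D_0(M)} \sqcup \dots \sqcup \Int{\partial D_{q-1}(M)}$, and $\partial M$ is the topological boundary (for the visual topology) of each $\Int{\partial D_i(M)}$; hence each such set is visually open, and for $M$ with $\partial M = \borg$ there is nothing to prove. For the comparison of bases I would argue as follows. Given $\xi \in \borg$ and a scale $k$, consider the finite set of building-walls $M$ that are along some chamber at $d_c$-distance $\le k$ from $x_0$ and that separate $x_0$ from a geodesic ray representing $\xi$; let $D(M,\xi)$ be the dial bounded by $M$ on the side of $\xi$, and set $U_k(\xi) = \bigcap_M \Int{\partial D(M,\xi)}$. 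Using Proposition \ref{prop distcombigrap} (the combinatorial metric counts separating building-walls) together with the hyperbolicity of $(\Sigma,d)$, a geodesic line $(\xi,\xi')$ stays far from $x_0$ precisely when $\xi$ and $\xi'$ are not separated from $x_0$ by any ``near'' building-wall; quantitatively, $\delta(\xi,\xi') \asymp e^{-\alpha \ell}$ translates into: $\xi' \in U_k(\xi)$ forces $\ell \gtrsim k$ up to the hyperbolicity constant, and conversely $\ell \gtrsim k + \text{const}$ forces $\xi' \in U_k(\xi)$. This sandwiches $U_k(\xi)$ between two $\delta$-balls about $\xi$ of comparable radii $e^{-\alpha k}$, and since each $U_k(\xi)$ is a finite intersection of sets $\Int{\partial D}$, this gives the equivalence of the two topologies.

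The main obstacle I expect is the careful bookkeeping in the correspondence between ``$M$ separates $x_0$ from $\xi$, with $M$ along a chamber near $x_0$'' and the quantity $\ell = d(x_0, (\xi,\xi'))$ appearing in the visual metric estimate; one must control how a building-wall that separates $\xi$ from $\xi'$ but is not along a near chamber can still contribute, and conversely that finitely many near building-walls suffice to pin down $\xi$ up to visual scale $e^{-\alpha k}$. This is where the hyperbolicity of $(\Sigma,d)$, the convexity and geodesic-ray description of building-walls recalled above (each $M$ is $\{r(t): r \in \mathrm{Ext}(\sigma_s)\}$), and the thinness-versus-thickness of panels all enter: in a thin building this is essentially \cite[Proposition 5.2.]{BourdonKleinerCLP}, and the point is that retracting onto an apartment via $\pi_{A,x_0}$ and invoking Fact \ref{fact bwstructure} reduces the separating-wall count in $\Sigma$ to the separating-wall count in an apartment containing $x_0$ and a ray to $\xi$, to which the Coxeter-group statement applies. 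Once that reduction is in place, the rest is the routine two-sided estimate sketched above.
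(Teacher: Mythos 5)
The paper itself gives no proof of this Fact: it is asserted by analogy with the Coxeter case (\cite[Proposition 5.2]{BourdonKleinerCLP}), and the tools that would make it precise only appear later, in Section \ref{sec bord topo met}. So your plan has to stand on its own, and while its general circle of ideas (separating building-walls, the visual-metric estimate via the distance from $x_0$ to $(\xi,\xi')$, reduction to an apartment via $\pi_{A,x_0}$) is the right one, its central quantitative step fails. The walls you select for $U_k(\xi)$ are only required to be along a chamber at distance at most $k$ from $x_0$, but such a wall $M$ is an unbounded subcomplex: a pair $\xi\in\Int{\partial D_1(M)}$, $\xi'\in\Int{\partial D_0(M)}$ both lying very close to a point of $\partial M$ is separated by $M$ even though $d(\xi,\xi')$ is arbitrarily small and the geodesic $(\xi,\xi')$ is very far from $x_0$. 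Hence the implication ``$\ell\gtrsim k+\mathrm{const}$ forces $\xi'\in U_k(\xi)$'' is false, and $U_k(\xi)$ contains no ball about $\xi$ of radius comparable to $e^{-\alpha k}$; the largest such ball is governed by $d(\xi,\partial M)$, which is unrelated to $k$. Worse, if $\xi$ itself lies in $\partial M$ for one of the selected walls, then $\xi\notin\Int{\partial D(M,\xi)}$ and $U_k(\xi)$ need not even contain $\xi$, so it is not a neighborhood basis at such points.

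Both defects are repairable, but with different ingredients than your sketch supplies. For mere equivalence of topologies you need only: (a) each $\Int{\partial D}$ is visually open (immediate), and (b) for every $\xi$ and $\epsilon>0$ a single $D\in\dialgp$ with $\xi\in\Int{\partial D}\subset B(\xi,\epsilon)$. For (b), take the walls $M_j$ crossed by a minimal infinite gallery from $x_0$ to $\xi$: local finiteness forces $\di{x_0,M_j}\to\infty$ (only finitely many building-walls meet a given ball, and the $M_j$ are pairwise distinct); moreover at most $\#S$ of the $M_j$ can satisfy $\xi\in\partial M_j$, because two such walls cannot be parallel by Lemma \ref{lem intersec bord des murs}, and distinct orthogonal walls have distinct pairwise commuting types. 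Choosing $M_j$ far from $x_0$ with $\xi\notin\partial M_j$, the dial $D$ on the far side has $\xi\in\Int{\partial D}$ and $\dia{\partial D}\lesssim e^{-\alpha\,\di{x_0,M_j}}$ (beware: it is the distance from $x_0$ to the wall, not the index of the crossing step, that controls the diameter, since a gallery can fellow-travel a wall long before crossing it). The uniform ball-sandwich you were aiming for is true not for your $U_k(\xi)$ but for finite unions of shadows: by Propositions \ref{prop conesontinter}, \ref{prop carcprodgrompremiere} and \ref{prop ballsshadows}, the union of the shadows $v_y$ over chambers $y$ at distance $k$ with $\xi\in v_y$ is a finite union of finite intersections of sets $\partial D$ squeezed between balls of radius $\asymp e^{-\alpha k}$, and this is also the form of the statement actually used later (step iv) of the proof of Proposition \ref{propdupli}). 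Finally, note that read literally the Fact cannot hold with the closed sets $\partial D$ as subbasic open sets, since each has frontier $\partial M$; the neighborhood-basis formulation, which your use of interiors implicitly adopts, is the one to prove, and it should be stated as such rather than folded into the incorrect uniform estimate for $U_k(\xi)$.
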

 
 Finally, consider an apartment $A$ containing the base chamber $x_0$ and the retraction map $\pi_{A,x_0}:\Sigma \longrightarrow A$. This retraction maps any geodesic ray of $\Sigma$ starting from a based point $p_0\in x_0$ to a geodesic ray in $A$ starting from $p_0$. Hence  $\pi_{A,x_0}$ extends naturally to the boundaries and we keep the notation $\pi_{A,x_0}:\partial \Sigma \longrightarrow \partial A$ for this extension.

\begin{rem}
In \cite{DavisMeierTopo},   M.W. Davis and J. Meier described how properties of  connectedness  of $\borg$ are encoded in the combinatorial structure  of $\mathcal{G}$. We use a corollary of their result  in Subsection \ref{subsec control curveinpara}. 
\end{rem}

\begin{rem} 
 A classification of F. Haglund and F. Paulin  states that the construction presented in Subsection \ref{subsec def gp rab} describes all the right-angled buildings in the following sense.
  
\begin{theo}[{\cite[Proposition 5.1.]{HaglundPaulinImmeubles}}] Let $\Gamma$ be the graph product given by the pair $(\mathcal{G}, \{G_i\}_{i=1,\dots, n})$ as in Definition \ref{def graphprod}. Let $\Delta$ be the building of type $(W,S)$ associated with  $\Gamma$.  Assume that  $\Delta'$ is a building of type $(W,S)$ such that for any $s_i\in S$ the  $\{s_i\}$-residues of $\Delta'$ are of cardinality $\#G_i$. Then $\Delta$ and $\Delta'$ are isomorphic. 
\end{theo}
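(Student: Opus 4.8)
The plan is to build an isomorphism of chamber systems $\phi\colon\Delta\longrightarrow\Delta'$ explicitly, by induction along galleries issued from a base chamber, using crucially that $\cox$ is right-angled, so that the only braid relations in $W$ are the commutations $st=ts$ occurring when $m_{s,t}=2$. Fix base chambers $x_0$ of $\Delta$ and $x_0'$ of $\Delta'$, and recall that in a building of type $\cox$ every ordered pair of chambers $(c,d)$ has a Weyl distance $\delta(c,d)\in W$, that $\delta$ determines the building up to isomorphism, and that a bijection of chamber sets that preserves $\delta$ is an isomorphism of chamber systems (it preserves each $s$-adjacency, and conversely preserving all $s$-adjacencies preserves galleries of reduced type hence $\delta$). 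I will construct $\phi$ so that $\delta_{\Delta'}(x_0',\phi(c))=\delta_{\Delta}(x_0,c)$ for every chamber $c$, and so that $\phi$ respects the "gate'' (projection toward $x_0$) on every panel.

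Concretely, I would run the induction on $\ell(w)$, where $w=\delta_{\Delta}(x_0,c)$. Put $\phi(x_0)=x_0'$. Suppose $\phi$ has been defined, $\delta$-preservingly, on all chambers $c$ with $\ell(\delta_{\Delta}(x_0,c))<\ell$, and moreover — this strengthening is what makes the argument close — so that $\phi$ restricts to an isomorphism onto the corresponding residue of $\Delta'$ on every rank-$2$ residue of type $\{s,t\}$ with $m_{s,t}=2$ that is entirely contained in this set of chambers. Given $c$ with $\ell(w)=\ell$, choose a reduced decomposition $w=w's$ (so $\ell(w')=\ell-1$) and let $\pi$ be the $s$-panel through $c$; by the gate property the unique chamber $b$ of $\pi$ closest to $x_0$ satisfies $\delta_{\Delta}(x_0,b)=w'$, and every chamber of $\pi\setminus\{b\}$ is at Weyl distance $w$. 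By induction $b':=\phi(b)$ is defined, is the chamber of its $s$-panel $\pi'$ in $\Delta'$ closest to $x_0'$, and $\#\pi'=q_s=\#\pi$ — this is the only point where the thickness hypothesis enters; one then extends $\phi$ by a bijection $\pi\setminus\{b\}\longrightarrow\pi'\setminus\{b'\}$ and reads off $\phi(c)$ from it.

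The crux is that $c$ generally has several reduced decompositions $w=w_1s_1=w_2s_2=\cdots$, hence several candidate defining panels, so one must check that $\phi(c)$ is independent of the choice and that the strengthened hypothesis is restored. By Tits' solution of the word problem in a right-angled Coxeter group, any two reduced words for $w$ differ by a finite sequence of commutation moves $\cdots st\cdots\rightsquigarrow\cdots ts\cdots$ with $m_{s,t}=2$, so it suffices to treat two distinct right descents $s\neq t$ of $w$; since $\ell(w)<\infty$ forces $m_{s,t}<\infty$, we get $m_{s,t}=2$ and $w=vst$ reduced with $\ell(v)=\ell-2$. The chamber $c$, its $s$- and $t$-neighbours toward $x_0$, and the chamber at Weyl distance $v$ all lie in one $\{s,t\}$-residue, which in a right-angled building is canonically the product of an $s$-panel with a $t$-panel (a $q_s\times q_t$ grid), and likewise on the $\Delta'$ side; the proper faces of this residue have already been matched by the induction hypothesis, so the product structure forces the two candidate extensions of $\phi$ at $c$ to agree and to be grid-compatible — in practice one fixes the required panel bijection from the already-constructed data on this residue \emph{before} defining $\phi(c)$. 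This makes $\phi$ coherent and reinstates the strengthened hypothesis. Finally $\phi$ is surjective because every chamber of $\Delta'$ is gallery-connected to $x_0'$, injective because it preserves Weyl distance from $x_0$ and is fibrewise bijective (equivalently, the same construction with the inverse panel bijections yields a two-sided inverse), and a morphism of chamber systems by construction; hence it is the desired isomorphism. I expect the main obstacle to be precisely the bookkeeping in this coherence step: the induction hypothesis must be strong enough — compatibility on \emph{all} commuting rank-$2$ residues, equivalently on the building-walls and the dials of Fact \ref{prop defcadrangp} — for the commutation moves of the right-angled word problem to apply, and it is exactly the absence of braid relations other than commutations that makes this feasible. (This is the algebraic shadow of Haglund and Paulin's "tree-wall'' decomposition of right-angled buildings.)
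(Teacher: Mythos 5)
First, a point of comparison: the paper does not prove this statement at all --- it is quoted as background from \cite{HaglundPaulinImmeubles} (Proposition 5.1 there) and is not used in the paper's arguments --- so there is no internal proof to measure your proposal against; I can only assess it on its own terms and against the known tree-wall argument of Haglund and Paulin, whose general shape (build an isomorphism chamber by chamber, using that rank-$2$ spherical residues of a right-angled building are products of panels, hence determined by their two thicknesses) your sketch correctly reproduces. Your identification of where thickness and right-angledness enter is also correct: two distinct descents $s\neq t$ of $w$ generate a finite subgroup, hence $m_{s,t}=2$ in the right-angled case, and the corresponding residue is a $q_s\times q_t$ grid; this is exactly what fails for general types, where uniqueness is false.

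The genuine gap is that the coherence step --- which you yourself flag as the crux --- is asserted rather than established, and the strengthened induction hypothesis you formulate cannot be invoked at the moment you need it. You assume $\phi$ is an isomorphism on every $\{s,t\}$-residue \emph{entirely contained} in the already-treated chambers, but when you define $\phi(c)$ the relevant residue $R$ through $c$ is never entirely contained there: its chambers at Weyl distance $w=vst$ are precisely the ones being created. What you actually need is that $\phi$ is injective, type- and adjacency-preserving on the defined set (so that the images of the two gate panels of $R$ lie in a single $\{s,t\}$-residue $R'$ of $\Delta'$, again a grid of the same parameters), after which $\phi(c)$ is pinned down as the unique chamber of $R'$ that is $s$-adjacent to $\phi(c_s)$ and $t$-adjacent to $\phi(c_t)$. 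Even granting this reformulation, two substantive points remain open. First, when $w$ has three or more descents (pairwise commuting, so $c$ lies in a spherical residue of rank $\geq 3$ which is a product of panels), the pairwise definitions must be shown to coincide; the appeal to ``commutation moves suffice'' has to be implemented, either via the homotopy of minimal galleries or by working directly with the higher-rank product structure. Second, and more seriously, the bijections you choose freely on panels whose gate chamber has a single descent are not independent: the same panel can later be the ``far'' panel of two different grids whose ``near'' panels received independent choices, and the grid-compatibility you need then amounts to the nontrivial claim that parallelism transport between panels crossing a common building-wall is consistent around cycles. That consistency is essentially the content of the theorem; the clean way to secure it is to fix one bijection per building-wall using the decomposition of $\Sigma\setminus M$ into $q_s$ dials (Fact \ref{prop defcadrangp}), i.e.\ to label the chambers of every panel crossing $M$ by the dials they lie in --- which is exactly the tree-wall mechanism of Haglund--Paulin that you mention only parenthetically. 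Without some such device, your induction as written does not close.
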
  
\end{rem}

\section{Curves in connected parabolic limit sets} 
\label{seccurves inPLS}
 
As we will see with Example \ref{ex translation PLS}, parabolic limit sets (\emph{i.e} boundaries of residues of the building) play a key role in the proof of the CLP on boundaries of graph product. 
 
 In this section, we use the ideas of \cite[Section 5 and 6]{BourdonKleinerCLP} to prove Theorem \ref{theocourbedanspara}  which is the first   major step to prove the main result of this  article (Theorem \ref{theoprincip}).  The idea of this theorem is to control the modulus of the curves of a parabolic limit set by the modulus of curves in the neighborhood of a single curve. Then we  apply this theorem to  recover a  result about boundaries of right-angled Fuchsian buildings.

We use the notation and convention of Section \ref{sec LFRAHB}. In particular $\Gamma$ is a fixed graph product given by the pair $(\mathcal{G}, \{\Z/q_i\Z\}_{i=1,\dots,n})$. We identify the building $\Delta$ with its Davis complex $\Sigma$ equipped with the piecewise Euclidean metric.   The base chamber is   $x_0$. We  assume that $\Gamma$ and $\Sigma$ are hyperbolic and that  $\borg$ is arcwise connected. The metric on $\Ch{\Sigma}$ is denoted by $\dc{\cdot,\cdot}$.
Moreover, in this section we  equip $\borg$  with a  self-similar metric that comes from the action of $\Gamma$ on $\Sigma$.

\subsection{Parabolic limit sets in $\borg$}
 \label{subsecpara}

In this subsection we give some basic properties of  boundaries of parabolic subgroups. At the end of this subsection we will see that these subsets of the boundary cause difficulties in proving the CLP.

 \begin{déf} \label{defPLS}
Let $P=g\Gamma_I g^{-1}$ be a parabolic subgroup of $\Gamma$. If the limit set of $P$ in $\borg$ is non-empty, we call it   a \index{Parabolic limit set of type $I$} \emph{parabolic limit set}.   If moreover $\partial P \neq \borg $ the parabolic limit set is called a \emph{proper} parabolic limit set. 
 \end{déf}
 Equivalently we could  say that a subset $F\subset \borg$ is   a parabolic limit set if there exists a residue $g\Sigma_I$ such that $F$ is equal to $\partial (g\Sigma_I)$ under the canonical homeomorphism between $\borg$ and $\bori$. In the following we will frequently use this point of view about parabolic limit sets.

 The following notion of convex hull of a subset of the boundary will be used in this section.

\begin{déf} \label{def convexhull}Let $F$ be a subset of $\partial \Gamma$ containing more than one point and such that $\overline{F}\neq\borg$. Let \[\mathcal{D}^c(F)=\{ D\in \dialgp : F  \subset \borg   \backslash \partial D\}.\]
The  \index{Convex hull of $F \subset \borg$} \emph{convex hull of $F$} in $\Sigma$ is defined by
\[\cv{F}=  \Sigma \backslash \cup_{D \in \mathcal{D}^c(F)} D. \]
If  $\overline{F}=\borg$ then we set $\cv{F}=\Sigma$.
\end{déf} 

Clearly we can also write $\cv{F}= \bigcap_{D \in \mathcal{D}^c(F)}\Sigma \backslash   D$. Hence $\cv{F}$ is convex for both the geodesic metric on $\Sigma$ and the combinatorial metric over the chambers of $\Sigma$. Moreover we observe that  $F\subset \partial \cv{F}$.





\begin{exmp} \label{ex conv}
 Let $\partial P$ be a parabolic limit set and assume that $P=\Gamma_I$. Then we can verify that  $\cv{\partial P}=\Sigma_J$ where $J=I \cup \{s_j\in S: s_js_i=s_is_j \text{ for any } s_i\in I\}$.
 
In particular, if $M\in \bw$ then $\cv{\partial M}$ is the the union of all the chambers along $M$.

\end{exmp}
  
 In the following definition $\overline{\Sigma}= \Sigma \cup  \partial \Sigma$  and if $M$ is a building-wall  $\overline{M} = M \cup \partial M$.
 
\begin{déf}
\text{ }
\numeroti{
 \item Let $F$ be a subset of $\bori$. We say that a building-wall \emph{$M$ cuts $F$} if   there exist two distinct indices $i$ and $j$ such that $F$ meets both $\partial D_i (M)$ and  $\partial D_j (M)$.

\item If  $E_1 \subset \bori$ and  $E_2 \subset   \Sigma $ (resp. $E_2 \subset \bori$) 
 we say that \emph{a building-wall $M$ separates $E_1$ and $E_2$} if $E_1$ and $E_2$ are entirely contained in two distinct connected components of $\overline{\Sigma} \backslash \overline{M}$.}
\end{déf}

The proof of the following fact is identical to the proof of   \cite[Lemma 5.7]{BourdonKleinerCLP}.
 
\begin{fact}
Let $F$ be a subset of $\bori$. The  building-wall $M$ cuts $F$  if and only if  $M$ crosses $\cv{F}$ (see Definition \ref{defcross}).\end{fact}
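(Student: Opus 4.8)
The statement to prove is the Fact that, for a subset $F\subset\bori$, a building-wall $M$ cuts $F$ if and only if $M$ crosses $\cv{F}$. The proof will follow the pattern of \cite[Lemma 5.7]{BourdonKleinerCLP}, unwinding the definitions in both directions. The plan is to translate both conditions into statements about how $F$ sits relative to the dials $D_0(M),\dots,D_{q-1}(M)$ bounded by $M$, and then use the description of $\cv{F}$ as the complement of the dials $D\in\mathcal{D}^c(F)$ together with Definition \ref{defcross} for what it means for $M$ to cross a convex subset.

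First I would prove the forward implication. Suppose $M$ cuts $F$, so there are two distinct indices $i,j$ with $F\cap\partial D_i(M)\neq\emptyset$ and $F\cap\partial D_j(M)\neq\emptyset$. I claim no dial bounded by $M$ lies in $\mathcal{D}^c(F)$: indeed if $D$ is any dial bounded by $M$, then $D$ is one of $D_0(M),\dots,D_{q-1}(M)$, hence it fails to contain at least one of $\partial D_i(M)$ or $\partial D_j(M)$ in its complement, so $F\not\subset\borg\backslash\partial D$. Since $\cv{F}=\Sigma\backslash\bigcup_{D\in\mathcal{D}^c(F)}D$ contains every chamber that does not lie in a dial of $\mathcal{D}^c(F)$, and no dial of $M$ is in $\mathcal{D}^c(F)$, the convex hull $\cv{F}$ meets chambers on both sides of $M$ — more precisely it contains chambers in $D_i(M)$ and in $D_j(M)$ (the chambers adjacent to the carrier of $F$ in those dials), so $\cv{F}\backslash M$ is disconnected and $M$ crosses $\cv{F}$ by Definition \ref{defcross}. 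I would spell out the last step by noting that $\cv{F}$, being convex, is a subcomplex and that a convex subset meeting two distinct components of $\Sigma\backslash M$ is disconnected by $M$.

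For the converse, suppose $M$ does not cut $F$. Then $F$ meets $\partial D_i(M)$ for at most one index $i$; say $F\subset\overline{\partial D_i(M)}=\partial D_i(M)\cup\partial M$ for that $i$ (using that $\borg\backslash\partial M$ is the disjoint union of the $\Int{\partial D_j(M)}$, so any point of $F$ outside $\partial M$ lies in a unique such dial). Consider a dial $D_j(M)$ with $j\neq i$: then $F\subset\borg\backslash\partial D_j(M)$, so $D_j(M)\in\mathcal{D}^c(F)$ and hence $D_j(M)$ is removed in forming $\cv{F}$. Doing this for all $j\neq i$ shows $\cv{F}\subset\Sigma\backslash\bigcup_{j\neq i}D_j(M)=D_i(M)\cup M$ up to the chamber structure (more carefully, $\cv{F}$ contains no chamber strictly inside any $D_j(M)$ with $j\neq i$). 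Thus $\cv{F}$ lies on one side of $M$, so $\cv{F}\backslash M$ is connected and $M$ does not cross $\cv{F}$. Contrapositively, if $M$ crosses $\cv{F}$ then $M$ cuts $F$.

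The main obstacle, and the only genuinely delicate point, is bookkeeping at the building-wall $M$ itself: $\cv{F}$ may contain chambers that lie along $M$ on several sides even when $F\cap\partial D_j(M)=\emptyset$, because $\cv{F}$ is a closed convex set and $\partial M$ may be nonempty and shared by several dials. I would handle this by working at the level of chambers and the combinatorial notion of crossing: the relevant assertion is that $\cv{F}\backslash M$ is disconnected iff $\cv{F}$ contains chambers in two distinct components of $\Sigma\backslash M$, and I would check that $M$ crossing $\cv{F}$ forces $F$ itself to have points whose nearest chambers lie on different sides of $M$, using Example \ref{ex conv} (that $\cv{\partial M}$ is exactly the union of chambers along $M$) and the fact that $F\subset\partial\cv{F}$. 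Because the argument is a formal manipulation of the definitions of cut, cross, dial, and convex hull — all of which are already set up — I expect no serious difficulty beyond careful attention to the role of $\partial M$, exactly as in the cited lemma of Bourdon–Kleiner.
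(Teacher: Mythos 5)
The paper itself offers no argument here---it just points to \cite[Lemma 5.7]{BourdonKleinerCLP}---so the question is whether your sketch actually proves the statement, and in the forward direction it does not. First, your claim that no dial bounded by $M$ lies in $\mathcal{D}^c(F)$ is false once the thickness is at least $3$: if $F$ meets only $\Int{\partial D_i(M)}$ and $\Int{\partial D_j(M)}$, then every $D_k(M)$ with $k\neq i,j$ satisfies $F\subset \borg\setminus\partial D_k(M)$, hence does belong to $\mathcal{D}^c(F)$; only $D_i(M),D_j(M)\notin\mathcal{D}^c(F)$ is true. More seriously, even the corrected claim does not give your conclusion: $\cv{F}$ is $\Sigma$ minus the union of \emph{all} dials in $\mathcal{D}^c(F)$, bounded by all building-walls, so knowing that $D_i(M)$ and $D_j(M)$ themselves are not removed does not show that $\cv{F}$ contains a chamber of $D_i(M)\setminus M$ and a chamber of $D_j(M)\setminus M$---a priori dials bounded by other building-walls could cover one whole side. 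The missing step is the actual heart of the lemma, e.g.: take $\xi\in F\cap\Int{\partial D_i(M)}$; since $F\subset\partial\cv{F}$ there is a sequence in $\cv{F}$ converging to $\xi$, and since the limit set of $\bigcup_{k\neq i}D_k(M)$ is $\borg\setminus\Int{\partial D_i(M)}$, which avoids $\xi$, this sequence eventually lies in $D_i(M)\setminus M$. Your phrase ``the chambers adjacent to the carrier of $F$'' is not a substitute for this argument.

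Second, with the paper's definition of cutting via the \emph{closed} sets $\partial D_i(M)\supset\partial M$, the case where $F$ meets $\partial D_i(M)$ and $\partial D_j(M)$ only along $\partial M$ is a genuine instance of ``$M$ cuts $F$'', and it is exactly the delicate point you flag but then defer (``I would check\dots''); there the accumulation argument above does not apply, and one must show for instance that chambers along $M$ sufficiently close to an end $\xi\in F\cap\partial M$ lie in no removed dial, so that $\cv{F}$ meets at least two components of $\Sigma\setminus M$. This is not a formal manipulation of definitions. Relatedly, your converse direction is inconsistent with the paper's definition: if $M$ does not cut $F$ then, because $\partial M\subset\partial D_k(M)$ for every $k$, necessarily $F\cap\partial M=\emptyset$ and $F\subset\Int{\partial D_i(M)}$ for a single $i$; allowing $F\subset\partial D_i(M)\cup\partial M$ and then asserting $F\subset\borg\setminus\partial D_j(M)$ for $j\neq i$ is false for points of $F$ on $\partial M$. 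Restated with $F\subset\Int{\partial D_i(M)}$, your converse argument is fine; the forward implication, however, has a genuine gap as described.
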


The following corollary is an immediate consequence of the preceding fact and of  Theorem \ref{theo caracresidurab}
   
 \begin{coro}\label{corocaracpls}
Let $F$ be a subset of $\bori$ containing at least two distinct points and $P_F$ denote the group generated by the rotations around the building-walls that cut $F$. If $P_F$ stabilizes $F$, then $F$ is a parabolic limit set. 
 \end{coro}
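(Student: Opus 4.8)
The plan is to realize $F$ as the boundary of its convex hull in $\Sigma$ and then invoke the combinatorial characterization of residues, Theorem \ref{theo caracresidurab}. Concretely, I would set $R=\cv{F}$ and $C=\Ch{R}$. By Definition \ref{def convexhull} and the remarks following it, $R=\bigcap_{D\in\mathcal{D}^c(F)}(\Sigma\backslash D)$ is a union of closed chambers that is convex for the combinatorial metric on the chambers; hence $C$ is a convex set of chambers, $R=\bigcup_{x\in C}x$, and $F\subset\partial R$. So $R$ is the natural candidate residue, and everything reduces to checking the hypothesis of Theorem \ref{theo caracresidurab} for $R$ and then showing that $\partial R$ is not strictly bigger than $F$.

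To apply Theorem \ref{theo caracresidurab} I would verify two things. First, the group $P_R$ it attaches to $R$ is generated by the rotations around the building-walls that cross $R$; by the preceding fact these are exactly the building-walls that cut $F$, so $P_R=P_F$. Second, $P_F$ stabilizes $R$: the convex hull $\cv{F}$ is built from $F$ using only the action of $\Gamma$ on $\overline{\Sigma}=\Sigma\cup\borg$ by homeomorphisms, so for $g\in\Gamma$ with $gF=F$ and any $D\in\mathcal{D}^c(F)$ we have $gD\in\dialgp$ and $F=gF\subset\borg\backslash\partial(gD)$, that is $gD\in\mathcal{D}^c(F)$. Thus $g$ permutes $\mathcal{D}^c(F)$, hence stabilizes $\bigcup_{D\in\mathcal{D}^c(F)}D$ and therefore its complement $R$. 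Since the hypothesis of the corollary is precisely that $P_F$ stabilizes $F$, the group $P_R=P_F$ stabilizes $R$.

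Theorem \ref{theo caracresidurab} now yields that $R=\cv{F}$ is a residue, say $R=g\Sigma_I$, with $P_F=\mathrm{Stab}_\Gamma(R)=g\Gamma_Ig^{-1}$; in particular $\partial R=\partial(g\Sigma_I)$ is a parabolic limit set and $F\subset\partial R$. The one step that needs genuine care — and the step I expect to be the main obstacle — is the reverse inclusion $\partial R\subset F$. Since $g\Gamma_Ig^{-1}$ acts cocompactly on the hyperbolic space $R=\cv{F}$, its limit set is $\partial R$, and $F$ is a non-empty $g\Gamma_Ig^{-1}$-invariant subset of $\partial R$. If $\#\partial R\le 2$, then $\#F\ge 2$ forces $F=\partial R$; if $g\Gamma_Ig^{-1}$ is non-elementary, minimality of its action on $\partial R$ gives $\overline{F}=\partial R$, so $F=\partial R$ provided $F$ is closed — which holds in the intended applications, where $F$ is the image of a curve (or a closed union of such). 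In all cases $F=\partial(g\Sigma_I)$ is a parabolic limit set, which concludes the proof.
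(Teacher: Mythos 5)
Your proposal follows essentially the same route as the paper, whose entire proof is the one-line observation that the corollary follows from the fact that the building-walls cutting $F$ are exactly those crossing $\cv{F}$ together with Theorem \ref{theo caracresidurab}; your first two paragraphs are precisely that argument spelled out. Your final paragraph on the reverse inclusion $\partial \cv{F}\subset F$ addresses a point the paper passes over in silence (it implicitly identifies $F$ with $\partial\cv{F}$), and you are right that this step genuinely needs $F$ to be closed (via minimality of the parabolic subgroup's action on its limit set, or the two-point case), which indeed holds in the paper's only application of the corollary, namely Corollary \ref{coro CompoconnexePLS}, where $F$ is a connected component of a parabolic limit set.
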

 
 This characterization yields the following corollary concerning the connectedness of the parabolic limit sets.

\begin{coro} \label{coro CompoconnexePLS}
Let $\partial P$ be a parabolic limit set. Then every connected component $F$ of $\partial P$ containing more than one point is a parabolic limit set.
\end{coro}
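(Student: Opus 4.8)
The plan is to deduce this from the characterization of parabolic limit sets in Corollary~\ref{corocaracpls}. Up to translating by an element of $\Gamma$ we may assume $P=\Gamma_I$, so that $\partial P$ is the boundary of the residue $\Sigma_I$, and we may assume $\partial P\neq\borg$ (otherwise $\partial P=\borg$ is connected and there is nothing to prove). Fix a connected component $F$ of $\partial P$ with more than one point. Since $F$ is a subset of $\bori$ with at least two points, by Corollary~\ref{corocaracpls} it suffices to show that the group $P_F$ generated by the rotations around the building-walls that cut $F$ stabilizes $F$; and for this it is enough to prove that $r(F)=F$ for each such rotation $r$, say around a building-wall $M$ that cuts $F$.

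First I would check that $r$ preserves $\partial P$. Since $F\subset\partial P$, the wall $M$ also cuts $\partial P$ (directly from the definition of ``cuts''), hence $M$ crosses $\cv{\partial P}$ by the fact that a building-wall cuts a set if and only if it crosses that set's convex hull (applied to $\partial P$). By Example~\ref{ex conv}, $\cv{\partial P}$ is the residue $\Sigma_J$, where $J=I\cup\{s\in S:\text{$s$ commutes with every element of }I\}$; in particular $\Gamma_I$ is normal in $\Gamma_J$, because each generator of $\Gamma_J$ not already in $I$ commutes with all of $I$. Now $r$ stabilizes $\Sigma_J$ by Fact~\ref{fact residuebasegp}, so $r\in\mathrm{Stab}_\Gamma(\Sigma_J)=\Gamma_J$ normalizes $\Gamma_I=P$, and therefore $r(\partial P)=\partial(rPr^{-1})=\partial P$. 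Consequently $r$ restricts to a self-homeomorphism of $\partial P$ and permutes its connected components, so $r(F)$ is again a connected component of $\partial P$.

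It remains to see that $r(F)$ and $F$ actually coincide, and here I would use that $r$ fixes $\partial M$ pointwise. I claim $F\cap\partial M\neq\emptyset$: otherwise $F$ would be contained in $\borg\setminus\partial M=\bigsqcup_{k=0}^{q-1}\Int{\partial D_k(M)}$, and since $F$ is connected and nonempty it would lie inside a single open piece $\Int{\partial D_{k_0}(M)}$; but then $F$ could meet $\partial D_i(M)$ for no index $i\neq k_0$, since $\partial D_i(M)=\Int{\partial D_i(M)}\cup\partial M$ is disjoint from $\Int{\partial D_{k_0}(M)}$, contradicting that $M$ cuts $F$. So $F\cap\partial M$ is nonempty, and being fixed pointwise by $r$ it is contained in $r(F)\cap F$. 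Two connected components of $\partial P$ that meet are equal, hence $r(F)=F$, as desired. This holds for every generator of $P_F$, so $P_F$ stabilizes $F$, and Corollary~\ref{corocaracpls} yields that $F$ is a parabolic limit set.

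The step I expect to require the most care is showing that a rotation around a wall merely \emph{cutting} $F$ preserves the whole limit set $\partial P$: such a wall need only cross the possibly larger residue $\cv{\partial P}=\Sigma_J$ rather than $\Sigma_I$ itself, so one cannot conclude directly and must instead invoke the normality of $\Gamma_I$ in $\Gamma_J$. The connectedness argument locating $F\cap\partial M$ is routine given the description of $\borg\setminus\partial M$ recalled in Subsection~\ref{subsec BofLFRAHB}.
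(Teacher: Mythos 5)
Your proof is correct and follows essentially the same route as the paper's: invoke Corollary \ref{corocaracpls}, note that a rotation around any building-wall cutting $F$ stabilizes $\partial P$ and hence permutes its components, and use that such a rotation fixes $\partial M$ pointwise together with $F\cap\partial M\neq\emptyset$ to conclude $r(F)=F$. The only difference is that you spell out two steps the paper leaves implicit — the normality of $\Gamma_I$ in $\Gamma_J=\mathrm{Stab}_\Gamma(\cv{\partial P})$ justifying $r(\partial P)=\partial P$, and the connectedness argument giving $F\cap\partial M\neq\emptyset$ — both of which are correct.
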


\begin{proof}
 Let $M$ be a building-wall that cuts $F$. Since $M$ cuts $\partial P$ a rotation $r\in \Gamma$ around $M$ stabilize $\partial P$ so in particular it permutes the connected components of $\partial P$. With $r(\partial M\cap F)= \partial M\cap F$  we deduce that $r(F)=F$ and so $F$ is a parabolic limit set.

\end{proof}

 Finally the following example illustrates the difficulty caused by parabolic limit sets in proving the CLP.
 
\begin{exmp} \label{ex translation PLS}
Let $M\in \bw$ be a building-wall of type $s$ along the base chamber $x_0$. Let $P =\mathrm{Stab}_\Gamma (M)$. The group  $P$ is the parabolic subgroup which  is   generated by the generators $r\in S\backslash\{s\}$ such that  $rs=sr$. Moreover, as we recalled in Subsection \ref{subsec BofLFRAHB}, $\partial P\simeq\partial M$.
 Now we assume that $\partial P$ is a proper parabolic limit set and we pick $g\in \Gamma$. Now  we verify that \liste{\item either $g\partial P = \partial P$, 
\item or $ \partial P\cap g\partial P=\emptyset$.}
Indeed if  two building-walls $M_1$ and $M_2$ are distinct with $M_1\perp M_2$ then $M_1$ and $M_2$ are of distinct types. As $M$ and $gM$ are of the same type it follows that if $M\cap gM\neq \emptyset$ then $M=gM$ and  $g\partial P = \partial P$.  On the other hand, if  $M\cap gM = \emptyset$, by Lemma \ref{lem intersec bord des murs} the hyperbolicty implies that $ \partial P\cap g\partial P=\emptyset$. 

  Finally the set $\cup_{g\in\Gamma} g   \partial M$ is the union of countably many disjoint copies of $\partial M$. In the introduction    we recalled  that an efficient way to prove the CLP is to follow curves using bi-Lipschitz maps (see Section \ref{sec stepsofproof}). As $\Gamma$ acts by bi-Lipschitz homeomorphisms on its boundary, the first idea is to use $\Gamma$ to follow curves. However if a non-constant curve $\eta$ is contained in $\partial M$ then we cannot hope to follow   the curves of $\borg$ using $\eta$ and $\Gamma$. 
\end{exmp}

\begin{figure}[h!]
 
\centering
\includegraphics[height=5cm]{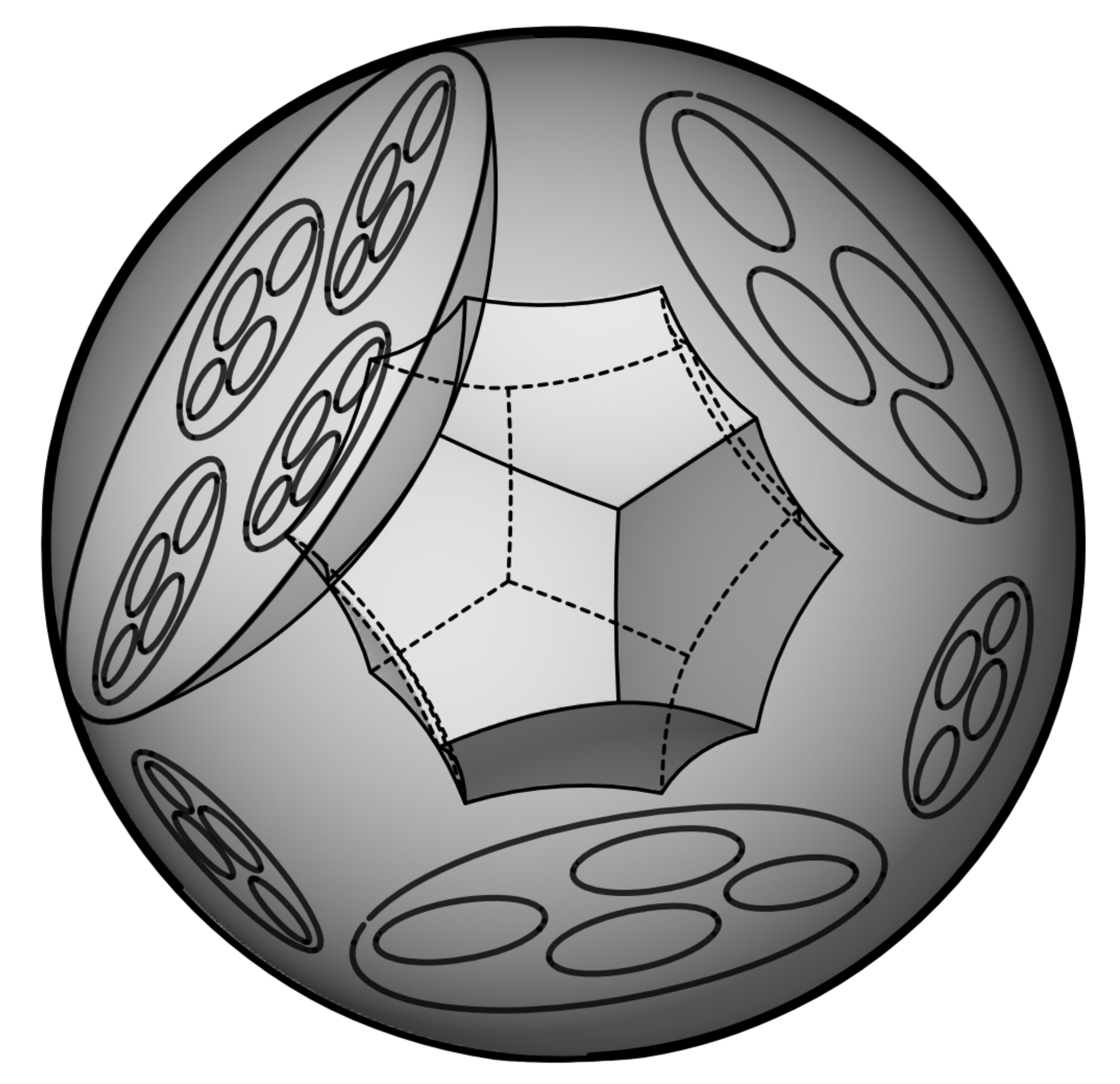}
\caption{Example \ref{ex translation PLS} on the boundary of a thin building}
\label{fig:D21}
\end{figure}

\subsection{Modulus of curves in connected parabolic limit set} 
\label{subsec control curveinpara}
In this subsection we apply the ideas of \cite[Section 5 and 6]{BourdonKleinerCLP} to $\Gamma$.
As in Subsection \ref{subsecdimconf},  $d_0$ denotes a small constant compared with $\dia{\borg}$ and with the constant of approximate self-similarity. Then $\F_0$ is the set of curves of diameter larger than $d_0$.
Here we prove that, from the point of view of the modulus, curves in a parabolic limit set are all the same (see consequences of   Theorem \ref{theocourbedanspara}).

Until the end of this  article, we  use the following notation:
\begin{nota}
Let $\partial P$ be a connected parabolic limit set in $\borg$. For $\delta,r >0$, let $\F_{\delta,r}(\partial P)$ denote \index{$\F_{\delta,r}(\partial P)$} the set of curves $\gamma$ in $\borg$ such that:  \liste{
\item $\dia{\gamma} \geq d_0$,
\item $\gamma \subset N_\delta(\partial P)$,
\item $\gamma \not\subset N_r(\partial Q)$ for any connected parabolic limit set $\partial Q \varsubsetneq \partial P$.
}
\end{nota}
 
 As we saw in Example \ref{ex translation PLS}, it is impossible to use the curves in the parabolic limit set to follow other curves.  Nevertheless, in this section we prove that these curves can be used to follow the curves in $\partial P$ (Proposition \ref{propdupli}). Then we deduce from this proposition a control of the modulus of the curves in parabolic limit sets (Theorem \ref{theocourbedanspara}). To this end we   use the following notion.

\begin{déf}
Let $L\geq0$ and $I$ a non-empty subset of $S$. A curve $\gamma$ in $\borg$ is called  a  \index{$(L,I)$-curve} \emph{$(L,I)$-curve} if \liste{\item $x_0 \subset \cv{\gamma}$,

\item for all $s\in I$, there exists a panel $\sigma_s$ of type $s$ inside $\cv{\gamma}$ with $\di{x_0,\sigma_s}\leq L$.}
\end{déf}

The next proposition says that curves in parabolic limit sets are $(L,I)$-curves.

\begin{prop} \label{propLI}
Let $I\subset S$ and $P=h \Gamma_I h^{-1}$. Then for all $r>0$, there exist $L>0$ and $ \delta >0$ such that if $x_0 \subset \cv{\gamma}$ and $\gamma \in \F_{\delta,r}(\partial P)$, then $\gamma$ is a $(L,I)$-curve.
\end{prop}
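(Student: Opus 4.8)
The plan is to translate the hypothesis ``$\gamma \in \F_{\delta,r}(\partial P)$'' into statements about building-walls crossing $\cv{\gamma}$, and then to use the characterization of parabolic limit sets (Corollary \ref{corocaracpls}) together with the geometry of residues to locate, for each $s \in I$, a panel of type $s$ close to $x_0$ inside $\cv{\gamma}$. First I would fix $I \subset S$ and $P = h\Gamma_I h^{-1}$; conjugating, there is no essential loss in assuming $P = \Gamma_I$, so $\partial P = \partial \Sigma_I$ and $\cv{\partial P} = \Sigma_J$ as in Example \ref{ex conv}. The key observation is that, by the defining conditions of $\F_{\delta,r}(\partial P)$, the curve $\gamma$ lies in a $\delta$-neighborhood of $\partial P$ but is \emph{not} contained in the $r$-neighborhood of any proper connected parabolic sub-limit-set $\partial Q \varsubsetneq \partial P$. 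Since $\partial P$ decomposes (via Corollary \ref{coro CompoconnexePLS}) into connected parabolic limit sets, and since the ``$s$-boundary pieces'' of $\partial P$ for $s \in I$ are exactly the proper parabolic limit sets $\partial D_i(M) \cap \partial P$ for building-walls $M$ of type $s$ cutting $\partial P$, the third condition forces $\gamma$ to ``cross'' a building-wall of each type $s \in I$ that cuts $\partial P$.

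Concretely, the main step is: for each $s \in I$, show there is a building-wall $M_s$ of type $s$, with $\di{x_0, M_s}$ bounded by a constant depending only on $I$, $h$ (absorbed by our normalization), and $r$, such that $M_s$ cuts $\gamma$ — equivalently, by the Fact preceding Corollary \ref{corocaracpls}, $M_s$ crosses $\cv{\gamma}$. For this I would argue by contradiction: if for a given $s \in I$ \emph{no} building-wall of type $s$ crosses $\cv{\gamma}$, or all those that do are at distance $> L$ from $x_0$, then combined with $x_0 \subset \cv{\gamma}$ and $\gamma \subset N_\delta(\partial P)$ one can confine $\gamma$ to the $r$-neighborhood of the proper parabolic limit set obtained by dropping the $s$-direction, namely $\partial(\Gamma_{I\setminus\{s\}}$-residue inside $\cv{\partial P})$, by choosing $\delta$ small and $L$ large relative to $r$ (using that $\borg$ carries a self-similar metric and the boundaries $\partial D$ of dials form a basis for the topology, Fact \ref{fact equivdestopo}). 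This contradicts the third bullet in the definition of $\F_{\delta,r}(\partial P)$. Once $M_s$ is produced, a panel $\sigma_s$ of type $s$ along $x_0$-side of $M_s$ — more precisely a panel of type $s$ crossed by $M_s$ and lying in $\cv{\gamma}$, which exists since $\cv{\gamma}$ is convex and contains $x_0$ and chambers on both sides of $M_s$ — satisfies $\di{x_0, \sigma_s} \leq L$ after possibly enlarging $L$ by a bounded amount (the panel sits on a minimal gallery realizing a crossing of $M_s$).

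Assembling: take $L$ to be the maximum over $s \in I$ of the bounds obtained above, and $\delta$ the minimum of the corresponding $\delta$'s; since $I$ is finite this is harmless. Then $x_0 \subset \cv{\gamma}$ by hypothesis, and for every $s \in I$ we have produced a panel $\sigma_s \subset \cv{\gamma}$ of type $s$ with $\di{x_0,\sigma_s}\leq L$, which is exactly the assertion that $\gamma$ is an $(L,I)$-curve.

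\textbf{Main obstacle.} The delicate point is the quantitative confinement argument in the contradiction step: from ``all type-$s$ walls crossing $\cv{\gamma}$ are far from $x_0$'' plus ``$\gamma \subset N_\delta(\partial P)$'' one must \emph{deduce} ``$\gamma \subset N_r(\partial Q)$'' for the appropriate proper $\partial Q$. This requires a careful comparison, using the self-similar visual metric, between combinatorial distance in $\Sigma$ (distance to a wall) and metric distance in $\borg$ (distance to $\partial Q$), i.e. one needs the fact that if every wall separating $\gamma$ from $\partial Q$ lies at combinatorial distance $\geq L$ from $x_0$, then $\gamma$ is within $r \asymp e^{-\alpha L}$ of $\partial Q$ in the visual metric — and conversely that $\gamma \subset N_\delta(\partial P)$ controls how far $\gamma$ can stray from $\partial P$ in the combinatorial picture. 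Making the dependence $L = L(r)$, $\delta = \delta(r)$ explicit and uniform over the finitely many $s \in I$ (and independent of $\gamma$) is the technical heart of the proof; I expect it to parallel the corresponding estimates in \cite[Section 5]{BourdonKleinerCLP} for Coxeter groups, with residues of the building playing the role of special subgroups.
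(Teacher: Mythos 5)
Your overall shape is right (argue by contradiction from the absence of nearby type-$s$ panels in $\cv{\gamma}$, and contradict the third bullet in the definition of $\F_{\delta,r}(\partial P)$), but the step you yourself flag as the ``technical heart'' is precisely the content of the proposition, and it is not supplied. From ``no type-$s$ wall crosses $\cv{\gamma}$ within combinatorial distance $L$ of $x_0$'' plus ``$\gamma \subset N_\delta(\partial P)$'' you assert that $\gamma$ is confined to $N_r$ of the parabolic limit set ``obtained by dropping the $s$-direction''. Two things are missing. First, the quantitative implication ``close to $\partial P$ and close to $\partial(\text{type-}(S\setminus\{s\})\text{ residue at }x_0)$ implies close to the boundary of their intersection'' is false without uniform control; nearness to two limit sets does not give nearness to the intersection limit set with constants independent of $\gamma$, and no such estimate is proved or cited. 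Second, the limit set you land on, $\partial \Gamma_{I\setminus\{s\}}$, need not be a \emph{proper} subset of $\partial P$: if $s$ commutes with all of $I\setminus\{s\}$ (so $\Gamma_I = \Gamma_{I\setminus\{s\}}\times\langle s\rangle$ with $\langle s\rangle$ finite) then $\partial\Gamma_{I\setminus\{s\}} = \partial\Gamma_I$ and no contradiction with the third bullet arises; the paper explicitly has to handle this case ($\partial Q = \partial P$) by rerunning the argument with another generator $s'\in I\setminus\{s\}$, and your assembled argument breaks there.

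The paper avoids the quantitative estimates altogether: it negates the statement to get a sequence $\gamma_n \in \F_{1/n,r}(\partial P)$ with $x_0\subset\cv{\gamma_n}$ and no type-$s$ panel in $B_c(x_0,n)\cap\cv{\gamma_n}$, then uses compactness of the space of non-degenerate continua in the Hausdorff metric to extract a limit continuum $\mathcal{L}\subset\partial P$, a diagonal argument to make $\cv{\gamma_{n+k}}\cap B_c(x_0,n)$ eventually constant, and deduces that $\mathcal{L}$ lies in the boundary of a parabolic subgroup $g\Gamma_J g^{-1}$ with $s\notin J$ (note the conjugator $g$: the limiting object need not be the standard residue at $x_0$ that your argument targets). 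Intersecting with $P$, passing to a connected component via Corollary \ref{coro CompoconnexePLS}, and using Hausdorff convergence gives $\gamma_n\subset N_r(\partial Q)$ for large $n$, contradicting $\gamma_n\in\F_{1/n,r}(\partial P)$ when $\partial Q\varsubsetneq\partial P$, with the case $\partial Q=\partial P$ handled separately. If you want to keep your direct, quantitative route, you would have to prove a uniform version of the confinement lemma (an explicit $L(r),\delta(r)$ comparison between combinatorial distance to walls and visual distance to parabolic limit sets, uniform over all curves in $\F_0$) and add the case analysis above; as it stands the proposal has a genuine gap.
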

\begin{proof}
We fix $r>0$ and we assume that for every integer $n\geq1$, there exists a curve $\gamma_n$ such that:\liste{\item $x_0 \subset \cv{\gamma_n}$, \item  $\gamma_n \in \F_{1/n,r}(\partial P)$, \item $\gamma_n$ is not a $(n,I)$-curve.} For  $n\geq1$, we designate the ball of center $x_0$ and of radius $n$ for the metric over the chambers by \[B_c(x_0,n)=\{x\in \ch : \dc{x_0,x}\leq n \}.\]
For simplicity we also designate by $B_c(x_0,n)$ the union of its chambers.
 Up to a subsequence we can suppose  that for a fixed $s\in I$, there is no panel of type $s$ in $B_c(x_0,n)\cap \cv{\gamma_n}$ for $n\geq1$.

We want to reveal a contradiction using the sequences $\{\gamma_n\}_{n\geq 1}$ and $\{\cv{\gamma_n}\}_{n\geq 1}$. According to \cite[p. 281]{MunkresTopo}
 the set of non-degenerate  continua in a compact metric space is a compact metric space with respect to the Hausdorff distance. Therefore, up to a subsequence, we can suppose that $\gamma_n$ tends to a non-degenerate continuum $\mathcal{L}\subset \partial P$. 

Since $x_0 \subset \cv{\gamma_n}$, using a diagonal argument we can also assume that, up to a subsequence, $\cv{\gamma_{n+k}}\cap B_c(x_0,n)$ is non-empty and constant for $k\geq 0$. We denote $C:= \bigcup_{n\geq 1} \cv{\gamma_{n }}\cap B_c(x_0,n)$. As we remarked before, if $F\subset \partial \Gamma$ contains more than one point  $F\subset \partial \cv{F}$. In particular   $\mathcal{L}\subset \partial C$. 

Now, by the assumptions on the sequence $\{\gamma_n\}_{n\geq 1}$ we observe that $C$ does not contain any panel of type $s$. Hence $\mathcal{L}$ is contained in the limit set of a parabolic subgroup of the form $g\Gamma_J g^{-1}$ with $s \notin J$. Moreover, we know that  intersections of parabolic subgroups are parabolic subgroups (see  \cite[Theorem 1.2]{ClaParaResidinBuildings}). Therefore $\mathcal{L}$ is contained in the limit set of the parabolic subgroup $Q'=P\cap g\Gamma_J g^{-1}$.  Let $\partial Q$ be the connected component of $\partial Q'$ that contains  $\mathcal{L}$. Thanks to Corollary \ref{coro CompoconnexePLS}, $\partial Q$ is a parabolic limit set. Now, as $\gamma_n$ tends to $\mathcal{L} \subset \partial Q$ with respect to the Hausdorff distance, we have that for $n\geq 0$ large enough $\gamma_n \subset N_r(\partial Q)$. 

Finally, if $\partial Q \varsubsetneq \partial P$, this reveals a  contradiction with $\gamma_n \in \F_{1/n,r}(\partial P)$ for every $n \geq 1$. If $\partial Q = \partial P$, then we apply the same reasoning with $s'\in I\backslash \{s\}$.

\end{proof}

An interesting feature of  $(L,I)$-curves is that these curves are  crossed by building-walls of type in $I$. Which means that from a $(L,I)$-curve,  we can follow curves using rotations around building-walls of type in $I$.

\begin{prop} \label{propdupli}
Let $\epsilon>0$, $L>0$ and $I$ be a non-empty subset of $S$. For $P=h\Gamma_I h^{-1}$, let  $\eta$ denote a curve contained in $\partial P$. Then there exists a finite subset $F\subset \Gamma$ such that for any $(L,I)$-curve $\gamma$ the set $\bigcup\limits_{g\in F} g \gamma$ contains a curve that belongs to $\U_\epsilon(\eta)$.
\end{prop}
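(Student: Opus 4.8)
The plan is to realize the curve $\eta$, up to a small $C^0$-error, as a concatenation of finitely many short pieces, each of which is a controlled rotation-image of a piece of the given $(L,I)$-curve $\gamma$. First I would exploit arcwise connectedness of $\borg$ and the fact (Fact~\ref{fact equivdestopo}) that the boundaries of dials form a basis for the topology to subdivide $\eta$ into finitely many subarcs $\eta = \eta_1 \cdots \eta_m$, chosen fine enough that each $\eta_i$ has diameter less than $\epsilon/2$ (say) and lies inside a small ``piece'' of $\partial P$ that can be rescaled, via an element of $\mathrm{Stab}_\Gamma(P) = h\Gamma_I h^{-1}$, to a piece of $\partial P$ of definite size. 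The number $m$ is bounded by a constant depending only on $\epsilon$, $L$, and the geometry, which is what will give finiteness of $F$ at the end.

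Next I would use the $(L,I)$-hypothesis. Since $x_0 \subset \cv{\gamma}$ and, for each $s\in I$, there is a panel $\sigma_s$ of type $s$ in $\cv{\gamma}$ within combinatorial distance $L$ of $x_0$, the building-walls of type $s\in I$ passing near $x_0$ actually \emph{cross} $\cv{\gamma}$, hence \emph{cut} $\gamma$ in the sense of the cutting definition. Applying rotations around such walls moves $\gamma$ to curves $r\gamma$ whose images overlap $\gamma$ along the wall, so that $\gamma \cup r\gamma$ is connected; iterating, for any word $w$ of bounded length in the generators of $\Gamma_I$ (conjugated by $h$, and by the bounded element carrying $x_0$ near the relevant panels), the union $\bigcup_{g} g\gamma$ over the corresponding finite set of $g$'s is a connected subset of $N_0(\partial P)$ that contains, near $x_0$, a copy of a definite-size neighborhood inside $\partial P$ of the arc structure we need. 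Concretely: because $\Gamma_I$ acts on $\partial P$ with the bi-Lipschitz / approximately self-similar structure (Proposition~\ref{propdefselfsim}), a bounded family of group elements suffices to drag a fixed definite-size subarc of $\gamma\cap\partial P$ onto a subarc that $C^0$-approximates any prescribed $\eta_i$; here one also uses that $\Gamma_I$ acts cocompactly on (the convex hull of) $\partial P$, so finitely many translates of a fixed fundamental-size arc cover everything.

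Then I would assemble: choose $F$ to be the (finite) union, over $i=1,\dots,m$, of the finite sets of group elements produced in the previous step for the piece $\eta_i$; since $m$ is bounded and each constituent set is bounded, $F$ is finite and depends only on $\epsilon$, $L$, and the fixed geometric data (not on $\gamma$). The subarcs $g\gamma$ with $g\in F$ then contain subarcs $\theta_i$ with $\theta_i \in \U_{\epsilon}(\eta_i)$ and with matching endpoints (up to $\epsilon$), so their concatenation $\theta = \theta_1\cdots\theta_m$ is a curve contained in $\bigcup_{g\in F} g\gamma$ lying in $\U_\epsilon(\eta)$, after a harmless reparametrization.

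The main obstacle, and the place I expect the real work to be, is the \emph{connectivity/endpoint-matching} of the pieces: one must ensure that the arcs $g\gamma$ overlap enough that a genuine continuous curve can be selected inside their union (not just that the union is connected), and that this selection stays within the $\epsilon$-tube of $\eta$ simultaneously at all scales of the subdivision. This is exactly the kind of argument carried out for Coxeter groups in \cite[Sections~5 and 6]{BourdonKleinerCLP}; the adaptation here requires care because rotations around a building-wall $M$ permute the $q$ dials $D_0(M),\dots,D_{q-1}(M)$ rather than merely swapping two half-spaces, so one has more room but also must track which dial one lands in. A secondary technical point is making the constant $L$ interact correctly with the approximate self-similarity constant when rescaling small subarcs of $\partial P$ back to definite size — this is where the precise choice of $d_0$ and the hypothesis ``$x_0\subset\cv{\gamma}$'' get used.
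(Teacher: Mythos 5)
There is a genuine gap at the core of your argument. The hypothesis that $\gamma$ is an $(L,I)$-curve only says that $x_0\subset\cv{\gamma}$ and that, for each $s\in I$, some panel of type $s$ lies in $\cv{\gamma}$ within distance $L$ of $x_0$; it does \emph{not} say that $\gamma$ meets, or is even close to, $\partial P$. So the object you rely on, ``a fixed definite-size subarc of $\gamma\cap\partial P$'', may simply not exist. Moreover, even if $\gamma$ did contain such a subarc, applying group elements (which act as bi-Lipschitz homeomorphisms) to a fixed arc cannot force its image to $C^0$-approximate an arbitrary prescribed piece $\eta_i$ of $\eta$: bi-Lipschitz rescaling controls sizes, not shapes, and cocompactness of $\Gamma_I$ gives you translates that \emph{cover} $\partial P$, not translates that \emph{track} a given curve within $\epsilon$ in the $C^0$ sense. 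This ``drag a subarc onto $\eta_i$'' step is exactly where your plan breaks down, and no amount of care in the gluing/endpoint-matching phase can repair it.

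The paper's proof avoids any pointwise approximation of $\eta$ by images of pieces of $\gamma$. The only consequence of the $(L,I)$-hypothesis it uses is that building-walls of each type $s\in I$ cut $\gamma$ within bounded distance of $x_0$ (your second paragraph gets this part right): hence, with $F_L=\{g:\vert g\vert\leq L\}$, the union $\bigcup_{g\in F_L}g\gamma$ is connected and contains a curve meeting $\partial M_s$ for every $s\in I$. That curve is then translated by elements $g_1,\dots,g_\ell$ running along a gallery inside the residue $\Sigma_I$, the translates overlapping on boundaries of walls of type in $I$, so that the connected union passes through the boundaries of any prescribed finite family of dials meeting $\Sigma_I$ properly. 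Finally, using Fact \ref{fact equivdestopo}, one chooses dials $D_1',\dots,D_{k+1}'$ whose boundaries form a neighborhood of $\eta$ inside its $\epsilon/2$-tube, entered by $\eta$ in order, together with dials $D_i$ with $\partial D_i\subset\partial D_i'\cap\partial D_{i+1}'$; the curve produced above passes through each $\partial D_i$, and the rotations $r_i^\alpha$ around the walls bounding the $D_i'$ are adjoined to $F$ precisely to correct the extracted curve so that it never leaves $\partial D_i'$ between consecutive visits. Membership in $\U_\epsilon(\eta)$ then comes from the choice of the $D_i'$, not from any piecewise approximation of $\eta$ by images of subarcs of $\gamma$. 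Your subdivision-and-glue scheme would need to be replaced by this ``force the connected union through a prescribed chain of dial boundaries'' mechanism to yield a correct proof.
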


\begin{proof}
We divide the proof in four steps. In this proof $M_s$ denotes the building-wall of type $s\in S$ along $x_0$.

i) First, we can suppose without loss of generality that $P= \Gamma_I$. Indeed, as $h\in \Gamma$ is a bi-Lipschitz homeomorphism of $(\borg,d)$, then if the property holds for $\Gamma_I$ it holds for $h\Gamma_I h^{-1}$.

ii) Now we prove that the following property holds. \begin{center}
\begin{minipage}[c]{12cm}\emph{The set $\bigcup_{g\in F_L} g \gamma$ contains a curve passing through $\partial M_s$ for every $s \in I$,}
\end{minipage}
\end{center}
where $F_L =\{g\in \Gamma : \vert g \vert \leq L  \}$ and $\vert g \vert= \dc{x_0,g x_0}$.

As $\gamma$ is a $(L,I)$-curve, for any $s\in I$ there exist $\alpha \in \Z$ and $g \in F_L$ such that $g x_0$ and $gs^\alpha x_0$ belong to $\cv{\gamma}$. We fix $s\in I$, $\alpha \in \Z$ and $g \in F_L$ as before and let $x_0\sim g_1 x_0 \sim  \dots \sim  g_\ell x_0$ be a gallery contained in $\cv{\gamma}$ with $g_{\ell-1}=gs^\alpha $ and $g_{\ell }=g $. For any $i= 0, \dots , \ell-1$, let  $M_i$ denote the building-wall separating $g_i x_0$ and $g_{i+1} x_0$. In particular,   $\partial M_i$  cuts $\gamma$ for any $i= 0, \dots , \ell-1$. This means that if  $M_i$ is of type $s_i$, then $\gamma \cap g_i s_i^{\alpha_i}  g_i^{-1} \gamma \neq \emptyset$ for any $\alpha_i \in \Z$. In particular, if $\alpha_i$ is such that $g_{i+1}= g_i s_i^{-\alpha_i}$ then  $\gamma \cap g_i g_{i+1}^{-1} \gamma \neq \emptyset$ for any $i= 0, \dots , \ell-1$.
  Hence the set $\gamma\cup g_{1}^{-1} \gamma \cup \dots \cup g_{\ell}^{-1} \gamma$ is arcwise connected and $ g_{\ell}^{-1} \gamma$ intersects $\partial M_s$. Thus the property is satisfied.

iii) Let  $\Sigma_I\subset \Sigma$ be the residue associated with $\Gamma_I$. We recall that this means $\Sigma_I=\Gamma_I x_0$. For each $1\leq i \leq k$ let $D_i$  be a dial of building bounded by the building-wall $M_i$. We assume that each $D_i$ intersects   $\Sigma_I$ properly (\emph{i.e.} $\Sigma_I \cap D_i \neq \emptyset$ and  $\Sigma_I \cap D_i \neq \Sigma_I$).  In particular, this means that the building-walls $M_1, \dots, M_k$ have their types contained in $I$.

 Now we prove that the following property holds. \begin{center}
\begin{minipage}[c]{12cm} \emph{There exists a finite subset $F_0\subset \Gamma$ such that for every $(L,I)$-curve $\gamma$ the set $\bigcup_{g \in F_0} g \gamma$ contains a curve passing through $\partial D_1, \dots, \partial D_k$.}
\end{minipage}
\end{center}
 For   $i=1,\dots , k$ pick $h_i \in \Gamma_I$ such that $M_i$ is along   $h_i x_0 \in \Sigma_I$. In particular, for any $i$, we can write  $M_i=h_i( M_s)$ where  $s\in I$ is the type of $M_i$. Let $g_1x_0 = h_1 x_0 \sim g_2 x_0 \sim  \dots \sim  g_\ell x_0 = h_k x_0$ be a gallery in $\Sigma_I$ passing through $h_1 x_0, \dots, h_k x_0$ in this order. 
 
Applying the second step of the proof, there exists  a curve $\theta$ in $\bigcup_{g \in F_L} g \gamma$ such that $\theta$ crosses $\partial M_s$ for  every $s\in I$. Therefore the set $\bigcup_{i=1,\dots , \ell} g_i\theta$ meets $g_i(M_s)$ for any $i=1,\dots,k$ and any $s\in I$. In particular,   it meets every $h_i (M_s)$ and  intersects every $\partial D_1, \dots, \partial D_k$.
 
We set $F_0 = \{g_ig\in \Gamma : \vert g \vert \leq L, 1\leq  i\leq \ell  \}$, and it is now enough to check that $\bigcup_{g \in F_0} g \gamma$  is arcwise connected.  For any $i=1,\dots , \ell-1 $ let $s_i\in I$ and $\alpha_i \in \Z$ be such that $g_{i+1}=g_i s_i^{\alpha_i}$. Then   $g_{i+1} \theta = (g_i s_i^{\alpha_i} g_i^{-1}) g_i \theta$. Since $\theta$ intersects any $\partial M_{s_i}$  then $g_{i} \theta \cap g_i(\partial M_{s_i}) \neq \emptyset$  and this intersection is fixed by $g_i s_i^{\alpha_i} g_i^{-1}$. Thus $g_{i} \theta \cap g_{i+1}\theta \neq \emptyset$.

iv) With Fact \ref{fact equivdestopo}, we can choose,  $D_1', \dots, D_{k+1}'$   a collection of dials of building such that the union of their boundaries is a neighborhood of $\eta$ contained in the $\epsilon/ 2 $ neighborhood of $\eta$.  We also assume that  $\eta$ enters in the boundaries of the  $D_1', \dots, D_{k+1}'$ in this order. For any $i=1,\dots,k+1$, let $r_i$ denote the rotation around the building-wall associated with $D_i'$. Let $D_1, \dots, D_{k}$ be a collection of dials of building such that  $\partial D_i \subset \partial D_i'\cap \partial D_{i+1}'$.  Applying the previous step of the proof, there exists a finite set $F_0\subset \Gamma$ such that for every $(L,I)$-curve $\gamma$ the set $\bigcup_{g\in F_0} g\gamma$ contains a curve passing through each $\partial D_1, \dots, \partial D_{k}$.

If for some $i=1,\dots,k+1$ the curve $\eta$ leaves $\partial D_i'$ then   $\theta \bigcup_{\alpha\in \Z} r_i^\alpha \theta$ contains a curve that does not leave $\partial D_i'$.
 Finally we set $F=\{r_i^\alpha g : \alpha\in \Z \text{ and } g \in F_0  \}$ and $F$ satisfies the desired property.
\end{proof}

  We   use the two preceding propositions to obtain a control  of $\modcomb{p}{\F_{\delta,r}(\partial P)}$. 
\begin{theo} \label{theocourbedanspara} There exists an increasing  function $\delta_0 : (0,+\infty) \longrightarrow (0,+\infty)$ satisfying the following property. Let $p\geq 1$, let $\eta \in \F_0$,  and let $\partial P$ be the smallest parabolic limit set containing $\eta$. Let $r>0$ be  small enough so that $\eta \not\subset \overline{N_r}(\partial Q ) $ for any connected parabolic limit set $\partial Q \varsubsetneq \partial P$ and  let $\delta<\delta_0 (r)$. Then for  $\epsilon>0$ small enough  there exists a constant $C=C(d_0,p,\eta,r,\epsilon)$ such that for every $k\geq 1$
\[C^{-1}\cdot \modcomb{p}{\U_\epsilon(\eta),G_k} \leq \modcomb{p}{\F_{\delta,r}(\partial P), G_k}\leq C \cdot \modcomb{p}{\U_\epsilon(\eta), G_k}.\]
Furthermore when $p$ belongs to a compact subset of $[ 1, + \infty)$ the constant  $C$ may be chosen independent of $p$.
\end{theo}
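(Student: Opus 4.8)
The plan is to establish the two inequalities separately, with the right-hand inequality being the more substantial one and the one driving the ``follow curves'' philosophy explained in Section~\ref{sec stepsofproof}.

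\emph{Lower bound $C^{-1}\cdot\modcomb{p}{\U_\epsilon(\eta),G_k}\leq\modcomb{p}{\F_{\delta,r}(\partial P),G_k}$.} First I would observe that, for $\delta$ small enough (this is where the function $\delta_0$ enters: one needs $\delta$ small relative to the distance of $\eta$ from the proper sub-parabolic-limit-sets, which is controlled by $r$ via Proposition~\ref{propLI}), every curve $\gamma\in\F_{\delta,r}(\partial P)$ lies in a small neighbourhood of $\partial P$ and thus is itself close, in the $C^0$-sense, to a genuine curve in $\partial P$; modulo translating by a group element (the self-similar metric makes $\Gamma$ act by bi-Lipschitz maps, Proposition~\ref{propdefselfsim}), one arranges $x_0\subset\cv{\gamma}$, and then Proposition~\ref{propLI} gives that $\gamma$ is an $(L,I)$-curve for $L$ and $\delta$ depending only on $r$. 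This reduces the lower bound to the situation where $\gamma$ ranges over $(L,I)$-curves. Applying Proposition~\ref{propdupli} to $\eta\subset\partial P$ and $\epsilon$ yields a finite $F\subset\Gamma$ such that $\bigcup_{g\in F}g\gamma$ contains a curve in $\U_\epsilon(\eta)$ for every such $\gamma$; the exact computation of the introduction (the proof accompanying property $(P)$ in Section~\ref{sec stepsofproof}, with $\rho'(v)=\sum_{g\in F}\sum_{gw\cap v\neq\emptyset}\rho(w)$) then transfers a $\U_\epsilon(\eta)$-admissible function to an $\F_{\delta,r}(\partial P)$-admissible one with $p$-mass multiplied by a constant $N^p\cdot\#F$ depending only on $\#F$, the bi-Lipschitz constants of elements of $F$ and the doubling constant; since $F$ depends only on $(d_0,\eta,r,\epsilon)$ and not on $p$, this constant is bounded uniformly when $p$ stays in a compact set.

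\emph{Upper bound $\modcomb{p}{\F_{\delta,r}(\partial P),G_k}\leq C\cdot\modcomb{p}{\U_\epsilon(\eta),G_k}$.} Here the roles are reversed: I would run the same ``follow curves'' argument in the other direction, now following an arbitrary curve of $\F_{\delta,r}(\partial P)$ \emph{starting from} curves near $\eta$. Concretely, every $\gamma\in\F_{\delta,r}(\partial P)$ is (after a group translation) an $(L,I)$-curve, so it is cut by building-walls of every type in $I$ along $x_0$ up to distance $L$; one shows, by the $(L,I)$-curve mechanism of Proposition~\ref{propdupli} applied with a fixed reference curve, that finitely many bi-Lipschitz images of any curve in $\U_\epsilon(\eta)$ cover a curve in $\U_{\epsilon'}(\gamma')$ for a suitable $\gamma'$, and then invoke Proposition~\ref{prop minofonctionmini} / the approximate self-similarity (Proposition~\ref{propscale}) to pass between curves of small diameter and curves in $\F_0$. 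The admissibility transfer is again the convexity estimate of Section~\ref{sec stepsofproof}, giving a multiplicative constant of the same shape, uniform in $p$ on compacta.

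\emph{Main obstacle.} The delicate point is the upper bound, and specifically the choice of $\delta_0$: one must guarantee that curves in $\F_{\delta,r}(\partial P)$ genuinely spread across $\partial P$ (are $(L,I)$-curves for uniform $L$) rather than concentrating near a proper sub-parabolic-limit-set, and this is exactly what Proposition~\ref{propLI} delivers but only after choosing $\delta<\delta_0(r)$ with $\delta_0$ increasing. The compactness/diagonal argument in Proposition~\ref{propLI} is non-quantitative, so I expect the bookkeeping to ensure $\delta_0$ can be taken increasing and the constant $C$ can be taken independent of $p$ on compact parameter sets to be the part requiring the most care; the admissible-function manipulations themselves are routine convexity, as in the introduction.
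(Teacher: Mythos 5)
You have the paper's key argument, but you have attached it to the wrong inequality, and as a result the statement is not actually covered. The construction in your first paragraph — reduce to $x_0\subset\cv{\gamma}$, apply Proposition \ref{propLI} to make every $\gamma\in\F_{\delta,r}(\partial P)$ an $(L,I)$-curve, apply Proposition \ref{propdupli} to get a finite $F$ with $\bigcup_{g\in F}g\gamma$ containing a curve of $\U_\epsilon(\eta)$, and then transfer a $\U_\epsilon(\eta)$-admissible $\rho$ to an $\F_{\delta,r}(\partial P)$-admissible $\rho'$ with $M_p(\rho')\leq N^p\,\#F\,M_p(\rho)$ — bounds $\modcomb{p}{\F_{\delta,r}(\partial P),G_k}$ \emph{from above} by $C\cdot\modcomb{p}{\U_\epsilon(\eta),G_k}$: producing admissible functions for $\F_{\delta,r}(\partial P)$ out of admissible functions for $\U_\epsilon(\eta)$ estimates the modulus of the former, not of the latter. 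So this paragraph proves the right-hand inequality (and it is essentially the paper's proof of it, including the uniformity in $p$ via $C=\lambda N^p$; the paper gets the increasing function $\delta_0$ by taking the minimum of your $\delta(r,\partial P)$ over the finitely many parabolic limit sets of diameter $\geq d_0$, Proposition \ref{proptaillepara}), not the ``lower bound'' you announce.

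Consequently your second paragraph aims at an inequality you have already established, and the ``reverse following'' it sketches would not deliver it anyway: transferring admissibility in the direction you describe (from each $\theta\in\U_\epsilon(\eta)$, finitely many bi-Lipschitz images containing a curve near some $\gamma'$) can only bound $\modcomb{p}{\U_\epsilon(\eta),G_k}$ by the modulus of curves near $\gamma'$, i.e.\ at best the left-hand inequality; moreover Proposition \ref{propdupli} cannot be invoked this way, since its target curve must be \emph{contained} in $\partial P$, whereas $\gamma'\in\F_{\delta,r}(\partial P)$ need not be. The inequality that genuinely remains — the left-hand one — requires no following at all: after arranging (as in the paper, via the $\di{x_0,\cv{\gamma}}\leq N$ reduction and Proposition \ref{propscale}) that $\U_\epsilon(\eta)\subset\F_0$, one observes that for $\epsilon$ small enough $\U_\epsilon(\eta)\subset\F_{\delta,r}(\partial P)$, because $\eta\subset\partial P$ while $\eta\not\subset\overline{N_r}(\partial Q)$ for every proper connected parabolic limit set $\partial Q\varsubsetneq\partial P$; monotonicity of the modulus (Proposition \ref{propmodbase}(1)) then gives $\modcomb{p}{\U_\epsilon(\eta),G_k}\leq\modcomb{p}{\F_{\delta,r}(\partial P),G_k}$ directly. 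With that correction, the two directions are exactly the paper's proof; as written, the left-hand inequality has no valid argument and the second paragraph contains a step that would fail.
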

 \begin{proof} 
 
i) We can assume,  without loss of generality,   that $x_0 \subset \cv{\gamma}$ for every $\gamma \in \F_0$. Indeed, there exists an upper bound $N$ depending on $d_0$ such that $\di{x_0,\cv{\gamma}}\leq N$  for every $\gamma \in \F_0$. So there exists only a finite set $E$ of elements of $\Gamma$ such that for $g\in E$, there exists $\gamma \in \F_0$ with $\di{x_0,\cv{\gamma}}= \dc{x_0,gx_0}$. 
Now, for $\F\subset \F_0$,  we write \[\F^{x_0} = \{\gamma \in \F : x_0 \subset \cv{\gamma} \}.\] Applying  Proposition \ref{propscale} $N$ times and Proposition \ref{propmodbase},  we obtain that for every $k\geq 1$ large enough \[\modcombg{\F^{x_0}}\leq  \modcombg{\F  }\leq C \cdot \modcombg{\F^{x_0}}, \]
where $C$ depends only on $d_0$.
 
ii) Similarly, for $\epsilon>0$ small enough, we can assume that  $\mathcal{U}_\epsilon(\eta)\subset \F_0$.  Indeed, for $\epsilon>0$ small enough there exists an upper bound $N$ depending on $d_0$ such that if $\gamma \in \mathcal{U}_\epsilon(\eta)$ then $\di{x_0,\cv{\gamma}}\leq N$. Again, the multiplicative constant induced by this assumption depends  only on $d_0$. 

iii) As a consequence of the preceding part, for $\epsilon>0 $ small enough, one has $\mathcal{U}_\epsilon(\eta)\subset \F_{\delta,r}(\partial P)$ and   the left-hand side inequality is established by  Proposition \ref{propmodbase} (1).  

 iv) Now we prove the right-hand side inequality. Let $P=h\Gamma_Ih^{-1}$, let $\eta$ be a curve in  $\partial P$, $r>0$ as in the hypothesis of the  theorem and $\epsilon>0$ small enough so that the preceding part hold.

With the assumption of the first part, we can apply   Proposition \ref{propLI} and set $L>0$ and $\delta>0$ such that the curves of $\F_{\delta,r}(\partial P)$ are $(L,I)$-curves. Let $F\subset \Gamma$ be the finite set given by   Proposition \ref{propdupli} and let $\rho : G_k \longrightarrow [0,+\infty)$ be a $\mathcal{U}_\epsilon(\eta)$-admissible function. We define $\rho' : G_k \longrightarrow [0,+\infty)$ by: 

\[(*) \ \ \ \ \ \rho' ( v) = \sum\limits_{g\in F}\sum\limits_{w\cap g v \neq \emptyset} \rho(w).   \] Let $\gamma \in \F_{\delta,r}(\partial P)$ and $\theta \subset \bigcup\limits_{g\in F} g \gamma$ such that $\theta \in \U_\epsilon(\eta)$. Then 
\[L_{\rho'}(\gamma) = \sum\limits_{g\in F}\sum\limits_{v\cap\gamma\neq \emptyset}\sum\limits_{w\cap g v \neq \emptyset} \rho(w)\geq  \sum\limits_{g\in F} \sum\limits_{w\cap g\gamma\neq \emptyset} \rho(w).\]However \[L_{\rho}(\theta) \leq \sum\limits_{g\in F} L_\rho (g\gamma) = \sum\limits_{g\in F} \sum\limits_{w\cap g\gamma\neq \emptyset} \rho(w).\] Thus $L_{\rho'}(\gamma)\geq L_\rho (\theta)$ and $\rho'$ is $\F_{\delta,r}(\partial P)$-admissible.

Then the number or terms in the right-hand side of the definition $(*)$ is bounded by a constant $N$ depending on $\# F$, the bi-Lipschitz constants of the elements of $F$, and  the doubling constant of $\borg$. Therefore by convexity \[M_p(\rho')= \sum\limits_{v\in G_k} \Big(\sum\limits_{g\in F}\sum\limits_{w\cap g v \neq \emptyset} \rho(w)\Big)^p \leq N^{p-1} \cdot \sum\limits_{v\in G_k} \sum\limits_{g\in F}\sum\limits_{w\cap g v \neq \emptyset} \rho(w)^p  \leq N^p \cdot \# F \cdot \sum\limits_{w\in G_k}   \rho(w)^p . \] 
Which  proves the inequality. The multiplicative constant induced by this part depends on $p$, $\eta$ and $r$ as the set $F$ depends on $\eta$ and $r$. However it does not depend on $k$.

v) Here we write $\delta(r,\partial P)$ for the constant $\delta$ chosen in the previous part such that  the curves of $\F_{\delta,r}(\partial P)$ are $(L,I)$-curves for a certain $L>0$.

By Proposition \ref{proptaillepara} there exist only a finite number of parabolic limit set of diameter larger than  $d_0$. Moreover we recall that according to our notation, if $\delta\geq \delta'$ then $ \F_{\delta',r}(\partial P) \subset \F_{\delta,r}(\partial P)$. As a consequence, the function defined by \[ \delta_0 ( r ) = \min{\{\delta(r,\partial P) : \dia{\partial P}\geq d_0 \}},\] 
satisfies the desired property.

vi) As we see at the end of part iv), $ C= \lambda. N^p$ with $\lambda$ and $N$ independent of $p$. Hence if $p$ belongs to a compact subset $K \subset [ 1, + \infty)$, the constant  $C$ may be chosen independent of $p$ by taking  $ C= \lambda. N^{\max{K}}$.

\end{proof}
 
As an immediate application, we notice that under the assumptions of the theorem, the behavior of $ \modcomb{p}{\mathcal{U}_\epsilon(\eta),G_k}$ as $k$ goes to infinity does not depend,  on the choice of $\eta$ and $\epsilon$. Indeed, for $p\geq 1$, $r>0$ and $\delta<\delta_0(r)$ fixed, if $\eta, \eta' \subset \partial P$ and $\epsilon, \epsilon' >0$ are such  that the  hypothesis of the theorem are satisfied. Then there exist $C=C(\eta,\epsilon)$ and $C'=C'(\eta',\epsilon')$ such that \[ C^{-1} \cdot \modcomb{p}{\mathcal{U}_{\epsilon'}(\eta'),G_k} \leq \modcomb{p}{\mathcal{U}_\epsilon(\eta),G_k}\leq C' \cdot \modcomb{p}{\mathcal{U}_{\epsilon'}(\eta'),G_k}. \]Of course, if $\eta=\eta'$ and $\epsilon'<\epsilon$ we can choose $C=1$.

\subsection{Application to Fuchsian buildings}

Another consequence of the theorem is that if the boundary of  a graph product   does not contain connected parabolic limit sets, then it satisfies the CLP.
 
\begin{theo}\label{theoapplic au imm de dim2} Let $\Gamma$ be a thick hyperbolic graph product such that $\borg$ is connected and any proper  parabolic limit set is disconnected. Then $\partial \Gamma$  equipped with a visual metric satisfies the CLP.
\end{theo}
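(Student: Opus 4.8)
The overall plan is to check the two hypotheses of Proposition~\ref{propcritCLP} for $Z=\borg$ equipped with a self-similar metric. By Proposition~\ref{propdefselfsim} such a $Z$ is doubling and approximately self-similar, and it is arcwise connected by assumption, so Proposition~\ref{propcritCLP} will apply once we establish: (a) that $\modcomb{1}{\F_0,G_k}$ is unbounded in $k$; and (b) that for every non-constant curve $\eta$ in $\borg$ and every $\epsilon>0$ there is a constant $C=C(p,\eta,\epsilon)$, which can be chosen independent of $p$ when $p$ varies over a compact subset of $[1,+\infty)$, such that $\modcombfo\le C\cdot\modcomb{p}{\U_\epsilon(\eta),G_k}$ for all $k$.

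For (a), I would produce, for every $N\in\N$, a family $\gamma_1,\dots,\gamma_N$ of pairwise disjoint curves in $\borg$ of diameter larger than $d_0$. Given such a family, for $k$ large enough the sets $\{v\in G_k:\ v\cap\gamma_i\neq\emptyset\}$, $1\le i\le N$, are pairwise disjoint (the $\gamma_i$ being compact and mutually disjoint), hence any $\F_0$-admissible $\rho$ satisfies $M_1(\rho)=\sum_v\rho(v)\ge\sum_{i=1}^N L_\rho(\gamma_i)\ge N$, so that $\modcomb{1}{\F_0,G_k}\ge N$. Producing such disjoint macroscopic curves is the step that uses thickness: $\borg$ being connected it contains an arc, and the building-wall structure (a building-wall $M$ with $\partial M\varsubsetneq\borg$ exists since $\Gamma$ is infinite, and, of type $s$, it bounds $q_s\ge 3$ dials $D_0(M),\dots,D_{q_s-1}(M)$ whose boundary interiors are non-empty and pairwise disjoint, see Subsection~\ref{subsec BofLFRAHB}), combined with the approximate self-similarity to restore diameter, yields arbitrarily many pairwise disjoint curves of definite size; this is the analogue of the corresponding argument for Coxeter groups in \cite[Section~6]{BourdonKleinerCLP} and I would treat it as a routine verification.

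The heart of (b) is the topological observation that, \emph{for every non-constant curve $\eta$ in $\borg$, the smallest parabolic limit set $\partial P$ containing $\eta$ is $\borg$ itself.} Indeed such a $\partial P$ exists, since intersections of parabolic limit sets are again parabolic limit sets; as $\eta$ is connected it lies in a single connected component $F$ of $\partial P$, and $F$ has more than one point, so by Corollary~\ref{coro CompoconnexePLS} $F$ is itself a parabolic limit set; minimality of $\partial P$ then forces $F=\partial P$, i.e. $\partial P$ is connected, and by hypothesis a connected parabolic limit set cannot be a proper one. Granting this, let $\eta$ be non-constant and $\epsilon>0$; after a preliminary rescaling of $\eta$ by a group element (via Proposition~\ref{propscale}, at the cost of multiplicative constants depending on $\eta$ and $\epsilon$) we may assume $\eta\in\F_0$, and by the observation the smallest parabolic limit set containing $\eta$ is $\borg$. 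Now, since there is no connected parabolic limit set properly contained in $\borg$, the constraint on $r$ in Theorem~\ref{theocourbedanspara} is vacuous and, crucially, the third clause defining $\F_{\delta,r}(\borg)$ is vacuous as well, so $\F_{\delta,r}(\borg)=\F_0$ for all $\delta,r>0$. Picking any $r>0$ and $\delta<\delta_0(r)$, Theorem~\ref{theocourbedanspara} supplies, for $\epsilon$ small enough, a constant $C=C(d_0,p,\eta,r,\epsilon)$ — independent of $p$ on compact subsets of $[1,+\infty)$ — with
\[
\modcombfo\ \le\ C\cdot\modcomb{p}{\U_\epsilon(\eta),G_k}\qquad\text{for every }k\ge 1 .
\]
For the remaining (larger) values of $\epsilon$ this follows by applying the inequality at a fixed small $\epsilon_0$ and noting that $\U_{\epsilon_0}(\eta)\subset\U_\epsilon(\eta)$, so $\modcomb{p}{\U_{\epsilon_0}(\eta),G_k}\le\modcomb{p}{\U_\epsilon(\eta),G_k}$ by Proposition~\ref{propmodbase}. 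Thus both hypotheses of Proposition~\ref{propcritCLP} hold and $\borg$ satisfies the combinatorial $Q$-Loewner property, that is, the CLP.

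I expect the main obstacle to be essentially bookkeeping rather than a new idea: the real work is already encapsulated in Theorem~\ref{theocourbedanspara} (which itself relies on Propositions~\ref{propLI} and~\ref{propdupli}), and the delicate point here is to reconcile that theorem — which controls the modulus of the curves of a \emph{fixed} parabolic limit set — with the hypothesis of Proposition~\ref{propcritCLP}, which ranges over \emph{all} non-constant curves of $\borg$. The observation that under the present assumptions the relevant parabolic limit set is always $\borg$, so that $\F_{\delta,r}$ coincides with $\F_0$, is exactly what closes this gap; the only genuinely separate ingredient is the $p=1$ estimate, where thickness is used to exhibit enough disjoint large curves.
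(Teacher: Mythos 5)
Your handling of the second hypothesis of Proposition \ref{propcritCLP} is exactly the paper's argument, and you even make explicit a point the paper leaves implicit: by Corollary \ref{coro CompoconnexePLS} the smallest parabolic limit set containing a non-constant curve is connected, hence under the standing assumption it must be $\borg$ itself, so that (for $r$ small) $\F_{\delta,r}(\borg)=\F_0$ and Theorem \ref{theocourbedanspara}, applied after rescaling $\eta$ into $\F_0$ via Proposition \ref{propscale}, gives the required inequality with the right uniformity in $p$. That part is correct and coincides with the paper's proof.

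The gap is in the $p=1$ step. You correctly reduce it, as the paper does, to producing for every $N$ a family of $N$ pairwise disjoint curves each of diameter at least $d_0$, but the mechanism you sketch does not deliver this. A single building-wall $M$ of type $s$ yields only the $q_s$ pairwise disjoint sets $\Int{\partial D_i(M)}$, hence a bounded number of disjoint curves; iterating the branching inside these dials produces more disjoint regions, but their diameters shrink geometrically, so for a fixed $d_0$ only boundedly many of them still contain curves of $\F_0$; and invoking approximate self-similarity ``to restore diameter'' does not help, because the rescaling maps are restrictions of group elements which move the curves, so pairwise disjointness is lost. The claim is genuinely quantitative and really uses thickness: for a thin Fuchsian example the boundary is a circle, which carries only boundedly many disjoint arcs of diameter $\geq d_0$, and $\modcomb{1}{\F_0,G_k}$ stays bounded. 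What the paper actually constructs, and what your proof is missing, is a family of $N$ pairwise disjoint subsets that are all of \emph{full} size: $N$ apartments containing $x_0$ which agree on a large combinatorial ball and branch at its frontier by switching between the two distinct nontrivial powers of each generator (this is where $q_i\geq 3$ enters); their boundaries are pairwise disjoint, each is connected of diameter comparable to $\dia{\borg}$, hence each contains a curve of $\F_0$, and these curves are disjoint. Some construction of this kind (arbitrarily many disjoint ``parallel'' large curves accumulating on one another) must be supplied; as written, your step (a) would not go through.
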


\begin{proof}

 We  check the hypothesis  of Proposition \ref{propcritCLP}. To prove that  $\modcomb{1}{\F_0,G_k}$  is unbounded, it is enough to check that for every $N \in \N$ there exist $N$ disjoint curves of diameter larger than $d_0$ in $\borg$. Indeed, this implies that for $k\geq 0$ large enough $\modcomb{1}{\F_0,G_k}>N$.

 To prove this we use the assumption on the thickness which implies that   for every $N \in \N$ there exist $N$ apartments with disjoint boundaries that intersects in a compact domain inside the building. To observe such apartments we  use the following notation. \liste{\item $ W_{i-1} = \{w\in \Gamma : w=s_1^i\dots s_k^i \text{ with }   s_j\in S \text{ and } \dc{x_0,wx_0}=k\}$   for $i=1,2$.
 \item $\mathrm{CB}_n= \cv{B_c(x_0,n)}\subset \ch $ the convex hull for the metric over the chambers $\dc{\cdot,\cdot}$ of $B_c(x_0,n)\subset \ch$   the ball of center $x_0$ and of radius $n$ for $\dc{\cdot,\cdot}$ for $n\geq 0$.
 \item  $\mathrm{FCB}_n$ is the frontier of $\mathrm{CB}_n$. By this we mean the  set of all the chambers $x\in   \mathrm{CB}_n$ such that there exists $y \notin   \mathrm{CB}_n$ with $x\sim y$.
 }
 
 Then the following sets of chambers define $N$ apartments all containing the base chamber $x_0$, and intersecting only inside $\mathrm{CB}_N$
 
 \liste{\item $A_1=\{w x_0 : w \in W_0  \}$,
 \item $A_2=\{w x_0 : w \in W_1  \}$,
 \item $A_3=A_2 \cap \mathrm{CB}_1 \bigcup \{g w x_0 : w \in W_0,  \  gx_0 \in A_2 \cap \mathrm{FCB}_1  \}$,
 \item $A_n= A_{n-1} \cap \mathrm{CB}_{n-2} \bigcup \{g w x_0 : w \in W_{n+1\ \mathrm{mod}(2)} \text{ and }  gx_0 \in A_{n-1} \cap \mathrm{FCB}_{n-2} \}$ for $n=2,\dots, N$. }

Now let $\eta$ be a non-constant curve in $\partial \Gamma$. Up to a change of scale, by  Proposition \ref{propscale}, we can assume $\eta\in \F_0$. Then as $\partial \Gamma$ is the only parabolic limit set containing $\eta$, it is enough to apply   Theorem \ref{theocourbedanspara} to satisfy the second hypothesis of   Proposition \ref{propcritCLP}.

\end{proof}

In particular, we can  apply this result to the case of right-angled Fuchsian buildings.  In the following, we call \emph{right-angled Fuchsian building} \index{Fuchsian building} a building associated with a graph product $(C_n, \{\Z/q_i\Z\}_{i=1,\dots, n})$ where   $C_n$ is the cyclic graph with $n\geq 5$ vertices and   $q_1, \dots, q_n$ is  a family of integers larger than or equal to $3$.

\begin{coro}
For $n\geq 5$, let $C_n$ be the cyclic graph with $n$ vertices and let $q_1, \dots, q_n$ be a family of integers larger than or equal to $3$. Let $\Gamma$ be the graph product given by the pair $(C_n, \{\Z/q_i\Z\}_{i=1,\dots, n})$. Then $\borg$ equipped with a visual metric satisfies the CLP.  
\end{coro}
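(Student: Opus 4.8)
The plan is to verify the hypotheses of Theorem~\ref{theoapplic au imm de dim2} for the graph product $\Gamma$ associated with $(C_n,\{\Z/q_i\Z\}_{i=1,\dots,n})$: that $\Gamma$ is thick and hyperbolic, that $\borg$ is connected, and that every proper parabolic limit set of $\borg$ is disconnected. Thickness is immediate, since each $\{s_i\}$-residue of $\Sigma$ has $q_i\ge 3$ chambers. Hyperbolicity follows from Meier's criterion recalled in Subsection~\ref{subsec def gp rab}: for $n\ge 5$ the cyclic graph $C_n$ has no full subgraph isomorphic to $C_4$, so the requirement that every circuit of length $4$ have a chord holds vacuously. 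Moreover $\Gamma$ is infinite because $C_n$ has a pair of non-adjacent vertices, and $\borg$ is arcwise connected: this is the Davis--Meier criterion in the case of $C_n$ with $n\ge 5$, and is in any event classical for right-angled Fuchsian buildings, whose boundary is the Menger curve $\mu^1$. So the standing assumptions of Section~\ref{sec LFRAHB} are met.

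The substantive point is the disconnectedness of proper parabolic limit sets. Since conjugation by an element of $\Gamma$ is a homeomorphism of $\borg$, it suffices to treat the limit sets $\partial\Gamma_I$ with $I\varsubsetneq S$; as $\Sigma_I$ is convex in $\Sigma$, the subgroup $\Gamma_I$ is quasiconvex, so $\partial\Gamma_I$ is canonically homeomorphic to the Gromov boundary of $\Gamma_I$. The key combinatorial remark is that, because $I$ omits at least one vertex of the cycle $C_n$, the full subgraph $(C_n)_I$ is a disjoint union of path graphs, i.e.\ a forest. A graph product over a forest with finite vertex groups is virtually free: it is built from its one-vertex subgroups by finitely many free products and amalgamations over finite (vertex) subgroups, obtained by peeling off leaf edges $s_as_{a+1}$ via the visual splitting $\Gamma_J\cong\Gamma_{J\setminus\{s_a\}}\ast_{\langle s_{a+1}\rangle}\bigl(\Z/q_a\Z\times\Z/q_{a+1}\Z\bigr)$. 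A virtually free group has totally disconnected Gromov boundary; and since every $q_i\ge 3$, an infinite $\Gamma_I$ contains the non-elementary subgroup $\Z/q_i\Z\ast\Z/q_j\Z$ for two non-adjacent vertices of $(C_n)_I$, so $\partial\Gamma_I$ is either empty (when $\Gamma_I$ is finite) or a Cantor set. In either case the nonempty parabolic limit set $\partial\Gamma_I$ is disconnected.

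Having checked the three hypotheses, Theorem~\ref{theoapplic au imm de dim2} gives that $\borg$ equipped with a visual metric satisfies the CLP, as claimed. I do not anticipate a genuine obstacle: the corollary is a specialization of Theorem~\ref{theoapplic au imm de dim2}, and the only steps that need care are the quasiconvexity identification of $\partial\Gamma_I$ with the Gromov boundary of $\Gamma_I$, and the structure theory of $\Gamma_I$ — recognizing that deleting a vertex of $C_n$ leaves a forest, and that graph products over forests with finite vertex groups are virtually free and hence have totally disconnected boundaries. The connectedness of $\borg$ is borrowed from the literature rather than reproved here.
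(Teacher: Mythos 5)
Your proposal is correct and takes essentially the same route as the paper, which obtains the corollary simply by specializing Theorem \ref{theoapplic au imm de dim2} to the pair $(C_n,\{\Z/q_i\Z\})$, leaving the verification of its hypotheses implicit. Your check of those hypotheses — thickness from $q_i\geq 3$, hyperbolicity via Meier's criterion (no full $4$-circuit in $C_n$ for $n\geq 5$), connectedness of $\borg$, and disconnectedness of every proper parabolic limit set because a proper full subgraph of $C_n$ is a forest, so the corresponding special subgroups are finite or virtually free with totally disconnected boundary — correctly fills in the details the paper omits.
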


This result was known since  boundaries of right-angled Fuchsian buildings are Loewner spaces (see \cite[Proposition 2.3.4.]{BourdonPajotRigi}). However, here we give a direct proof of this result.

Furthermore, we can prove that these thick graph products are the only ones that satisfy the hypothesis of Theorem \ref{theoapplic au imm de dim2}. To verify this we need to introduce the following simplicial complex.

\begin{déf}
Let $\Gamma =  (\mathcal{G}, \{\Z/q_i\Z\}_{i=1,\dots, n})$ be a graph product, the \emph{nerve of $\Gamma$} is the simplicial complex $L=L(\mathcal{G})$ such that:\liste{
\item the 1-skeleton of $L$ is $\mathcal{G}$,
\item $k$ vertices of $\mathcal{G}$ span a $(k-1)$-simplex in $L$ if and only if the corresponding parabolic subgroup in $\Gamma$ is finite. 
}
\end{déf}

For  a  simplex $\sigma \subset L$ spanned by the vertices $v_1,\dots,v_k\in \mathcal{G}^{(0)}$, we denote by $\mathcal{G}_\sigma$ the full sub-graph of $\mathcal{G}$ spanned by the vertices $\mathcal{G}^{(0)}\backslash \{v_1,\dots,v_k\}$. Now we write  $L  \backslash \sigma = L(\mathcal{G}_\sigma)$.  Note that $L  \backslash \sigma$ can be seen as a subcomplex of $L$.
 
The following theorem is a special case of  \cite[Corollary 5.14.]{DavisMeierTopo}. 
 
 \begin{theo} \label{theo davismeier}
The boundary of $\Gamma$ is connected if and only if  the subcomplex $L  \backslash \sigma$ is connected for any simplex $\sigma \subset L$.
\end{theo}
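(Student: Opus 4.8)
The plan is to relate the topological connectedness of $\borg$ to a combinatorial connectedness condition on the nerve $L$ via the Davis complex $\Sigma$. First I would recall the general machinery from Davis's work: the Davis complex $\Sigma$ is built by gluing copies of the Davis chamber $D$ along their faces according to the poset structure, and the link of a vertex of $\Sigma$ of type $I$ (the type of the face in whose interior the vertex sits) is naturally identified with a join of the form $L\backslash\sigma_I * (\text{something spherical coming from the finite factor }\Gamma_I)$. Since $\Gamma$ acts geometrically on $\Sigma$, the boundary $\borg$ is homeomorphic to $\partial\Sigma$, and by the standard theory (Bestvina--Mess type results, or the explicit analysis of Davis--Meier) the connectedness of $\partial\Sigma$ is governed by the connectedness of $\Sigma$ at infinity, which in a locally finite building of this type reduces to a condition on the links, hence on the subcomplexes $L\backslash\sigma$.

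Concretely, the key steps in order would be: (i) reduce from $\borg$ to $\partial\Sigma$ using the geometric action; (ii) invoke the characterization that $\partial\Sigma$ is connected if and only if $\Sigma$ is one-ended, i.e. for every bounded set $K$ there is a bounded set $K'\supset K$ such that $\Sigma\backslash K'$ has exactly one unbounded component that is, moreover, "seen" consistently; (iii) translate one-endedness of $\Sigma$ into a statement about the combinatorics of the building by observing that the complement of a metric ball in $\Sigma$ deformation-retracts, component by component, onto a union of residues, and the way these residues are attached is dictated precisely by the simplices $\sigma\subset L$ and the subcomplexes $L\backslash\sigma$; (iv) conclude that a disconnection of some $L\backslash\sigma$ produces, by pushing out to infinity through the corresponding residue directions, a disconnection of $\partial\Sigma$, and conversely that if all $L\backslash\sigma$ are connected one can join any two boundary points by a path, building it piece by piece across adjacent chambers and using connectedness of the relevant links to cross from one chamber to the next. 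Since the statement is quoted as a special case of \cite[Corollary 5.14.]{DavisMeierTopo}, the honest thing is to cite that corollary directly; the role of this proof sketch is to explain why the general Davis--Meier result specializes to exactly this clean form in the right-angled building setting, where $\mathcal{G}$ has at least one edge, $n\geq 2$, and $\Gamma$ is infinite and hyperbolic.

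The main obstacle is step (iii): making rigorous the passage from "topology at infinity of $\Sigma$" to "connectedness of the subcomplexes $L\backslash\sigma$". In the Coxeter group case this is handled by Davis's description of the visual boundary in terms of the nerve and its subcomplexes $L_I$, and for buildings one must additionally account for the thickness of panels — but since replacing a panel of cardinality $2$ by one of cardinality $q_i$ only multiplies the number of branches at each wall and never changes whether a given link is connected, the connectedness analysis is insensitive to the thickness. Thus the combinatorial input is genuinely the same as in the Coxeter case, and the theorem follows. I would therefore present the proof as: cite \cite[Corollary 5.14.]{DavisMeierTopo} for the graph product $\Gamma$, then observe that the cited corollary's connectedness criterion is exactly the condition that $L\backslash\sigma$ be connected for every simplex $\sigma\subset L$, since in our setting the only relevant "punctures" of the nerve are those coming from finite parabolic subgroups, i.e. from simplices of $L$.
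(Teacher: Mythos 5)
Your proposal ends up doing exactly what the paper does: the paper gives no independent proof of this statement, presenting it only as a special case of \cite[Corollary 5.14.]{DavisMeierTopo}, and the operative step of your argument is precisely that citation. The surrounding sketch (links, one-endedness, insensitivity to thickness) is heuristic background rather than a needed argument, so your approach matches the paper's.
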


Now we can prove the following.

\begin{prop} \label{prop caracdesFuchsin}
 Let $\Gamma =  (\mathcal{G}, \{\Z/q_i\Z\}_{i=1,\dots, n})$  be a hyperbolic graph product. Assume that $\borg$ is connected and  that any proper parabolic limit set $\partial P$ is disconnected, then the building associated with $\Gamma$ is a right-angled Fuchsian building.
\end{prop}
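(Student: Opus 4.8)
The plan is to rephrase both hypotheses as purely combinatorial conditions on the graph $\mathcal{G}$ using the nerve $L$ and Davis--Meier's criterion (Theorem~\ref{theo davismeier}), and then to deduce that $\mathcal{G}$ must be a cycle $C_n$ with $n\ge 5$. The starting observation is that, since every vertex group $\Z/q_i\Z$ is finite, a set of vertices of $\mathcal{G}$ spans a simplex of $L$ if and only if it spans a clique of $\mathcal{G}$ (a clique $K$ gives the finite parabolic $\prod_{i\in K}\Z/q_i\Z$, and a non-clique gives a parabolic containing a free product, hence infinite). Consequently Theorem~\ref{theo davismeier} reads: $\borg$ is connected if and only if $\mathcal{G}\setminus K$ is connected for every clique $K\varsubsetneq S$. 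Applying the same theorem to a special subgroup $\Gamma_I$ (itself a graph product, on the full subgraph $\mathcal{G}_I$) shows that $\partial\Gamma_I$ is \emph{disconnected} exactly when some clique $K\subseteq I$ disconnects $\mathcal{G}_I$; this is the form in which the second hypothesis will be used.

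First I would record the cheap consequences. Since $\Gamma$ is infinite, $\mathcal{G}$ is not a complete graph and $\#S\ge 3$; since $\Gamma$ is hyperbolic, by the criterion of Meier recalled in Subsection~\ref{subsec def gp rab} the graph $\mathcal{G}$ has no induced $4$-cycle. Taking $K=\emptyset$ and $K$ a single vertex in the reformulation of ``$\borg$ connected'' shows that $\mathcal{G}$ is $2$-connected; in particular every vertex has degree at least $2$.

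The core of the argument is to show $\mathcal{G}$ is a cycle. I would first show $\mathcal{G}$ is \emph{not chordal}: a $2$-connected chordal graph that is not complete has a simplicial vertex $u$, so $\{u\}\cup N(u)$ is a clique, and it is a proper subset of $S$ (otherwise $\mathcal{G}$ would be complete); but deleting the proper clique $N(u)$ isolates $u$, contradicting connectedness of $\borg$. Hence $\mathcal{G}$ has an induced cycle of length $\ge 4$ and, having no induced $C_4$, an induced cycle $C_m$ with $m\ge 5$. Let $I$ be its vertex set: then $\Gamma_I=\Gamma_{C_m}$ has \emph{connected} limit set, because $C_m$ stays connected after removing any clique (a vertex or an edge leaves a path), so Davis--Meier applied to $\Gamma_I$ gives that $\partial\Gamma_I$ is connected. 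If $I\varsubsetneq S$ and $\partial\Gamma_I\ne\borg$, this is a connected proper parabolic limit set, contradicting the hypothesis. If instead $I=S$, then $\mathcal{G}$ equals $C_n$ together with some chords; a chord produces, with an arc of $C_n$, a strictly shorter induced cycle --- of length $3$ or, by hyperbolicity, $\ge 5$ --- and in the latter case (now with proper vertex set) we reach the same contradiction, so after finitely many steps $\mathcal{G}=C_n$ with no chords. Finally $n\ne 3$ (else $\Gamma$ is finite) and $n\ne 4$ (else $\Gamma$ is not hyperbolic), so $n\ge 5$, and the building associated with $\Gamma$ is a right-angled Fuchsian building.

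The step I expect to be the main obstacle is guaranteeing that the connected subset $\partial\Gamma_I$ produced above is genuinely a \emph{proper} parabolic limit set, i.e. that $\partial\Gamma_I\ne\borg$. This requires analyzing when a special subgroup $\Gamma_I$ with $I\varsubsetneq S$ has full limit set --- which happens precisely when $S\setminus I$ is a clique each of whose vertices is adjacent to every vertex of $I$, that is, when $\mathcal{G}$ is a join of $\mathcal{G}_I$ with a clique and $\Gamma$ splits as a direct product of $\Gamma_I$ with a finite group. Isolating and handling these degenerate (reducible) configurations is the delicate point; once it is settled, the remainder is bookkeeping with the Davis--Meier criterion and elementary graph theory.
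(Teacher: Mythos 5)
Your derivation of an induced circuit of length at least $5$ is correct, and it takes a genuinely different route from the paper. The paper starts from a proper parabolic limit set $\partial\Gamma_I$ with $\#I=n-1$, uses its disconnectedness together with Theorem \ref{theo davismeier} to produce a simplex $\sigma$ of the nerve $L_I$ separating two components $C_1,C_2$, and then runs a case analysis (the ``forbidden situations'') on how the vertices of $\sigma$ and the removed vertex $p$ attach to $C_1$ and $C_2$, producing a circuit of length $\geq 4$, hence $\geq 5$ by hyperbolicity. You instead translate Davis--Meier into ``no clique of $\mathcal{G}$ separates'' and invoke Dirac's simplicial-vertex theorem: a chordal, non-complete $\mathcal{G}$ would have a separating clique $N(u)$. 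This is shorter, and it only uses connectedness of $\borg$ (plus hyperbolicity); the disconnectedness hypothesis, which the paper needs already at this stage, is not used. A small point: since non-chordality yields an \emph{induced} cycle, the case where its vertex set is all of $S$ gives $\mathcal{G}=C_n$ immediately, so your chord-shortening iteration (whose triangle sub-case is in any event not handled) is superfluous.

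The genuine gap is the one you flag yourself: proving that $\partial\Gamma_I$ is a \emph{proper} parabolic limit set when $I\varsubsetneq S$. This step cannot be carried out under the stated hypotheses. Take $\mathcal{G}$ to be the cone over $C_5$, i.e.\ an apex $v$ joined to all five vertices of an induced pentagon. This graph contains no induced circuit of length $4$, so $\Gamma\cong\Gamma_{C_5}\times\Z/q\Z$ is hyperbolic; $\borg\cong\partial\Gamma_{C_5}$ is connected; and every proper parabolic limit set is disconnected, because any special subgroup whose defining vertex set does not contain all five cycle vertices is finite or virtually free (its limit set is empty, two points, or a Cantor set), while a special subgroup containing the five cycle vertices has finite index, so its limit set equals $\borg$ and is not proper. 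Thus all hypotheses of the proposition hold while $\mathcal{G}\neq C_n$: the degenerate ``join with a clique adjacent to all of $I$'' configuration you isolate is genuinely consistent with the hypotheses, so it must be excluded or treated separately, not merely handled by a finer argument. For what it is worth, the paper's own proof does not address this point either: it declares that it suffices to show that $\mathcal{G}$ \emph{contains} a circuit of length $\geq 5$ and stops there, never showing that $\mathcal{G}$ equals that circuit. So your argument is complete and correct up to the point the paper actually proves, and the remaining step is missing from both; as stated it requires an additional irreducibility-type hypothesis ruling out the cone configuration above.
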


\begin{proof} We only need to prove  that $\mathcal{G}$ contains a circuit of length $n\geq 5$. According to Corollary \ref{coro CompoconnexePLS}, if any proper parabolic limit set in $\borg$ is disconnected then any proper parabolic limit set in $\borg$ is discrete. Moreover, $\borg$ contains at least one proper parabolic limit set of the form $\partial \Gamma_I$ with $\# I = n-1$ otherwise $\borg=\emptyset$. The subgroup $\Gamma_I$ is a graph product associated with the graph  $\mathcal{G}_I$. This  graph is obtained from $\mathcal{G}$ to which we remove a vertex $p$ and all the edges adjacent to $p$. Then if $L_I$ is the nerve associated with $\Gamma_I$, we get $L_I$ from $L$ to which we remove the interior of any simplex containing $p$.

Now, thanks to Theorem \ref{theo davismeier}, we know that there exists a simplex $\sigma\subset L_I$ such that $L_I \backslash \sigma$ is disconnected. Let $C_1$ and $C_2$ be two connected components of $L_I \backslash \sigma$. Up to  a subsimplex, we can assume that any vertex of $\sigma$ is connected to $C_1$ or to $C_2$ by an edge. However, if we consider  the simplex $\sigma$ in $L$, we see that $L \backslash \sigma$ is connected because $\borg$ is connected. Therefore there exist at least one edge attaching $p$ to $C_1$ and  at least one edge attaching $p$ to $C_2$.

 We set  $V=\{v_1, \dots , v_k\}$   the  vertices of $\sigma$ that are not connected to $p$ by an edge and $V'=\{v'_1,\dots, v'_{k'}\}$ the rest of the vertices of $\sigma$. At this point, we assume by contradiction, that $\mathcal{G}$ contains no circuit of length $n\geq 4$. We can check  that under this assumption the following situations does not occur

\begin{figure}[ht] 
  \begin{minipage}[b]{0.5\linewidth}
  \labellist
\small\hair 2pt
\pinlabel $C_1$ at 100 340 
\pinlabel $C_2$ at 1100 340 
\pinlabel $p$ at 590 590 
\pinlabel $\sigma$ at 585 140
\endlabellist
\centering
    \includegraphics[width=.49\linewidth]{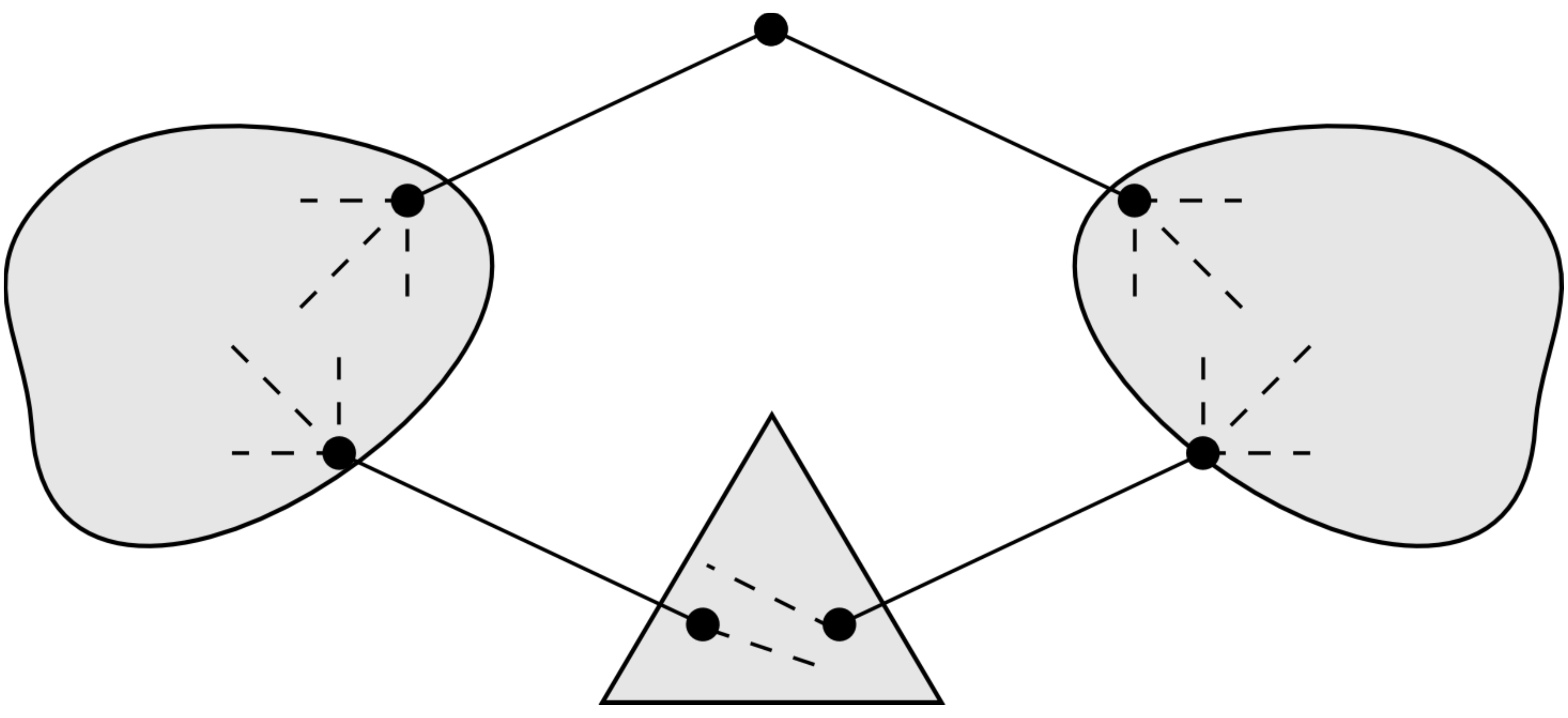}
    \caption{Forbidden situation $i)$} 
  \end{minipage} 
  \begin{minipage}[b]{0.5\linewidth}
   \labellist
\small\hair 2pt
\pinlabel $C_1$ at 100 340 
\pinlabel $C_2$ at 1100 340 
\pinlabel $p$ at 590 590 
\pinlabel $\sigma$ at 585 50
\endlabellist
\centering
    \includegraphics[width=.49\linewidth]{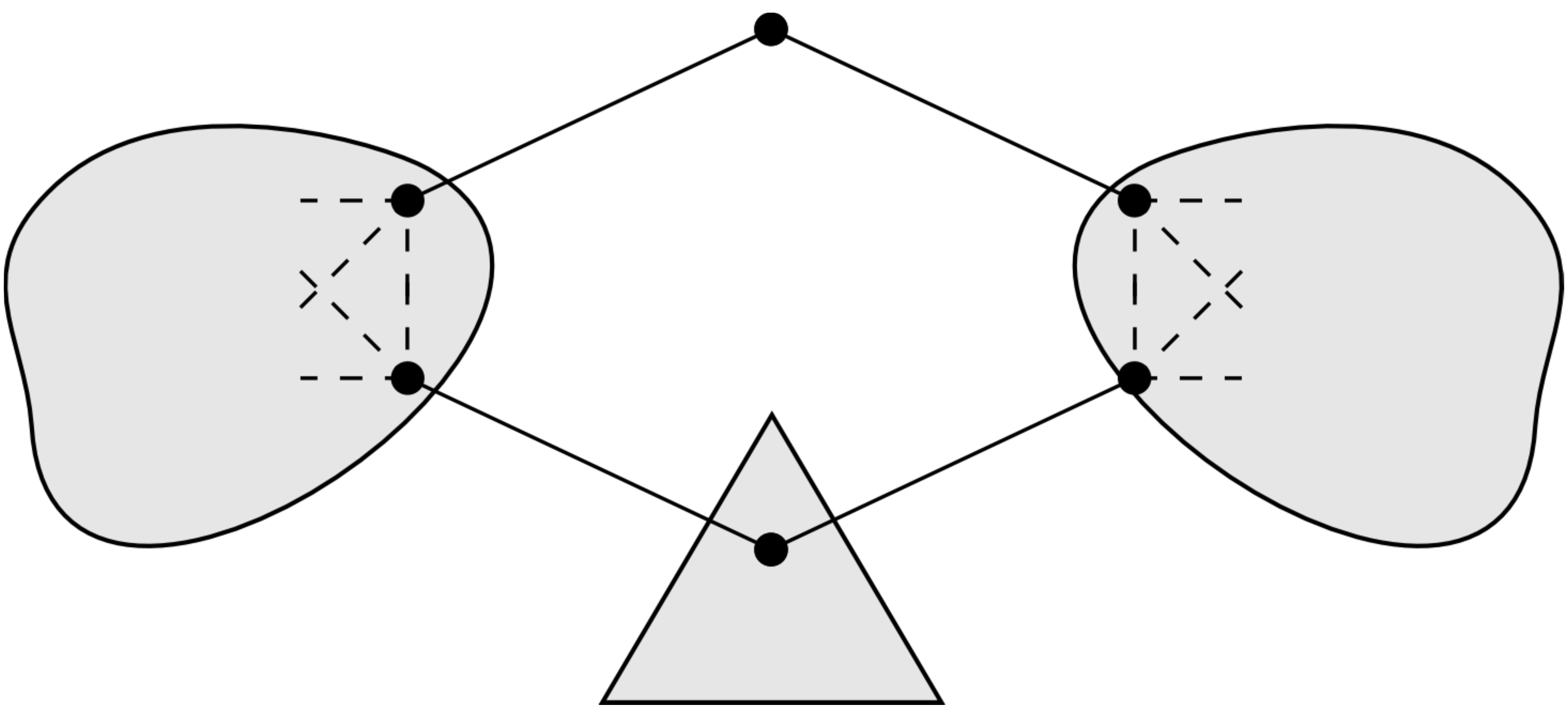}
    \caption{Forbidden situation $ii)$} 
  \end{minipage}

\vspace{0.5cm}
 
  \begin{minipage}[b]{0.5\linewidth}
   \labellist
\small\hair 2pt
\pinlabel $C_1$ at 100 340 
\pinlabel $C_2$ at 1100 340 
\pinlabel $p$ at 590 590 
\pinlabel $\sigma$ at 585 120
\endlabellist
\centering
    \includegraphics[width=.49\linewidth]{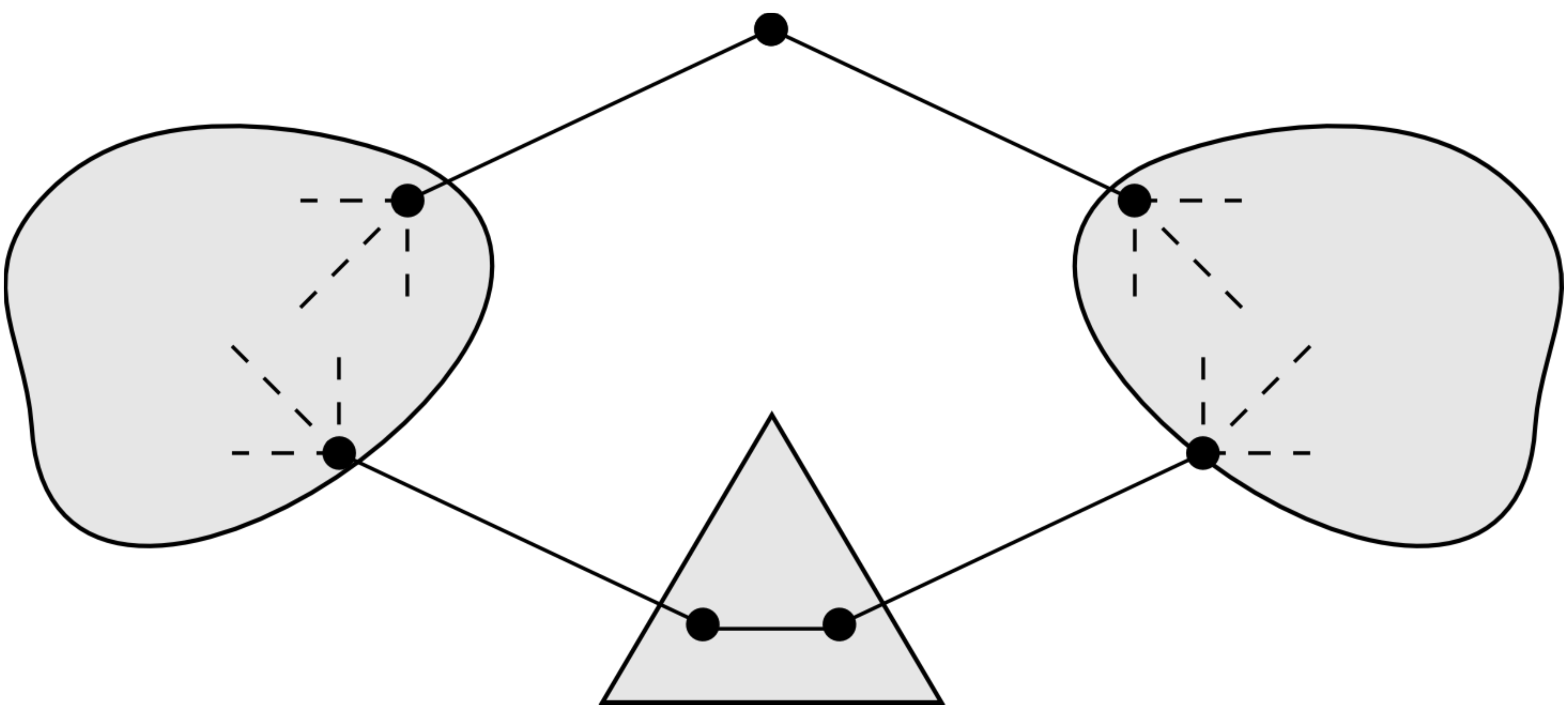}
    \caption{Forbidden situation $iii)$} 
  \end{minipage}
  \hfill
  \begin{minipage}[b]{0.5\linewidth}
   \labellist
\small\hair 2pt
\pinlabel $C_1$ at 100 340 
\pinlabel $C_2$ at 1100 340 
\pinlabel $p$ at 590 590

\pinlabel $\sigma'$ at 670 360
\pinlabel $V$ at 405 25

\endlabellist
\centering
    \includegraphics[width=.49\linewidth]{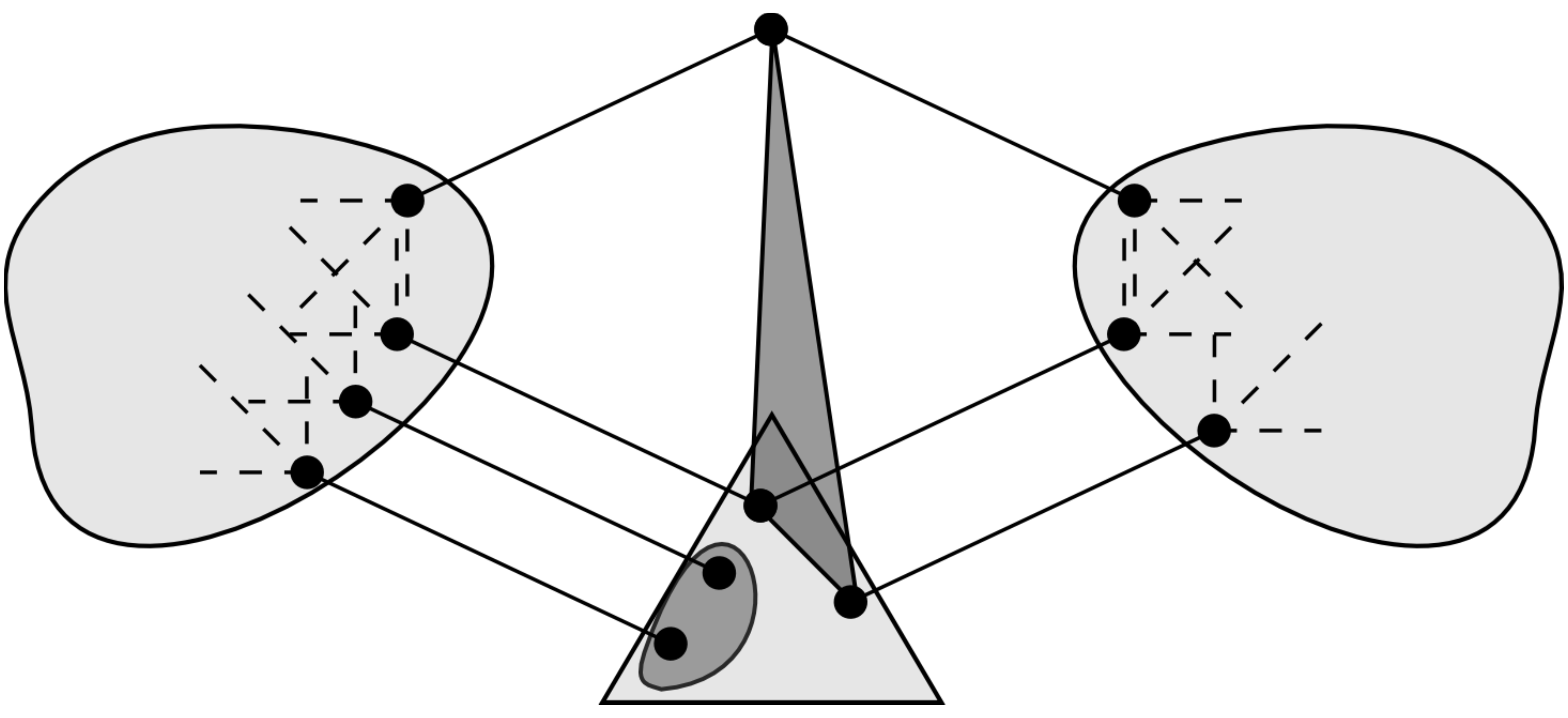}
    \caption{Resulting situation} 
  \end{minipage} 
\end{figure} 
 
  \numeroti{
\item $V'$ is empty,
\item there exists $v \in V $ such that $v $ is adjacent to both $C_1$ and $C_2$, 
\item there exist $v,w \in V$ such that $v$ is adjacent to $C_1$ and $w$ is adjacent to $C_2$.
}

Hence,   the vertices in $V$ are either  all adjacent to $C_1$ or  all adjacent to $C_2$. Assume that   the vertices in $V$ are  all adjacent to $C_1$. As a consequence, if $\sigma'$ designates the simplex in $L$ spanned by $V'\cup \{p\}$ then $L\backslash \sigma'$ is not connected. This is not possible because $\borg$ is connected. 

Therefore $\mathcal{G}$ contains a circuit of length $n\geq 4$, but as $\Gamma$ is hyperbolic it contains no circuit of length $4$. This concludes the proof.
 
\end{proof}

\section{Combinatorial metric on boundaries of right-angled hyperbolic buildings}
\label{sec bord topo met}

 In this section  we explain how   the geometry of the boundary is determined by  boundaries of   building-walls. 
We start by discussing the geometry of intersections of dials of building and the boundaries of such intersections. Then, we describe  a   combinatorial and  self-similar metric on $\borg$ in terms of dials of building. Finally,  we construct an approximation of $\borg$ that will be  convenient to use in Section \ref{secmoduleapartmoduleimmeuble}. 

Here we use the notation and assumptions of Section \ref{sec LFRAHB} and \ref{seccurves inPLS}. In particular, $\Gamma$ is a fixed graph product given by the pair $(\mathcal{G}, \{\Z/q_i\Z\}_{i=1,\dots,n})$ and acting on the building $\Sigma$. The base chamber is   $x_0$, and $W$ is the right-angled Coxeter group associated with $\Gamma$. We  assume that $\Gamma$ is hyperbolic and $\borg$ is connected.

 \subsection{Projections of chambers in $\Sigma$}

  In right-angled buildings, we can project chambers on  residues and on dials of building. This will be useful in the rest of this section to understand the metric on the boundary in the hyperbolic case.
   
\begin{prop}  \label{prop deffrojdial} Let $D$ be a residue or a dial of building  and  $C = \Ch{D}$. Then for any  $x \in \ch$ there exists a unique chamber $\proj{C}{x} \in C$ \index{$\proj{C}{\cdot}$} such that \[\dc{x,\proj{C}{x}}= \di{x, C}.\]   Moreover, for any chamber  $y \in C$ there exists a minimal gallery from  $x$ to $y$ passing through   $\proj{C}{x}$. \end{prop}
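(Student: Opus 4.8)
The plan is to reduce everything to the wall–separation combinatorics of Proposition~\ref{prop distcombigrap} together with the convexity of $C$. For two chambers $a,b$ write $\mathcal{W}(a,b)\subset\bw$ for the set of building-walls separating $a$ from $b$, so that $\dc{a,b}=\#\mathcal{W}(a,b)$. Since each chamber lies in exactly one connected component of $\Sigma\setminus M$ for every $M\in\bw$, looking at the side of a third chamber $m$ yields the elementary identity $\mathcal{W}(a,b)=\mathcal{W}(a,m)\,\triangle\,\mathcal{W}(m,b)$, valid for any three chambers; I will also use that a minimal gallery crosses every wall of $\mathcal{W}(\cdot,\cdot)$ joining its endpoints exactly once and no other wall. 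Note that $C$ is non-empty and convex in the sense of Definition~\ref{def convexe}: for a dial of building a minimal gallery joining two of its chambers cannot cross the bounding building-wall, and for a residue this is classical (compatibly with Example~\ref{ex conv}). In particular any minimal gallery with extremities in $C$ stays in $C$; moreover a building-wall separating two chambers of $C$ crosses $C$ (Definition~\ref{defcross}), whereas a wall in $\mathcal{W}_x(C):=\{M\in\bw: M\text{ separates }x\text{ from every chamber of }C\}$ does not.

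The key step, and the one I expect to be the main obstacle, is the \emph{gate property}: if $p_0\in C$ realises $\di{x,C}$ (such a chamber exists because $\dc{x,\cdot}$ takes non-negative integer values on the non-empty set $C$), then $\mathcal{W}(x,p_0)=\mathcal{W}_x(C)$. The inclusion $\mathcal{W}_x(C)\subset\mathcal{W}(x,p_0)$ is immediate since $p_0\in C$. For the reverse inclusion I would argue by contradiction: suppose some $M\in\mathcal{W}(x,p_0)$ is not in $\mathcal{W}_x(C)$; as $p_0$ lies on the side of $M$ opposite to $x$, there is then a chamber of $C$ on the same side of $M$ as $x$, and I pick such a $q\in C$ minimising $\dc{p_0,q}$. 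A minimal gallery $p_0=e_0\sim\dots\sim e_m=q$ stays in $C$ and crosses $M$ exactly once, say between $e_{j-1}$ and $e_j$; since $e_j\in C$ already lies on the $x$-side of $M$, minimality of $q$ forces $\dc{p_0,e_j}\geq\dc{p_0,q}=m$, hence $j=m$, so $q=e_m$ is adjacent to $e_{m-1}\in C$ and $\mathcal{W}(e_{m-1},q)=\{M\}$. Applying the identity twice gives $\mathcal{W}(x,q)=\mathcal{W}(x,e_{m-1})\,\triangle\,\{M\}$, and $M\in\mathcal{W}(x,e_{m-1})$ because $x$ and $e_{m-1}$ lie on opposite sides of $M$; therefore $\dc{x,q}=\dc{x,e_{m-1}}-1$. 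But $e_{m-1}\in C$ gives $\dc{x,e_{m-1}}\geq\di{x,C}=\dc{x,p_0}$, so $\dc{x,q}<\dc{x,p_0}$ with $q\in C$, contradicting the choice of $p_0$. The care needed here is only in choosing $q$ so that the crossing of $M$ occurs at the last step of the gallery.

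Granting the gate property, the statement follows by bookkeeping. For any $y\in C$ the identity gives $\mathcal{W}(x,y)=\mathcal{W}(x,p_0)\,\triangle\,\mathcal{W}(p_0,y)$; the first set equals $\mathcal{W}_x(C)$ and consists of walls not crossing $C$, while the second consists of walls separating two chambers of $C$, hence crossing $C$, so the two sets are disjoint and $\dc{x,y}=\dc{x,p_0}+\dc{p_0,y}$. In particular $\dc{x,y}\geq\dc{x,p_0}$ with equality exactly when $y=p_0$, so $p_0$ is the unique chamber of $C$ at distance $\di{x,C}$ from $x$; I set $\proj{C}{x}:=p_0$. Finally, concatenating a minimal gallery from $x$ to $\proj{C}{x}$ with a minimal gallery from $\proj{C}{x}$ to $y$ produces a gallery of length $\dc{x,\proj{C}{x}}+\dc{\proj{C}{x},y}=\dc{x,y}$, i.e.\ a minimal gallery from $x$ to $y$ passing through $\proj{C}{x}$, as required.
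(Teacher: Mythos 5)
Your reduction to wall--separation combinatorics runs into trouble precisely because in this setting a building-wall is not two-sided: by Fact \ref{prop defcadrangp}, $\Sigma\setminus M$ has $q\geq 3$ connected components. Consequently your ``elementary identity'' $\mathcal{W}(a,b)=\mathcal{W}(a,m)\,\triangle\,\mathcal{W}(m,b)$ is false in general: if $m$ lies in a third component of $\Sigma\setminus M$, then $M$ separates $a$ from $b$ but belongs to both $\mathcal{W}(a,m)$ and $\mathcal{W}(m,b)$, so it is lost from the symmetric difference (only the inclusion $\supseteq$ holds). Your first use of it can be salvaged because $\mathcal{W}(e_{m-1},q)=\{M\}$, but the contradiction that closes your gate-property step is a non sequitur: from $\dc{x,q}=\dc{x,e_{m-1}}-1$ and $\dc{x,e_{m-1}}\geq\dc{x,p_0}$ you can only conclude $\dc{x,q}\geq\dc{x,p_0}-1$, not $\dc{x,q}<\dc{x,p_0}$; the argument works only if $e_{m-1}=p_0$ (i.e.\ $m=1$), which nothing forces. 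Similarly, the assertion that a wall of $\mathcal{W}_x(C)$ cannot cross $C$ is unproved: with $q\geq3$ sides a wall can separate $x$ from every chamber of $C$ while still separating chambers of $C$ from one another, and this is exactly the disjointness your final bookkeeping needs to get $\dc{x,y}=\dc{x,p_0}+\dc{p_0,y}$.

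These gaps are not repairable by pure wall-counting, because your argument uses nothing about $D$ beyond convexity; if it were correct it would yield gates for arbitrary convex sets of chambers, which the paper explicitly rules out just after Proposition \ref{prop defprojinter} (in a thick building two adjacent chambers equidistant from $x_0$ form a convex set with no unique closest chamber). The missing building-specific input is that a wall crossing a dial or a residue has chambers of it on \emph{every} side, which comes from the rotations stabilizing $D$ (Lemma \ref{lem stabiortho} for dials, Fact \ref{fact residuebasegp} for residues); this is exactly how the paper argues: for residues it cites Tits, and for a dial it assumes the concatenated gallery $x\to\proj{C}{x}\to y$ is not minimal, extracts a building-wall crossed once in each half, and folds the gallery by the rotation around that wall --- legitimate since the rotation preserves $D$ --- contradicting minimality of $\di{x,C}$. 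To fix your proof you would need to inject this rotational symmetry at the two flagged points (to rule out chambers of $C$ on the $x$-side of a wall of $\mathcal{W}(x,p_0)$, and to get the disjointness), or else revert to the folding argument.
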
 

For  simplicity, in the following we sometime use the notation $\proj{D}{\cdot}$ \index{$\proj{D}{\cdot}$} to  designate $\proj{\Ch{D}}{\cdot}$.
Before the proof of the proposition we need to establish the following lemma.
  \begin{lem}  \label{lem stabiortho}
  Let $M$ and $M'$ be two building-walls such that $M \perp M'$. Let $r\in \Gamma$ be a rotation around $M$ and $D'$ be a dial of building bounded by $M'$ then $r(D')=D'$.  
  \end{lem}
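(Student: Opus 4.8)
The plan is to reduce the claim to two assertions and then finish by comparing types. Write $s$, $s'$ for the respective types of $M$, $M'$, and let $D_0(M'),\dots,D_{q'-1}(M')$ be the dials bounded by $M'$ (Fact~\ref{prop defcadrangp}), say $D'=D_{i_0}(M')$. The two assertions are: \emph{(a)} $r(M')=M'$; and \emph{(b)} some chamber $z$ lying along $M$ belongs to $\Ch{D'}$. Granting \emph{(a)}, the element $r$ carries the family $\{D_i(M')\}_i$ onto itself, so $r(D')=D_j(M')$ for a unique $j$. Granting \emph{(b)} as well: because $z$ lies along $M$, the chamber $rz$ lies in the type-$s$ panel through $z$, hence $z$ and $rz$ are distinct and $s$-adjacent, and by Proposition~\ref{prop distcombigrap} the only building-wall separating them is $M$. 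Since $M\neq M'$, the wall $M'$ does not separate $z$ from $rz$, so $rz$ lies in the same dial bounded by $M'$ as $z$, i.e. $rz\in\Ch{D'}$. Thus the dial $r(D')=D_j(M')$ contains the chamber $rz\in\Ch{D'}$; as distinct dials bounded by $M'$ have disjoint chamber sets, $D_j(M')=D'$, i.e. $r(D')=D'$.

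To prove \emph{(a)}: $r(M')$ is again a building-wall, and since $r\in\Gamma$ is type-preserving it has type $s'$, like $M'$. Because $M\perp M'$, pick $p\in M\cap M'$; as $r$ fixes $M$ pointwise, $r(p)=p$, so $p\in M'\cap r(M')$ and this intersection is non-empty. But two distinct building-walls of equal type are disjoint, so $r(M')=M'$.

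To prove \emph{(b)}: the very same argument applied with $M'$ in the role of $M$ shows that every rotation $r'$ around $M'$ fixes $M$, hence maps every chamber along $M$ to a chamber along $M$; that is, $r'$ stabilizes $\Ch{\cv{\partial M}}$, the set of chambers lying along $M$ (Example~\ref{ex conv}). This set is non-empty, and the rotations around $M'$ act transitively on the dials $D_0(M'),\dots,D_{q'-1}(M')$ (Fact~\ref{prop defcadrangp}). So, starting from any chamber $x$ along $M$, which lies in some $D_j(M')$, and choosing a rotation $r'$ around $M'$ with $r'(D_j(M'))=D'$, the chamber $z:=r'x$ lies along $M$ and in $r'(\Ch{D_j(M')})=\Ch{D'}$, which is \emph{(b)}.

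The only point requiring care is the local statement invoked in the first paragraph: a nontrivial rotation around $M$ sends a chamber lying along $M$ to a \emph{distinct} chamber of the same type-$s$ panel, and — these two chambers being at combinatorial distance $1$ — Proposition~\ref{prop distcombigrap} then forces $M$ to be the unique building-wall separating them. This is immediate from the description of a type-$s$ panel as the $q$ chambers obtained by rotating one of them around $M$, distributed one per dial bounded by $M$. It is worth stressing that the argument is not circular: \emph{(a)} is proved outright from the fact that distinct walls of equal type are disjoint, and \emph{(b)} uses only \emph{(a)} applied to rotations around $M'$, never the lemma being proved.
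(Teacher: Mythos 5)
Your proof is correct, but it takes a genuinely different route from the paper. The paper normalizes (up to translation and conjugation) so that $M$ and $M'$ are along $x_0$ with $x_0\subset D'$, writes $\Ch{D'}=\{x:\dc{x_0,x}<\dc{x_0,sx}\}$ for a rotation $s$ around $M'$ via Fact~\ref{prop defcadrangp}, and then computes $r(\Ch{D'})=\{x:\dc{rx_0,x}<\dc{rx_0,sx}\}$ using the commutation $rs=sr$ (which holds because the types of orthogonal building-walls commute), concluding since $rx_0\in\Ch{D'}$ and the characterization is independent of the base chamber. You instead never invoke the commutation relation: you first show $r(M')=M'$ from the pointwise fixing of $M$ by $r$, type-preservation, and the fact (recorded in Example~\ref{ex translation PLS}) that distinct intersecting building-walls have distinct types; then you locate a chamber along $M$ inside $D'$ using transitivity of the rotations around $M'$ on its dials; and you finish with the wall-counting formula of Proposition~\ref{prop distcombigrap} applied to an $s$-adjacent pair. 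Both arguments are sound; the paper's is shorter and purely metric/algebraic, while yours is more geometric and isolates the useful intermediate fact $r(M')=M'$. Two of your ingredients go slightly beyond the literal statements you cite — that rotations fix their building-wall pointwise (the paper asserts "$M$ is fixed by any rotation", which is the intended pointwise reading), and that the full cyclic group of rotations around $M'$ acts transitively on its $q'$ dials (Fact~\ref{prop defcadrangp} only says a rotation permutes them, though transitivity is immediate from the panel picture you describe) — but these are at the same level of "clearly" that the paper itself uses, so there is no genuine gap.
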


  \begin{proof}
Up to a translation on the dials and a conjugation on the rotations we can assume that $M$ and $M'$ are along $x_0$ and $x_0\subset D'$. If $s$ is a rotation around $M'$, we  can write 
\[\Ch{D'}=\{x \in \ch : \dc{x_0,x}<\dc{x_0,sx} \}.\]  
Hence \begin{align*}
  r(\Ch{D'})=& \{rx \in \ch : \dc{x_0,x}<\dc{x_0,sx}   \}    \\
      =&  \{x \in \ch : \dc{x_0,r^{-1}x}<\dc{x_0,sr^{-1}x}\}.
\end{align*}

By assumption $rs=sr$, thus $r(\Ch{D'})= \{x \in \ch : \dc{r x_0,x}<\dc{r x_0,sx}   \}$. Moreover, $\dc{x_0,r x_0}=1$ and $\dc{x_0,sr x_0}=2$ so $r x_0\in \Ch{D'}$. With Fact \ref{prop defcadrangp} we obtain  $\Ch{D'}= r(\Ch{D'})$.
  \end{proof}

\begin{proof}[Proof of Proposition  \ref{prop deffrojdial}]
If $D$ is a residue, then we refer to \cite[Proposition 3.19.3]{TitsBuildingsLectureNotes}. If $D$ is a dial of building, let $y\in C$ be such that $\dc{x,y}= \di{x, C}$. Then for   $z \in C$ we set  $x=x_1\sim x_2\sim \dots\sim y$ and $ y=x_\ell \sim \dots \sim z=x_k$ two  minimal galleries. Assume that the gallery   \[x=x_1\sim  x_2\sim \dots\sim y=x_\ell \sim \dots \sim z=x_k \] is not minimal. Then there exists a  building-wall $M$ and two indices $i,j$ with $1\leq i < \ell$ and $\ell\leq j < k$ such that \liste{
\item $M$ separates $x_i$ and $x_{i+1}$,
\item $M$ separates $x_j$ and $x_{j+1}$.
}
Now consider $r\in \Gamma$ the rotation around $M$  such that $r  x_{i+1}=x_i$. By Proposition \ref{lem stabiortho}, $r(D)=D$ hence, the gallery \[x \sim \dots \sim  x_i\sim r  x_{i+2} \dots\sim r  x_{\ell} = r y\] connects $x$ to $C$ and is of length $\di{x, C}-1$, which is a contradiction.
 
\begin{figure}[ht!]
 \labellist
\small\hair 2pt

\pinlabel $x$ at 30 480
\pinlabel $x_{i}=rx_{i+1}$ at 370 425 
\pinlabel $x_{i+1}$ at 355 255
\pinlabel $ry$ at 740 430
 \pinlabel $y$ at 740 160
 \pinlabel $C$ at 785 80
 \pinlabel $M$ at 800 370

\endlabellist
\centering
\includegraphics[height=4cm]{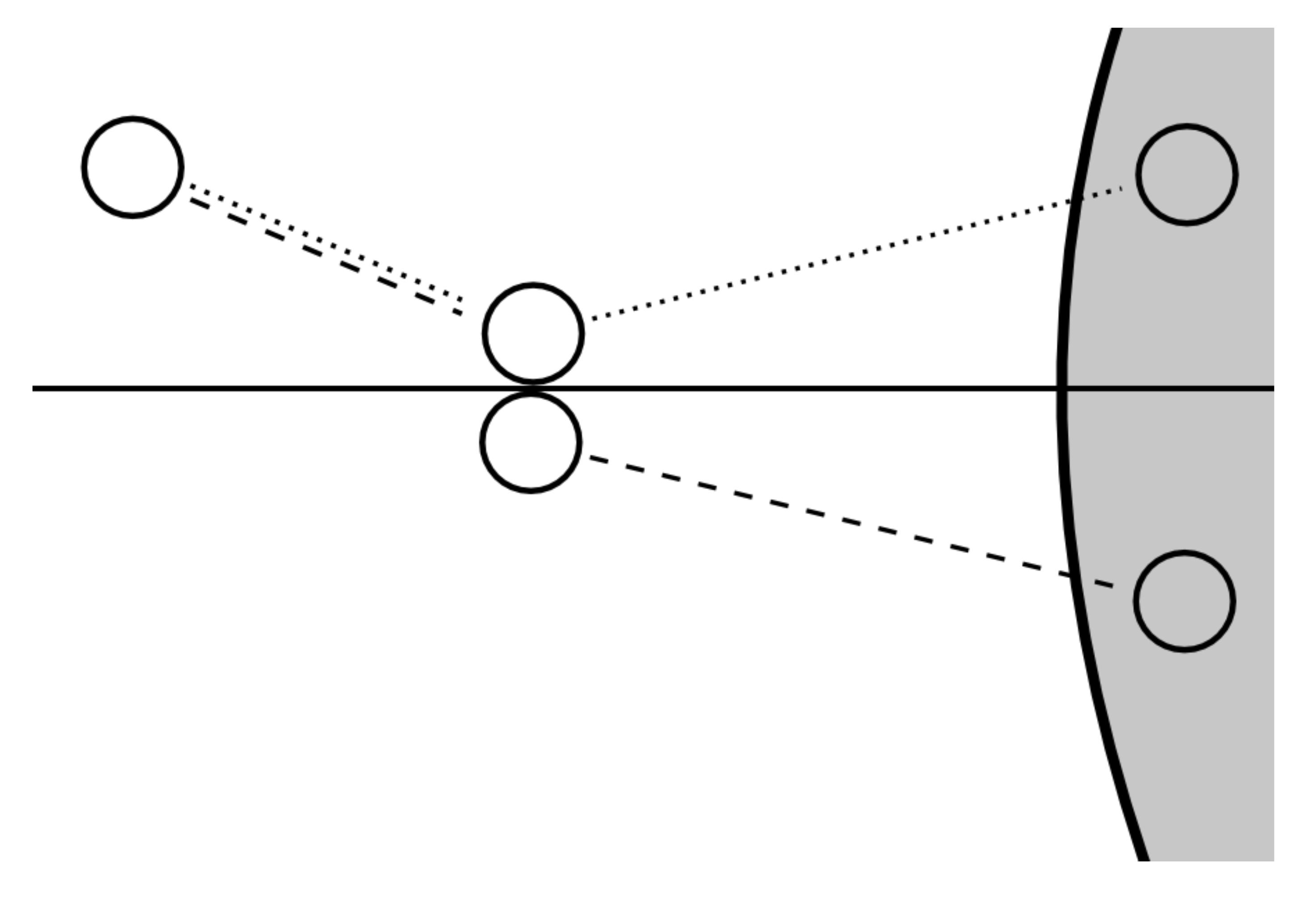}
\caption{}
 
\end{figure}
We proved that  for any $z \in C$, there exists a minimal gallery from $x$ to $z$ passing through $y$. This proves in particular that $y$ is unique and the proof is achieved.
 
\end{proof}

The following   lemma says that   the projections on the dials of building are orthogonal relatively to the building-wall structure.

\begin{lem} \label{lem stabiproj}
Let $D,D' \in \dialgp$  such that $\Ch{D \cap D'}\neq\emptyset$. If $x\in\Ch{D}$ then   $\proj{D'}{x}\in \Ch{D \cap D'}$.
\end{lem}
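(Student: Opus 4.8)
The plan is to deduce the lemma from the ``gate'' property of Proposition~\ref{prop deffrojdial} together with the convexity of dials of building, so that essentially no computation is needed. First I would note that, at the level of chambers, $\Ch{D\cap D'}=\Ch{D}\cap\Ch{D'}$, since a chamber is contained in $D\cap D'$ precisely when it is contained in both $D$ and $D'$. Thus the hypothesis $\Ch{D\cap D'}\neq\emptyset$ furnishes a chamber $y\in\Ch{D}\cap\Ch{D'}$. Because $\proj{D'}{x}\in\Ch{D'}$ holds by the very definition of the projection (Proposition~\ref{prop deffrojdial}), it suffices to prove the single inclusion $\proj{D'}{x}\in\Ch{D}$.

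Next I would record that $\Ch{D}$ is convex for the gallery metric $\dc{\cdot,\cdot}$. Let $M\in\bw$ be the building-wall bounding $D$; then $\Ch{D}$ is exactly the set of chambers lying on one side of $M$, so $M$ separates no two of them. By Proposition~\ref{prop distcombigrap} a minimal gallery between two chambers has length equal to the number of building-walls separating its endpoints, and it must cross each such wall at least once; a counting argument then shows it crosses precisely those walls, each exactly once, and no others. Hence a minimal gallery between two chambers of $\Ch{D}$ never crosses $M$, so it stays on the $D$-side of $M$, i.e.\ inside $\Ch{D}$.

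Now I would apply Proposition~\ref{prop deffrojdial} to the dial $D'$ and the chamber $y\in\Ch{D'}$: it produces a minimal gallery from $x$ to $y$ passing through $z:=\proj{D'}{x}$. Its endpoints $x$ and $y$ both lie in $\Ch{D}$, and the gallery is minimal, so by the convexity just recorded every chamber of this gallery lies in $\Ch{D}$; in particular $z\in\Ch{D}$. Combined with $z\in\Ch{D'}$ this gives $z\in\Ch{D}\cap\Ch{D'}=\Ch{D\cap D'}$, which is exactly the assertion.

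I do not expect a genuine obstacle here; the only point requiring slight care is the convexity of $\Ch{D}$ above, and one should also observe that the degenerate configuration in which $D$ and $D'$ are two distinct dials bounded by the same building-wall does not occur, since then $\Ch{D}\cap\Ch{D'}=\emptyset$ and the hypothesis would be vacuous.
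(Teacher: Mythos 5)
Your proof is correct, and it takes a genuinely different route from the paper's. The paper argues by cases on how the bounding building-walls $M$ and $M'$ of $D$ and $D'$ interact (nested dials versus $M\perp M'$), and in the essential perpendicular case it runs a folding argument: if the minimal gallery from $x$ to $\proj{D'}{x}$ crossed $M$, applying a rotation $r$ around $M$ — which stabilizes $D'$ by Lemma \ref{lem stabiortho} — to the tail of the gallery would produce a gallery from $x$ to $\Ch{D'}$ of length $\dc{x,\proj{D'}{x}}-1$, contradicting minimality. You instead avoid rotations and case distinctions altogether: you extract from the hypothesis a single common chamber $y\in\Ch{D}\cap\Ch{D'}$, invoke the gate property (second clause of Proposition \ref{prop deffrojdial}, which is established before the lemma, so there is no circularity) to get a minimal gallery $x\sim\dots\sim\proj{D'}{x}\sim\dots\sim y$, and conclude by convexity of $\Ch{D}$, which you correctly derive from the wall-counting formula of Proposition \ref{prop distcombigrap} (a minimal gallery crosses exactly the separating walls, once each, so it cannot cross $M$ when both endpoints lie on the same side). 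What the paper's approach buys is self-containedness in the rotation machinery it has just set up — indeed the folding via Lemma \ref{lem stabiortho} is the same engine that proves the gate property itself, so in your argument that rotation trick is still present, only hidden inside Proposition \ref{prop deffrojdial}. What your approach buys is a uniform, case-free proof that isolates a genuinely useful intermediate fact (convexity of dials of building for the gallery metric), which the paper uses implicitly elsewhere (e.g.\ right after the definition of the convex hull) but never states; your remark that two distinct dials bounded by the same wall share no chamber correctly explains why that configuration is absent from the paper's case list.
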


\begin{proof}
Clearly  $\proj{D'}{x} \subset D'$ so we   check that $\proj{D'}{x}\subset D $. Under the assumption $\Ch{D \cap D'}\neq\emptyset$  three cases are possible.  
First, if $D'\subset D$ then $\proj{D'}{x} \subset D' \subset D $. Then if  $D\subset D'$, then   $\proj{D'}{x}=x \subset D$ for any  $x\in \Ch{D}$. 

Now let $M$ and $M'$ be the building-walls that bound $D$ and $D'$. The last case is realized when $M\perp M'$. In this case consider a minimal gallery $x\sim\dots\sim  \proj{D'}{x}$.

If $\proj{D'}{x}\not\subset D $, then the preceding gallery crosses $M$. As a consequence, we can write that there exists a minimal gallery of the form 
\[x\sim \dots \sim x_i \sim x_{i+1}=r x_i \sim x_{i+2}\sim \dots \sim  \proj{D'}{x}\]
where $r\in \Gamma$ is rotation around $M$. Then with Lemma \ref{lem stabiortho} we obtain that

\[x\sim \dots \sim x_i \sim r^{-1} x_{i+2} \sim \dots \sim  r^{-1} \proj{D'}{x} \]
is a gallery between $x$ and $D'$ of length $\dc{x,\proj{D'}{x}}-1$. Which is a contradiction.

\end{proof}

Applying several times the projection maps on dials of building, we define projection maps on finite intersections of dials of building.

\begin{prop}  \label{prop defprojinter}  
 Let  $D_1,\dots, D_k\in \dialgp$  and $C=\Ch{D_1\cap \dots \cap D_k}$. Assume that  $C\neq \emptyset$. Then for any $x\in \ch$ there exists a unique chamber $\proj{C}{x}\in C$  \index{$\proj{C}{\cdot}$} such that \[\dc{x,\proj{C}{x}}= \di{x, C}.\]   Moreover, for any chamber  $y \in C$ there exists a minimal gallery from  $x$ to $y$ passing through   $\proj{C}{x}$. Finally $\proj{C}{x}=\mathrm{proj}_{D_k } \circ \dots \circ \mathrm{proj}_{D_1}(x)$.\end{prop}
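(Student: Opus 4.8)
The plan is to argue by induction on $k$, the case $k=1$ being exactly Proposition \ref{prop deffrojdial}. Throughout I would use the ``gate'' reformulation of the statement: a chamber $g\in C$ has the two asserted properties (it realizes $\di{x,C}$, and for every $z\in C$ there is a minimal gallery from $x$ to $z$ through $g$) if and only if $\dc{x,z}=\dc{x,g}+\dc{g,z}$ for every $z\in C$. Such a $g$ is then automatically unique, since $\dc{x,y}=\dc{x,g}$ for $y\in C$ forces $\dc{g,y}=0$, i.e. $y=g$; and the additive identity directly yields minimal galleries by concatenation. I would also record that dials of building are convex for $\dc{\cdot,\cdot}$: if $x_1,x_2$ lie in a dial $D$ bounded by a building-wall $M$, then $M$ does not separate $x_1$ and $x_2$, so by Proposition \ref{prop distcombigrap} no minimal gallery from $x_1$ to $x_2$ crosses $M$, and hence every such gallery stays in $D$.

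For the inductive step I would set $C'=\Ch{D_1\cap\dots\cap D_{k-1}}$. Since $C\subset C'$ and $C\neq\emptyset$, the set $C'$ is non-empty, so the induction hypothesis applies and $y_0:=\mathrm{proj}_{D_{k-1}}\circ\dots\circ\mathrm{proj}_{D_1}(x)$ is the gate of $C'$ seen from $x$, that is $\dc{x,z}=\dc{x,y_0}+\dc{y_0,z}$ for every $z\in C'$. I claim that $g:=\proj{D_k}{y_0}$ is the gate of $C$; this claim gives at once the existence, the uniqueness, and the composition formula $\proj{C}{x}=\mathrm{proj}_{D_k}\circ\dots\circ\mathrm{proj}_{D_1}(x)$.

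The first thing to check is that $g\in C$. By construction $g\in\Ch{D_k}$. Since $C\neq\emptyset$, pick $w\in C$; then $w\in\Ch{D_k}$, so by Proposition \ref{prop deffrojdial} there is a minimal gallery from $y_0$ to $w$ passing through $g$. Both $y_0$ and $w$ lie in $D_i$ for every $i\leq k-1$ (the former because $y_0\in C'$, the latter because $w\in C$), so by convexity of $D_i$ this gallery, and in particular $g$, lies in $D_i$. Hence $g\in\Ch{D_1\cap\dots\cap D_k}=C$, and a fortiori $g\in C'$. Now for any $z\in C\subset C'$ I would combine three additive identities: $\dc{x,z}=\dc{x,y_0}+\dc{y_0,z}$ (induction hypothesis for $C'$), $\dc{y_0,z}=\dc{y_0,g}+\dc{g,z}$ (Proposition \ref{prop deffrojdial} applied to $D_k$, using $z\in\Ch{D_k}$ and $g=\proj{D_k}{y_0}$), and $\dc{x,y_0}+\dc{y_0,g}=\dc{x,g}$ (induction hypothesis for $C'$ applied to $g\in C'$). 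Substituting gives $\dc{x,z}=\dc{x,g}+\dc{g,z}$ for all $z\in C$, which is exactly the gate identity for $C$, and $g=\mathrm{proj}_{D_k}\circ\dots\circ\mathrm{proj}_{D_1}(x)$ since $y_0$ is the corresponding iterated projection by the induction hypothesis.

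The only mildly delicate step is the verification $g\in C$, which is why I introduce an auxiliary point $w\in C$ and invoke the convexity of the $D_i$; everything else is bookkeeping with the additivity identities supplied by the gate property. Since the convexity of dials of building follows directly from Proposition \ref{prop distcombigrap}, no geometric input beyond Propositions \ref{prop deffrojdial} and \ref{prop distcombigrap} is required.
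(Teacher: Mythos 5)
Your proof is correct, and its skeleton is the same as the paper's: induct on the number of dials and iterate the single-dial projection of Proposition \ref{prop deffrojdial}, so that the gate of $C$ is exhibited as $\mathrm{proj}_{D_k}\circ\dots\circ\mathrm{proj}_{D_1}(x)$. The genuine difference lies in how the iterated projection is shown to land in the intersection. The paper gets this from Lemma \ref{lem stabiproj} (applied to each pair $D_j,D_k$, the hypothesis $\Ch{D_j\cap D_k}\neq\emptyset$ being supplied by $C\neq\emptyset$), whereas you bypass that lemma: you pick an auxiliary chamber $w\in C$, take the minimal gallery from $y_0$ to $w$ through $g$ furnished by Proposition \ref{prop deffrojdial}, and conclude $g\subset D_1\cap\dots\cap D_{k-1}$ by convexity of dials, which you extract from the wall-counting formula of Proposition \ref{prop distcombigrap}. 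This makes the argument independent of Lemma \ref{lem stabiproj} (and hence of Lemma \ref{lem stabiortho}), at the modest cost of the convexity verification --- strictly speaking that step also uses the fact that each step of a gallery crosses exactly one building-wall, which again follows from Proposition \ref{prop distcombigrap} applied to adjacent chambers, so your compressed justification is sound. Your other deviation, recasting ``closest chamber together with minimal galleries through it'' as the gate identity $\dc{x,z}=\dc{x,g}+\dc{g,z}$ for all $z\in C$, is equivalent to the paper's explicit concatenation of minimal galleries, but it streamlines the additivity bookkeeping in the inductive step and yields uniqueness for free.
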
 
 
For  simplicity, in the following we will   use the notation   $\proj{D}{\cdot}$ \index{$\proj{D}{\cdot}$} instead of $\proj{C}{\cdot}$. Notice that it is not always possible to define a projection on a  convex set of chambers. For instance, if $\Sigma$ is a thick building there exist  pairs of adjacent chambers $x$ and $y$ with $\dc{x_0,x}=\dc{x_0,y}$.  

\begin{proof}
First, according to Lemma \ref{lem stabiproj}, 
 we can assume, up to a subfamily, that $x\not\subset D_i$ for each $i=1,\dots, k$. Now we set \liste{\item $C_1=\Ch{D_1}$ and $C_i=C_{i-1} \cap \Ch{D_i}$ for any $i=2,\dots, k$,
\item  $x_1=\proj{D_1}{x}$ and $x_i=\proj{D_i}{x_{i-1}}$ for any $i=2,\dots, k$.}

By induction on $i$ we prove the following property:
\begin{center}
\begin{minipage}[c]{12cm} 
\textit{  $x_i\in C_i$  and   is the unique chamber of $C_i$ such that $\dc{x,x_i}= \di{x, C_i}$. Moreover, for any chamber  $y \in C_i$ there exists a minimal gallery from  $x$ to $y$ passing through   $x_i$. }
\end{minipage}
\end{center}

If $i=1$ the property holds by Proposition \ref{prop deffrojdial}. Let $i>1$ and assume that the property holds at rank $i$. In particular, for $j=1,\dots, i$ one has  $x_i\in \Ch{D_j}$ and $\Ch{D_j}\cap \Ch{D_{i+1}}\neq\emptyset$. Therefore, with Lemma \ref{lem stabiproj}, $x_{i+1}\in \Ch{D_1}\cap \dots \cap \Ch{D_i}\cap \Ch{D_{i+1}}=C_{i+1}$.

By Proposition \ref{prop deffrojdial}, $x_{i+1}$ is the unique chamber in $C_{i+1}$ such that \[\dc{x_i,x_{i+1}}=\di{x_i,C_{i+1}}.\]
Moreover, by the same proposition, for $y\in  C_i$ if  the galleries   $x\sim \dots \sim x_i$  and $x_i\sim \dots \sim y$ are minimal, the gallery \[ x\sim \dots \sim x_i \sim \dots \sim y\]is minimal. As a consequence,  $x_{i+1}$ is the unique chamber in $C_{i+1}$ such that $\dc{x,x_{i+1}}=\di{x,C_{i+1}}.$

On the other hand, by Proposition \ref{prop deffrojdial}, for any $y \in C_{i+1}$ there exists a minimal gallery \[ x_i\sim \dots \sim x_{i+1} \sim \dots \sim y.\] Hence if the gallery $x\sim \dots \sim x_i$ is minimal, the gallery \[x\sim \dots \sim x_i\sim \dots \sim x_{i+1} \sim \dots \sim y\] is also minimal.
\end{proof}

\subsection{Shadows on $\borg$}
\label{subsecshadows}

The following notions are used in the rest of this  article to describe the topology and the metric on $\borg$. We recall that the boundary of $\Gamma$ is canonically identified with the boundary of $\Sigma$.

\begin{déf} \label{def cone}
 Let $x\in \Ch{\Sigma}$. We call \index{Cone of chambers} \emph{cone of chambers of base $x$} and we denote \index{$C_x$} $C_x \subset \Sigma$, the union of the set of chambers $y\in \ch$ such that  there exists a minimal gallery from $x_0$ to $y$ passing through $x$.  
\end{déf}

Cones of chambers are characterized by  projection maps and dials of building.

\begin{prop} \label{prop intersontcone}
Let $D_1, \dots, D_k \in \dialgp$ and $C=D_1\cap \dots \cap D_k$. Assume that  $C$ contains a chamber and that $x_0\not\subset D_i$ for  $i=1, \dots, k$. If we set $x=\proj{C}{x_0}$  then $C_x=C$.
\end{prop}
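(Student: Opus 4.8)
The statement says that a finite intersection $C = D_1 \cap \dots \cap D_k$ of dials of building, none of which contains $x_0$, coincides with the cone of chambers $C_x$ based at $x = \proj{C}{x_0}$. The plan is to prove the two inclusions $\Ch{C_x} \subseteq \Ch{C}$ and $\Ch{C} \subseteq \Ch{C_x}$ separately, relying on Proposition \ref{prop defprojinter} (projection onto finite intersections of dials) as the main tool.

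\textbf{First inclusion: $C_x \subseteq C$.} Let $y \in \Ch{C_x}$, so there is a minimal gallery from $x_0$ to $y$ passing through $x$. I want to show $y \in \Ch{D_i}$ for each $i$. Fix $i$ and let $M_i$ be the building-wall bounding $D_i$. Since $x_0 \notin D_i$ but $x = \proj{C}{x_0} \in C \subseteq D_i$, the wall $M_i$ separates $x_0$ from $x$, so any minimal gallery from $x_0$ to $x$ crosses $M_i$ exactly once (a minimal gallery crosses each wall at most once, by Proposition \ref{prop distcombigrap}). Now the sub-gallery from $x$ to $y$ is itself minimal; if it crossed $M_i$, then $M_i$ would separate $x$ and $y$, and since the whole gallery from $x_0$ to $y$ is minimal, $M_i$ would have to separate $x_0$ and $y$ with the gallery crossing $M_i$ twice — impossible. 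Hence the sub-gallery from $x$ to $y$ does not cross $M_i$, so $y$ lies on the same side of $M_i$ as $x$, i.e. $y \in \Ch{D_i}$. As $i$ was arbitrary, $y \in \Ch{C}$.

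\textbf{Second inclusion: $C \subseteq C_x$.} Let $y \in \Ch{C}$. By Proposition \ref{prop defprojinter}, since $y \in C$, there exists a minimal gallery from $x_0$ to $y$ passing through $\proj{C}{x_0} = x$. That is exactly the condition for $y \in \Ch{C_x}$ in Definition \ref{def cone}. This direction is essentially immediate from the preceding proposition, which is the point of having developed the projection machinery.

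\textbf{Main obstacle.} The delicate point is the first inclusion, specifically the bookkeeping about how a minimal gallery interacts with the separating walls $M_i$: one must be careful that $x = \proj{C}{x_0}$ genuinely lies in every $D_i$ (which is what $x \in C$ means) and that minimality forbids re-crossing a wall. This is a standard ``convexity of minimal galleries'' argument, already implicit in Definition \ref{def convexe} and Proposition \ref{prop distcombigrap}, so no essentially new idea is needed; the work is just to phrase the crossing argument cleanly. Once both inclusions are in place, since a cone of chambers and an intersection of dials are each the union of their chambers, the equality $C_x = C$ as subsets of $\Sigma$ follows.
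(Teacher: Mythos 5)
Your proof is correct and follows essentially the same route as the paper: the inclusion $C\subseteq C_x$ is immediate from Proposition \ref{prop defprojinter} together with Definition \ref{def cone}, and the inclusion $C_x\subseteq C$ comes from the observation that a minimal gallery $x_0\sim\dots\sim x\sim\dots\sim y$ cannot cross any $M_i$ after $x$, which you justify via Proposition \ref{prop distcombigrap}. Your write-up is in fact slightly more detailed than the paper's on the wall-crossing bookkeeping, but the argument is the same.
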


\begin{proof}
According to Definition \ref{def cone} and to  Proposition \ref{prop defprojinter}, $C\subset C_x$. Now let $y \in \Ch{C_x}$ and let $M_i$ be the building-wall that bounds $D_i$  for each $i=1, \dots, k$. If $x_0\sim \dots \sim x\sim \dots \sim y$ is  a minimal gallery, then the subgallery $x\sim \dots \sim y$ does not cross the building-wall $M_i$ for any $i=1, \dots, k$ and $y\subset C$.
\end{proof}
 
Reciprocally cones are intersections of dials of building.

\begin{prop} \label{prop conesontinter}
Let $x\in \ch$ and  let $D_1 ,\dots , D _k \in \dialgp$ be the  family of   dials of building such that for any $i=1, \dots, k$ \[x_0\not\subset D_i \text{ and }x \subset D_i.\]  Then $C_x =D_1 \cap \dots \cap D_k$.
  \end{prop}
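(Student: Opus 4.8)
The plan is to deduce this from Proposition~\ref{prop intersontcone} together with a short computation that identifies $\proj{C}{x_0}$. First I would dispose of the degenerate case $x=x_0$: then there is no dial containing $x$ but not $x_0$, so $k=0$, the empty intersection is $\Sigma$, and $C_{x_0}=\Sigma$ as well, so there is nothing to prove. From now on I assume $x\neq x_0$, so that the family $D_1,\dots,D_k$ is non-empty.

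The first key point is to understand this family combinatorially. Let $M_i\in\bw$ be the building-wall bounding $D_i$. Since $x\subset D_i$ and $x_0\not\subset D_i$, the wall $M_i$ separates $x_0$ and $x$. Conversely, if $M\in\bw$ separates $x_0$ and $x$, then by Fact~\ref{prop defcadrangp} there is a unique dial bounded by $M$ containing $x$, and it does not contain $x_0$, hence it is one of the $D_i$; and two distinct $D_i$ have distinct bounding walls, again by the uniqueness in Fact~\ref{prop defcadrangp}. Thus $D_i\mapsto M_i$ is a bijection from $\{D_1,\dots,D_k\}$ onto $\{M\in\bw : M\text{ separates }x_0\text{ and }x\}$, and Proposition~\ref{prop distcombigrap} gives $k=\dc{x_0,x}$ and, in particular, that the family is finite.

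Next I would apply Proposition~\ref{prop intersontcone} to $D_1,\dots,D_k$: the intersection $C:=D_1\cap\dots\cap D_k$ contains the chamber $x$, and $x_0\not\subset D_i$ for every $i$, so the hypotheses hold and $C_{\proj{C}{x_0}}=C$. It then suffices to show $\proj{C}{x_0}=x$. By the uniqueness statement in Proposition~\ref{prop defprojinter}, this amounts to $\dc{x_0,x}=\di{x_0,C}$. The inequality $\di{x_0,C}\leq\dc{x_0,x}$ is clear since $x\in\Ch{C}$. For the reverse inequality, take any $z\in\Ch{C}$; then $z\subset D_i$ while $x_0\not\subset D_i$, so each of the pairwise distinct walls $M_1,\dots,M_k$ separates $x_0$ from $z$, whence $\dc{x_0,z}\geq k=\dc{x_0,x}$ by Proposition~\ref{prop distcombigrap}. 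Therefore $\di{x_0,C}=\dc{x_0,x}$, so $x=\proj{C}{x_0}$ and $C_x=C=D_1\cap\dots\cap D_k$.

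I do not expect a real obstacle: the only delicate bookkeeping is the use of Fact~\ref{prop defcadrangp} (that a building-wall of type $s$ bounds exactly $\#\langle s\rangle$ dials, with a chamber lying in exactly one of them, and the analogous statement in the infinite-order case), which is what underlies both the bijection $D_i\leftrightarrow M_i$ and the pairwise distinctness of the $M_i$. If one wished to avoid Proposition~\ref{prop intersontcone}, the inclusion $C_x\subseteq D_1\cap\dots\cap D_k$ can be proved directly: given $y\in\Ch{C_x}$ with a minimal gallery $x_0\sim\dots\sim x\sim\dots\sim y$, each wall $M_i$ separates $x_0$ and $x$ and so is crossed by the initial segment $x_0\sim\dots\sim x$; since a minimal gallery crosses any wall at most once, the terminal segment $x\sim\dots\sim y$ does not cross $M_i$, so $y$ lies on the same side of $M_i$ as $x$, i.e.\ $y\subset D_i$; the reverse inclusion is then exactly Proposition~\ref{prop intersontcone} once $x=\proj{C}{x_0}$ is known.
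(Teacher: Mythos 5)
Your proof is correct and follows the paper's strategy: both arguments reduce, via Proposition \ref{prop intersontcone}, to showing that $\proj{C}{x_0}=x$ for $C=D_1\cap\dots\cap D_k$. The only divergence is in that verification: you count the separating walls (getting $k=\dc{x_0,x}$ from Proposition \ref{prop distcombigrap}) and then invoke the uniqueness statement of Proposition \ref{prop defprojinter}, whereas the paper argues by contradiction, taking a minimal gallery $x_0\sim\dots\sim x'\sim\dots\sim x$ with $x'=\proj{C}{x_0}$ and observing that a wall separating $x'$ from $x$ would bound a dial belonging to the family $\{D_1,\dots,D_k\}$, contradicting $x'\subset C$; both verifications are sound, and your preliminary identification of the family with the walls separating $x_0$ and $x$ (together with the trivial case $x=x_0$) is harmless extra care.
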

\begin{proof} Let $C=D_1 \cap \dots \cap D_k$. According to Proposition \ref{prop intersontcone}, it is enough to prove that $\proj{C}{x_0}=x$. If we write $x'=\proj{C}{x_0}$, with Proposition \ref{prop intersontcone}, $C=C_{x'}$. Hence there exists a minimal gallery \[x_0\sim\dots\sim x'\sim\dots\sim x.\]

Now assume that $x'\neq x$, this means that there exists a     building-wall $M$ that separates $x$ and $x'$. Since the preceding gallery is minimal, the dial of building $D$ bounded by $M$ that contains $x$ does not contain $x'$ and $x_0$. Thus $D\in \{ D_1 ,\dots , D _k\}$ and $x'\not\subset C$ which is a contradiction. 
\end{proof}

 In particular, as cones of chambers are intersections of dials of building, it makes sense to consider projection maps on cones and  $\proj{C_x}{x_0}=x$. In the following fact we summarize what we can say about intersections of dials of  building.

\begin{fact} \label{fact interdial} Let $D_1,\dots, D_k\in \dialgp$  be such that $D_i\not\subset D_j$ for any $i\neq j$. Assume that $x_0\not\subset D_i$ for any $i=1, \dots, k$. Let $M_i$ be the  building-wall that bounds $D_i$  for any $i=1, \dots, k$,   and let   $C=D_1\cap \dots \cap D_k$. Then exactly one of these assertions holds. \liste{\item There exists $i,j$ such that $M_i\parallel M_j$ and $C=\emptyset$.\item  $M_i\cap M_j \neq \emptyset$  for any   $i, j=1, \dots, k$,  and there exists $i\neq j$ such that $M_i=M_j$. In this case $C$ is contained in $M_i$.\item $M_i \perp M_j$ for any   $i\neq j$. In this case $C$ is a cone.}
\end{fact}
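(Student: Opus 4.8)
The plan is to argue by a case analysis on the mutual positions of the building-walls $M_1,\dots,M_k$. Since $\Sigma$ is right-angled, two distinct building-walls are either orthogonal or parallel, so for every pair $i\neq j$ exactly one of $M_i=M_j$, $M_i\perp M_j$, $M_i\parallel M_j$ occurs. The three alternatives in the statement correspond, respectively, to: at least one pair is parallel; no pair is parallel but at least one pair is equal; all pairs are orthogonal. These three situations are mutually exclusive and exhaust all possibilities (the first two being distinguished by the clause ``$M_i\cap M_j\neq\emptyset$ for all $i,j$''), so it remains to prove the announced conclusion in each of them. Two elementary topological facts will be used repeatedly: distinct dials of one fixed building-wall $M$ meet only along $M$ (because distinct components of $\Sigma\setminus M$ have closures meeting only in $M$); and, by the remark recalled above, a pair of parallel building-walls $M,M'$ admits a dial $D\supseteq M'$ and a dial $D'\supseteq M$.

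\emph{A parallel pair.} Suppose $M_i\parallel M_j$; since $C\subseteq D_i\cap D_j$, it suffices to show $D_i\cap D_j=\emptyset$. Starting from the remark above I would first show that there is a unique dial $\tilde D_i$ of $M_i$ containing $M_j$, a unique dial $\tilde D_j$ of $M_j$ containing $M_i$, and that every dial of $M_i$ different from $\tilde D_i$ is contained in $\tilde D_j$ (and symmetrically); this last point follows because such a dial $D$ is connected, disjoint from $M_j$, and contains points of $M_i\subseteq\tilde D_j$, so it lies in the single dial of $M_j$ containing $M_i$. Combined with the first elementary fact, this gives $D\cap D'\subseteq M_i\cap M_j=\emptyset$ for any dials $D\neq\tilde D_i$ of $M_i$ and $D'\neq\tilde D_j$ of $M_j$. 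Now $x_0$ lies in exactly one dial of $M_i$ and one of $M_j$, and by the previous line it cannot lie in a dial $\neq\tilde D_i$ of $M_i$ and simultaneously in a dial $\neq\tilde D_j$ of $M_j$; so $x_0\in\tilde D_i$ or $x_0\in\tilde D_j$, say $x_0\in\tilde D_i$. Then $x_0\not\subset D_i$ forces $D_i\neq\tilde D_i$, hence $D_i\subseteq\tilde D_j$; if moreover $D_j=\tilde D_j$, we would get $D_i\subseteq D_j$, contradicting the hypothesis $D_i\not\subset D_j$; therefore $D_j\neq\tilde D_j$ and $D_i\cap D_j=\emptyset$. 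I expect this to be the main obstacle of the proof, since it is the only place where all three hypotheses ($M_i\parallel M_j$, $x_0\not\subset D_i$ and $x_0\not\subset D_j$, and $D_i\not\subset D_j$) are genuinely needed and must be combined carefully.

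\emph{An equal pair.} If $M_i=M_j$ with $i\neq j$, then $D_i$ and $D_j$ are dials bounded by the same building-wall, and they are distinct (otherwise one contains the other, against the hypothesis). Hence $C\subseteq D_i\cap D_j\subseteq M_i$ by the first elementary fact above, which is exactly the desired conclusion.

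\emph{All pairs orthogonal.} Here I would first show that $C$ contains a chamber, by induction on $k$. For $k=1$ this is clear, since a single dial $D_1$ equals the cone $C_{\proj{D_1}{x_0}}$ by Proposition \ref{prop intersontcone}. For the inductive step, set $C'=D_1\cap\cdots\cap D_{k-1}$, which contains a chamber by induction, put $y=\proj{C'}{x_0}$, and observe that for each $i<k$ one has $y\in\Ch{D_i}$ while $\Ch{D_i\cap D_k}\neq\emptyset$ (because $M_i\perp M_k$ and dials of orthogonal walls always share a chamber); Lemma \ref{lem stabiproj} then gives $\proj{D_k}{y}\in\Ch{D_i\cap D_k}\subseteq\Ch{D_i}$ for every $i\leq k$, whence $\proj{D_k}{y}\in\Ch{C}$. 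Once $C$ is known to contain a chamber, Proposition \ref{prop intersontcone} applies (the hypothesis $x_0\not\subset D_i$ holds for all $i$) and yields $C=C_x$ with $x=\proj{C}{x_0}$, so $C$ is a cone. This completes the trichotomy and hence the proof.
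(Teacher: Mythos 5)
Your proof is correct. The paper states this as a Fact without giving a proof, and your argument fills it in using exactly the ingredients the paper supplies nearby — the remark that dials bounded by orthogonal (resp.\ parallel) building-walls share a chamber (resp.\ contain one another's walls), Lemma \ref{lem stabiproj}, Proposition \ref{prop defprojinter} and Proposition \ref{prop intersontcone} — so it is essentially the intended justification; in particular the parallel case, where you combine $x_0\not\subset D_i$, $x_0\not\subset D_j$ and $D_i\not\subset D_j$ to force $D_i\cap D_j=\emptyset$, is handled carefully and correctly.
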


This fact, up to a translation and up to a subfamily, describes how a finite family of dials intersects. The following  lemma    specifies the   case when the intersection is a cone.

\begin{lem} \label{lem projalong}
Let $D_1,\dots, D_k$ be   a family of distinct dials of building  bounded by the building-walls $M_1 ,\dots , M_k$. Let $C=D_1\cap \dots \cap D_k$.  Assume that $x_0\not\subset D_i$ and that $M_i \perp M_j$  for any $i,j=1,\dots,k$ with $i\neq j$. Then any $M_i$ is along  $\proj{C}{x_0}$. 
\end{lem}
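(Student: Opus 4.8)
My plan is to show that each building-wall $M_i$ is along $\proj{C}{x_0}$ by a direct argument using the projection maps constructed in Proposition \ref{prop defprojinter}, together with the orthogonality hypothesis and Lemma \ref{lem stabiortho}. Write $x=\proj{C}{x_0}$. Since $x_0 \not\subset D_i$ for each $i$, we have in particular $x_0 \neq x$, and I will analyze the building-wall $M_i$ relative to $x$ by exhibiting the adjacent chamber $r_i(x)$ where $r_i$ is a suitable rotation around $M_i$.

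First I would recall from Proposition \ref{prop defprojinter} that $x = \mathrm{proj}_{D_k}\circ\dots\circ\mathrm{proj}_{D_1}(x_0)$, and more usefully that $x = \proj{C}{y}$ for \emph{every} $y \in C$ (this is part of the uniqueness statement: $x$ is the unique chamber of $C$ closest to $x_0$, and $\di{x_0,\cdot}$ restricted to $C$ is minimized only at $x$, while any $y \in C$ admits a minimal gallery through $x$). Now fix $i$ and let $r_i$ be a rotation around $M_i$ such that $r_i(x)$ is the chamber adjacent to $x$ on the $M_i$-side; I claim $r_i(x) \in C$, which is exactly the assertion that $M_i$ is along $x$. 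To see $r_i(x) \in C$: it lies in $D_i$ by construction of $r_i$ (it is the chamber adjacent to $x$ across $M_i$, and since $x \in D_i$ bounded by $M_i$, the image $r_i(x)$ is again in $D_i$ — here one uses Fact \ref{prop defcadrangp} on how $r_i$ permutes the dials bounded by $M_i$, or just that $D_i$ is $r_i$-invariant when $r_i$ is a rotation around its own bounding wall). For $j \neq i$: since $M_i \perp M_j$, Lemma \ref{lem stabiortho} gives $r_i(D_j) = D_j$, and since $x \in D_j$ we get $r_i(x) \in D_j$. Hence $r_i(x) \in D_1 \cap \dots \cap D_k = C$.

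The one point that needs care — and the step I expect to be the main obstacle — is justifying that $r_i(x)$ really is \emph{adjacent} to $x$, i.e.\ that $M_i$ is along $x$ in the sense of Definition (a rotation $r_i$ around $M_i$ with $r_i(x) \sim x$), rather than merely that $x \in D_i$. The content here is: the minimal gallery from $x_0$ to $x$ (which exists and, since $x = \proj{C}{x_0}$, enters $C$ only at its endpoint) must cross $M_i$ in its last step. Indeed, $x_0 \notin D_i$ and $x \in D_i$, so any gallery from $x_0$ to $x$ crosses $M_i$; if a minimal one crossed $M_i$ before the last step, say at step $t < \ell$, then the chamber $x_t$ reached just after crossing would already lie in $D_i$, and one could push this crossing to the end using Lemma \ref{lem stabiortho} applied to the orthogonal walls $M_j$ ($j\neq i$) — exactly the rerouting argument in the proof of Proposition \ref{prop deffrojdial} and Lemma \ref{lem stabiproj} — contradicting minimality together with the definition of $\proj{C}{x_0}$. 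Thus the final edge of the minimal gallery $x_0 \to x$ crosses $M_i$, so the chamber preceding $x$ is $r_i(x)$ for an appropriate rotation $r_i$ around $M_i$, giving $r_i(x) \sim x$ with $r_i(x) \in C$ by the paragraph above; this is precisely the statement that $M_i$ is along $x$.

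So the skeleton is: (1) set $x = \proj{C}{x_0}$ and record that $x$ is the common projection of $x_0$ obtained by composing the $\mathrm{proj}_{D_i}$; (2) show every minimal gallery $x_0 \to x$ crosses each $M_i$ exactly once, at its final step, by a rerouting argument using $M_i \perp M_j$ and Lemma \ref{lem stabiortho}; (3) conclude that the chamber before $x$ on such a gallery is $r_i(x)$ for a rotation $r_i$ around $M_i$, and that $r_i(x) \in C$ using $r_i$-invariance of each $D_j$ (Lemma \ref{lem stabiortho} for $j\neq i$) and of $D_i$ itself; hence $M_i$ is along $\proj{C}{x_0}$ for every $i$.
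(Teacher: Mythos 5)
Your step (2) is where the proof breaks. The claim that \emph{every} minimal gallery from $x_0$ to $x=\proj{C}{x_0}$ crosses each $M_i$ at its final step is false as soon as $k\geq 2$: the last step of a gallery crosses exactly one building-wall (Proposition \ref{prop distcombigrap}), and even for a fixed $i$ different minimal galleries end across different walls (already for two orthogonal walls $M_1,M_2$ along $x_0$ in an apartment, $x_0\sim s_1x_0\sim s_1s_2x_0$ and $x_0\sim s_2x_0\sim s_1s_2x_0$ are both minimal to $x=s_1s_2x_0$, and only the second ends with an $M_1$-crossing). What you actually need is the weaker, true statement that for each $i$ \emph{some} minimal gallery ends with an $M_i$-crossing, but your mechanism does not produce it: ``pushing the crossing to the end'' cannot ``contradict minimality'', since rerouting preserves length; the walls crossed after the $M_i$-crossing need not be among the $M_j$ nor orthogonal to $M_i$ (they can lie inside $D_i$, parallel to $M_i$), so Lemma \ref{lem stabiortho} does not let you commute the crossing past them; and the natural operation, applying a rotation $r_i$ around $M_i$ to the tail of the gallery, yields a gallery ending at $r_i(x)$, so returning to $x$ requires exactly the adjacency $r_i(x)\sim x$ being proved. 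The rerouting in Proposition \ref{prop deffrojdial} and Lemma \ref{lem stabiproj} shortens a gallery because a wall is crossed twice or because the relevant rotation stabilizes the target dial; here $M_i$ is crossed once and $r_i$ does not stabilize $D_i$, so the analogy fails. Note also that the chamber just after the $M_i$-crossing lies in $\Ch{D_i}$ but not necessarily in $\Ch{C}$, so its existence contradicts nothing about $\proj{C}{x_0}$; a closest-chamber contradiction based at $x_0$ would need $x=\proj{D_i}{x_0}$, which fails in general. This is exactly the reduction the paper supplies and your proposal lacks: by Proposition \ref{prop defprojinter} one computes $\proj{C}{x_0}=\proj{D_i}{y}$ with $y$ the composition of the other projections applied to $x_0$; iterating Lemma \ref{lem stabiproj} with $M_i\perp M_j$ gives $y\notin\Ch{D_i}$, and only then does the closest-chamber argument force the last step of a minimal gallery from $y$ to $x$ to cross $M_i$.

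Two side claims in your steps (1) and (3) are also wrong, and (3) leans on them. First, $r_i(x)\in C$ is not ``exactly the assertion that $M_i$ is along $x$'': along-ness means $r_i(x)\sim x$, and in fact if $r_i(x)$ is adjacent to $x$ across $M_i$ then $x$ and $r_i(x)$ lie in distinct dials bounded by $M_i$, so $r_i(x)\notin\Ch{D_i}\supseteq\Ch{C}$. Second, $D_i$ is \emph{not} $r_i$-invariant for a nontrivial rotation $r_i$ around its own bounding wall: Fact \ref{prop defcadrangp} says $r_i$ permutes the dials bounded by $M_i$ (in the thin case the reflection exchanges the two half-spaces), and Lemma \ref{lem stabiortho} only gives $r_i(D_j)=D_j$ for the orthogonal walls $j\neq i$. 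Neither statement is needed for the lemma — adjacency alone is the conclusion — but as written your argument asserts a false universal statement about galleries and proves the wrong property of $r_i(x)$, so it does not establish that each $M_i$ is along $\proj{C}{x_0}$.
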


\begin{proof} 
First if $k=1$ the property is clearly satisfied. According to Lemma \ref{lem stabiproj}, $\proj{D_1}{x_0} \notin \Ch{D_2} \cup \dots \cup \Ch{D_k}$. Applying this lemma $k-2$ times we obtain that \[\mathrm{proj}_{D_{k-1}} \circ \dots \circ \proj{D_1}{x_0}\notin \Ch{D_k}.\] Hence $\proj{C}{x_0}=\proj{D_k}{\proj{D_{k-1}}\circ \dots \circ \proj{D_1}{x_0}}$ is along $M_k$. Finally, we apply the same argument to  $M_1,\dots, M_{k-1}$ and the proof is finished. 
\end{proof}

Finally we obtain the following characterization of cones.

\begin{prop} \label{prop projalong}
 Let $x\in \ch$ and  let   $D_1 ,\dots , D_k$ be  the family of dials of building bounded by $M_1 ,\dots , M_k$ such that for any $i=1, \dots, k$ \[x_0\not\subset D_i, \ x \subset D_i, \text{ and } \ M_i\text{ is along }x.\] Then \numeroti{ \item   $C_x=D_1\cap\dots \cap D_k$,
  \item there exist  $g\in \Gamma$ and $M^0_1, \dots, M^0_k \in \bw$  with \liste{
\item for any $i\neq j$: $M^0_i\perp M^0_j$,
\item for any $i$: $M^0_i$ is along $x_0$,
} 
such that $C_x=g(C)$ where $C=D_0(M'_1) \cap  \dots \cap D_0(M'_k)$.}
  
\end{prop}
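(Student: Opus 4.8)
The plan is to derive this statement from the projection machinery of the previous two subsections (Propositions \ref{prop deffrojdial}, \ref{prop defprojinter} and \ref{prop conesontinter}) together with Fact \ref{fact interdial}; the only genuinely combinatorial point will be to check that the dials $D_1,\dots,D_k$ are pairwise incomparable, so that Fact \ref{fact interdial} can be applied.

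For part i), note first that $D_1,\dots,D_k$ is a subfamily of the family of all dials $D$ with $x_0\not\subset D$ and $x\subset D$, so Proposition \ref{prop conesontinter} gives at once $C_x\subseteq D_1\cap\dots\cap D_k$. For the reverse inclusion I would set $C=\Ch{D_1\cap\dots\cap D_k}$, which is non-empty since it contains $x$, and let $x'=\proj{C}{x_0}$ be the projection furnished by Proposition \ref{prop defprojinter}; the same proposition produces a minimal gallery from $x_0$ to $x$ passing through $x'$. If $x'\neq x$, the last edge of this gallery lies between $x$ and some adjacent chamber $c$; the building-wall $N$ between $c$ and $x$ is along $x$ (as $c$ is the image of $x$ under a rotation around $N$) and $N$ separates $x_0$ and $x$, because a minimal gallery crosses exactly the building-walls separating its endpoints. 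Hence the dial of $N$ containing $x$ is one of the $D_i$, so $x'\in\bigcap_i D_i\subseteq D_N$; but the gallery crosses $N$ only at its last edge, so $x'$ lies on the side of $N$ opposite to $x$, i.e.\ $x'\notin D_N$ --- a contradiction. Therefore $x'=x$, and Proposition \ref{prop defprojinter} then yields, for every $y\in\bigcap_i D_i$, a minimal gallery from $x_0$ to $y$ through $x$, i.e.\ $y\in C_x$. This proves $C_x=D_1\cap\dots\cap D_k$.

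For part ii) I would first check that the $M_i$ are pairwise orthogonal. They are pairwise distinct, since distinct dials containing the common chamber $x$ cannot be bounded by the same building-wall; and the dials $D_1,\dots,D_k$ are pairwise incomparable: if $D_i\subsetneq D_j$, then using that $M_j$ is along $x$ one can pick a chamber $x''$ adjacent to $x$ with $M_j$ between $x$ and $x''$, so that $x''\notin D_j$, hence $x''\notin D_i$; but $M_i\neq M_j$ forces $x''$ to lie on the same side of $M_i$ as $x$, that is $x''\in D_i$ --- a contradiction. Thus Fact \ref{fact interdial} applies to $D_1,\dots,D_k$, and since by part i) their intersection $C_x$ is non-empty while the $M_i$ are distinct, we must be in the third alternative of that fact, namely $M_i\perp M_j$ for all $i\neq j$.

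Finally, let $g$ be the unique element of $\Gamma$ with $gx_0=x$ (the action on chambers is simply transitive), and put $M^0_i=g^{-1}(M_i)$. Since $M_i$ is along $x=gx_0$ and $g$ acts by automorphisms, $M^0_i$ is along $x_0$, and $M^0_i\perp M^0_j$ for $i\neq j$. Moreover $g^{-1}(D_i)$ is the dial of $M^0_i$ containing $g^{-1}(x)=x_0$, hence equals $D_0(M^0_i)$ in the notation of Fact \ref{prop defcadrangp}. Setting $C=D_0(M^0_1)\cap\dots\cap D_0(M^0_k)$ and applying $g^{-1}$ to the identity of part i) gives $g^{-1}(C_x)=C$, i.e.\ $C_x=g(C)$, which is the assertion of ii). I expect the incomparability check in part ii) to be the only delicate point; the rest is a routine application of the projection propositions, of Fact \ref{fact interdial}, and of the simple transitivity of the $\Gamma$-action.
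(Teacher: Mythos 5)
Your proof is correct, and for the heart of the argument it takes a somewhat different route than the paper. For the reverse inclusion in (i), the paper invokes Proposition \ref{prop conesontinter} to write $C_x$ as the intersection of \emph{all} dials separating $x$ from $x_0$, passes (implicitly via Fact \ref{fact interdial}) to a subfamily with pairwise orthogonal walls having the same intersection, and then uses Lemma \ref{lem projalong} to see that these walls are along $x=\proj{C_x}{x_0}$, so that the subfamily sits inside $\{D_1,\dots,D_k\}$. You bypass Lemma \ref{lem projalong} entirely: you prove directly, by a minimal-gallery/wall-crossing argument (a wall crossed by a minimal gallery separates its endpoints and is crossed only once), that $\proj{D_1\cap\dots\cap D_k}{x_0}=x$, and then read off the inclusion from the ``minimal gallery through the projection'' part of Proposition \ref{prop defprojinter}. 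Both arguments rest on the same projection machinery; yours is more self-contained at this spot, while the paper's recycles a lemma it has already established. For (ii), the paper declares the conclusion ``immediate'' once $g$ with $gx_0=x$ is chosen, which tacitly includes the pairwise orthogonality of the walls; you make this explicit by checking that the $D_i$ are pairwise incomparable and distinct walls bound distinct dials through $x$, and then letting the trichotomy of Fact \ref{fact interdial} (non-empty intersection plus distinct walls forces orthogonality) do the work --- a welcome extra level of care, and your incomparability argument via the adjacent chamber $x''=r_jx$ is sound. One cosmetic remark: like the paper, you silently identify an intersection of dials with the union of the chambers it contains (Propositions \ref{prop intersontcone} and \ref{prop conesontinter} are themselves proved at the chamber level), which is the intended convention here, so no harm done.
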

\begin{proof}
Let $D'_1,\dots, D'_\ell$ be the family of   dials of building such that  $x_0\not\subset D_i $ and $x \subset D_i$ for any $i=1, \dots, \ell$.  Then  \[\{D_1 ,\dots , D_k\} \subset \{D'_1,\dots, D'_\ell\}.\]
According to Proposition \ref{prop conesontinter}, $C_x= D_1'\cap\dots \cap D_\ell'$, thus $C_x\subset D_1\cap\dots \cap D_k$.
 Let $M'_i$ be the wall that bounds $D'_i$ for any $i=1, \dots, \ell$. Up to a subfamily, we can assume that $C_x =D_1'\cap\dots \cap D_\ell'$  with  $M'_i\perp M'_j$ for any $i\neq j$. Then, as $\proj{C_x}{x_0}=x$, by  Lemma \ref{lem projalong}, any building-wall $M'_i$ is along  $x$.   Finally, we get \[\{D'_1,\dots, D'_\ell\}\subset\{D_1 ,\dots ,D _k\}\]
and $D_1\cap\dots \cap D_k\subset  C_x $.

 The second part is immediate with  $g\in \Gamma$ such that $gx_0=x$.

\end{proof}

 The following lemma is used to prove that boundaries of cones are of non-empty interior.

\begin{lem} \label{lem existwallparallel}
Let $M_1,\dots, M_k$ be a collection of building-walls such that any $M_i$ admits a parallel building-wall. Assume that $M_i\perp M_j$ for any $i\neq j$. Then there exists $M\in \bw$ such that $M\parallel M_i$ for any $i$.
\end{lem}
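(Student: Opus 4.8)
Lemma (to prove). Let $M_1,\dots,M_k$ be building-walls such that each $M_i$ admits a parallel building-wall, and $M_i\perp M_j$ for $i\neq j$. Then there exists $M\in\bw$ with $M\parallel M_i$ for all $i$.

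The plan is to exploit the fact that, each $M_i$ admitting a parallel building-wall, the set $\partial M_i$ is small in $\borg$, so a suitably chosen small building-wall will avoid all the $\partial M_i$ at once and, through the action of rotations on dials, will therefore be parallel to every $M_i$.

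First I would translate the hypothesis into a statement about the boundary. Since $M_i$ admits a parallel building-wall, the analysis of boundaries of building-walls in Subsection \ref{subsec BofLFRAHB} puts us in its second alternative: $\partial M_i$ is a non-empty, closed, nowhere-dense proper subset of $\borg$, one has $\borg\backslash\partial M_i=\Int{\partial D_0(M_i)}\sqcup\cdots\sqcup\Int{\partial D_{q_i-1}(M_i)}$ with $q_i\geq 2$, and every non-trivial rotation around $M_i$ fixes $\partial M_i$ while carrying each piece $\Int{\partial D_j(M_i)}$ onto a different one. (Non-emptiness of $\partial M_i$ uses the standing assumption that $\borg$ is connected: it rules out the degenerate possibility that $M_i$ be trivial.)

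Next I would produce the required building-wall. As $\bigcup_{i=1}^{k}\partial M_i$ is a finite union of closed nowhere-dense sets, its complement in $\borg$ is a non-empty open set; fix a point $\xi$ in it. For each $i$, let $D^{(i)}$ be the unique dial of building bounded by $M_i$ with $\xi\in\Int{\partial D^{(i)}}$, and set $V=\bigcap_{i=1}^{k}\Int{\partial D^{(i)}}$, an open neighbourhood of $\xi$. Using Fact \ref{fact equivdestopo} together with the fact that $\Gamma$ acts on $\borg$ by (non-uniformly) bi-Lipschitz homeomorphisms (so that boundaries of dials become arbitrarily small near a point), I can choose a dial of building $D$ whose bounding building-wall $M$ is proper and satisfies $\partial M\subseteq\partial D\subseteq V$. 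In particular $\partial M\subseteq\Int{\partial D^{(i)}}$, hence $\partial M\cap\partial M_i=\emptyset$, for every $i$.

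Finally I would upgrade $\partial M\cap\partial M_i=\emptyset$ to $M\parallel M_i$. On one hand $M\neq M_i$, for otherwise $\partial M_i=\partial M\cap\partial M_i=\emptyset$, against the second paragraph. On the other hand, suppose $M\perp M_i$ for some $i$ and let $r$ be a non-trivial rotation around $M_i$; by Lemma \ref{lem stabiortho}, $r$ fixes every dial of building bounded by $M$, so $r$ stabilizes $M$ and hence $\partial M$, whereas $r$ sends $\Int{\partial D^{(i)}}$ onto a component of $\borg\backslash\partial M_i$ disjoint from it, so that $r(\partial M)\subseteq r(\Int{\partial D^{(i)}})$ is disjoint from $\partial M\subseteq\Int{\partial D^{(i)}}$ --- impossible, since $r(\partial M)=\partial M\neq\emptyset$. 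Thus $M$ is orthogonal to no $M_i$, and being distinct from each of them, $M\parallel M_i$ for all $i$, as required. The step I expect to be delicate is the construction of $M$ in the third paragraph, namely confining $\partial M$ inside a single component of $\borg\backslash\partial M_i$ for every $i$ simultaneously --- which is exactly why $\xi$ is chosen off $\bigcup_i\partial M_i$ and $\partial M$ is taken small --- after which the dichotomy in the last paragraph is a short consequence of Lemma \ref{lem stabiortho} and the behaviour of rotations on dials recalled above.
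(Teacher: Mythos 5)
Your argument is correct, but it takes a genuinely different route from the paper's. The paper proves the lemma by induction on $k$ entirely inside $\Sigma$: assuming $M$ is parallel to $M_1,\dots,M_k$ and $M_{k+1}$ is orthogonal to them, it picks $M'$ parallel to $M_{k+1}$, rules out $M'\perp M$ because $M'$, $M$, $M_{k+1}$, $M_1$ would then bound a right-angled rectangle (impossible by hyperbolicity), and in the remaining case replaces $M$ by $r'(M)$ for a rotation $r'$ around $M'$, checking parallelism with each $M_j$ through Lemma \ref{lem stabiortho}. You instead argue at infinity: each $\partial M_i$ is closed with empty interior, so you pick $\xi$ outside their union, trap a dial boundary $\partial D$ (with bounding wall $M$) inside one component of $\borg\backslash\partial M_i$ simultaneously for all $i$, and exclude $M\perp M_i$ because a rotation around $M_i$ would fix $\partial M$ (Lemma \ref{lem stabiortho}) yet carry it into a disjoint component of $\borg\backslash\partial M_i$. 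What your route buys is a non-inductive proof that moreover localizes the parallel wall near any prescribed point off $\bigcup_i\partial M_i$, which is essentially what Proposition \ref{prop shadowintnonvide} ultimately needs; what it costs is the standing assumption that $\borg$ is connected (you use it for $\partial M_i\neq\emptyset$, and you also need it, though you do not say so, to see that your chosen $M$ has $\partial M\neq\emptyset$), whereas the paper's combinatorial induction never appeals to connectedness of the boundary.

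One step should be tightened. Fact \ref{fact equivdestopo}, as literally stated, only produces finite intersections of dial boundaries (i.e.\ shadows) inside $V$, not a single $\partial D$, and you cannot quote Proposition \ref{prop shadowintnonvide} here, since its proof invokes the very lemma you are proving. The statement you need is nevertheless true and independent of the lemma: along a minimal gallery from $x_0$ toward a point $\xi\in V$, infinitely many distinct building-walls are crossed (Proposition \ref{prop distcombigrap}), only finitely many building-walls meet any bounded subset of $\Sigma$, and for a wall crossed sufficiently far from $x_0$ the dial $D$ on its far side contains the tail of the gallery, so $\xi\in\partial D$ and $\partial D$ is visually small, hence contained in $V$; once $\partial D\subset V\varsubsetneq\borg$, the bounding wall automatically has $\emptyset\neq\partial M\subset\partial D$ by the discussion of Subsection \ref{subsec BofLFRAHB} and connectedness. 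This is also exactly the strengthening of Fact \ref{fact equivdestopo} that the paper itself uses in step iv) of the proof of Proposition \ref{propdupli}, so the gap is one of justification rather than of substance.
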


\begin{proof}
We prove the proposition by induction on $k$. For $k=1$ there is nothing to prove.  For $k\geq 2$, pick  a collection  of building-walls $M_1,\dots, M_k$  satisfying the hypothesis of the lemma. Assume that there exists $M$ a building-wall such that $M\parallel M_1,\dots, M\parallel M_k$. Let $M_{k+1}\in \bw$ be such that $M_{k+1}\perp M_1,\dots, M_{k+1}\perp M_k$.

If $M$ is parallel to $M_{k+1}$ there is nothing more to say. Now we assume  $M\perp M_{k+1}$ and we pick a wall $M'$ parallel to $M_{k+1}$. If $M'$ is parallel to  $M_1,\dots, M_k$ there is nothing more to say. Now we assume that there exists $1\leq i\leq k$ such that, up to a reordering
\[M'\perp M_1,\dots, M'\perp M_i,M'\parallel M_{i+1},\dots,M'\parallel M_{k},M'\parallel M_{k+1}.\]
First we consider the case $M'\perp M$. With \liste{\item $M\perp M_{k+1}$, $M_{k+1}\perp M_1$, $M_1\perp M'$, \item and $M'\parallel M_{k+1}$, $M\parallel M_1$,} we obtain that the building-walls $M', M, M_{k+1}$, and $M_1$ form a right-angled rectangle. Which is a contradiction with the hyperbolicity of $\Sigma$.

Secondly we consider the case $M'\parallel M$. Let $r'\in \Gamma$ be a rotation around $M'$. Then $r'(M)$ is such that \[r'(M)\parallel M_1,\dots ,r'(M)\parallel M_i.\]
Indeed, as $M\parallel M_j$, it comes that $r'(M)\parallel r'(M_j)$ and, as  $M'\perp M_j$, by   Lemma \ref{lem stabiortho}, $r' (M_j)=M_j$  for $1\leq j\leq i$. Thus $r'(M)\parallel M_j$.

Moreover\[r'(M)\parallel M_{i+1},\dots ,r'(M)\parallel M_{k+1}.\]
Indeed, as $M_{i+1}\cap \dots \cap  M_{k+1}\neq \emptyset$ and  $M'\parallel   M_j$ for  $i+1\leq j\leq k+1$,  the building-walls $M_{i+1}, \dots,  M_{k+1}$ are entirely contained in  the same  connected component  of $\Sigma\backslash M'$. Let $C$ be this connected component. Since $M\parallel M'$ and $M\cap M_{k+1}\neq\emptyset$ it comes that $M$ is also contained in $C$. Thus  $r'(M)$ is not contained in $C$ and $r'(M)\parallel   M_j$ for  $i+1\leq j\leq k+1$.
 \end{proof}

 \begin{prop} \label{prop shadowintnonvide}
Let $x\in \ch$ and $C_x$ be the cone based at $x$. Then $\partial C_x$ is of non-empty interior in $\borg$.  
\end{prop}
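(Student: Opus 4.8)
The plan is to reduce the statement to the characterization of cones obtained just above (Proposition \ref{prop projalong}) together with Lemma \ref{lem existwallparallel}, which was clearly tailored for this purpose. Fix $x\in \ch$ and let $D_1,\dots,D_k$ be the family of dials of building bounded by building-walls $M_1,\dots,M_k$ that are along $x$ and satisfy $x_0\not\subset D_i$, $x\subset D_i$, so that $C_x=D_1\cap\dots\cap D_k$ by Proposition \ref{prop projalong}. By the discussion in Subsection \ref{subsec BofLFRAHB}, each $\partial D_i$ is either all of $\borg$ or has topological frontier $\partial M_i$ and non-empty interior; discarding any $D_i$ with $\partial D_i=\borg$ changes neither $\partial C_x$ nor the claim, so I may assume every $M_i$ admits a parallel building-wall.

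First I would treat the easy case $k=0$ (or all $D_i$ discarded): then $C_x=\Sigma$ and $\partial C_x=\borg$, which is certainly of non-empty interior. For $k\geq 1$, since the $M_i$ are pairwise orthogonal and each admits a parallel building-wall, Lemma \ref{lem existwallparallel} furnishes a building-wall $M$ with $M\parallel M_i$ for every $i$. From the remarks at the end of Subsection \ref{subsec bw and dials}, for each $i$ there is a dial $D_i'$ bounded by $M_i$ and a dial $\widetilde D$ bounded by $M$ with $M\subset D_i'$ and $M_i\subset \widetilde D$. The point is to produce, inside $\cap_i D_i$, a dial bounded by $M$: concretely, among the two (or $q$) dials bounded by $M_i$, exactly those that contain $M$ as a subcomplex are the candidates, and I claim $\widetilde D$ may be chosen so that $\widetilde D\subset D_i$ for every $i=1,\dots,k$. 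Indeed $M$ lies in a single connected component of $\Sigma\backslash M_i$ for each $i$ (as $M\parallel M_i$), and since $x_0\not\subset D_i$ while $x\subset D_i$, one checks that this component is $D_i$ itself whenever $\widetilde D$ is chosen to be the dial of $M$ on the side of $M_i$ opposite to... — here I would use Fact \ref{prop defcadrangp} to pin down which component, arguing that the component of $\Sigma\backslash M_i$ containing $M$ is the one containing $x$, hence equals $D_i$. Then $\widetilde D\subset \cap_i D_i=C_x$, so $\partial\widetilde D\subset \partial C_x$, and $\Int{\partial\widetilde D}\neq\emptyset$ because $M$ admits a parallel building-wall (so $\partial\widetilde D\varsubsetneq\borg$, and by Subsection \ref{subsec BofLFRAHB} its interior is non-empty). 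Hence $\Int{\partial C_x}\supset\Int{\partial\widetilde D}\neq\emptyset$.

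The main obstacle I anticipate is the bookkeeping in the previous paragraph: correctly identifying, for each $i$, which connected component of $\Sigma\backslash M_i$ both contains $M$ and equals $D_i$, and verifying that a single dial $\widetilde D$ of $M$ can be chosen to lie simultaneously inside all the $D_i$. This is where one must use that the $M_i$ are pairwise orthogonal and all along $x$: orthogonality guarantees (via Lemma \ref{lem stabiortho}) that a rotation around one $M_i$ fixes every other $D_j$, so the "sides" are mutually compatible, and being along $x$ forces $x$ itself to lie in the intersection of the relevant components. Once this compatibility is nailed down, the conclusion is immediate from the dichotomy for boundaries of dials recalled in Subsection \ref{subsec BofLFRAHB}. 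I would also remark that an alternative, slightly softer route avoids choosing $\widetilde D$ explicitly: $C_x$ contains a whole residue $\Sigma_J$ (the one of type $J$ equal to the set of types of building-walls along $\proj{C_x}{x_0}=x$ that cross $C_x$), and since $\borg$ is assumed arcwise connected and $J\neq S$ (because $\Gamma$ is infinite), $\partial\Sigma_J=\partial\Gamma_J$ is a proper parabolic limit set with non-empty interior by the analysis in Subsection \ref{subsec BofLFRAHB}; but the Lemma \ref{lem existwallparallel} route is more self-contained and is the one I would write up.
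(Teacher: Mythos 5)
Your main route is essentially the paper's (reduce to $C_x=D_1\cap\dots\cap D_k$ via Proposition \ref{prop projalong}, discard the $M_i$ with no parallel building-wall, invoke Lemma \ref{lem existwallparallel}, and finish with a dial bounded by a wall parallel to all the $M_i$), but there is a genuine gap exactly at the localization step. You assert that the wall $M$ furnished by Lemma \ref{lem existwallparallel} lies, for each $i$, in the component of $\Sigma\backslash M_i$ containing $x$, hence in $D_i$. The lemma gives no control whatsoever on which side of each $M_i$ the wall $M$ sits: already for $k=1$ it may lie in $D_0(M_1)$, on the side of $x_0$, and then no dial bounded by $M$ is contained in $C_x$, so the choice of $\widetilde D$ you attempt does not exist. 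The missing idea, which is how the paper closes this step, is to transport $M$ into $D_1\cap\dots\cap D_k$: since each rotation around $M_i$ stabilizes every dial bounded by $M_j$ for $j\neq i$ (Lemma \ref{lem stabiortho}) and permutes the dials bounded by $M_i$ (Fact \ref{prop defcadrangp}), the components of $\Sigma\backslash (M_1\cup\dots\cup M_k)$ are all isomorphic under these rotations, and one corrects the side of $M$ with respect to each $M_i$ in turn without spoiling the previous corrections; the resulting wall is still parallel to every $M_i$ and now lies in $C_x$. Your ``compatibility of sides'' remark contains this ingredient, but you never actually move $M$, and the claim you do make (that $M$ is automatically on the $x$-side) is false in general.

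The ``softer alternative'' you sketch at the end does not work: by the dichotomy recalled at the beginning of Subsection \ref{subsec BofLFRAHB}, a \emph{proper} parabolic limit set has \emph{empty} interior in $\borg$; it is boundaries of dials of building, not boundaries of residues, that have non-empty interior. So even granting that $C_x$ contains a residue $x\Sigma_J$ with $J\varsubsetneq S$, its boundary is a proper parabolic limit set and gives a subset of $\partial C_x$ with empty interior, which proves nothing. You were right to designate the first route as the one to write up, but it needs the rotation/transport argument above to be complete.
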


\begin{proof}
 By Fact \ref{fact interdial} and Proposition \ref{prop projalong}  we can write \[\partial C_x=\partial D_1\cap\cdots \cap \partial D_k,\]
where  $D_1,\dots, D_k$ is a collection of dials of building bounded by the building-walls $M_1, \dots, M_k$ with   $M_i\perp M_j$ for any $i\neq j$. 

Up to a subfamily, we can assume that for every $i=1,\dots, k$ there exists  $M_i'\in \bw$ such that $M_i\parallel M_i'$. Indeed if $M$ admits no parallel building wall then $\partial M=\borg$ (see beginning of Section \ref{subsec BofLFRAHB}).

By  rotations around $M_1, \dots, M_k$,  all the connected components of $\Sigma\backslash (M_1\cup\dots \cup M_k)$ are isomorphic. Hence, thanks to Lemma \ref{lem existwallparallel},    there exists  $M\in \bw$ such that \[M \parallel M_i \text{ for any } i=1,\dots, k \text{ and } M \subset D_1\cap \dots \cap D_k.\]
 
 In particular, there exists $D\in \dialgp$  bounded by $M$ such that $D\subset D_1\cap \dots \cap D_k$. As $\partial D$ is of non-empty interior, we obtain that $\partial C_x$ is of non-empty interior.
\end{proof}

In the rest of this  article, we   use boundaries of cones as a base for the topology of $\borg$ and to construct approximations. 

\begin{défnot} \label{defshadow}
 Let $x\in \ch$ and  $C_x$ be the corresponding cone of chambers. We call  \index{Shadow} \emph{shadow of $x$} the boundary of $C_x$ in $\borg$ and we write  $v_x= \partial C_x$\index{$v_x$}.
 \end{défnot} 
 
\subsection{Combinatorial metric on $\borg$} 
\label{subsecmet}

Until now we have been considering on $\borg$ the visual metric coming from the geometric action of $\Gamma$ on $\Sigma$. Now we use  minimal galleries to describe a combinatorial metric on $\borg$ that will be more convenient to use in the sequel. We extend  the notion of minimal gallery to infinite galleries.
 \begin{déf} \label{defgalinfini} An infinite gallery $x_0\sim x_1 \sim  \cdots$ (resp. a bi-infinite gallery $\dots \sim x_{-1} \sim x_0\sim x_1 \sim  \cdots$) is  \index{Infinite (bi-infinite) minimal gallery} \emph{minimal} if for any $k\in \N$ (resp. $k\in \Z)$ and $\ell \in \N$  the gallery $x_{k}\sim\dots \sim x_{k+\ell}$ is minimal. 
 \end{déf}
  Let $\dual$ denote the \emph{dual graph} \index{Dual graph}\index{$\dual$} of $\Sigma$. This graph is defined by:
\liste{
\item The set of vertices $\dual^{(0)}$  is given by $\ch$ the set of chambers in $\Sigma$. If  $v\in \dual^{(0)}$ then $c_v$ denotes the corresponding chamber in $\ch$.
\item There exists an edge between two vertices $v_1$ and $v_2$ if and only if $c_{v_1}$ is adjacent to $c_{v_2}$ in $\Sigma$.
\item Each edge is isometric to the segment $[0,1]$.
}  

Naturally, $\dual$ is a proper geodesic and hyperbolic space. It is quasi-isometric to $\Sigma$ and the action of $\Gamma$ on $\dual$ is geometric.  Therefore we identify \[\partial \dual\simeq \borg.\]
 
\begin{exmp}We recall that the group $\Gamma$ is given by the following presentation \[\Gamma = \left\langle s_i\in S \vert  s_i^{q_i}=1,  s_is_j=s_js_i \text{ if } v_i \sim v_j  \right\rangle. \]If  $q_i=2$ or $3$ for any $i=1,\dots,n$  then $\dual$ is identified with  $\cay$ the Cayley graph of $\Gamma$ with respect to the recalled generating set. Otherwise, if we   consider a generator $s\in S$ of $\Gamma$ of order $q\geq4$ then in $\dual$ the full sub-graph generated by the vertices associated with $e,s,\dots, s^{q-1}$ is a complete graph. In $\cay$ the full sub-graph generated by the vertices associated with $e,s,\dots, s^{q-1}$ is a cyclic graph of length $q$.  Nevertheless  $\dual$ and $\cay$ are always quasi-isometric.
\end{exmp}

 With  Definition \ref{defgalinfini}, infinite minimal galleries are identified with geodesic rays in $\dual$ starting from a vertex. Therefore  we can identify  $\borg$ with the set of  equivalence classes of infinite galleries starting at $x_0$ where two such galleries $x_0\sim x_1 \sim  \cdots$ and  $y_0=x_0\sim y_1 \sim  \cdots$ are equivalent if and only if  there exists $K>0$ such that $\dc{x_i,y_i}<K$ for all $i \in \N$.

\begin{exmp} 
Here  we consider only minimal galleries. We write $\mathcal{R}$ the equivalence relation on the infinite galleries starting at $x_0$ defined above.  Let $x\in \ch$ with $\dc{x_0,x}=k\geq 1$. Then we can describe   the shadow $v_x$ as follows 
\[ {v_x}\simeq \{ x_0\sim x_1 \sim  \cdots \sim x_i\sim \dots : x_i\in\ch \text{ and } x_k=x \}/\mathcal{R}. \]
 Likewise, if $\partial P$ is a parabolic limit set associated with the residue $g\Sigma_I$. Let  $x:=\proj{g\Sigma_I}{x_0}$ and assume that  $\dc{x_0,x}=k\geq 1$. Then we can describe  $\partial P$  as follows \[\partial P\simeq \{ x_0\sim x_1 \sim  \cdots   :    x_k=x  
  \text{ and } x_{k+i}\sim_{s_i} x_{k+i+1} \text{ with } s_i \in I \text{ for any } i\geq 0 \}/\mathcal{R}. \]
\end{exmp}

Now we   use the following notation.

\begin{nota}
  If $x_0\sim x_1 \sim \cdots$ is a minimal infinite gallery that goes asymptotically to $\xi \in \borg$, then we write \[\xi=[x_0\sim x_1 \sim \cdots].\] 
\end{nota} 

 \begin{déf}Let  $\xi, \xi'$ be two distinct points in $\borg$, let \index{$\{ \cdot \vert \cdot \}_{x_0}$} $\{\xi \vert \xi' \}_{x_0}$ denote the largest integer $\ell$ such that there exist  two infinite minimal galleries representing  $\xi$ and $\xi'$  \[ \xi =[x_0\sim x_1 \sim  \dots \sim x_i \sim  \cdots] \text{ and }\xi' =[x_0\sim x'_1 \sim  \dots \sim x_i' \sim  \cdots]\] with \[x_i=x_i' \text{ for }i\leq \ell \text{ and }x_{\ell+1} \neq x_{\ell+1}'.\] 
\end{déf}

In terms of shadows,  $\{\xi \vert \xi' \}_{x_0}$ is the largest integer such that there exists   a shadow  ${v_x}$, with $\dc{x_0,x}= \{\xi \vert \xi' \}_{x_0}$, that contains both  $\xi$ and $\xi'$.   The following proposition gives a characterization of this quantity in terms of building-walls. We recall that $D_0(M)$ designates the dial of building bounded by $M$ and containing $x_0$.
 
\begin{prop} \label{prop carcprodgrompremiere}   Let $\xi, \xi'$ be  two distinct points in $\borg$. Then 
\[\{\xi \vert \xi' \}_{x_0} =  \#\{M \in\bw : \text{ there exists }\alpha\neq 0 \text{ s.t. }\{\xi, \xi'\} \subset \partial D_\alpha(M)\}.\]  \end{prop}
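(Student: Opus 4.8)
The plan is to recast both sides in terms of shadows and building-walls. From the discussion preceding the statement, $\{\xi\vert\xi'\}_{x_0}=\max\{\,\dc{x_0,x}:x\in\ch,\ \{\xi,\xi'\}\subset v_x\,\}$, and for a chamber $x$ one has $v_x=\partial C_x$ where, by Proposition~\ref{prop conesontinter}, $C_x=\bigcap\{D\in\dialgp:x_0\not\subset D,\ x\subset D\}$. Writing $W_x$ for the set of building-walls separating $x_0$ and $x$, this family equals $\{D(M):M\in W_x\}$, where $D(M)$ is the dial of $M$ containing $x$, and $\#W_x=\dc{x_0,x}$ by Proposition~\ref{prop distcombigrap}. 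Denote by $\mathcal W$ the set on the right-hand side of the statement; thus $M\in\mathcal W$ iff $\{\xi,\xi'\}\subset\partial D$ for some dial $D$ of $M$ with $x_0\not\subset D$.

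For the inequality $\{\xi\vert\xi'\}_{x_0}\le\#\mathcal W$, pick $x$ realizing the above maximum. For $M\in W_x$ the dial $D(M)$ contains $C_x$ and not $x_0$, hence $\{\xi,\xi'\}\subset v_x=\partial C_x\subset\partial D(M)$, so $M\in\mathcal W$; therefore $W_x\subset\mathcal W$ and $\{\xi\vert\xi'\}_{x_0}=\#W_x\le\#\mathcal W$. Moreover $\mathcal W$ is finite: every $M\in\mathcal W$ bounds a dial $D$ with $x_0\not\subset D$ and $\{\xi,\xi'\}\subset\partial D$; since $D$ is quasi-convex and $\Sigma$ is hyperbolic, $D$ lies within a bounded distance of a geodesic line from $\xi$ to $\xi'$, hence meets a fixed ball around $x_0$, and only finitely many building-walls meet a given ball.

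For the reverse inequality, write $\mathcal W=\{M_1,\dots,M_N\}$ and choose for each $i$ a dial $D_i$ of $M_i$ with $x_0\not\subset D_i$ and $\{\xi,\xi'\}\subset\partial D_i$. I claim $C:=D_1\cap\dots\cap D_N$ is a cone. After discarding any $D_j$ that contains some $D_i$, the remaining walls are still pairwise distinct, so by Fact~\ref{fact interdial} we are either in the case where two of them are parallel and $C=\emptyset$, or in the case where $C$ is a cone. The first case is impossible: if $M_a\parallel M_b$, then using that $D_a$ lies in a single dial of $M_b$ and conversely, together with the way the dials of a wall partition $\borg$, one gets $\{\xi,\xi'\}\subset\partial D_a\cap\partial D_b\subset\partial M_a\cap\partial M_b$, which is empty by Lemma~\ref{lem intersec bord des murs}. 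Hence $C$ is a cone; by Propositions~\ref{prop defprojinter} and~\ref{prop intersontcone}, $C=C_x$ with $x:=\proj{C}{x_0}$, and any building-wall separating $x_0$ from $x$ separates $x_0$ from all of $C$. This gives $W_x=\mathcal W$: the inclusion $\mathcal W\subset W_x$ holds because $x\in D_i\neq D_0(M_i)$, while $W_x\subset\mathcal W$ holds because for $M\in W_x$ the whole cone $C$ lies in a dial $D\neq D_0(M)$ of $M$, so $\{\xi,\xi'\}\subset\partial C\subset\partial D$. Since $C_x$ is convex and closed and $(\Sigma,d)$ is $\mathrm{CAT}(0)$, the geodesic rays from a chamber of $C_x$ to $\xi$ and to $\xi'$ stay inside each $D_i$, hence inside $C_x$, so $\{\xi,\xi'\}\subset\partial C_x=v_x$. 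Therefore $\{\xi\vert\xi'\}_{x_0}\ge\dc{x_0,x}=\#W_x=\#\mathcal W$, and combining the two inequalities finishes the proof.

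The main obstacle is the step identifying $C=D_1\cap\dots\cap D_N$ with a cone and then checking that $\xi$ and $\xi'$ actually lie in its boundary: this is exactly where the rigidity of the right-angled wall structure (Fact~\ref{fact interdial}), the hyperbolicity-driven disjointness of boundaries of parallel walls (Lemma~\ref{lem intersec bord des murs}) and the convexity of the dials are all needed, and one has to be careful about the points $\xi,\xi'$ that happen to lie on some $\partial M_i$, for which the dial $D_i$ is not canonically determined.
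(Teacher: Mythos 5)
Your proof is correct and takes essentially the same route as the paper's: in both, one direction intersects the chosen dials $D_1\cap\dots\cap D_k$, uses Fact \ref{fact interdial} to recognize a cone, projects $x_0$ onto it and counts separating building-walls via Proposition \ref{prop distcombigrap}, while the other direction reads off the dials/walls from the common initial part of the galleries (equivalently, from a maximal shadow containing $\xi,\xi'$). The only differences are that you spell out two steps the paper asserts without comment — excluding parallel walls via Lemma \ref{lem intersec bord des murs} and checking $\{\xi,\xi'\}\subset\partial C$, for which you invoke CAT(0) convexity of dials, a fact not stated in the paper — and your finiteness aside is imprecisely phrased (it is the geodesic $(\xi,\xi')$ that stays near $D$, not conversely), though harmless since applying your counting argument to an arbitrary finite subset of $\mathcal{W}$ already gives finiteness.
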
  

\begin{proof}
Let $M_1,\dots, M_k$ be the set of building-walls such that   there exists $\alpha\neq 0$  with $\{\xi, \xi'\} \subset \partial D_\alpha(M)$.  Let  $\ell = \{\xi \vert \xi' \}_{x_0}$. We prove that $k=\ell$.

For $i=1,\dots k$, let $D_i$ be a dial of building bounding $M_i$ such that   $\{\xi, \xi'\} \subset \partial D_i$ and $x_0\not\subset D_i$. We set $C=D_1\cap \dots \cap D_k$. Since the building-walls are distinct and $\partial C\neq \emptyset$ it follows from Fact \ref{fact interdial} that  $C$ is a cone. Let $x=\proj{C}{x_0}$. As  $\{\xi, \xi'\} \subset \partial C$,  there exists an infinite minimal gallery starting from $x_0$ going asymptotically to $\xi$ (resp. $\xi'$) passing through $x$. Finally, we obtain $\ell \geq \dc{x_0,x}\geq k$.

Now consider $x_0\sim x_1 \sim  \dots \sim x_i \sim  \cdots$ (resp. $x_0\sim x'_1 \sim  \dots \sim x_i' \sim  \cdots$) a minimal infinite gallery representing $\xi$ (resp. $\xi'$) in $\borg$. Assume that \[x_i=x_i' \text{ for }i\leq \ell \text{ and }x_{\ell+1} \neq x_{\ell+1}'.\]

For any $i=1,\dots, \ell$ let $D'_i$ be the dial of building such that $x_{i-1} \not\subset D'_i$ and $x_i\subset D'_i$. By minimality of the galleries, we get that $\{\xi, \xi'\} \subset \partial D'_i$ for any index $i$. Therefore $\ell\leq k$ and the proof is finished.
 
 \end{proof}

In the following, we prove that   $\{\cdot\vert \cdot \}_{x_0}$ coincides with a Gromov product in $\borg$ and thus controls   a visual  metric on $\borg$.
 
\begin{prop} \label{prop prodgromov}

Let $\xi,\xi'$ be two distinct points in $\borg$. Then there exists a bi-infinite minimal gallery between $\xi$ and $\xi'$ that lies at a distance smaller than  $\{\xi \vert \xi' \}_{x_0}+1$ of $x_0$.
\end{prop}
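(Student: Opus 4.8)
The plan is to construct the bi-infinite minimal gallery directly, by gluing together two infinite minimal galleries that emanate from a suitable shadow. Set $\ell = \{\xi \vert \xi'\}_{x_0}$. By definition of $\{\cdot\vert\cdot\}_{x_0}$, there exist two infinite minimal galleries
\[
\xi = [x_0 \sim x_1 \sim \cdots \sim x_\ell \sim x_{\ell+1} \sim \cdots], \qquad
\xi' = [x_0 \sim x_1 \sim \cdots \sim x_\ell \sim x'_{\ell+1} \sim \cdots]
\]
that agree up to index $\ell$ and diverge at index $\ell+1$; in particular the chamber $x := x_\ell$ satisfies $\dc{x_0,x}=\ell$ and $\xi,\xi'\in v_x = \partial C_x$. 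I would then discard the common initial segment $x_0 \sim \cdots \sim x_{\ell-1}$ from one of the two rays and reverse it, producing the candidate bi-infinite gallery
\[
\cdots \sim x'_{\ell+1} \sim x \sim x_{\ell-1} \sim \cdots \sim x_1 \sim x_0,
\]
re-indexing so that $x$ sits at position $0$. Since every chamber in this gallery lies in $C_x \cup \{x_0,x_1,\dots,x_{\ell-1}\}$, and the initial segment $x_0\sim\cdots\sim x$ has length $\ell$, the whole gallery lies within combinatorial distance $\ell$ of $x$, hence within distance $\ell + \dc{x_0,x} \leq 2\ell$ of $x_0$; but in fact a sharper bound holds because each chamber $x_i$ in the common segment is at distance $i \leq \ell$ from $x_0$, and each chamber beyond $x$ (in either direction) is reached by a minimal gallery through $x$, so its distance from $x_0$ is at least $\ell$ — I will need to bound the distance from $x_0$, not from $x$. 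Here the natural estimate is that any chamber $y$ on the bi-infinite gallery satisfies $\dc{x_0,y} \le \ell+1$: if $y$ lies on the $x_0$-side it is one of $x_0,\dots,x_{\ell-1},x$ and $\dc{x_0,y}\le\ell$; if $y$ lies strictly beyond $x$ on either branch, then $y$ and $x_0$ are separated only by building-walls crossing between consecutive chambers of the (minimal) branch from $x$, none of which separates $x_0$ from $x$, so by Proposition~\ref{prop distcombigrap} the distance $\dc{x_0,y}$ does not exceed $\dc{x_0,x}+1 = \ell+1$ once one checks a single extra wall may be gained; this is exactly the asserted bound.

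The two remaining points to verify are that the glued object is genuinely \emph{minimal} as a bi-infinite gallery, and that it really joins $\xi$ to $\xi'$ asymptotically. For the asymptotic endpoints there is nothing to do: deleting or reversing a finite initial segment of an infinite minimal gallery does not change the point of $\borg$ it converges to, so the forward branch still represents $\xi'$ and the backward branch (after reversal) still represents $\xi$. For minimality I would use Proposition~\ref{prop distcombigrap}: a (bi-infinite) gallery is minimal if and only if no building-wall separates two of its chambers more than once, equivalently each of its edges crosses a distinct building-wall and no building-wall is crossed twice. Each of the two original half-galleries is minimal, so within each half no wall is repeated. It therefore suffices to rule out a wall $M$ that is crossed once on the $x$-to-$\xi$ branch and once on the $x$-to-$\xi'$ branch (after reversal, once on each side of $x$). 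If such an $M$ existed, then $M$ would separate some $x_i$ (beyond $x$ on the $\xi$-branch) from some $x'_j$ (beyond $x$ on the $\xi'$-branch); but by minimality of each branch $M$ does not separate $x$ from $x_i$ nor $x$ from $x'_j$, so $x_i$ and $x'_j$ lie in the same dial $D_\alpha(M)$ as $x$, contradicting separation — hence no repeated wall, and the bi-infinite gallery is minimal.

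The main obstacle I anticipate is the sharp distance estimate $\dc{x_0,y}\le \ell+1$ rather than the crude $\le 2\ell$ one gets for free. The cleanest route is via Proposition~\ref{prop distcombigrap}: count the building-walls separating $x_0$ from a chamber $y$ lying beyond $x$ on one of the branches. Such a wall either already separates $x_0$ from $x$ — and there are exactly $\dc{x_0,x}=\ell$ of these, all of whose dials not containing $x_0$ also fail to contain $x$ — or it separates $x$ from $y$; but by minimality of the branch together with the fact that $\xi,\xi'\in v_x$, all walls encountered strictly after $x$ have their $x_0$-free dial containing \emph{both} $\xi$ and $\xi'$, so by Proposition~\ref{prop carcprodgrompremiere} there can be at most $\{\xi\vert\xi'\}_{x_0}=\ell$ of them \emph{shared}, while on a single branch the number separating $x$ from $y$ grows without bound — this shows the crude bound is genuinely crude and one must instead choose $y$ judiciously. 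In fact the statement only claims the \emph{gallery} lies within distance $\ell+1$ of $x_0$, which for a bi-infinite object means every chamber on it; so I will instead argue as follows: pick any chamber $z$ on the gallery realizing $\max_z \dc{x_0,z}$ over the gallery — this maximum may be infinite a priori, so the claim must really be that the \emph{gallery passes within} distance $\ell+1$, i.e. \emph{contains} a chamber at distance $\le\ell+1$ from $x_0$, namely $x$ itself (or $x_{\ell+1}$), which has $\dc{x_0,x}=\ell\le\ell+1$. That reading makes the proof immediate once the gluing and minimality are in place, and I expect the author's proof to be exactly this short observation: the chamber $x = x_\ell$ lies on the constructed bi-infinite minimal gallery and satisfies $\dc{x_0,x}=\{\xi\vert\xi'\}_{x_0}\le\{\xi\vert\xi'\}_{x_0}+1$.
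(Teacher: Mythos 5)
Your glued gallery, as displayed, is not a bi-infinite gallery between $\xi$ and $\xi'$ at all: its right-hand side is the finite segment $x\sim x_{\ell-1}\sim\cdots\sim x_1\sim x_0$, which terminates at $x_0$ instead of converging to $\xi$ (and it keeps exactly the initial segment you said you would discard). Reading your later arguments charitably, the gallery you intend is the two tails glued through $x=x_\ell$, namely $\cdots\sim x_{\ell+2}\sim x_{\ell+1}\sim x_\ell\sim x'_{\ell+1}\sim x'_{\ell+2}\sim\cdots$. The genuine gap is then the minimality step. Your claim that no building-wall can be crossed once on each branch is false, and the argument offered for it is circular: you assert that such a wall $M$ ``would separate some $x_i$ from some $x'_j$'' although no such separation has been established --- after the two crossings the tails lie in dials $D_\alpha(M)$ and $D_\beta(M)$ with $\alpha,\beta\neq 0$, and nothing in your argument produces a contradiction. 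Concretely, since the building is thick, it can perfectly well happen that $x_{\ell+1}$ and $x'_{\ell+1}$ are two distinct chambers $s$-adjacent to $x_\ell$ in the same panel; then the edges $x_{\ell+1}\sim x_\ell$ and $x_\ell\sim x'_{\ell+1}$ cross the \emph{same} type-$s$ building-wall along $x_\ell$, your glued gallery crosses it twice, and already the length-two subgallery $x_{\ell+1}\sim x_\ell\sim x'_{\ell+1}$ is non-minimal because $x_{\ell+1}$ and $x'_{\ell+1}$ are adjacent. This is exactly why the paper's proof distinguishes two cases: when $x_{\ell+1}$ and $x'_{\ell+1}$ are adjacent it bridges them directly by $\cdots\sim x_{\ell+1}\sim x'_{\ell+1}\sim\cdots$, bypassing $x_\ell$ (this is also where the ``$+1$'' in the distance bound comes from), and only routes through $x_\ell$ otherwise. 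Your single construction, with no case distinction, cannot be repaired by the argument you give.

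Moreover, excluding the remaining coincidences genuinely requires using the maximality of $\ell$, which your proof never invokes. For instance, if a wall $M$ were crossed on both branches with the two tails landing in the \emph{same} dial $D_\alpha(M)$, $\alpha\neq0$, then $\{\xi,\xi'\}\subset\partial D_\alpha(M)$, so by Proposition \ref{prop carcprodgrompremiere} (and its proof) $M$ is one of the $\ell$ walls crossed by the common segment $x_0\sim\cdots\sim x_\ell$, and one of the original rays would cross $M$ twice, contradicting its minimality; and ruling out crossings into \emph{different} dials forces one to exploit the freedom in the choice of the representing galleries, not just Proposition \ref{prop distcombigrap}. None of this appears in your proposal. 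Your closing interpretation of the distance statement is the intended one and is unproblematic: once a minimal bi-infinite gallery joining $\xi$ and $\xi'$ has actually been produced, it contains $x_\ell$ or $x_{\ell+1}$, hence passes at distance $\ell$ or $\ell+1$ from $x_0$, as in the paper's conclusion. The missing content is precisely the construction and the minimality argument.
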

 
\begin{proof}
Let  $\ell = \{\xi \vert \xi' \}_{x_0}$ and assume that  $\xi = [x_0\sim x_1 \sim  \dots \sim x_i \sim  \cdots]$ and  $\xi' =[x_0\sim x'_1 \sim  \dots \sim x_i' \sim  \cdots]$ with \[x_i=x_i' \text{ for }i\leq \ell \text{ and }x_{\ell+1} \neq x_{\ell+1}'.\]We  consider two cases. Either $x_{\ell+1}$ is adjacent to $ x_{\ell+1}'$, or $x_{\ell+1}$ is not adjacent to $ x_{\ell+1}'$. In the first case, Proposition \ref{prop distcombigrap} implies that  the bi-infinite gallery   \[\cdots \sim x_{\ell+2}    \sim x_{\ell+1}   \sim x_{\ell+1}' \sim x_{\ell+2}'\sim  \cdots \] only crosses once the building-walls that separate   $\xi$ and $\xi'$.  Hence it is minimal.
 
In the second case, we apply the same reasoning to the bi-infinite gallery  \[\cdots \sim x_{\ell+2}      \sim x_{\ell+1} \sim x_\ell   \sim x_{\ell+1}' \sim x_{\ell+2}'\sim  \cdots. \]
Finally $\{\xi \vert \xi' \}_{x_0}$ or $\{\xi \vert \xi' \}_{x_0}+1$ is the distance between $x_0$ and a bi-infinite minimal gallery between $\xi$ and $\xi'$.
\end{proof}

 \begin{nota}
Let $d(\cdot,\cdot)$ \index{$d(\cdot,\cdot)$} be the self-similar metric on $\borg$ coming from the geometric action of $\Gamma$ on $\dual$(see Definition \ref{defselfsimbord}).
\end{nota}

As a consequence of  Proposition \ref{prop prodgromov}, a   general result about hyperbolic spaces due to M. Gromov states that there exist two constants $A\geq 1$ and $\alpha >0$ such that for any $\xi,\xi' \in \borg$:\[A^{-1}  e^{-\alpha \{\xi \vert \xi' \}_{x_0}} \leq d(\xi, \xi') \leq A \  e^{-\alpha \{\xi \vert \xi' \}_{x_0}}.\]
In the sequel we   also write \[d(\xi, \xi') \asymp  e^{-\alpha \{\xi \vert \xi' \}_{x_0}}\index{$\asymp$}.\]This means that, $d(\xi, \xi')$ is, up to a multiplicative constant, equal to $e^{-\alpha \{\xi \vert \xi' \}_{x_0}}$. An application  of this description of the visual metric on $\borg$ is the following proposition.

\begin{prop}\label{proptaillepara} For every $\epsilon>0$, there exists only a finite set of parabolic limit sets of diameter larger than $\epsilon$.
\end{prop}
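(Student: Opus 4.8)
The plan is to bound the diameter of a parabolic limit set by the combinatorial distance from $x_0$ to the residue carrying it, and then to quote local finiteness of $\Sigma$. Fix $\epsilon>0$ and recall that the self-similar metric satisfies $d(\xi,\xi')\leq A\,e^{-\alpha\{\xi\vert\xi'\}_{x_0}}$. Let $F$ be a parabolic limit set with $\dia{F}>\epsilon$; by the remark following Definition \ref{defPLS} we may write $F=\partial R$ for a residue $R=g\Sigma_I$, and we set $x:=\proj{R}{x_0}$ to be its gate. By Proposition \ref{prop deffrojdial}, every chamber $y\in\Ch{R}$ is joined to $x_0$ by a minimal gallery passing through $x$; hence $y\subset C_x$ for each such $y$, so $R\subset C_x$, and therefore $F=\partial R\subset\partial C_x=v_x$.

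Next I would bound $\dia{v_x}$ from above. If $\xi,\xi'$ are two distinct points of $v_x$, then the shadow $v_x$ contains both, so by the shadow description of $\{\xi\vert\xi'\}_{x_0}$ (Subsection \ref{subsecmet}) one has $\{\xi\vert\xi'\}_{x_0}\geq\dc{x_0,x}$. Combined with the visual-metric estimate this gives $d(\xi,\xi')\leq A\,e^{-\alpha\,\dc{x_0,x}}$, hence $\dia{\partial R}\leq\dia{v_x}\leq A\,e^{-\alpha\,\dc{x_0,x}}$. Since $\dia{\partial R}>\epsilon$, we conclude $\dc{x_0,x}<N$, where $N:=\alpha^{-1}\log(A/\epsilon)$ depends only on $\epsilon$ and the fixed metric.

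Finally I would count the residues that can occur. The group $\Gamma$ is a graph product of finite cyclic groups, so every panel of $\Sigma$ is finite and the ball $B_c(x_0,N)=\{y\in\ch:\dc{x_0,y}\leq N\}$ is finite. A residue $R$ with $\dia{\partial R}>\epsilon$ has its gate $x=\proj{R}{x_0}$ inside $B_c(x_0,N)$; moreover a residue is determined by its type $I\subset S$ together with any one of its chambers (the $I$-residue through a given chamber is unique), so for each of the finitely many chambers of $B_c(x_0,N)$ and each of the $2^{\#S}$ subsets $I\subset S$ there is at most one residue of type $I$ with that gate. Hence at most $(\#B_c(x_0,N))\cdot 2^{\#S}$ residues $R$ satisfy $\dia{\partial R}>\epsilon$, and a fortiori there are only finitely many parabolic limit sets of diameter larger than $\epsilon$.

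There is no real obstacle here; the only points needing a little care are the two inclusions packaged in the first two paragraphs, namely that projecting a residue onto its gate exhibits it as a subcomplex of the cone $C_x$ (so that $\partial R\subset v_x$), and that the visual diameter of the shadow $v_x$ decays exponentially in $\dc{x_0,x}$. Both are immediate consequences of the gate property of Proposition \ref{prop deffrojdial} and of the shadow-characterization of the Gromov product, so once these are invoked the proof is pure bookkeeping.
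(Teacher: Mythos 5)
Your proof is correct and follows essentially the same route as the paper: both bound $\dia{\partial P}$ by $A\,e^{-\alpha\,\dc{x_0,x}}$ where $x$ is the gate $\proj{R}{x_0}$ of the residue (via Proposition \ref{prop deffrojdial} and the visual/Gromov-product estimate), and then conclude by local finiteness, the paper counting the finitely many $g\in\Gamma$ with $\vert g\vert$ bounded while you equivalently count gate chambers in a finite ball together with types. The extra details you supply (the inclusion $\partial R\subset v_x$ and the exponential decay of $\dia{v_x}$) are exactly the steps the paper leaves implicit, so no changes are needed.
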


\begin{proof}
Let $\partial P$ be a parabolic limit set. Let  $g'\Sigma_I$ be a residue in $\Sigma$ such that $\partial P \simeq \partial (g'\Sigma_I)$. According to  Proposition \ref{prop deffrojdial}, there exists a unique chamber $x \subset g' \Sigma_I$ such that for every chamber $y \subset g'\Sigma_I$ there exists a minimal gallery from $x_0$ to $y$ passing through $x$. Let $g\in \Gamma$ such that $x=g x_0$.  Then the diameter of $\partial P$ is controlled by $e^{-\alpha \vert g \vert}$ with $\vert g \vert = \dc{x_0,g x_0} $. As there exists  only a finite number of $g \in \Gamma$ such that $\vert g \vert$ is smaller than a fixed  constant, the proposition is proved.

\end{proof}

\subsection{Approximation of $\borg$ with shadows}
\label{subsec approxshadow}

The following proposition says that shadows are almost balls. This will allow us to construct approximations using   shadows.
  
\begin{prop} \label{prop ballsshadows} There exists $\lambda > 1$ such that  for any $x\in \Ch{\Sigma}$ with $\dc{x_0,x}=k$  there exists $z\in v_x$ with 
\[B(z,\lambda^{-1} e^{-\alpha k}) \subset v_x \subset B(z,\lambda e^{-\alpha k}).\]
\end{prop}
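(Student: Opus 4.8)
The plan is to show that the shadow $v_x$ is "pinched" between two concentric balls whose radii are comparable to $e^{-\alpha k}$, using the combinatorial description of the visual metric via the Gromov product $\{\cdot\vert\cdot\}_{x_0}$ established in Proposition \ref{prop prodgromov} (so $d(\xi,\xi')\asymp e^{-\alpha\{\xi\vert\xi'\}_{x_0}}$), together with the identification of $v_x=\partial C_x$ with classes of infinite minimal galleries passing through $x$.

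\textbf{Construction of the center.} First I would pick the point $z\in v_x$ as follows. Choose a minimal gallery $x_0\sim x_1\sim\dots\sim x_k=x$ and prolong it to an infinite minimal gallery $x_0\sim x_1\sim\dots\sim x_k\sim x_{k+1}\sim\dots$ which stays, after level $k$, "deep inside" $C_x$ — concretely, one can arrange (using Proposition \ref{prop shadowintnonvide}, i.e. that $\partial C_x$ has non-empty interior, and the cone description $C_x=D_1\cap\dots\cap D_m$ of Proposition \ref{prop projalong}) that the tail avoids all the bounding building-walls $M_1,\dots,M_m$ of $C_x$ with room to spare. Let $z=[x_0\sim x_1\sim\dots]$ be the endpoint. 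The point is that for any other $\xi\in v_x$ one has $\{z\vert\xi\}_{x_0}\geq k$, because both are represented by galleries through $x$; hence $d(z,\xi)\leq A\,e^{-\alpha k}$, giving the inclusion $v_x\subset B(z,\lambda e^{-\alpha k})$ for $\lambda=A$.

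\textbf{The inner inclusion.} This is the step I expect to be the main obstacle: I must find $\lambda$, independent of $x$ and $k$, so that $B(z,\lambda^{-1}e^{-\alpha k})\subset v_x$. Equivalently, I need: if $\xi\notin v_x=\partial C_x$, then $\{z\vert\xi\}_{x_0}\leq k+c$ for a uniform constant $c$, so that $d(z,\xi)\geq A^{-1}e^{-\alpha(k+c)}$ and we may take $\lambda=A\,e^{\alpha c}$. To prove this, suppose $\xi\notin\partial C_x$. Since $C_x$ is an intersection of dials of building $D_1\cap\dots\cap D_m$ with bounding walls $M_1,\dots,M_m$ all along $x$ (Proposition \ref{prop projalong}), $\xi$ lies outside some $\partial D_i$, so the building-wall $M_i$ separates $z$ and $\xi$ in the sense of Definition \ref{defcross}; moreover $M_i$ is along $x$, i.e.\ $\dc{x_0,\proj{\cv{\partial M_i}}{x_0}}\leq k$, in fact $M_i$ is along the chamber $x$ at combinatorial distance $k$. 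Any pair of infinite minimal galleries representing $z$ and $\xi$ must therefore separate — by Proposition \ref{prop carcprodgrompremiere}, the common shadows containing both $z$ and $\xi$ are exactly the $v_y$ with $y$ lying on one side of all walls separating them, and one such wall is $M_i$ which is crossed by the gallery to $z$ at step $k$ (or $k{-}1$); a short argument with Proposition \ref{prop carcprodgrompremiere} bounds the number of common dials, hence $\{z\vert\xi\}_{x_0}\leq k+c$ with $c$ controlled by the finite local structure (the maximal number of chambers in a panel is irrelevant here; what matters is that walls along $x$ sit within bounded combinatorial distance, so $c$ can be taken, say, $c=1$).

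\textbf{Conclusion.} Combining the two inclusions with a single $\lambda=\max\{A,\,A\,e^{\alpha c}\}$, and noting that every choice made above (the tail of the gallery, the constant $c$) is controlled uniformly in $x$ and $k$ by the hyperbolicity constants $A,\alpha$ and the fixed finite data of $\mathcal G$ and $\{q_i\}$, we obtain the stated $\lambda>1$ with $B(z,\lambda^{-1}e^{-\alpha k})\subset v_x\subset B(z,\lambda e^{-\alpha k})$ for all $x$ with $\dc{x_0,x}=k$. The delicate point, as noted, is the inner inclusion: one must be careful that $\xi$ escaping $\partial C_x$ forces the Gromov product with the carefully-chosen interior point $z$ to drop within a bounded amount of $k$, which is where the choice of $z$ "deep inside the cone" — legitimized by Proposition \ref{prop shadowintnonvide} — is essential.
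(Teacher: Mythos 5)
Your outer inclusion is fine (it is just the diameter bound $\dia{v_x}\leq A e^{-\alpha k}$, as in the paper), but the inner inclusion — which you correctly identify as the crux — is not actually proved. Your reduction says: choose $z$ ``deep inside'' the cone and show that every $\xi\notin v_x$ satisfies $\{z\vert\xi\}_{x_0}\leq k+c$ with $c$ uniform. But that claim \emph{is} the inner inclusion, merely rewritten through the visual metric; the ``short argument with Proposition \ref{prop carcprodgrompremiere}'' that is supposed to bound the number of common dials is never given, and it cannot be given at the level of generality you state it: whether $\{z\vert\xi\}_{x_0}$ stays within a bounded amount of $k$ depends entirely on how far $z$ sits from the frontier of $v_x$ (which is contained in $\bigcup_i\partial M_i$) \emph{at the scale $e^{-\alpha k}$}, uniformly in $x$ and $k$. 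Proposition \ref{prop shadowintnonvide}, which you invoke to legitimize the choice of $z$, only yields a ball $B(z,r)\subset\partial C_x$ with $r$ depending on the cone; it says nothing about $r$ being comparable to $e^{-\alpha k}$, so appealing to it here is circular — the uniform quantitative statement is exactly what Proposition \ref{prop ballsshadows} asserts. The claim that one may take $c=1$ is likewise unjustified.

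The missing ingredient is the one the paper uses: by Proposition \ref{prop projalong}(ii), $C_x=g(C)$ where $C=D_0(M^0_1)\cap\dots\cap D_0(M^0_k)$ is one of only \emph{finitely many} cones based at $x_0$ (finitely many pairwise orthogonal walls along $x_0$), and $g^{-1}$, being one of the partial bi-Lipschitz maps of the approximately self-similar structure, rescales the metric on $v_x$ by the factor $e^{\alpha k}$. Applying Proposition \ref{prop shadowintnonvide} to each of the finitely many standard cones $C$ gives a ball $B(z,r)\subset\partial C$ with $r$ uniform over this finite family, and transporting it by $g$ produces a ball of radius $\asymp r\,e^{-\alpha k}$ inside $v_x$, which is the inner inclusion with a $\lambda$ independent of $x$ and $k$. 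Your argument could in principle be repaired along your lines (choose the tail of the gallery to move away from all bounding walls at linear speed and prove a hyperbolicity lemma converting linear combinatorial escape into a lower bound $d(z,\partial M_i)\gtrsim e^{-\alpha k}$), but as written the uniformity step — the heart of the proposition — is asserted rather than proved.
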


\begin{proof}
To prove the right-hand side inclusion it is enough to notice that $\dia{v_x} \leq A e^{-\alpha k}$ where $A$ and $\alpha$ are the visual constants.
Let $C_x$ be the cone based at $x$. Let $C=D_0(M_1) \cap  \dots \cap D_0(M_k)$ and $g\in \Gamma$ such that $g(C)=C_x$ (see Proposition \ref{prop projalong}). Now we recall that  $g^{-1}$ is a bi-Lipschitz homeomorphism. Restricted to  $v_x$, it  rescales  the metric  by a factor $e^{\alpha k}$. According to Proposition \ref{prop shadowintnonvide},  there exist $r >0$ and $z \in \partial C$ such that  $B(z,r) \subset\partial C$. As there is  only a finite number of possible $C$, the proof is achieved. 
\end{proof}

Let $x\in \ch$ and $v_x$ be the associated shadow as in Definition \ref{defshadow}. Thanks to Proposition \ref{prop prodgromov}, if   $\dc{x_0,x}=k$ then  $\dia{v_x}\asymp e^{-\alpha k}$. We use this property to  construct an approximation of $\borg$ consisting of shadows. 

For  an integer $k\geq0$ we set \[S_k= \{ x \in \ch: \dc{x_0,x}=k  \}.\] The set $\{ {v_x} : x\in S_k\}$ is a finite covering of    $\borg$.
 Now let $S'_k$ be a subset of $S_k$  such that   $ \{v_x : x\in S_k'\}$  defines a  minimal covering of $\borg$. This means that for every  $ x \in S_k'$ there exists $z \in v_x$ such that $z \notin v_y$  for any $y \in S_k'\backslash \{x\}$. Finally we set \[G_k=\{v_x : x\in S_k'\}.\] In the following, we prove  that the sequence  $\{G_k\}_{k\geq0}$ defines an approximation of $\borg$.

\begin{prop}\label{propapprox}
For $k\geq0$, let $S_k'$ be the set of chambers previously defined and $G_k$ be the minimal covering of $\borg$ associated with $S'_k$. There   exists $\kappa >1$ such that for any  $x\in S_k'$, there exists  $\xi_x\in v_x$ such that:

\liste{ \item $\forall x \in S_k'$: $B( \xi_x,\kappa^{-1} e^{-\alpha k}) \subset v_x \subset B(\xi_x,\kappa e^{-\alpha k})$,
\item $\forall x, y \in S_k'$ with $x\neq y$: $B(\xi_x,\kappa^{-1} e^{- \alpha k})\cap B(\xi_y,\kappa^{-1} e^{- \alpha k})= \emptyset$.}
\end{prop}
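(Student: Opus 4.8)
The plan is to verify the two defining properties of a $\kappa$-approximation directly from the results established earlier in this section, primarily Proposition \ref{prop ballsshadows} together with the minimality of the covering $G_k$. For the first property, Proposition \ref{prop ballsshadows} already provides, for each $x \in S_k'$ with $\dc{x_0,x}=k$, a point $z \in v_x$ and a constant $\lambda > 1$ (independent of $x$ and $k$) with $B(z,\lambda^{-1}e^{-\alpha k}) \subset v_x \subset B(z, \lambda e^{-\alpha k})$. However, this $z$ need not be the same point that witnesses the separation property, so I would instead take $\xi_x$ to be a point guaranteed by minimality of the covering: since $\{v_x : x \in S_k'\}$ is a minimal cover, for each $x \in S_k'$ there exists $\xi_x \in v_x$ with $\xi_x \notin v_y$ for all $y \in S_k' \setminus \{x\}$.

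The main step is then to show that this choice of $\xi_x$ still satisfies an inclusion of the form $B(\xi_x, \kappa^{-1}e^{-\alpha k}) \subset v_x$ for a uniform $\kappa$. The right-hand inclusion $v_x \subset B(\xi_x, \kappa e^{-\alpha k})$ is immediate once $\kappa \geq A$, since $\xi_x \in v_x$ and $\dia{v_x} \leq A e^{-\alpha k}$ by the visual metric estimate recalled before Proposition \ref{proptaillepara}. For the left-hand inclusion I would exploit the approximate self-similarity: as in the proof of Proposition \ref{prop ballsshadows}, there is $g \in \Gamma$ and a cone $C = D_0(M_1) \cap \cdots \cap D_0(M_k)$ (from the finite list of combinatorial types, by Proposition \ref{prop projalong}) with $g(C) = C_x$, and $g^{-1}$ restricted to $v_x$ rescales the metric by a bounded factor comparable to $e^{\alpha k}$. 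Pushing $\xi_x$ forward by $g^{-1}$ to a point in the compact set $\partial C$, and using that $\partial C$ has nonempty interior (Proposition \ref{prop shadowintnonvide}) together with the fact that only finitely many such $C$ occur, one obtains a uniform lower bound: there is $r_0 > 0$, depending only on the finite list of cones and the bi-Lipschitz constant $L$ of the self-similar structure, such that $B(g^{-1}\xi_x, r_0) \subset \partial C$; pulling back gives $B(\xi_x, \kappa^{-1} e^{-\alpha k}) \subset v_x$ for suitable $\kappa$. Here one must be slightly careful that $r_0$ can be chosen uniformly even though $\xi_x$ ranges over all of $v_x$ and not just a fixed interior point; this follows because the bi-Lipschitz map $g$ distorts distances by at most a factor $L e^{\alpha k}$, so it suffices that \emph{every} point of $\partial C$ is the center of a ball of some fixed radius contained in $\partial C$ — which need \emph{not} hold. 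The resolution is that we do not need every point: we only need the single point $g^{-1}\xi_x$, and since $\xi_x$ lies in the interior of $v_x$ relative to the covering (it is not covered by any other $v_y$), its image lies in a region of $\partial C$ that is, again up to the finitely many types, bounded away from the boundary of $\partial C$ in $\borg$.

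The second property — disjointness of the balls $B(\xi_x, \kappa^{-1}e^{-\alpha k})$ for $x \neq y$ — is where minimality of the covering is essential, and I expect this to be the main obstacle. The point $\xi_x$ lies only in $v_x$ among the shadows indexed by $S_k'$, but a priori it could lie very close to $v_y$ without lying in it. To get a uniform gap I would argue by self-similarity again: suppose $\xi_x$ and $\xi_y$ were at distance less than $\kappa^{-1}e^{-\alpha k}$. Rescaling the ball of radius $\asymp e^{-\alpha k}$ around $\xi_x$ by an element of $\Gamma$ to unit scale, the configuration $(\xi_x, \xi_y, v_x, v_y)$ maps to one of finitely many combinatorial configurations of shadows at uniformly bounded scale, in which $\xi_x$ is a point of $v_x$ not belonging to $v_y$; in each such configuration there is a positive distance from $\xi_x$ to $v_y$, and taking the minimum over the finite list yields the uniform constant $\kappa^{-1}$. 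One subtlety is to ensure that the relevant rescaling brings \emph{both} $v_x$ and $v_y$ into bounded scale and bounded combinatorial position; this uses Proposition \ref{prop prodgromov} to control $\{\xi_x \mid \xi_y\}_{x_0}$ in terms of $d(\xi_x,\xi_y)$, so that if the two balls intersected then $v_x$ and $v_y$ would already be combinatorially close (sharing an ancestor shadow of controlled depth), reducing to finitely many cases by the doubling property of $\borg$. Assembling these pieces, choosing $\kappa$ to be the maximum of the finitely many constants produced, gives the claimed $\kappa$-approximation.
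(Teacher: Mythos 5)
Your choice of center is where the argument breaks down. You take $\xi_x$ to be the minimality witness $z_x\in v_x\setminus\bigcup_{y\neq x}v_y$ and then try to extract \emph{uniform} metric estimates (a ball of radius $\asymp e^{-\alpha k}$ around $\xi_x$ inside $v_x$, and a gap of size $\asymp e^{-\alpha k}$ between $\xi_x$ and every other $v_y$) from the purely set-theoretic fact that $\xi_x$ is not covered by the other shadows. Since the shadows $v_y$ are closed, non-membership gives $d(\xi_x,v_y)>0$, but nothing bounds this below by a definite multiple of $e^{-\alpha k}$: within a single combinatorial configuration of $C=D_0(M_1)\cap\dots\cap D_0(M_k)$ and its rescaled neighbours, the uncovered region of $\partial C$ is a relatively open set whose closure can meet the frontier of $\partial C$, so the rescaled point $g^{-1}\xi_x$ may be arbitrarily close to that frontier and to the neighbouring shadows. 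Hence ``taking the minimum over the finite list of configurations'' does not produce a constant — the quantity you are minimizing has infimum $0$ inside each configuration. You flag exactly this obstruction yourself for property (1), but the proposed ``resolution'' (that the image of $\xi_x$ is bounded away from the boundary of $\partial C$ up to finitely many types) is precisely the unproved claim, and the same unproved claim reappears in your separation argument for property (2): sharing an ancestor shadow of depth close to $k$ (via Proposition \ref{prop prodgromov} and doubling) is not contradictory for distinct $x,y\in S_k'$, so no contradiction is reached without a uniform lower bound on $d(\xi_x,v_y)$.

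The paper's proof uses $z_x$ only for combinatorial information: since $z_x\notin v_y$ and $z_y\notin v_x$, the minimal galleries toward $z_x$ and $z_y$ must split at some level $i\le k-1$, and concatenating them at the splitting chamber (via Proposition \ref{prop distcombigrap}, as in Proposition \ref{prop prodgromov}) gives a bi-infinite minimal gallery through $x$ and $y$. The metric content then comes from elsewhere: if $N$ is the hyperbolicity constant for which same-level chambers at combinatorial distance $>N$ have disjoint shadows, then the chambers $x_{k+N}$, $y_{k+N}$ obtained by following these galleries $N$ steps beyond $x$ and $y$ satisfy $\dc{x_{k+N},y_{k+N}}>N$, so $v_{x_{k+N}}\cap v_{y_{k+N}}=\emptyset$; the center $\xi_x$ is then chosen \emph{inside} $v_{x_{k+N}}\subset v_x$ using Proposition \ref{prop ballsshadows}, which guarantees a ball of radius $\lambda^{-1}e^{-\alpha(k+N)}$ inside $v_{x_{k+N}}$. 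Both properties follow with $\kappa\asymp\lambda e^{\alpha N}$. In short: do not place the center at $z_x$; descend $N$ levels along a gallery toward $z_x$ and use the uniform interior-ball property of shadows at that deeper level. As written, your proposal has a genuine gap at both required estimates.
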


This property is enough to construct an approximation of $\borg$. Indeed the visual constant $\alpha$ can be chosen such that $1/2\leq e^{-\alpha}<1$. In this case we can extract from $\{G_k\}_{k\geq0}$ a subsequence that is an approximation of $\borg$ as defined in  Subsection  \ref{subsecdefapproxcasgen}.

\begin{proof}[Proof of Proposition \ref{propapprox}]
Let  $x  \in S_k'$, and let   $\xi_x \in v_x$.  With $\dia{v_x} \asymp e^{-\alpha k}$, there exists $\kappa >1$ such that for all  $x \in S_k'$: $v_x \subset B(\xi_x,\kappa e^{-\alpha k})$.

  We   recall that the hyperbolicity provides a constant  $N\geq 1$, depending only on the hyperbolicity parameter, such that for $x,x'\in \ch$ with $\dc{x_0,x}=\dc{x_0,x'}$ if $\dc{x,x'} \geq N$ then $v_x\cap v_{x'}=\emptyset$.  

For any $x\in S'_k$, we pick $z_x \in v_x$ such that $z_x \notin v_y$  for any $y \in S_k'\backslash \{x\}$. Let $x,y\in S'_k$, $x\neq y$ and let $c\in \Ch{\Sigma}$ be such that $\dc{x_0,c}= \{z_x\vert z_y\}_{x_0}$ and $\{z_x,z_y\}\subset v_c$. In this setting we can write that $z_x$ and $z_y$ are represented by infinite minimal galleries of the form:
\liste{
\item $z_x =[x_0\sim x_1 \sim  \dots \sim  x_i  \sim \dots \sim x_k \sim x_{k+1} \sim  \cdots]$
\item $z_y =[x_0\sim y_1 \sim \dots   \sim  y_i  \sim \dots   \sim y_k \sim y_{k+1} \sim  \cdots]$} 
with \liste{ \item $x_i= c$ and $y_i=c$ for one $i \in \{1, \dots, k-1\}$,
			\item $x_k=x$ and $y_k=y$.}			

			Now we consider two cases. First, we assume that  $x_{i+1}$ is adjacent to $ y_{i+1}$. In this case, Proposition \ref{prop distcombigrap} implies that  the bi-infinite gallery  \[ \dots  \sim x_{k+1} \sim x_k  \sim \dots \sim x_{i+1}   \sim y_{i+1} \sim \dots     \sim y_k  \sim y_{k+1}  \sim   \cdots,\]    only crosses once the building-walls that separate $\xi$ and $\xi'$. Hence it is minimal.
					
Then we assume that  $x_{i+1}$ is not adjacent to $ y_{i+1}$. In this case  we apply the same reasoning to the bi-infinite gallery  \[ \dots \sim x_{k+1} \sim x_k  \sim \dots \sim x_{i+1} \sim c \sim y_{i+1} \sim \dots     \sim y_k  \sim  y_{k+1}  \sim  \cdots. \]
  
  Summarizing we obtain that  one of the following galleries:
\liste{\item $ \dots  \sim x_{k+1} \sim x_k  \sim \dots \sim x_{i+1}   \sim y_{i+1} \sim \dots     \sim y_k  \sim y_{k+1}  \sim   \cdots$
\item $ \dots \sim x_{k+1} \sim x_k  \sim \dots \sim x_{i+1} \sim c \sim y_{i+1} \sim \dots     \sim y_k  \sim  y_{k+1}  \sim  \cdots$}
is a bi-infinite minimal gallery from $z_x$ to $z_y$.

 In particular $\dc{x_{k+N},y_{k+N}}> N$ and  the corresponding shadows do not intersect: \[v_{x_{k+N}}\cap v_{y_{k+N}} = \emptyset.\]
Now according to Proposition \ref{prop ballsshadows} there exist  $\xi_x\in v_{x_{k+N}}$ and $\xi_y\in v_{y_{k+N}}$ such that
 \[B(\xi_x,\lambda^{-1} e^{-\alpha (k+N)})   \subset v_{x_{k+N}}\text{ and }B(\xi_y,\lambda^{-1} e^{-\alpha (k+N)})   \subset v_{y_{k+N}}.\]
 With $v_{x_{k+N}}\cap v_{y_{k+N}} = \emptyset$, $v_{x_{k+N}} \subset v_{x}$,  and  $v_{y_{k+N}}\subset v_{y}$ we obtain the desired property.

\end{proof}
 
\section{Modulus in the boundary of a building  and in the boundary of an apartment} 
\label{secmoduleapartmoduleimmeuble}
 
The boundary of an apartment is, in a well chosen case, easier  to understand than the boundary of the building. This is why we want to compare the modulus in the boundary of the building with some modulus in the boundary of an apartment.

In this section, we start by defining a convenient approximations on $\borg$ and on the boundaries of the apartments using shadows and retraction maps. Afterwards, we introduce the weighted modulus on the boundary of an  apartment.  Then  we prove Theorem \ref{theocontrolimmappart}. This theorem is, after Theorem \ref{theocourbedanspara}, the second  major step in proving the main theorem (Theorem \ref{theoprincip}). It states that     weighted modulus are  comparable to the modulus in $\borg$.  Finally, using the ideas in Subsection \ref{subsecdimconf}, we reveal a connection between the conformal dimension of $\borg$ and a critical exponent computed in the boundary of an apartment.

We use the notation and assumptions from  Sections \ref{sec LFRAHB}, \ref{seccurves inPLS} and \ref{sec bord topo met}. In particular $p\geq1$ is a fixed constant. We fix $\Gamma$ the graph product associated with the pair $(\mathcal{G}, \{\Z/q_i\Z\}_{i=1,\dots,n})$. The   self-similar metric $d(\cdot,\cdot)$ on $\borg$ is defined as in Subsection \ref{subsecmet}. The visual exponent of $d(\cdot,\cdot)$ is $\alpha$. As in Section \ref{secmoduhyper}, $d_0$ denotes a small constant compared with $\dia{\borg}$ and with the constant of approximate self-similarity. Then $\F_0$ is the set of curves of diameter larger than $d_0$.

\subsection{Notations and conventions in $\partial A$ and in $\borg$}

In the  rest of this  article we fix an apartment $A$ containing the base chamber $x_0$. We shall connect the geometry and   the modulus in $\partial A$ and in $\borg$. Naturally we will use in $\partial A$ and in $\borg$ the same concepts. Here we summarize some of the notation used in the following to avoid confusion. First we  write 
 \[\apo=\{B\in \ap : x_0\subset B\}\index{$\apo$}.\]\index{$\apo$} Let $\pi$ denote the retraction $\pi_{A,x_0} :\Sigma \longrightarrow A$. We also denote by $\pi$ the extension of the retraction to the boundary. The notation $d(\cdot,\cdot)$ and $\alpha$ are also used to describe the metric on $\partial B$ for any $B\in \apo$.

An apartment is a thin building, so we can use in $\partial A$ the tools presented in Subsections   \ref{subsecshadows} and \ref{subsecmet}. First, we  define on $\partial A$ a combinatorial  self-similar metric as in Subsection \ref{subsecmet}. Since $x_0\subset A$,  for $\xi, \xi' \in \partial A$, the quantity $ \{\xi\vert \xi'\}_{x_0}$ is the same whether we compute it in $A$ or in $\Sigma$. Hence, if we choose the same  visual exponents for  the visual metric in $\borg$ and the visual metric in $\partial A$, then the metrics coincide up to a multiplicative constant. In the rest, $d(\cdot,\cdot)$  designate the metric on both $\partial A$ and  $\borg$. Likewise,  $\alpha$ and $A$ designate the visual constants or both $\partial A$ and  $\borg$.    

 Finally,   it makes sense to talk about cones of chambers in $A$ and shadows in $\partial A$. The results of \ref{subsecshadows} also hold in $\partial A$.

\begin{nota} 
\text{  }
\liste{ 
\item for $\xi\in \partial A$ and $r>0$ we designate by $B(\xi, r)\subset \borg$  the open ball of $\borg$ of radius  $r$ and  center  $\xi$, \index{$B^A(\xi, r)$}

\item for $x \in \Ch{A}$ we write $C_x$  \index{$C_x$} for the cone of chambers based on $x$ in $\Sigma$,
\item for $x \in \Ch{A}$ we write $v_x$ (resp. $w_x$) \index{$v_x$} \index{$w_x$}for the shadow of $x$ in $\borg$ (resp. $\partial A$). 
} 
\end{nota} 
 Usually we will use the following conventions. 

\liste{\item  $v$ (resp. $w$) designates an open subset of $\borg$ (resp. of $\partial A$),

\item $\partial P$ \index{$\partial P$} (resp. $\partial Q$ \index{$\partial Q$})  designates a parabolic limit set in  $\borg$  (resp. in $\partial A$).
 }

\subsection{Choice of approximations}
\label{subsec choiceapprox}
 
 The following lemma  says that shadows have a nice behavior under retraction maps. 
 \begin{lem} \label{lem interapartshadow}
 Let $A \in \apo$ and let $x\in \ch$ and $ v_x$ be the associated shadow in $\partial \Gamma$ as defined in Definition \ref{defshadow}. Then \liste{\item  either $x \notin \Ch{A}$ and  $\Int{v_x}\cap \partial A = \emptyset$, \item or $x\in \Ch{A}$ and $v_x\cap \partial A$ is a shadow in $\partial A$.} In the second case $v_x\cap \partial A= \pi(v_x)$.
 \end{lem}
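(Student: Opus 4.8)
The statement is about how a shadow $v_x \subset \borg$ interacts with an apartment $A \ni x_0$ under the retraction $\pi = \pi_{A,x_0}$. The natural approach is to work at the level of cones of chambers, since $v_x = \partial C_x$ by Definition \ref{defshadow}, and to use the gallery-description of cones (Definition \ref{def cone}): $C_x$ consists of the chambers $y$ reachable by a minimal gallery from $x_0$ through $x$. First I would dispose of the easy case: if $x \notin \Ch{A}$, I claim $\interior{v_x} \cap \partial A = \emptyset$. Suppose not, so $\interior{v_x}$ contains an open subset of $\partial A$; since the shadows $w_y$ (for $y \in \Ch{A}$) form a neighbourhood basis of the topology of $\partial A$ (by the results of Subsection \ref{subsecshadows} applied to the thin building $A$, via Fact \ref{fact equivdestopo}), some shadow $w_y \subset A$ would lie inside $v_x$. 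Pushing this down to the level of chambers, I would argue that $v_x$ containing a whole shadow of $\partial A$ forces $C_x$ to contain chambers of $A$ arbitrarily far out in $A$, and then — since any minimal gallery from $x_0$ into $A$ stays in $A$ (apartments are convex, being isomorphic to $\cox$, and retractions are distance non-increasing) — one deduces $x$ itself lies on such a minimal gallery inside $A$, i.e. $x \in \Ch{A}$, a contradiction.

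For the second case, $x \in \Ch{A}$, I want $v_x \cap \partial A = \pi(v_x)$ and that this is the shadow $w_x$ of $x$ computed inside $\partial A$. The inclusion $w_x \subseteq v_x \cap \partial A$ is immediate: a minimal gallery in $A$ through $x$ is a minimal gallery in $\Sigma$ through $x$ (minimality is preserved because $\pi$ is $1$-Lipschitz on chambers and fixes $A$), so $\partial C_x^A \subseteq \partial C_x$, and clearly $w_x = \partial C_x^A \subseteq \partial A$. Conversely, take $\xi \in v_x \cap \partial A$. Then $\xi$ is represented by an infinite minimal gallery in $\Sigma$ through $x$; applying $\pi$ gives an infinite gallery in $A$ starting at $x_0$, passing through $\pi(x) = x$ (as $x \in \Ch{A}$, it is fixed), which is again minimal since $\pi$ does not increase $\dc{\cdot,\cdot}$ and the original was minimal — so $\xi \in \partial C_x^A = w_x$. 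This also shows $\pi(v_x) \subseteq w_x$; combined with $w_x \subseteq v_x \cap \partial A \subseteq \pi(v_x)$ (the last inclusion because $\pi$ restricted to $\partial A$ is the identity), all three sets coincide.

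The step I expect to be the main obstacle is the first case: rigorously ruling out that $\interior{v_x}$ meets $\partial A$ when $x \notin \Ch{A}$. The subtlety is that $\partial A$ is a nowhere-dense (empty-interior) subset of $\borg$ in the cases we care about, yet $v_x \cap \partial A$ could a priori still be "fat" inside the subspace $\partial A$ without $v_x$ itself having $A$-chambers; I need to convert the topological hypothesis ($\interior{v_x} \supseteq$ some relatively open piece of $\partial A$) into a combinatorial one about galleries. The cleanest route is: a relatively open subset of $\partial A$ contains a shadow $w_y$ with $y \in \Ch{A}$ and $\dc{x_0,y}$ large; $w_y \subseteq v_x$ then means every minimal gallery in $A$ through $y$ extends (asymptotically) to a minimal gallery in $\Sigma$ through $x$; tracking where such galleries branch and using Proposition \ref{prop carcprodgrompremiere} (the characterization of $\{\xi\mid\xi'\}_{x_0}$ via building-walls) shows $x$ must lie on a minimal gallery from $x_0$ to such a $y \in \Ch{A}$, hence $x \in \cv{\{x_0\} \cup \Ch{A}}$-type convexity forces $x \in \Ch{A}$ — contradiction. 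I would present this via the identity $v_x \cap \partial A = \pi(v_x)$ which, once established in general, makes the first case transparent: if $\interior{v_x} \cap \partial A \neq \emptyset$ then $\pi(v_x)$ has non-empty interior in $\partial A$, so it is a shadow $w_z$ of some $z \in \Ch{A}$, and unwinding $\pi$ forces $z = x$, whence $x \in \Ch{A}$.
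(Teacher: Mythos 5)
Your core argument is the paper's own, just written out in more detail. The paper's proof is exactly the combinatorial step you sketch: a point of $\Int{v_x}\cap \partial A$ produces a chamber $c\in \Ch{A}\cap\Ch{C_x}$; by the definition of the cone there is a minimal gallery from $x_0$ to $c$ passing through $x$, and convexity of the apartment forces that gallery, hence $x$, to lie in $A$. The identification $v_x\cap\partial A=w_x=\pi(v_x)$ in the case $x\in\Ch{A}$, which the paper leaves implicit, you handle correctly by pushing infinite minimal galleries through the retraction (one small repair: ``$\pi$ is $1$-Lipschitz'' alone does not make images of minimal galleries minimal, since a distance non-increasing map can fold; what you need is the standard fact that the retraction centered at $x_0$ preserves gallery distance to $x_0$, which rules out stammering).

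The one step that would actually fail is your announced ``cleanest presentation'' of the first case via the identity $v_x\cap\partial A=\pi(v_x)$ ``once established in general''. That identity cannot be established in general: it holds only when $x\in\Ch{A}$. Indeed $\pi(v_x)$ is always non-empty, whereas for $x\notin\Ch{A}$ the set $v_x\cap\partial A$ can be empty (and in any case one only gets $v_x\cap\partial A\subset\pi(v_x)$; the reverse inclusion needs $\pi(v_x)\subset v_x$, which is false in general), so using it to prove the first case is circular. Moreover the inference ``$\pi(v_x)$ has non-empty interior in $\partial A$, so it is a shadow $w_z$'' is unjustified: having non-empty interior does not make a set a shadow. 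So you should discard that packaging and keep your direct route (relatively open subset of $\partial A$ inside $v_x$ $\Rightarrow$ a chamber of $A$ lies in $C_x$ $\Rightarrow$ convexity gives $x\in\Ch{A}$), which is precisely the paper's argument; the intermediate chamber-level claim is where the real work is, and it can be made rigorous by writing $C_x$ as an intersection of dials $D_1\cap\dots\cap D_k$ and noting that a gallery of $A$ converging to a point of $\Int{v_x}$ avoiding the $\partial M_j$ must eventually enter every $D_j$.
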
\begin{proof}
  Let $C_x$ be the cone based on $x$. If $\Int{v_x}\cap \partial A \neq \emptyset$ then there exists  a chamber $c$ in $A \cap C_x$. By convexity, a minimal  gallery from $x_0$ to $c$ that passes through $x$ is included in $A$ and $x\subset A$. Therefore $v_x\cap \partial A$ is the shadow in $\partial A$ associated with $x$.  
 \end{proof}

We fix $\{G^A_k\}_{k\geq 0}$  an approximation of $\partial A$ based on shadows as constructed in Subsection \ref{subsec approxshadow}.  

\begin{nota} 
  For $k\geq 0$ we set \[G_k:=\{v_y\subset \borg :\pi(v_y) \in G_k^A\}.\]
\end{nota}

We recall that we  chose the same visual exponents for the metrics in $\borg$ and $\partial A$. As a consequence of Lemma \ref{lem interapartshadow}  we get  the following fact.

\begin{fact} \label{fact approximapart}
There exists $\kappa>1$ such that $\{G^A_k\}_{k\geq 0}$  and $\{G_k\}_{k\geq 0}$ are $\kappa$-approximations. Moreover, for any $w \in G^A_k$ there exists a unique $\widetilde{w}\in G_k$ such that  $\Int{\widetilde{w}}\cap \partial A \neq \emptyset$ and  $\pi(\widetilde{w}) =  w$.
\end{fact}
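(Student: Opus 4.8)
The plan is to reduce everything to the single identity $\pi(v_y)=w_{\pi(y)}$, valid for every chamber $y\in\Ch\Sigma$, together with Lemma \ref{lem interapartshadow} and the construction of shadow approximations from Subsection \ref{subsec approxshadow}. First I would establish this identity. The inclusion $\pi(v_y)\subset w_{\pi(y)}$ follows because the retraction $\pi=\pi_{A,x_0}$ preserves the distance to $x_0$ and sends adjacent chambers to adjacent or equal chambers; hence it carries a minimal infinite gallery issued from $x_0$ through $y$ to a minimal infinite gallery issued from $x_0$ through $\pi(y)$, so that $\pi(\partial C_y)\subset\partial C_{\pi(y)}$. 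For the reverse inclusion I would choose (building axiom) an apartment $A'\in\ap$ containing $x_0$ and $y$; then $\pi|_{A'}\colon A'\to A$ is an isomorphism fixing $A\cap A'$ pointwise, in particular fixing $x_0$ and sending $y$ to $\pi(y)$, so it carries the cone $C_y\cap A'$ onto the cone of $A$ based at $\pi(y)$ and thus $\pi(v_y\cap\partial A')=w_{\pi(y)}$. Since $v_y\cap\partial A'$ is non-empty and contained in $v_y$, we get $w_{\pi(y)}\subset\pi(v_y)$.

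With the identity in hand, $v_y\in G_k$ if and only if $\dc{x_0,y}=k$ and $w_{\pi(y)}\in G^A_k$, so $\{G_k\}$ is a family of shadows of level $k$ in $\borg$ and each of them is comparable to an $e^{-\alpha k}$-ball by Proposition \ref{prop ballsshadows}. The only non-formal point is that $G_k$ still covers $\borg$: given $\xi\in\borg$, pick an apartment $A'$ containing $x_0$ and a minimal infinite gallery representing $\xi$; its retraction $\pi(\xi)\in\partial A$ lies in some $w_x\in G^A_k$; set $y=(\pi|_{A'})^{-1}(x)$, so $\pi(y)=x$ and, since $\pi|_{A'}$ is an isomorphism fixing $x_0$, the relation $\pi(\xi)\in w_x$ in $\partial A$ transports to $\xi\in v_y$ in $\partial A'$, hence in $\borg$; as $\pi(v_y)=w_x\in G^A_k$ we get $v_y\in G_k$ with $\xi\in v_y$. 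The remaining part of the $\kappa$-approximation property — separatedness of the inner balls, after moving each center to a point lying deep inside a sub-shadow — is obtained by repeating verbatim the argument in the proof of Proposition \ref{propapprox}, and one may take a single $\kappa$ valid for both $\{G^A_k\}$ and $\{G_k\}$.

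For the bijective correspondence: given $w=w_x\in G^A_k$, necessarily $x\in\Ch A$ with $\dc{x_0,x}=k$, and I set $\widetilde w=v_x$. By Lemma \ref{lem interapartshadow} (the case $x\in\Ch A$) one has $\pi(v_x)=v_x\cap\partial A=w_x\in G^A_k$, so $\widetilde w\in G_k$, and $v_x\cap\partial A=w_x$ is a shadow of $\partial A$, hence $\Int{\widetilde w}\cap\partial A\neq\emptyset$. For uniqueness, if $v_y\in G_k$ satisfies $\pi(v_y)=w_x$ and $\Int{v_y}\cap\partial A\neq\emptyset$, then Lemma \ref{lem interapartshadow} forces $y\in\Ch A$; since the retraction restricts to the identity on $\Ch A$ this gives $\pi(v_y)=v_y\cap\partial A=w_y$, whence $w_y=w_x$ and $y=x$, so $\widetilde w$ is unique.

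The step I expect to be the real obstacle is verifying that $\{G_k\}$ is genuinely a $\kappa$-approximation — that the fibres of $\pi$ do not destroy the covering, and that the inner balls can be made pairwise disjoint with a uniform constant — and this is exactly what the apartment-isomorphism lifting argument above, combined with the sub-shadow trick of Proposition \ref{propapprox}, is designed to handle; everything else is the formal identity $\pi(v_y)=w_{\pi(y)}$ plus Lemma \ref{lem interapartshadow}.
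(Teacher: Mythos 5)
The paper gives essentially no proof of this Fact (it is asserted as an immediate consequence of Lemma \ref{lem interapartshadow} and of the choice of equal visual exponents), and your proposal fills it in along exactly the intended route: the identity $\pi(v_y)=w_{\pi(y)}$, lifting through the isomorphism $\pi|_{A'}:A'\to A$ for an apartment $A'\in\apo$, Lemma \ref{lem interapartshadow} for the correspondence $w\mapsto\widetilde w$, and a separation argument in the style of Proposition \ref{propapprox}. The identity, the covering argument (granting the standard fact, used implicitly by the paper as well, that every $\xi\in\borg$ lies in $\partial A'$ for some apartment $A'$ containing $x_0$), and the existence half of the correspondence are correct.

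Two steps are asserted rather than proved, and the first is where the real content sits. The separation property for $\{G_k\}_{k\geq0}$ does not follow ``verbatim'' from Proposition \ref{propapprox}: that proof starts by choosing, in each element of a \emph{minimal} covering, a point lying in no other element, whereas $G_k$ is not the minimal covering of Subsection \ref{subsec approxshadow} — in a thick building each $w\in G_k^A$ has $q(w)=\prod (q_i-1)>1$ preimages in $G_k$ (Proposition \ref{prop poidshadows}), and your lifted point $\xi=(\pi|_{A'})^{-1}(\zeta)$ only avoids the elements of $G_k$ lying over the \emph{other} elements of $G_k^A$, not the shadows $v_{y'}$ with $\pi(y')=\pi(y)$ and $y'\neq y$. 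So same-fibre pairs need a separate argument before the machinery of Proposition \ref{propapprox} can be run (for instance: $y'\notin \Ch{A'}$ because $\pi|_{A'}$ is injective, hence $\Int{v_{y'}}\cap\partial A'=\emptyset$ by Lemma \ref{lem interapartshadow}, and a building-wall separating $y$ from $y'$ then yields disjoint deep sub-shadows of $v_y$ and $v_{y'}$). Second, in the uniqueness argument you pass from $w_y=w_x$ to $y=x$; this uses the unstated fact that a shadow determines its base chamber. It is true, and is genuinely needed for the uniqueness claimed in the Fact, but it deserves a line: a dial $D$ with $x_0\not\subset D$ contains $C_x$ if and only if $v_x\subset\partial D$ (because sub-shadows have non-empty interior while $\partial M$ has empty interior, Proposition \ref{prop shadowintnonvide}), so $C_x$ is the intersection of these dials, and then $x=\proj{C_x}{x_0}$ recovers the chamber from $v_x$.
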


Hereafter, $\{G_k\}_{k\geq 0}$ designates the approximation of $\borg$ obtained from $\{G_k^A\}_{k\geq 0}$ thanks to the preceding fact. This approximation of $\borg$ is canonically associated with  $\{G_k^A \}_{k\geq 0}$ in the following sense: from $\{G_k\}_{k\geq 0}$ we can equip   any $B\in \apo$ with an approximation isometric to $\{G_k^A \}_{k\geq 0}$.  Indeed if   $B \in \apo$, for $k\geq 0$ we set
\[G^B_k:=\{w = \partial  B \cap v : v \in G_k\} .\]
Now let $B \in \apo$ and $f  : B\longrightarrow A$ be the type preserving isometry that fixes $x_0$. The map $f$ is realized by the restriction to $B$ of the retraction $\pi$ and we get the following fact.

\begin{fact} \label{factchangeapartapprox}  $G^A_k= \{f(v)\}_{v\in G^B_k}$. 
\end{fact}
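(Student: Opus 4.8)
The statement to prove, Fact \ref{factchangeapartapprox}, asserts that $G^A_k = \{f(v)\}_{v\in G^B_k}$, where $f : B \longrightarrow A$ is the type-preserving isometry fixing $x_0$, realized as the restriction of the retraction $\pi = \pi_{A,x_0}$ to $B$. My plan is to unwind the definitions on both sides and check that the map $f$ carries the shadows making up $G^B_k$ bijectively onto the shadows making up $G^A_k$.

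\textbf{Setup and first reduction.} First I would recall that $f = \pi\vert_B$ is an isometry of chamber systems carrying $\Ch{B}$ onto $\Ch{A}$ and fixing $x_0$, hence $f$ preserves $\dc{x_0,\cdot}$ and carries minimal galleries from $x_0$ in $B$ to minimal galleries from $x_0$ in $A$. Consequently, for $x\in \Ch{B}$, $f$ maps the cone $C^B_x$ of chambers based at $x$ inside $B$ to the cone $C^A_{f(x)}$ inside $A$, and therefore maps the shadow $w_x = \partial C^B_x \subset \partial B$ homeomorphically onto the shadow $w^A_{f(x)} \subset \partial A$ of $f(x)$. The key point to extract from the construction of $\{G_k\}_{k\geq 0}$ (via Fact \ref{fact approximapart}) is that for $w\in G^A_k$ the canonical lift $\widetilde w\in G_k$ satisfies $\widetilde w \cap \partial A = w$ and, by Lemma \ref{lem interapartshadow}, $\widetilde w\cap \partial A = \pi(\widetilde w)$; moreover $\widetilde w$ is a shadow $v_y$ in $\borg$ with $y\in\Ch{A}$, so that $w = w_y$ is the shadow of $y$ in $\partial A$.

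\textbf{The two inclusions.} For the inclusion $\{f(v)\}_{v\in G^B_k}\subset G^A_k$: take $w = \partial B\cap v$ with $v\in G_k$ and $\Int w \neq\emptyset$ in $\partial B$ (the empty-interior case being vacuous, as then $w$ is not a genuine shadow and does not appear among the covering elements). By the analogue of Lemma \ref{lem interapartshadow} applied to the apartment $B$ in place of $A$, $v = v_y$ for some $y\in\Ch{B}$ and $w = w_y$ is the shadow of $y$ in $\partial B$; since $v\in G_k$, Fact \ref{fact approximapart} gives $\pi(v) = \pi(v_y) = v_y\cap\partial A \in G^A_k$, and by the first paragraph $f(w) = f(\partial B\cap v_y) = \partial A\cap \pi(v_y) = \pi(v) \in G^A_k$. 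Conversely, for $w\in G^A_k$, write $w = w_y$ with $y\in\Ch{A}$, $v_y\in G_k$ as above. Because $\{G_k\}_{k\geq 0}$ is defined from $\{G_k^A\}_{k\geq 0}$ through the canonical lift of each element of $G_k^A$, and since $f$ is a bijection $\Ch{B}\to\Ch{A}$, there is a unique $x\in\Ch{B}$ with $f(x) = y$; then $v := \partial B\cap v_x \in G^B_k$ (with $x$ lying in $B$, this is a genuine shadow of $\partial B$), and $f(v) = w_y = w$. This proves $G^A_k\subset \{f(v)\}_{v\in G^B_k}$ and establishes the equality.

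\textbf{Expected main obstacle.} The genuinely routine part is the isometry $f$ commuting with the formation of cones and shadows; the only point requiring a little care is matching the bookkeeping of the \emph{minimal coverings} — that is, checking that the sub-selection $S'_k$ of chambers used to define $G^A_k$ (and hence, via lifting, $G_k$ and $G^B_k$) is respected by $f$. Here one uses that $f$ is an isometry fixing $x_0$, so that $z\in w_y$ with $z\notin w_{y'}$ for $y'\neq y$ in $\partial A$ transports under $f^{-1}$ to the same minimality statement in $\partial B$; thus $f$ identifies the minimal covering $G^A_k$ with the one obtained the same way in $\partial B$, which is exactly $\{f(v)\}_{v\in G^B_k}$. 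Once this compatibility is spelled out, the statement follows; I expect the whole argument to be short, essentially a bookkeeping verification with no analytic content.
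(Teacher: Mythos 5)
Your proposal is correct and takes the same (purely definitional) route the paper intends: the paper states this fact without proof, as an immediate consequence of the definition $G^B_k=\{\partial B\cap v : v\in G_k\}$, the definition $G_k=\{v : \pi(v)\in G^A_k\}$, Lemma \ref{lem interapartshadow}, and the fact that $f=\pi\vert_B$ carries cones and shadows of $B$ to those of $A$, which is exactly the unwinding you carry out. Two small remarks: the intermediate equality $\pi(v_y)=v_y\cap\partial A$ you invoke is only valid when $y\in \Ch{A}$ (for $y\in\Ch{B}\setminus\Ch{A}$ the left side is a full shadow of $\partial A$ while the right side has empty interior), but the membership $\pi(v)\in G^A_k$ you need there is immediate from the definition of $G_k$; and your worry about compatibility of minimal coverings is moot, since $G^B_k$ is not chosen by an independent minimality selection in $\partial B$ but is by definition the trace of $G_k$, so the bookkeeping you describe is automatic.
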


Now that an  approximation $\{G_k \}_{k\geq0}$ is fixed the results we will obtain on the combinatorial modulus in $\borg$ will be valid, up to multiplicative constants, for any approximation thanks to Proposition \ref{propdouble}.

\subsection{Weighted modulus in $\partial A$}  
 
\label{subsec weighmod}
 
On scale $k\geq 0$, to compare the modulus in the building with the modulus in the apartment we need  to compare the cardinality of $G_k$ with the cardinality of $G_k^A$. If the building is thick these quantities differ by an exponential factor in $k$. This is the reason we attach a weight to the elements of    $G_k^A$.
 
\begin{déf} \label{defpoid}
Let $w\in   G_k^A$, we set $q(w)=\#  \{v\in G_k : \pi(v)=w\}$. 

\end{déf}

 Let $k\geq 0$ and let  $\F^A$ be a set of curves contained in $\partial A$. As in Subsection \ref{subsecdefapproxcasgen}, a positive function $\rho: G_k^A \longrightarrow [0,+\infty)$ is said to be $\F^A$-admissible if for any $\gamma \in \F^A$ 
 
 \[  \sum\limits_{\gamma\cap w \neq \emptyset } \rho (w) \geq 1.\]
 The \emph{weighted $p$-mass} of $\rho$ in $\partial A$ is  
  \[WM_p^A(\rho) = \sum\limits_{w\in G_k^A} q(w)\rho (w)^p.\]

  \begin{déf} \label{defmodulepoid}  Let $k\geq 0$ and let  $\F^A$ be a set of curves contained in $\partial A$, we define the \emph{weighted $G^A_k$-combinatorial $p$-modulus of $\F^A$} by  \[\modcombwg{\F^A}  : =\inf \{ WM^A_p(\rho)\}, \]where the infimum is taken over the set of $\F^A$-admissible functions and with the convention \index{$\modcombwg{\cdot}$} $\modcombwg{\emptyset}=0$. For  simplicity, we  usually use the terminology \emph{weighted modulus}.\end{déf}

We can check that Proposition \ref{propmodbase} holds for  weighted modulus as well and  the proof is identical to the one for the usual combinatorial modulus.

This definition of the weighted modulus   strongly depends on the choice we have made for the approximation. In particular, it does not permit to compute a weighted modulus relatively to a generic approximation of $\partial A$.   As a consequence, an analogue to Proposition \ref{propdouble} would make no sense here. This is a huge restriction on the use  of the weighted modulus. Indeed, this proposition is essential in proving Proposition \ref{propscale} and Theorem \ref{theocourbedanspara} for  the usual combinatorial modulus. 
 
However, in the rest of the paper, the weighted modulus  will be used to compute inequalities for the usual combinatorial  modulus. As the usual combinatorial modulus, up to multiplicative constant,  does not depend on the choice of the approximation, this point will not be a problem for us.
 
The following proposition says that the weights are given by the types of the building-walls crossed by a minimal gallery. We recall that the group $\Gamma$ is given by the following presentation \[\Gamma = \left\langle s_i\in S \vert  s_i^{q_i}=1,  s_is_j=s_js_i \text{ if } v_i \sim v_j  \right\rangle. \]
  
\begin{prop} \label{prop poidshadows}
 Let $w\in G_k^A$ be such that  $w$ is a shadow $w=w_x$ for $x\in \Ch{A}$.  Let $x_0\sim_{s_1}  x_1 \sim_{s_2} \dots\sim_{s_{k-1}}  x_{k-1} \sim_{s_{k}}  x$ be a minimal gallery where $s_i$ is the generator of $\Gamma$ associated with the type of the building-wall between   $x_{i-1}$ and $x_{i}$ for every $1\leq i \leq k$. If $q_i$ is the order of $s_i$ for every $1\leq i \leq k$ then    \[q(w)=\prod_{i=1,\dots, k} q_i - 1.\]

\end{prop}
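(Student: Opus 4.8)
The plan is to reduce the statement to a count of the fibres of the retraction $\pi=\pi_{A,x_0}$ on the set of chambers, and then to carry out that count inside the graph product $\Gamma$. The retraction $\pi$ is a type-preserving morphism of chamber systems and it preserves the distance to $x_0$; hence it sends every minimal gallery issued from $x_0$ to a minimal gallery issued from $x_0$, and for every $y\in\ch$ one has $\dc{x_0,\pi(y)}=\dc{x_0,y}$ and $\pi(C_y)=C_{\pi(y)}\cap A$, from which $\pi(v_y)=w_{\pi(y)}$. Combining this with Lemma~\ref{lem interapartshadow} and Fact~\ref{fact approximapart}, the map $y\mapsto v_y$ restricts to a bijection from $\pi^{-1}(x)=\{y\in\ch:\pi(y)=x\}$ onto $\{v\in G_k:\pi(v)=w_x\}$; here every $y$ in the fibre automatically satisfies $\dc{x_0,y}=k$, and injectivity uses that a cone of chambers is recovered from its shadow (as in Proposition~\ref{prop projalong}). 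Thus $q(w)=\#\pi^{-1}(x)$, and it remains to show $\#\pi^{-1}(x)=\prod_{i=1}^{k}(q_i-1)$.

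To perform the count I would identify $\ch$ with $\Gamma$ via $g\mapsto gx_0$ and $\Ch{A}$ with $W$ via the canonical isomorphism $A\cong\cox$ fixing $x_0$, so that the section $W\to\Gamma$ sending a reduced word to the corresponding product of generators is well defined precisely because $W$ is right-angled. Writing $w\in W$ for the element with $x=wx_0$, the chosen minimal gallery reads off a reduced word $w=s_1s_2\cdots s_k$. Consider
\[\Psi:\ \prod_{i=1}^{k}\bigl(\Z/q_i\Z\setminus\{0\}\bigr)\ \longrightarrow\ \ch,\qquad \Psi(a_1,\dots,a_k)=s_1^{a_1}s_2^{a_2}\cdots s_k^{a_k}\,x_0 .\]
Since $s_1\cdots s_k$ is reduced in $W$, the syllable word $s_1^{a_1}\cdots s_k^{a_k}$ is geodesic in $\Gamma$, so the gallery $x_0\sim_{s_1}s_1^{a_1}x_0\sim_{s_2}s_1^{a_1}s_2^{a_2}x_0\sim\cdots\sim_{s_k}\Psi(a)$ is minimal; its type sequence is $s_1,\dots,s_k$, hence so is that of its $\pi$-image, which by thinness of $A$ is the unique minimal gallery of $A$ from $x_0$ with that type sequence, namely $x_0\sim x_1\sim\cdots\sim x$. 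Therefore $\Psi$ takes values in $\pi^{-1}(x)$, and once $\Psi$ is shown to be a bijection we get $q(w)=\#\pi^{-1}(x)=\prod_{i=1}^{k}(q_i-1)$. Note that this product does not depend on the chosen minimal gallery, because in the right-angled group $W$ the multiset of types of the walls separating $x_0$ and $x$ is an invariant of $x$.

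The injectivity of $\Psi$ I would deduce from the uniqueness of the normal form in a graph product (see \cite{ClaParaResidinBuildings}): two geodesic syllable words representing the same element of $\Gamma$ differ by transpositions of adjacent commuting syllables, and since no generator commutes with itself, the syllables of a given type keep their relative order; with the underlying type word fixed to $s_1\cdots s_k$, this forces the exponents to agree. For surjectivity, take $y\in\pi^{-1}(x)$ and a minimal gallery $x_0=y_0\sim_{t_1}y_1\sim_{t_2}\cdots\sim_{t_k}y$ of $\Sigma$; since $y_{i-1}^{-1}y_i\in\langle t_i\rangle\setminus\{e\}$ one gets $y=t_1^{b_1}t_2^{b_2}\cdots t_k^{b_k}x_0$ with $b_i\neq 0$, while applying $\pi$ shows that $t_1\cdots t_k$ is a reduced word for $w$. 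By Tits' solution of the word problem in the right-angled group $W$ (see \cite{DavisBook}), $t_1\cdots t_k$ is obtained from $s_1\cdots s_k$ by a sequence of commutations of adjacent letters; carrying the exponents along the same sequence of commutations rewrites $y$ as $s_1^{c_1}\cdots s_k^{c_k}x_0=\Psi(c)$ with $c_i\neq 0$. Hence $\Psi$ is onto $\pi^{-1}(x)$, completing the argument.

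The point requiring most care is the first reduction, specifically the identity $\pi(v_y)=w_{\pi(y)}$ and, within it, the inclusion $w_{\pi(y)}\subset\pi(v_y)$: this rests on the (non-unique) lifting of galleries and geodesic rays of $A$ along the retraction starting from any prescribed chamber over their origin, which in turn comes from the fact that $\pi$ maps each panel onto a panel. Once this book-keeping is settled, the two remaining steps are a routine application of the graph-product normal form together with the right-angledness of $W$.
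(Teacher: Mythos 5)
Your proposal is correct and takes essentially the same approach as the paper: reduce $q(w)$ to the cardinality of the chamber fibre $\pi^{-1}(x)$, then count the chambers reachable from $x_0$ by minimal galleries of type $s_1,\dots,s_k$, with $q_i-1$ choices at each step. The paper's own proof is a condensed version of this; your verification of injectivity and surjectivity via the graph-product normal form and Tits' word problem simply makes explicit the gallery-counting facts the paper leaves implicit.
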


\begin{proof} 
Let $w\in G_k^A$ and $x\in \Ch{A}$ be such that $w=w_x$ in $\partial A$. Then we observe that $\{  v\subset \borg: \pi(v)=w\}=\{ v_y\subset \borg : \pi(y)=x\}$. As a consequence, we obtain   $q(w) = \#\pi^{-1}(x)$.

Now consider  the gallery $x_0\sim_{s_1}  x_1 \sim_{s_2} \dots\sim_{s_{k-1}}  x_{k-1} \sim_{s_{k}}  x$ given in the statement of proposition. Since $\pi$ preserves the types, $y\in \ch$ is in  $ \pi^{-1}(x)$ if and only if there exists a minimal gallery from $x_0$ to $y$ in $\Sigma$ of the form $x_0\sim_{s_1}  y_1 \sim_{s_2} \dots\sim_{s_{k-1}}  y_{k-1} \sim_{s_{k}}  y$.  Finally, we obtain $q(w)=\prod_{i=1,\dots, k} q_i - 1$.
  
\end{proof}

Thanks to the choices we have made, the weighted modulus is invariant up to a change of apartment in the following sense. For $B\in \apo$ consider the approximation $G_k^B$ given by Fact \ref{factchangeapartapprox}. To any element $w\in G_k^B$ we attach a weight    and define a \emph{weighted $G^B_k$-combinatorial $p$-modulus} as it is done in $\partial A$. Now let     $f: B\longrightarrow A$ be a type preserving isometry that fixes $x_0$  and denote $f: \partial B\longrightarrow \partial A$ the extension of this map to the boundary.
 The map  $f$ is realized by the restriction of the retraction $\pi$ to $B$. Thus $f$ preserves the weights.  Then the following fact is an immediate consequence of Fact \ref{factchangeapartapprox}.
 
\begin{fact} \label{factchangeapartmod}
Let  $B\in \apo$. Then for any $k\geq0$ and any set of curves $\F^B$ contained in $\partial B$ one has 
\[\mathrm{Mod}^B_{p}(\F^B,G^B_k)= \modcombwg{f(\F^B)}.\]
\end{fact}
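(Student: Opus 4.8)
The statement to prove (Fact \ref{factchangeapartmod}) asserts that for $B\in\apo$ and $f:B\to A$ the type-preserving isometry fixing $x_0$, one has $\mathrm{Mod}^B_p(\F^B,G^B_k)=\modcombwg{f(\F^B)}$ for every scale $k$ and every family $\F^B$ of curves in $\partial B$.

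\begin{proof}
Fix $k\geq0$. The proof is essentially a bookkeeping check that $f$ induces a weight-preserving bijection $G^B_k\to G^A_k$ under which admissible functions and weighted masses correspond exactly, so that the two infima defining the weighted moduli are over identical data.

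First I would recall, from Fact \ref{factchangeapartapprox}, that $f$ carries $G^B_k$ bijectively onto $G^A_k$; write this bijection as $w\mapsto f(w)$. Next I would check that $f$ preserves weights, i.e.\ $q(w)=q(f(w))$ for every $w\in G^B_k$. Indeed $f$ is realized as the restriction of the retraction $\pi=\pi_{A,x_0}$ to $B$, and $\pi$ is type-preserving; by Definition \ref{defpoid} the weight $q(w)$ counts the shadows $v\subset\borg$ with $\pi(v)=w$ (equivalently, using Proposition \ref{prop poidshadows}, it equals $\prod_{i=1,\dots,k}q_i-1$ where the $q_i$ are the orders of the generators labelling a minimal gallery from $x_0$ to the chamber whose shadow is $w$). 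Since $f$ preserves the types of chambers and of building-walls, a minimal gallery in $B$ from $x_0$ to the base chamber of $w$ is mapped by $f$ to a minimal gallery in $A$ from $x_0$ to the base chamber of $f(w)$ with the same sequence of types; hence the two products $\prod q_i-1$ coincide and $q(w)=q(f(w))$.

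Then I would verify the correspondence of admissible functions and masses. Given $\rho:G^B_k\to[0,+\infty)$, set $\rho':=\rho\circ f^{-1}:G^A_k\to[0,+\infty)$. For a curve $\gamma\in\F^B$ we have $f(\gamma)\in f(\F^B)$, and since $f$ is a homeomorphism with $f(w)$ meeting $f(\gamma)$ if and only if $w$ meets $\gamma$, the $\rho'$-length of $f(\gamma)$ equals the $\rho$-length of $\gamma$: $\sum_{f(w)\cap f(\gamma)\neq\emptyset}\rho'(f(w))=\sum_{w\cap\gamma\neq\emptyset}\rho(w)$. Therefore $\rho$ is $\F^B$-admissible iff $\rho'$ is $f(\F^B)$-admissible, and this sets up a bijection between the two sets of admissible functions. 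Using the weight identity $q(f(w))=q(w)$,
\[
WM^A_p(\rho')=\sum_{w'\in G^A_k}q(w')\rho'(w')^p=\sum_{w\in G^B_k}q(f(w))\rho(w)^p=\sum_{w\in G^B_k}q(w)\rho(w)^p=WM^B_p(\rho).
\]
Taking the infimum over the matched admissible functions on both sides yields $\mathrm{Mod}^B_p(\F^B,G^B_k)=\modcombwg{f(\F^B)}$. The degenerate case $\F^B=\emptyset$ (hence $f(\F^B)=\emptyset$) is handled by the convention that both weighted moduli vanish.
\end{proof}

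The only mildly delicate point is the weight equality $q(w)=q(f(w))$; everything else is a direct transport of structure along the homeomorphism $f$. I expect no real obstacle here, since the type-preservation of $\pi$ (and hence of $f$) is exactly what makes the counting of preimage shadows invariant, and Proposition \ref{prop poidshadows} already packages that count in a form manifestly depending only on the types along a minimal gallery.
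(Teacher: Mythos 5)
Your proposal is correct and follows the same route the paper takes: the paper treats the fact as an immediate consequence of Fact \ref{factchangeapartapprox} together with the observation that $f$, being the restriction of the retraction $\pi$ to $B$, preserves types and hence weights. You have simply written out the routine transport of admissible functions and weighted masses along the bijection $w\mapsto f(w)$, which is exactly the argument the paper leaves implicit.
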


Note that, for any $k\geq 0$ and any $w\in G_k^A$ one has \[ 1\leq q(w)\leq (q-1)^k \text{ with }  q:=\max\{q_1,\dots, q_n \}. \] Therefore for any  set of curves  $\F^A$   contained in $\partial A$, the next inequalities follow  directly from the definition 
\[\modcombapg{\F^A} \leq \modcombwg{\F^A}  \leq (q-1)^k \modcombapg{\F^A},\]where the   modulus in small letters  designates the usual \index{Modulus in the apartment}\index{$\modcombapg{\cdot}$} \emph{modulus    computed in $\partial A$}. In particular if  $\Gamma$ is of constant thickness $q\geq3$  then 
\[\modcombwg{\F^A}  = (q-1)^k \modcombapg{\F^A}. \] As a consequence,  at fixed scale $k\geq 0$, the weighted modulus depends only on the boundary of an apartment. We will discuss this  particular case in Sections \ref{sec applictethickeness} and \ref{sec result}.

  The following proposition is a major motivation of the definition of the weighted modulus.

\begin{prop} \label{propmajopoid}
Let $\F$ be a set of curves in $\borg$ and let $\F^A$ be a set of curves in $\partial A$ such that $\pi(\F)\subset \F^A$. Then
\[ \modcomb{p}{\F,G_k}\leq \modcombwg{\F^A}. \]
\end{prop}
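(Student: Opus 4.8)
The plan is to transport an admissible function for $\F^A$ on the apartment approximation $G_k^A$ to an admissible function for $\F$ on the building approximation $G_k$ by pulling it back along the retraction $\pi$, and then to check that the weighted $p$-mass in $\partial A$ is exactly the ordinary $p$-mass of the pullback in $\borg$. Concretely, let $\rho^A : G_k^A \longrightarrow [0,+\infty)$ be any $\F^A$-admissible function. Using Fact \ref{fact approximapart}, every element of $G_k$ is of the form $v$ with $\pi(v) = w$ for a unique $w \in G_k^A$; define $\rho : G_k \longrightarrow [0,+\infty)$ by $\rho(v) = \rho^A(\pi(v))$. First I would verify admissibility: given $\gamma \in \F$, its image $\pi(\gamma)$ is a curve in $\partial A$ which by hypothesis lies in $\F^A$. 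If $\gamma \cap v \neq \emptyset$ then, applying $\pi$, one gets $\pi(\gamma) \cap \pi(v) \neq \emptyset$; hence every element $w$ of $G_k^A$ met by $\pi(\gamma)$ is the image of at least one element $v$ of $G_k$ met by $\gamma$, which gives
\[
L_\rho(\gamma) \;=\; \sum_{\gamma \cap v \neq \emptyset} \rho(v) \;\geq\; \sum_{\pi(\gamma) \cap w \neq \emptyset} \rho^A(w) \;=\; L_{\rho^A}(\pi(\gamma)) \;\geq\; 1,
\]
the last inequality because $\rho^A$ is $\F^A$-admissible and $\pi(\gamma)\in\F^A$. So $\rho$ is $\F$-admissible.

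Next I would compute the $p$-mass of $\rho$. Grouping the elements of $G_k$ by their $\pi$-image and using Definition \ref{defpoid}, for each $w \in G_k^A$ there are exactly $q(w)$ elements $v \in G_k$ with $\pi(v) = w$, on each of which $\rho$ takes the constant value $\rho^A(w)$; therefore
\[
M_p(\rho) \;=\; \sum_{v \in G_k} \rho(v)^p \;=\; \sum_{w \in G_k^A} q(w)\,\rho^A(w)^p \;=\; WM_p^A(\rho^A).
\]
Since $\rho$ is $\F$-admissible, $\modcomb{p}{\F,G_k} \leq M_p(\rho) = WM_p^A(\rho^A)$. Taking the infimum over all $\F^A$-admissible functions $\rho^A$ yields $\modcomb{p}{\F,G_k} \leq \modcombwg{\F^A}$, as desired. (The cases where $\F$ or $\F^A$ is empty are handled by the conventions $\modcomb{p}{\emptyset,G_k} = 0$ and $\modcombwg{\emptyset}=0$; note $\pi(\emptyset) \subset \F^A$ holds vacuously, and if $\F^A = \emptyset$ then $\F = \emptyset$ since $\pi(\F)\subset\F^A$.)

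The only genuinely delicate point is the admissibility step, specifically the claim that if $\gamma$ meets $v$ then $\pi(\gamma)$ meets $\pi(v)$, which is immediate from set-theoretic image, together with the fact that $\pi(\gamma)$ is again a curve — here one uses that the boundary retraction $\pi_{A,x_0} : \partial\Sigma \longrightarrow \partial A$ is continuous (Subsection \ref{subsec BofLFRAHB}), so $\pi \circ \gamma : [0,1] \to \partial A$ is continuous and hence a curve, and the hypothesis $\pi(\F) \subset \F^A$ then places it in $\F^A$. Everything else is bookkeeping with the approximations constructed in Subsection \ref{subsec choiceapprox}, and no doubling or quasi-self-similarity input is needed. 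I would therefore expect the whole argument to be short, with the weight computation above being the conceptual heart of why the definition of weighted modulus is the right one.
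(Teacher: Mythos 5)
Your proposal is correct and is essentially the paper's own proof: pull back an admissible function along $\pi$ to $G_k$, use $\pi(\gamma)\in\F^A$ for admissibility, and observe that the ordinary $p$-mass of the pullback equals the weighted $p$-mass $WM_p^A$, then take the infimum. One caveat: the inequality $L_\rho(\gamma)\geq L_{\rho^A}(\pi(\gamma))$ really rests on the surjectivity statement you assert (every $w\in G_k^A$ met by $\pi(\gamma)$ is $\pi(v)$ for some $v\in G_k$ met by $\gamma$), not on the ``set-theoretic image'' direction you call immediate; this is, however, exactly the same step the paper passes over without further comment, so your argument is at the same level of detail as the original.
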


\begin{proof}  Let $\rho_A$ be a $\F^A$-admissible function. We set $\rho : G_k \longrightarrow  [0,+\infty)$ defined by \[\rho(v) = \rho_A \circ \pi(v). \]
If $\gamma \in \F$, let $\gamma_A:=\pi \circ \gamma$. Then, as $\gamma_A\in \F^A$  \[L_{\rho}(\gamma) =  \sum\limits_{v\cap \gamma \neq \emptyset} \rho_A \circ \pi (v) \geq \sum\limits_{w\cap \gamma_A  \neq \emptyset} \rho_A  (w) \geq 1, \]
thus $\rho$ is $\F$-admissible.
Furthermore, one has: \[M_p(\rho) = \sum\limits_{v\in G_k} \rho_A \circ \pi(v)^p = \sum\limits_{w\in G_k^A} q(w)\cdot\rho_A (w)^p=WM_p^A(\rho_A). \]
With the first point it follows that $\modcomb{p}{\F,G_k}\leq \modcombwg{\F^A}$.
\end{proof}

\subsection{Modulus in $\borg$ compared with weighted modulus in $\partial A$}
\label{subsec comparaisonimappart}
  
As before $d_0>0$  is a small constant compared with $\dia{\borg}$ and with the constant of approximate self-similarity. 

We recall that the apartment $A\in \apo$ is fixed. Thanks to Fact \ref{factchangeapartmod} the following result hold for any apartment containing $x_0$.

In this subsection we continue to use the approximations $G_k$ and $G_k^A$ defined at the beginning of Subsection \ref{subsec choiceapprox}.
We recall that if  $\eta$ is  a non-constant curve of $\borg$,  the notation $\U_\epsilon(\eta)$ designates   the $\epsilon$-neighborhood of $\eta$ relative to the ${C}^0$ topology. If $\eta$ is a non-constant curve contained in $\partial A$, we use the notation\index{$\U_\epsilon^A(\eta)$} \[\U_\epsilon^A(\eta):= \{ \gamma \in \U_\epsilon(\eta) : \gamma \subset \partial A\}.\] The next theorem says that in this case, the modulus of $\U_\epsilon(\eta)$ in the boundary of the building is controlled by the weighted modulus of $\U_\epsilon^A(\eta)$ in the boundary of the apartment.   It is a key step in the proof of Theorem \ref{theoprincip}. 

\begin{theo}\label{theocontrolimmappart} Let $p\geq 1$, let $\eta \in \F_0$  and assume  $ \eta \subset \partial A$. For $\epsilon>0$ small enough so that the hypothesis of    Theorem \ref{theocourbedanspara} hold  in $\borg$, there exists a positive  constant  $C = C(p,\eta, \epsilon)$ independent of $k$ such that for every $k\geq 1$ large enough
\[ \modcomb{p}{\U_\epsilon(\eta),G_k }    \leq    \modcombwg{\mathcal{U}_\epsilon^A(\eta)}  \leq C \cdot  \modcomb{p}{\U_\epsilon(\eta),G_k }.\]
Furthermore, when $p$ belongs to a compact subset of $[ 1, + \infty)$ the constant  $C$ may be chosen independent of $p$.
\end{theo}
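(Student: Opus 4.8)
The plan is to prove the two inequalities separately and rather asymmetrically. The left-hand inequality $\modcomb{p}{\U_\epsilon(\eta),G_k} \leq \modcombwg{\mathcal{U}_\epsilon^A(\eta)}$ should be essentially free: it follows from Proposition \ref{propmajopoid}, applied to $\F = \U_\epsilon(\eta)$ and $\F^A = \U_\epsilon^A(\eta)$. The only thing to check is the hypothesis $\pi(\F)\subset \F^A$, i.e.\ that the retraction $\pi = \pi_{A,x_0}$ maps every curve in $\U_\epsilon(\eta)$ to a curve in $\U_\epsilon^A(\eta)$. Since $\eta\subset\partial A$ and $\pi$ fixes $\partial A$ pointwise, and $\pi$ is (uniformly) Lipschitz with respect to the visual metrics (distances are non-increasing because a minimal gallery projects to a gallery of no greater length, and one checks this at the level of Gromov products $\{\cdot\vert\cdot\}_{x_0}$, using that these are the same computed in $A$ or in $\Sigma$), the image of a curve $\epsilon$-close to $\eta$ in the $C^0$ sense is again $\epsilon$-close to $\eta$ and lies in $\partial A$. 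Thus $\pi(\U_\epsilon(\eta))\subset \U_\epsilon^A(\eta)$ and Proposition \ref{propmajopoid} gives the bound immediately, with constant $1$.

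**The hard direction.** The right-hand inequality is the substantive one, and the idea — already advertised in Section \ref{sec stepsofproof}, paragraph ``Controlling the modulus in $\borg$ by the modulus in the boundary of an apartment'' — is that from the point of view of the modulus the building looks like the product of $\partial A$ with a finite set whose cardinality on scale $k$ is $q(w)$. Concretely, I would start from a minimal admissible function $\rho:G_k\longrightarrow[0,+\infty)$ for $\U_\epsilon(\eta)$ and push it forward to $\partial A$ by averaging over fibers of $\pi$: set $\rho_A(w) = \bigl(\tfrac{1}{q(w)}\sum_{\pi(v)=w}\rho(v)^p\bigr)^{1/p}$, or perhaps more simply $\rho_A(w) = \max_{\pi(v)=w}\rho(v)$, whichever makes the admissibility check cleanest. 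The $p$-mass bookkeeping is then exactly the content of the weight: $WM_p^A(\rho_A) = \sum_w q(w)\rho_A(w)^p \lesssim \sum_w \sum_{\pi(v)=w}\rho(v)^p = M_p(\rho)$. The delicate point is admissibility of $\rho_A$ for $\U_\epsilon^A(\eta)$: given a curve $\gamma_A\in\U_\epsilon^A(\eta)$ living in $\partial A$, I need $L_{\rho_A}(\gamma_A)\gtrsim 1$. Here $\gamma_A$ is itself a curve in $\borg$ (since $\partial A\subset\borg$) lying in $\U_\epsilon(\eta)$, so $L_\rho(\gamma_A)\geq 1$; but $L_\rho$ sums $\rho(v)$ over $v\in G_k$ meeting $\gamma_A$, and such $v$ with $\pi(v)=w$ need \emph{not} all be accounted for by a single term $\rho_A(w)$ unless we are careful — there may be several $v$ in the fiber over $w$ that $\gamma_A$ meets. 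This is where one uses Fact \ref{fact approximapart}: among the $v\in G_k$ with $\pi(v)=w$ there is a distinguished $\widetilde w$ with $\Int{\widetilde w}\cap\partial A\neq\emptyset$, and by Lemma \ref{lem interapartshadow} a shadow $v_x$ meets $\partial A$ with nonempty interior only when $x\in\Ch A$, in which case $v_x\cap\partial A = \pi(v_x)$; so generically $\gamma_A$ meets only the fiber element $\widetilde w$, and $L_\rho(\gamma_A) \leq \sum_{w\cap\gamma_A\neq\emptyset}\rho(\widetilde w) \leq \sum_{w\cap\gamma_A\neq\emptyset}\rho_A(w) = L_{\rho_A}(\gamma_A)$ (using $\rho_A \geq \rho\circ\widetilde{(\cdot)}$ for the max choice). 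This forces the choice $\rho_A(w):=\rho(\widetilde w)$, not an average — and then $WM_p^A(\rho_A) = \sum_w q(w)\rho(\widetilde w)^p$, which is \emph{not} bounded by $M_p(\rho)$ in general, since $q(w)$ can be exponentially large while only one summand $\rho(\widetilde w)$ appears.

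**Resolving the obstruction.** This is the real obstacle, and I expect the fix is to invoke Theorem \ref{theocourbedanspara} (whose hypotheses are assumed to hold, by the way the theorem is phrased). That theorem says the modulus of $\U_\epsilon(\eta)$ is controlled, up to a constant independent of $k$, by the modulus of $\F_{\delta,r}(\partial P)$ where $\partial P$ is the smallest parabolic limit set containing $\eta$ — and the curves in a parabolic limit set can be ``duplicated'' by the rotations $g\gamma s^\alpha\gamma^{-1}g^{-1}$ around building-walls crossing the corresponding residue. The point is that these rotations permute the fiber $\pi^{-1}(w)$, so a minimal admissible function for $\F_{\delta,r}(\partial P)$ may be chosen $\Gamma_I$-invariant (averaging over the group of such rotations only increases $p$-mass by the bounded overlap factor, by convexity as in the proof of Theorem \ref{theocourbedanspara}), hence \emph{constant on fibers of $\pi$}: $\rho(v)$ depends only on $\pi(v)$. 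For such a $\rho$ one has exactly $M_p(\rho) = \sum_w q(w)\rho(\widetilde w)^p = WM_p^A(\rho_A)$, so the weighted mass is recovered on the nose, and combining with the $\lesssim$ from Theorem \ref{theocourbedanspara} and the left-hand inequality already proved (applied to $\F_{\delta,r}(\partial P)$ and an appropriate family in $\partial A$, or rather rerunning the push-forward with the fiber-constant $\rho$) yields $\modcombwg{\U_\epsilon^A(\eta)} \lesssim \modcomb{p}{\U_\epsilon(\eta),G_k}$ with $C$ depending only on $p,\eta,\epsilon$ through the constant in Theorem \ref{theocourbedanspara}. Finally, the uniformity in $p$ over compact subsets of $[1,+\infty)$ is inherited directly: the push-forward and the convexity/bounded-overlap estimates produce constants of the shape $\lambda\cdot N^p$ with $\lambda,N$ independent of $p$ (exactly as in part (vi) of the proof of Theorem \ref{theocourbedanspara}), and the constant from Theorem \ref{theocourbedanspara} is already uniform by hypothesis, so one takes $C = \lambda\cdot N^{\max K}$ on a compact $K$.
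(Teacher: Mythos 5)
Your treatment of the left-hand inequality is fine and matches the paper: one only needs $\pi(\U_\epsilon(\eta))\subset\U_\epsilon^A(\eta)$ and Proposition \ref{propmajopoid}. You also diagnose the obstruction in the hard direction correctly (pushing a minimal $\U_\epsilon(\eta)$-admissible function forward to $\partial A$ by $\rho_A(w)=\rho(\widetilde w)$ loses control of the weighted mass because of the factor $q(w)$), and the idea that one should look for an admissible function that is (approximately) constant on the fibers of $\pi$ is indeed the right one. But the mechanism you propose to achieve fiber-constancy does not work, and this is a genuine gap. You claim that the rotations $g\gamma s^\alpha\gamma^{-1}g^{-1}$ around building-walls crossing the residue ``permute the fiber $\pi^{-1}(w)$'' and that averaging a minimal admissible function for $\F_{\delta,r}(\partial P)$ over them makes it $\Gamma_I$-invariant, hence constant on $\pi$-fibers. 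Neither step holds: $\Gamma$ acts simply transitively on $\Ch{\Sigma}$, so no non-trivial element of $\Gamma$ fixes $x_0$, and a rotation around a wall $M$ permutes the dials $D_0(M),\dots,D_{q-1}(M)$, moving $x_0$; consequently such a rotation does not commute with the retraction $\pi=\pi_{A,x_0}$ and does not preserve its fibers (already at distance $1$: the rotation $s_1$ sends $s_1^{q_1-1}x_0$, which lies in the fiber over $s_1x_0$, to $x_0$, which does not). Moreover the group generated by these rotations is the infinite parabolic $g\Gamma_Ig^{-1}$, so there is no invariant probability measure to average against, and the finite-sum convexity trick from the proof of Theorem \ref{theocourbedanspara} cannot be invoked; finitely many elements of $\Gamma$ can never spread mass over fibers whose cardinality grows like $(q-1)^k$.

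The paper's proof supplies precisely the ingredient your argument is missing: it leaves the group $\Gamma$ and works with the full group $\mathrm{Aut}_\Sigma$ of type-preserving automorphisms of the (thick) building. For $n$ large it takes the \emph{compact} subgroup $K_n$ fixing pointwise the ball $B_n\subset\Ch{\Sigma}$; elements of $K_n$ do preserve the fibers of $\pi$ (they fix $x_0$ and preserve types), they move boundary points by at most $A e^{-\alpha n}$, so the enlarged family $\F_n=\{g\gamma : g\in K_n,\ \gamma\in\U_\epsilon^A(\eta)\}$ sits inside $\U_{2\epsilon}(\eta)$, whose modulus is comparable to that of $\U_\epsilon(\eta)$ by Theorem \ref{theocourbedanspara}. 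Averaging a minimal $\F_n$-admissible function over the Haar probability measure of $K_n$ produces a function on $G_k^A$ that is admissible for $\U_\epsilon^A(\eta)$, and the orbit count of Lemma \ref{lemcardiorbites} (in the spirit of Caprace) shows the $K_n$-orbits exhaust the $\pi$-fibers up to a bounded defect $(q-1)^{N}$ coming from the walls crossing $B_n$; Jensen's inequality then gives $\modcombwg{\mathcal{U}_\epsilon^A(\eta)}\leq (q-1)^{n+N}\,\modcomb{p}{\F_n,G_k}$. Without this passage to the non-discrete automorphism group (or some substitute producing a genuinely fiber-invariant admissible function), your argument for the right-hand inequality does not close; Theorem \ref{theocourbedanspara} alone, which only involves finitely many elements of $\Gamma$, cannot compensate for the exponential weight $q(w)$.
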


For the rest of the subsection $\eta\in \F_0$ and $\epsilon>0$ are as in  the hypothesis of the preceding  theorem. For each $\eta\in \F_0$ we fix a constant $r>0$ such that the hypothesis of  Theorem \ref{theocourbedanspara} are satisfied.  To prove the theorem we   need to introduce the following notation:
\liste{
\item $\mathrm{Aut}_\Sigma$ is the full group of type preserving isometries of $\Sigma$.
\item For $n\geq 0$, $B_n \subset \ch$ is the ball of center $x_0$ and of radius $n$ for the distance over the chambers $\dc{\cdot,\cdot}$.
\item For $n\geq 0$, $K_n < \mathrm{Aut}_\Sigma$ is the pointwise stabilizer of $B_n$ under the action of $\mathrm{Aut}_\Sigma$.
\item  $\F_n:= \{g \gamma \subset \borg : g \in K_n \text{ and } \gamma \in \U_\epsilon^A(\eta)\}$.

}

The main step to prove the theorem is to show that  $\F_n$ is an intermediate set of curves between $\U_\epsilon^A(\eta)$ and $\U_\epsilon(\eta)$. This will be done in Lemma \ref{lemcontrolmodulpoid}. Before proving this, we need to discuss the action of $K_n$ on the chambers. The next lemma uses ideas of \cite[Lemma 3.5 and Proposition 8.1]{CapraceAutomRightAngled}.
 
 \begin{lem} \label{lemcardiorbites}
There exists an integer $N>0$ depending only on $n$ and satisfying the following property. Let $x\in \ch$, set $\dc{x_0,x}=k$ and assume $k>n$. Let  \[x_0\sim_{s_1}  x_1 \sim_{s_2} \dots\sim_{s_{n}}x_n\sim_{s_{n+1}}  \dots\sim_{s_{k-1}}  x_{k-1} \sim_{s_{k}}  x\] be a minimal gallery where  $s_1, \dots, s_k$ is the family of types of the building-walls crossed by this gallery.  Then   \[\frac{1}{(q-1)^N}\cdot\prod_{i=n+1}^k q_i -1\leq \# K_n.x \leq \prod_{i=n+1}^k q_i -1,\]
where $ q:=\max\{q_1,\dots, q_n \}$.
 \end{lem}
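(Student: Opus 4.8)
The plan is to prove the two inequalities separately: the upper bound is elementary, while the lower bound rests on the abundance of automorphisms of a right-angled building (the input from \cite[Lemma 3.5 and Proposition 8.1]{CapraceAutomRightAngled}). Throughout I read the right-hand side as $\prod_{i=n+1}^{k}(q_i-1)$, which is the cardinality of the relevant set of chambers.

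\textbf{Upper bound.} First I would note that every $g\in K_n$ fixes the chamber $x_n$, since $\dc{x_0,x_n}=n$ forces $x_n\in B_n$. Being a type-preserving isometry, such a $g$ carries the given minimal gallery to $x_0\sim\cdots\sim x_n\sim gx_{n+1}\sim\cdots\sim gx$, again minimal and of type $(s_1,\dots,s_k)$; hence $gx$ lies in the set $\mathcal C$ of chambers joined to $x_n$ by a minimal gallery of type $(s_{n+1},\dots,s_k)$. I would then argue, by induction on $k-n$ using the building axioms — and using that in the right-angled Coxeter group $W$ any two reduced expressions of $s_{n+1}\cdots s_k$ differ only by transpositions of commuting generators, so that the multiset $\{q_{n+1},\dots,q_k\}$ is an invariant of that element — that $\#\mathcal C=\prod_{i=n+1}^k(q_i-1)$. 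This yields $\#K_n.x\leq\prod_{i=n+1}^k(q_i-1)$.

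\textbf{Lower bound.} For $n<i\leq k$, let $M_i$ be the building-wall crossed between $x_{i-1}$ and $x_i$; these are $k-n$ distinct walls (Proposition \ref{prop distcombigrap}), each separating $x_0$ from $x$. I would call $M_i$ \emph{good} when $B_n\subset D_0(M_i)$, equivalently when $M_i$ does not separate two chambers of $B_n$. A wall separating two chambers of $B_n$ lies on a minimal gallery of length $\leq 2n$ joining two of them, and there are only finitely many such walls, bounded by a constant $N$ depending only on $n$ (the building $\Sigma$ being fixed); so all but at most $N$ of the $M_i$ are good. For a good $M_i$, the chamberwise fixator of $D_0(M_i)$ in $\mathrm{Aut}_\Sigma$ is contained in $K_n$, and here I would invoke the structure of the automorphism group of right-angled buildings — following \cite[Lemma 3.5 and Proposition 8.1]{CapraceAutomRightAngled} — to the effect that this fixator acts on the remaining dials $D_1(M_i),\dots,D_{q_i-1}(M_i)$ through isometries fixing $M_i$ chamberwise and inducing the full symmetric group on them. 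Composing such automorphisms along the good walls, processed in a suitable order (so that each one leaves untouched the branchings already realized at earlier walls), I would produce, for each of the $\prod_{i\ \mathrm{good}}(q_i-1)$ independent choices of a dial distinct from $D_0(M_i)$ at every good $M_i$, an element of $K_n$ sending $x$ to a distinct chamber. Hence $\#K_n.x\geq\prod_{i\ \mathrm{good}}(q_i-1)\geq (q-1)^{-N}\prod_{i=n+1}^k(q_i-1)$.

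\textbf{Main obstacle.} The hard part will be the lower bound, in two respects. First, pinning down the precise statement to extract from Caprace's work: that the chamberwise fixator of a half-building $D_0(M)$ surjects, through isometries that restrict to the identity on $M$, onto the symmetric group on the other dials of $M$. Second, making the composition of these automorphisms genuinely independent across the good walls, i.e. checking that an automorphism supported beyond a good wall $M_i$ does not disturb the portion of the configuration fixed at walls processed before it; this should reduce to ordering the good walls compatibly with the separation order of the walls crossed by the gallery, but it is the step that needs to be carried out with care.
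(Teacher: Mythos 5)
Your proposal is correct and follows essentially the same route as the paper: the upper bound from type-preservation and the fixed initial segment $x_0,\dots,x_n$, and the lower bound from the pointwise stabilizers $U(D_0(M_i))$ of the dials containing $B_n$ (acting as the full symmetric group on the remaining dials, per Caprace), with the exceptional walls being exactly those crossing $B_n$, whose number $N$ depends only on $n$. The independence-of-choices step you flag as delicate is treated just as tersely in the paper, so nothing in your sketch diverges from its argument.
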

 
 \begin{proof}
 Since $K_n$ preserves the types and fixes $x_0,\dots, x_n$ it follows that 
 \[\# K_n.x \leq \prod_{i=n+1}^k q_i -1.\]
 
 Now for $D\in \dialgp$ we write $U(D)$ the  pointwise  stabilizer of $D$ under the action of $\mathrm{Aut}_\Sigma$ and we set 
 \[U(n)= \left\langle U(D) \vert B_n \subset \Ch{D} \right\rangle.\]
 Clearly $U(n)<K_n$ and 
 \[\#K_n.x \geq\#U(n).x .\]
 
Now if we write $M_{i}$ the building-wall between $x_i$ and $x_{i+1}$, we observe that the orbit of $x_{i+1}$ under $U(D_0(M_{i}))$ has $q_i-1$ elements. Indeed, $U(D_0(M_{i}))$ acts as the full group of permutations on the set $\{D_1(M_{i}),\dots,D_{q_i-1}(M_{i})  \}$.

 Note that  $U(D_0(M_{i}))<U(n)$ if and only if $B_n\subset \Ch{D_0(M_{i})}$. Otherwise $M_i$ crosses $B_n$, because $x_0 \in \Ch{D_0(M_{i})}$. As a consequence, we set $N$  the number of building-walls that cross $B_n$  and we obtain \[\#U(n).x\geq\frac{1}{(q-1)^N}\cdot\prod_{i=n+1}^k q_i -1.\]
This achieves the proof.
 
 \end{proof}
 
 Now we can prove the main lemma.
 
\begin{lem} \label{lemcontrolmodulpoid}
Let $p\geq 1$. For $n\geq 0$ large enough, there exist two positive  constants $C_1$, $C_2$ depending only on $p$, $\eta$, $\epsilon$, such that for every $k>n$: \[\modcombwg{\mathcal{U}_\epsilon^A(\eta)}\leq C_1\cdot \modcomb{p}{\F_n,G_k }\leq C_2\cdot \modcomb{p}{\U_\epsilon(\eta),G_k}. \]
Furthermore, when $p$ belongs to a compact subset of $[1, + \infty)$ the constants   may be chosen independent of $p$.
\end{lem}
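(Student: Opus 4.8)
\emph{Strategy.} The plan is to prove the two inequalities separately, after fixing $n$ large enough --- depending only on $\eta$ and $\epsilon$ --- that every $g\in K_n$ displaces each point of $\borg$ by less than $\epsilon$ for the self-similar metric $d$, and large enough for the mixing argument below.

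\emph{The right-hand inequality.} If $g\in K_n$ and $\xi\in\borg$, an infinite minimal gallery representing $\xi$ and its $g$-image agree on $B_n$ (since $g$ fixes $x_0$ and $B_n$), so $\{\xi\mid g\xi\}_{x_0}\ge n$ and $d(\xi,g\xi)$ is of order $e^{-\alpha n}$; for $n$ large this is $<\epsilon$. Hence $g$ maps $\U_\epsilon^A(\eta)\subset\U_\epsilon(\eta)$ into $\U_{2\epsilon}(\eta)$, so that $\F_n\subset\U_{2\epsilon}(\eta)$. By Proposition \ref{propmodbase} and the remark following Theorem \ref{theocourbedanspara} (applied with $\eta'=\eta$ and $\epsilon'=2\epsilon$, shrinking $\epsilon$ at the outset so that $2\epsilon$ is still admissible),
\[\modcomb{p}{\F_n,G_k}\le\modcomb{p}{\U_{2\epsilon}(\eta),G_k}\le C\cdot\modcomb{p}{\U_\epsilon(\eta),G_k},\]
with $C=C(p,\eta,\epsilon)$, chosen independent of $p$ on compact subsets of $[1,+\infty)$; together with the left-hand inequality this gives $C_2=C_1\cdot C$.

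\emph{The left-hand inequality.} Let $\rho:G_k\to[0,+\infty)$ be $\F_n$-admissible with $M_p(\rho)=\modcomb{p}{\F_n,G_k}$. Each $w\in G_k^A$ is a shadow $w_x$ with $x\in\Ch{A}$, its fiber $\{v\in G_k:\pi(v)=w\}$ has cardinality $q(w)$ (Proposition \ref{prop poidshadows}), and each $g\in K_n$ commutes with the retraction $\pi=\pi_{A,x_0}$ --- fixing $x_0$ and $B_n$, it preserves the type sequence and terminal chamber of every minimal gallery issued from $x_0$, hence $\pi\circ g=\pi$ on $\Sigma$ and on $\borg$ --- so $g$ maps shadows to shadows and permutes each fiber. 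Define, for $w=w_x$,
\[\rho_A(w):=\frac{1}{\#(K_n\cdot x)}\sum_{y\in K_n\cdot x}\rho(v_y).\]
By convexity of $t\mapsto t^p$ and Lemma \ref{lemcardiorbites}, which gives $q(w)\le c_n\cdot\#(K_n\cdot x)$ with $c_n$ depending only on $n$,
\[WM_p^A(\rho_A)\le\sum_{w\in G_k^A}\frac{q(w)}{\#(K_n\cdot x)}\sum_{y\in K_n\cdot x}\rho(v_y)^p\le c_n\,M_p(\rho),\]
since distinct $w$ contribute pairwise disjoint subfamilies of $G_k$. There remains the admissibility of a bounded multiple of $\rho_A$: fixing $\gamma\in\U_\epsilon^A(\eta)$ and $Z_\gamma=\{y:v_y\in G_k,\ v_y\cap\gamma\ne\emptyset\}$, for every $g\in K_n$ one has $g\gamma\in\F_n$, hence $1\le L_\rho(g\gamma)=\sum_{y\in Z_\gamma}\rho(v_{gy})$ (using $g^{-1}v_{gy}=v_y$ and $\pi\circ g=\pi$, so $v_{gy}\in G_k$); averaging over $g$ in a finite $F\subset K_n$ equidistributed on each orbit $K_n\cdot y$ and using that the $K_n$-translates of $\gamma$ meet, within each fiber over a shadow that meets $\gamma$, every $K_n$-orbit of that fiber (at most $c_n$ of them, by Lemma \ref{lemcardiorbites}), one gets $1\le c_n'\sum_{w\cap\gamma\ne\emptyset}\rho_A(w)$, so $c_n'\rho_A$ is $\U_\epsilon^A(\eta)$-admissible and $\modcombwg{\U_\epsilon^A(\eta)}\le(c_n')^{p}c_n\cdot\modcomb{p}{\F_n,G_k}$. \emph{The main obstacle} is precisely this mixing step --- bounding the number of $K_n$-orbits in a fiber and checking that $\gamma$ and its translates reach all of them --- which rests on the product-like structure of the action of $K_n$ on chambers beyond $B_n$ (in the spirit of \cite[Lemma 3.5, Proposition 8.1]{CapraceAutomRightAngled}) and on the fact that a curve of diameter $\ge d_0$ meets shadows issuing from independent directions. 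All the constants depend only on $n$, hence on $\eta$ and $\epsilon$; the only $p$-dependence of $C_1$ is the factor $(c_n')^{p}$, bounded on compacta, and $C_2$ likewise inherits $p$-uniformity on compacta from the constant $C$ above, which Theorem \ref{theocourbedanspara} furnishes $p$-uniformly.
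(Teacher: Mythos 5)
Your right-hand inequality is exactly the paper's argument ($d(\xi,g\xi)\lesssim e^{-\alpha n}$ for $g\in K_n$, hence $\F_n\subset \U_{2\epsilon}(\eta)$, then Theorem \ref{theocourbedanspara} to compare $\U_{2\epsilon}(\eta)$ with $\U_\epsilon(\eta)$), and your averaged function $\rho_A$ together with the mass bound $WM^A_p(\rho_A)\le c_n M_p(\rho)$ (convexity plus Lemma \ref{lemcardiorbites}) coincides with the paper's Haar-average construction. The genuine gap is the admissibility step, which you yourself label ``the main obstacle'' and only sketch. The claim you lean on --- that the $K_n$-translates of $\gamma$ meet, in each fiber over a shadow meeting $\gamma$, \emph{every} $K_n$-orbit of that fiber --- is both unproved and false in general. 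Since $g\in K_n$ fixes $B_n$ pointwise, every point $g\xi$ with $\xi\in\partial A$ is represented by a gallery whose first $n$ chambers lie in $A$; so the translates $g\gamma$ stay inside the level-$n$ shadows of chambers of $A$. An orbit in the fiber over $w$ whose defining gallery branches off $A$ inside $B_n$ consists of shadows $v_{y'}$ contained in some $v_{y_j}$ with $\dc{x_0,y_j}\le n$ and $y_j\notin\Ch{A}$, whose interior misses $\partial A$ (Lemma \ref{lem interapartshadow}); nothing forces any translate to hit such an orbit, and there are of order $\prod_{i\le n}(q_i-1)$ of them per fiber.

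Moreover, even granting that claim, the inequality $1\le c_n'\sum_{w\cap\gamma\ne\emptyset}\rho_A(w)$ does not follow from your displayed ingredients: averaging $1\le L_\rho(g\gamma)=\sum_{y\in Z_\gamma}\rho(v_{gy})$ over $g$ yields, for every $y\in Z_\gamma$ with $y\notin\Ch{A}$, the average of $\rho$ over the \emph{non-distinguished} orbit $K_n\cdot v_y$, whereas $\rho_A(w)$ records only the average over $K_n\cdot\widetilde{w}$; a minimal $\F_n$-admissible $\rho$ may concentrate its mass on those other orbits, so no comparison with $\rho_A$ results and the ``mixing'' fact would not repair this. The paper needs neither a constant $c_n'$ nor any mixing: since each $g\in K_n$ commutes with $\pi$ and permutes $G_k$, the elements of $G_k$ met by $g\gamma$ are identified with the translated lifts $g\widetilde{w}$, $w\cap\gamma\neq\emptyset$ (Lemma \ref{lem interapartshadow} and Fact \ref{fact approximapart}), so that for each $g$ one has $\sum_{w\cap\gamma\ne\emptyset}\rho(g\widetilde{w})=L_\rho(g\gamma)\ge1$; exchanging the sum with the Haar integral gives $L_{\rho_A}(\gamma)\ge1$, i.e. $\rho_A$ itself is $\U^A_\epsilon(\eta)$-admissible, and the lemma follows with a constant of the form $(q-1)^{n+N}$ depending only on $n$, hence on $\eta$ and $\epsilon$. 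As written, your proof of the left-hand inequality is incomplete at precisely this point.
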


\begin{proof}
i) First we prove  the right-hand side inequality. According to Proposition \ref{prop prodgromov}, for any $g \in K_n$ and any $\xi \in \borg$,  $d(\xi,g\xi) \leq A. e^{-\alpha n}$ where $A$ is the visual multiplicative constant. Hence for $n\geq 0$ large enough, by triangular inequality, $\F_n  \subset \mathcal{U}_{2\epsilon}(\eta)$. Then, as a consequence of   Theorem \ref{theocourbedanspara}, for a fixed $r>0$ the combinatorial modulus of $\U_\epsilon(\eta)$ is controlled  by the combinatorial modulus of $\U_{2\epsilon}(\eta)$ with multiplicative constants depending only on $p$, $\eta$, $\epsilon$. Thus, by  Proposition \ref{propmodbase} (1), there exists $C=C(p,\eta,\epsilon)$ such that \[ \modcomb{p}{\F_n,G_k }\leq C\cdot \modcomb{p}{\U_\epsilon(\eta),G_k}.\]  

ii)  Now we fix an integer $n\geq 0$ large enough so that the first part of the proof holds. We use the notation $K:=K_n$ for   simplicity. Moreover we assume that $k>n$.
Let $\rho : G_k \longrightarrow [0,+\infty)$ be a minimal $\F_n$-admissible function and set $ \rho_A : G_k^A \longrightarrow [0,+\infty)$ the function defined by:\[ \rho_A (w) = \int_K \rho (g\widetilde{w}) d\mu(g),\]
where $\mu$ denotes the Haar  probability measure over $K$ and where the function $:w\in G_k^A \longrightarrow \widetilde{w}\in G_k$ is given by Fact \ref{fact approximapart}. Let $w\in G_k^A$ and let $x\in \ch$ be such that $v_x=\widetilde{w}$. Then $\dc{x_0,x}=k$. As in  Proposition \ref{prop poidshadows}, let \[x_0\sim_{s_1}  x_1 \sim_{s_2} \dots\sim_{s_{n}}x_n\sim_{s_{n+1}}  \dots\sim_{s_{k-1}}  x_{k-1} \sim_{s_{k}}  x\] be a minimal gallery where  $s_1, \dots, s_k$ is the family of types of the building-walls crossed by this gallery. We set \[q(w,n)=\prod_{i=n+1,\dots, k} q_i -1 .\]

We notice that for any $g\in K$  the  translated $g  \widetilde{w}=gv_x$ is the shadow $v_{gx}$. In particular, this means that $\# K.\widetilde{w}=\# K.x$. Then   according to Lemma \ref{lemcardiorbites} 
\[(*) \ \ \ \ \  \frac{q(w,n)}{(q-1)^N}\leq \#K.\widetilde{w}\leq q(w,n),\]
where $q:=\max\{q_1,\dots, q_n \}$ and $N$ is the number of building-walls crossing $B_n$.

As a consequence   we can write \[\rho_A (w) = \frac{1}{\#K.\widetilde{w}} \cdot \sum\limits_{v\in K.\widetilde{w}} \rho(v), \] 
 and we prove the second inequality of the proposition.
 
On the one hand, let $\gamma \in \U_\epsilon^A(\eta)$:
\[L_{\rho_A} (\gamma ) = \sum\limits_{w\cap\gamma \neq \emptyset} \int_K \rho (g  \widetilde{w}) d\mu(g) =  \int_K \sum\limits_{w\cap\gamma\neq \emptyset} \rho (g  \widetilde{w}) d\mu(g)  =  \int_K \sum\limits_{v\cap g (\gamma)\neq \emptyset} \rho (v) d\mu(g). \]
Since  $ g (\gamma ) \in \F_n$, we get $\sum\limits_{v\cap g (\gamma)\neq \emptyset} \rho ( v)\geq 1$ and $\rho_A$ is $\F_A$-admissible.

On the other hand, thanks to  Jensen's inequality,  for $p\geq 1$ one has:  \[WM^A_p(\rho_A) \leq \sum\limits_{w \in G_k^A} q(w) \int_K \rho (g  \widetilde{w})^p d\mu(g) = \sum\limits_{w \in G_k^A}  \frac{q(w)}{\#K.\widetilde{w}} \cdot \sum\limits_{v \in K. \widetilde{w}} \rho (v)^p.\]
Hence with $(*)$ we obtain \[WM^A_p(\rho_A)\leq \sum\limits_{w \in G_k^A}  (q-1)^N\cdot\frac{q(w)}{q(w,n)} \cdot \sum\limits_{v \in K. \widetilde{w}} \rho (v)^p\leq (q-1)^{n+N} M_p(\rho).\]
 Finally we get: \[ \modcombwg{\mathcal{U}_\epsilon^A(\eta)} \leq (q-1)^{n+N} \modcomb{p}{\F_n,G_k}.\]
 
This last multiplicative constant depends only on $n$ and on the geometry of the building. Since $n$ depends only on  $\eta$ and $\epsilon$  the second inequality is proved.

iii) The last statement of the lemma is an immediate consequence of the two first parts and of Theorem \ref{theocourbedanspara}.
 
\end{proof}
 
\begin{proof}[Proof of Theorem \ref{theocontrolimmappart}]
Since $\pi(\mathcal{U}_\epsilon(\eta)) \subset \mathcal{U}_\epsilon^A(\eta)$,   Proposition \ref{propmajopoid} and  Lemma \ref{lemcontrolmodulpoid}   imply the theorem.

\end{proof}
    
\subsection{Consequences}
\label{subsec conseq}

Here we   continue to use the approximations $G_k$ and $G_k^A$ defined in   Subsection \ref{subsec choiceapprox}. For   $\eta$ a non-constant curve in $\partial A$, $\partial Q$ a parabolic limit set in $\partial A$, and $\delta,r, \epsilon >0$, we use the following notation:
\liste{
\item $\F_0^A=\{\gamma \in \F_0 : \gamma \subset \partial A\}$,
\item $\F_{\delta,r}^A(\partial Q)$ is the subset of $\F_0^A$ consisting of all curves $\gamma$ satisfying:\liste{\item $\gamma \subset N_\delta(\partial Q)$, \item  $\gamma \not\subset N_r(\partial Q')$  for any connected parabolic limit set $\partial Q' \varsubsetneq \partial Q $,}

\item $\delta_0(\cdot)$   refers to the increasing function  in   Theorem \ref{theocourbedanspara}.}

We recall that the apartment $A\in \apo$ is fixed. Thanks to Fact \ref{factchangeapartmod}, the following result  holds for any apartment containing $x_0$.

The main consequence of the previous subsection is    Theorem \ref{theomajomodul}, where we control from above the combinatorial modulus of $\F_0$ by the weighted-combinatorial modulus of  $\F_0^A$.

\begin{lem} \label{lemmodulemajo}
Let $p\geq 1$ and $A\in \apo$.  Let $\partial P$ be a parabolic limit set in $\borg$ and assume that $x_0\subset \cv{\partial P}$. Let $\gamma$ be a non-constant curve in $\partial Q= \partial P \cap \partial A$ such that $\partial Q$ is the smallest parabolic limit set of $\partial A$ containing $\gamma$. Let $r>0$ be  small enough so that $\gamma \not\subset \overline{N_r}(\partial Q') $ for any connected parabolic limit set $\partial Q'\varsubsetneq \partial Q$. Let $\delta <\delta_0(r)$. Then for $\epsilon>0$   small enough,   there exists a constant $C=C(p,\gamma,r, \epsilon)$ such that for every $k\geq 1$ \[\modcomb{p}{\F_{\delta,r}(\partial P),G_k}\leq C \cdot \modcomb{p}{\mathcal{U}_\epsilon (\gamma),G_k} \leq C \cdot \modcombwg{\F^A_{\delta,r}(\partial Q) }.\]
 In particular 
 \[\modcomb{p}{\F_{\delta,r}(\partial P),G_k}  \leq C \cdot\modcombwg{\F^A_0 }.\]
Furthermore, when $p$ belongs to a compact subset of $[1, + \infty)$ the constant  $C$ may be chosen independent of $p$.  
\end{lem}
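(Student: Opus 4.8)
The plan is to chain together the two main tools already proved: Theorem~\ref{theocourbedanspara} (which compares the modulus of $\F_{\delta,r}(\partial P)$ in $\borg$ with the modulus of $\U_\epsilon(\eta)$ for a single curve $\eta$) and Theorem~\ref{theocontrolimmappart} (which compares the modulus of $\U_\epsilon(\eta)$ in $\borg$ with the weighted modulus of $\U_\epsilon^A(\eta)$ in $\partial A$ when $\eta \subset \partial A$). The only genuinely new point is to check that we can pick the reference curve $\eta$ to lie inside $\partial A$, more precisely inside $\partial Q = \partial P \cap \partial A$, so that both theorems apply to the \emph{same} curve.

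First I would reduce to the case $\partial P = \Gamma_J$ with $x_0 \subset \cv{\partial P}$, which is exactly the standing hypothesis. Since $\gamma$ is a non-constant curve contained in $\partial Q = \partial P \cap \partial A$ and $\partial Q$ is the smallest parabolic limit set of $\partial A$ containing $\gamma$, I claim $\partial Q$ is also (contained in) the smallest parabolic limit set of $\borg$ containing $\gamma$: indeed parabolic limit sets of $\partial A$ are traces on $\partial A$ of parabolic limit sets of $\borg$ (a residue of the apartment sits inside a residue of the building of the same type), and intersections of parabolics are parabolics by \cite[Theorem 1.2]{ClaParaResidinBuildings}, so the smallest parabolic limit set $\partial P'$ of $\borg$ containing $\gamma$ satisfies $\partial P' \cap \partial A \subset \partial Q$, forcing $\partial P' \subset \partial P$. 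Shrinking $r$ if needed, the hypotheses of Theorem~\ref{theocourbedanspara} are met with $\eta := \gamma$ and $\partial P'$ in the role of ``the smallest parabolic limit set containing $\eta$''. Applying that theorem (and using Proposition~\ref{propmodbase}~(1) together with the monotonicity $\F_{\delta,r}(\partial P') \subset \F_{\delta,r}(\partial P)$, or more simply arguing directly from the ``following curves'' mechanism on $\partial P$) yields a constant $C_1 = C_1(p,\gamma,r,\epsilon)$, independent of $k$, with
\[
\modcomb{p}{\F_{\delta,r}(\partial P),G_k} \leq C_1 \cdot \modcomb{p}{\U_\epsilon(\gamma),G_k}
\]
for all $k \geq 1$, and $C_1$ may be taken independent of $p$ over compact $p$-ranges.

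Next, since $\gamma \in \F_0$ and $\gamma \subset \partial A$, Theorem~\ref{theocontrolimmappart} gives a constant $C_2 = C_2(p,\gamma,\epsilon)$, independent of $k$ (for $k$ large), with $\modcomb{p}{\U_\epsilon(\gamma),G_k} \leq C_2 \cdot \modcomb{p}{\U_\epsilon(\gamma),G_k}$ — rather, the relevant inequality is the \emph{upper} bound $\modcomb{p}{\U_\epsilon(\gamma),G_k} \asymp \modcombwg{\U_\epsilon^A(\gamma)}$, and I only need $\modcomb{p}{\U_\epsilon(\gamma),G_k} \leq C_2 \cdot \modcombwg{\U_\epsilon^A(\gamma)}$, which is the displayed chain of Theorem~\ref{theocontrolimmappart} read from left to middle (trivial, via Proposition~\ref{propmajopoid}) — wait, that direction is the easy one; the content I want is the reverse, $\modcombwg{\U_\epsilon^A(\gamma)} \leq C \modcomb{p}{\U_\epsilon(\gamma),G_k}$, which is exactly the right-hand inequality of that theorem. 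So I would instead bound $\modcomb{p}{\U_\epsilon(\gamma),G_k}$ from above by $\modcombwg{\U_\epsilon^A(\gamma)}$ directly from Proposition~\ref{propmajopoid} (since $\pi(\U_\epsilon(\gamma)) \subset \U_\epsilon^A(\gamma)$, because $\pi$ is $1$-Lipschitz on the boundary and fixes $\partial A$ pointwise), giving $\modcomb{p}{\U_\epsilon(\gamma),G_k} \leq \modcombwg{\U_\epsilon^A(\gamma)}$ with constant $1$. Observing $\U_\epsilon^A(\gamma) \subset \F^A_{\delta,r}(\partial Q)$ for $\epsilon$ small enough (same argument as parts i)--iii) of the proof of Theorem~\ref{theocourbedanspara}, applied inside $\partial A$: a $C^0$-neighbourhood of $\gamma$ consists of curves in $N_\delta(\partial Q)$ not contained in $N_r(\partial Q')$ for proper connected sub-parabolics), and then $\F^A_{\delta,r}(\partial Q) \subset \F^A_0$, the weighted analogue of Proposition~\ref{propmodbase}~(1) yields
\[
\modcomb{p}{\U_\epsilon(\gamma),G_k} \leq \modcombwg{\U_\epsilon^A(\gamma)} \leq \modcombwg{\F^A_{\delta,r}(\partial Q)} \leq \modcombwg{\F^A_0}.
\]
Combining with the first chain gives the statement with $C = C_1$, and Fact~\ref{factchangeapartmod} ensures the result is insensitive to the choice of apartment containing $x_0$.

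The main obstacle, and the point deserving the most care, is the bookkeeping around ``which theorem is used in which direction'': Theorem~\ref{theocontrolimmappart} is a two-sided estimate, but only the trivial side (via $\pi$) plus the inclusion $\U_\epsilon^A(\gamma) \subset \F^A_{\delta,r}(\partial Q)$ is needed here, so I must be careful not to invoke the hard reverse inequality where it is not required, and conversely to verify that the $\epsilon$ chosen small enough for Theorem~\ref{theocourbedanspara} (in $\borg$) is also small enough for the inclusion $\U_\epsilon^A(\gamma) \subset \F^A_{\delta,r}(\partial Q)$ (in $\partial A$) — this is where one uses that $\partial Q$ is the \emph{smallest} parabolic limit set of $\partial A$ containing $\gamma$ and that $\partial Q = \partial P \cap \partial A$, so that $\delta < \delta_0(r)$ does the job simultaneously on both sides. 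A secondary technical point is the identification of the smallest building-parabolic containing $\gamma$ with (a subset of) $\partial P$, which rests on the trace-of-residue description of parabolic limit sets of $\partial A$ together with $x_0 \subset \cv{\partial P}$; this should be recorded as a short lemma or at least spelled out, since it is what makes the two regimes compatible. The uniformity of $C$ over compact sets of exponents $p$ is then inherited verbatim from the corresponding clauses of Theorems~\ref{theocourbedanspara} and~\ref{theocontrolimmappart}.
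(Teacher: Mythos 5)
Your overall route is the paper's route: bound $\modcomb{p}{\F_{\delta,r}(\partial P),G_k}$ by $\modcomb{p}{\U_\epsilon(\gamma),G_k}$ via Theorem \ref{theocourbedanspara}, then pass to the apartment by the trivial direction $\pi(\U_\epsilon(\gamma))\subset\U_\epsilon^A(\gamma)$ and Proposition \ref{propmajopoid} (not the hard side of Theorem \ref{theocontrolimmappart}), and finish with the inclusions $\U_\epsilon^A(\gamma)\subset\F^A_{\delta,r}(\partial Q)\subset\F^A_0$ and the weighted analogue of Proposition \ref{propmodbase}(1); that is exactly how the paper argues, and your bookkeeping of which direction of Theorem \ref{theocontrolimmappart} is needed, after the initial hesitation, lands correctly.

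The one step of yours that does not work as stated is the handling of the mismatch between ``smallest parabolic limit set of $\partial A$ containing $\gamma$'' and the hypothesis of Theorem \ref{theocourbedanspara}. If the smallest building parabolic $\partial P'$ containing $\gamma$ is strictly smaller than $\partial P$, your claimed monotonicity $\F_{\delta,r}(\partial P')\subset\F_{\delta,r}(\partial P)$ is generally false: a curve of $\F_{\delta,r}(\partial P')$ lies in $N_\delta(\partial P')$, and since $\partial P'\varsubsetneq\partial P$ (and typically $\delta\leq r$) it is then excluded from $\F_{\delta,r}(\partial P)$ by the condition of non-containment in $N_r$ of proper sub-parabolics. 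Worse, even granting the inclusion, Proposition \ref{propmodbase}(1) would give $\modcomb{p}{\F_{\delta,r}(\partial P'),G_k}\leq\modcomb{p}{\F_{\delta,r}(\partial P),G_k}$, which is the wrong direction: it cannot be combined with the theorem applied to $\partial P'$ to produce the required upper bound on $\modcomb{p}{\F_{\delta,r}(\partial P),G_k}$. Your parenthetical fallback is the correct fix, and it is what implicitly legitimizes the paper's one-line citation: the right-hand inequality of Theorem \ref{theocourbedanspara} is proved (parts i), ii) and iv) of its proof, via Propositions \ref{propLI} and \ref{propdupli}) using only that the reference curve is a curve contained in $\partial P$ together with the $(L,I)$-curve property of $\F_{\delta,r}(\partial P)$; the minimality of $\partial P$ and the choice of $r$ are needed only for the left-hand inequality, which is not used here. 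So you should drop the $\partial P'$ detour and invoke the following-curves mechanism on $\partial P$ with $\eta:=\gamma\subset\partial Q\subset\partial P$ directly; with that replacement your argument coincides with the paper's proof, including the reduction (as in parts i)--iii) of the proof of Theorem \ref{theocourbedanspara}) guaranteeing $\U_\epsilon(\gamma)\subset\F_0$, $x_0\subset\cv{\gamma}$, and $\U_\epsilon^A(\gamma)\subset\F^A_{\delta,r}(\partial Q)$ for $\epsilon$ small, and the uniformity of $C$ over compact sets of exponents $p$.
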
 

\begin{proof} As in the beginning  of the proof of Theorem \ref{theocourbedanspara} we can assume,  without loss of generality,   that for $\epsilon>0$ small enough 

\liste{\item  $x_0 \subset \cv{\gamma}$ for every $\gamma \in \F_0$,  
\item $\mathcal{U}_\epsilon(\gamma)\subset \F_0$. } 
As before, the multiplicative constants resulting from these assumptions only depends on $d_0$.
 
 Now for $\epsilon>0$ small enough, we obtain by Proposition \ref{propmodbase}(1) and the preceding assumption
\[ \modcombwg{\mathcal{U}^A_\epsilon (\gamma)} \leq   \modcombwg{\F^A_{\delta,r}(\partial Q) } \leq  \modcombwg{\F^A_0 }.\]
 Since $\pi(\ \mathcal{U}_\epsilon (\gamma)) \subset \mathcal{U}^A_\epsilon (\gamma)$, with   Proposition \ref{propmajopoid} one has\[\modcomb{p}{\mathcal{U}_\epsilon (\gamma),G_k } \leq  \modcombwg{\mathcal{U}^A_\epsilon (\gamma)}.\]
Finally thanks to     Theorem \ref{theocourbedanspara} there exists $C=C(p,\gamma,r, \epsilon)$ such that for every $k\geq 1$\[\modcomb{p}{\F_{\delta,r}(\partial P),G_k}\leq C \cdot \modcomb{p}{\mathcal{U}_\epsilon (\gamma),G_k}.\]
\end{proof}

Now we are ready to prove the  following theorem which is used in  the proof of Theorem \ref{theoprincip}. 

\begin{theo} \label{theomajomodul}
For any $p\geq 1$, there exists a constant $D=D(p)$ such that for every $k\geq 1$
\[ \modcomb{p}{\F_0,G_k}  \leq D\cdot  \modcombwg{\F^A_0}.\]
\end{theo}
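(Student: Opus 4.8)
The plan is to reduce the modulus of $\F_0$ to a finite sum of moduli of the families $\F_{\delta,r}(\partial P)$ attached to parabolic limit sets, and then apply Lemma \ref{lemmodulemajo} to each of them. The starting observation is that every curve $\gamma\in\F_0$ is contained in a smallest parabolic limit set $\partial P$ (since intersections of parabolic subgroups are parabolic, by \cite[Theorem 1.2]{ClaParaResidinBuildings}); as $\gamma$ has diameter at least $d_0$, Proposition \ref{proptaillepara} guarantees that only finitely many such $\partial P$ occur, say $\partial P_1,\dots,\partial P_m$ (with $\partial P_m=\borg$). For each $j$ choose $r_j>0$ small enough that for a curve $\gamma$ whose smallest parabolic limit set is $\partial P_j$ one has $\gamma\not\subset\overline{N_{r_j}}(\partial Q)$ for every connected parabolic limit set $\partial Q\varsubsetneq\partial P_j$ (possible since there are finitely many such $\partial Q$ of diameter $\ge d_0$, again by Proposition \ref{proptaillepara}), and set $r=\min_j r_j$, then $\delta=\min_j\delta_0(r_j)$ where $\delta_0$ is the function of Theorem \ref{theocourbedanspara}. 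With these choices every $\gamma\in\F_0$ belongs to $\F_{\delta,r}(\partial P_j)$ for the index $j$ corresponding to its smallest parabolic limit set, so $\F_0=\bigcup_{j=1}^m \F_{\delta,r}(\partial P_j)$.

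By subadditivity of the combinatorial modulus (Proposition \ref{propmodbase}(2)),
\[\modcomb{p}{\F_0,G_k}\leq \sum_{j=1}^m \modcomb{p}{\F_{\delta,r}(\partial P_j),G_k}.\]
Next I would apply Lemma \ref{lemmodulemajo} to each term. There is a bookkeeping point: Lemma \ref{lemmodulemajo} is stated under the hypothesis $x_0\subset\cv{\partial P}$ and with $\partial Q=\partial P\cap\partial A$ being the smallest parabolic limit set of $\partial A$ containing a chosen curve. To handle a general $\partial P_j$, use that $\Gamma$ acts transitively on chambers and that elements of $\Gamma$ are bi-Lipschitz homeomorphisms of $\borg$ preserving the approximation up to a bounded multiplicative constant (Propositions \ref{propdefselfsim}, \ref{propdouble}): translating by a suitable $g\in\Gamma$ with $\di{x_0,\cv{\partial P_j}}=\dc{x_0,gx_0}$, which ranges over a finite set as in step i) of the proof of Theorem \ref{theocourbedanspara}, one arranges $x_0\subset\cv{g\partial P_j}$ while changing the modulus by a factor depending only on $d_0$ and the doubling constant. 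Similarly, one may need to intersect the translated limit set with the fixed apartment $A$; this is legitimate because $\borg\subset\overline{A}$ up to the retraction and every curve in $\partial P_j$ of diameter $\ge d_0$ can, after a further bounded translation, be taken inside $\partial A$ — and by Fact \ref{factchangeapartmod} the weighted modulus does not depend on the choice of apartment through $x_0$. Thus Lemma \ref{lemmodulemajo} yields, for each $j$, a constant $C_j=C_j(p)$ with
\[\modcomb{p}{\F_{\delta,r}(\partial P_j),G_k}\leq C_j\cdot\modcombwg{\F^A_0}\]
for all $k\ge1$, and summing over the finitely many $j$ gives the claim with $D=D(p)=\sum_{j=1}^m C_j$.

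The main obstacle I anticipate is not any single estimate but the careful reconciliation of the hypotheses: ensuring that the translations used to put each $\partial P_j$ into the normalized position ($x_0$ in its convex hull, curve inside $\partial A$) can be performed simultaneously with a uniform control, and that the constants $r$ and $\delta$ can be chosen once and for all, independent of the individual $\partial P_j$, so that Theorem \ref{theocourbedanspara} and Lemma \ref{lemmodulemajo} apply with the same parameters. This is exactly where Proposition \ref{proptaillepara} (finiteness of large parabolic limit sets) does the essential work, and where one must be slightly careful that the $\F_{\delta,r}(\partial P_j)$ genuinely cover $\F_0$ — a curve whose smallest parabolic limit set is $\partial P_j$ must fail the $\overline{N_r}$-condition for \emph{every} proper connected sub-limit-set, which is why $r$ is taken as the minimum over all relevant $r_j$. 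Once these uniformity issues are settled, the proof is a one-line combination of Proposition \ref{propmodbase}(2) and Lemma \ref{lemmodulemajo}, and the statement about $p$ in a compact set is inherited from the corresponding statements in Theorem \ref{theocourbedanspara} and Lemma \ref{lemmodulemajo}.
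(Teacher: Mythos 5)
Your overall strategy is the paper's: split $\F_0$ into finitely many families $\F_{\delta,r}(\partial P)$ attached to the (finitely many, by Proposition \ref{proptaillepara}) large parabolic limit sets, apply subadditivity (Proposition \ref{propmodbase}(2)), normalize so that $x_0\subset\cv{\partial P}$, and invoke Lemma \ref{lemmodulemajo} together with Fact \ref{factchangeapartmod}. The normalization and change-of-apartment bookkeeping in your second paragraph is essentially what the paper does. The gap is in the decomposition itself. You claim that for each $\partial P_j$ one can choose $r_j>0$ so that \emph{every} curve $\gamma$ whose smallest parabolic limit set is $\partial P_j$ satisfies $\gamma\not\subset\overline{N_{r_j}}(\partial Q)$ for all connected $\partial Q\varsubsetneq\partial P_j$, and you justify this by the finiteness of the relevant $\partial Q$. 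The quantifiers are in the wrong order: finiteness of the collection does not help, because there are curves whose smallest containing parabolic limit set is $\partial P_j$ (for instance $\borg$ itself) but which lie in an arbitrarily small neighborhood $N_{r}(\partial Q)$ of a proper connected parabolic limit set $\partial Q$ without being contained in $\partial Q$ — e.g.\ small perturbations of a curve inside $\partial Q$. For such $\gamma$ no uniform $r_j$ exists, so with your single pair $(\delta,r)$ the curve belongs to none of your families and the covering $\F_0=\bigcup_j\F_{\delta,r}(\partial P_j)$ fails. (In Theorem \ref{theocourbedanspara} the constant $r$ is chosen for one fixed curve $\eta$, which is why that statement is fine; it cannot be made uniform over all curves assigned to $\partial P_j$.) There is also a smaller slip: since $\delta_0$ is increasing, your choice $\delta=\min_j\delta_0(r_j)$ satisfies $\delta\geq\delta_0(\min_j r_j)=\delta_0(r)$, so it does not even meet the requirement $\delta<\delta_0(r)$ needed to apply Lemma \ref{lemmodulemajo}.

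The paper repairs exactly this point with a height-indexed inductive choice of constants: order the parabolic limit sets of diameter $\geq d_0$ by the maximal length of a chain $\partial P'_0\varsubsetneq\cdots\varsubsetneq\partial P'_i=\partial P$, and set $r_{i+1}=\delta_i$, $\delta_{i+1}<\min\{\delta_0(r_{i+1}),\delta_i\}$. Then a curve that fails the second condition for $\partial P$ of height $i$ — because it is trapped in $N_{r_i}(\partial Q)$ for some $\partial Q\varsubsetneq\partial P$ of height $i'<i$ — automatically satisfies $\gamma\subset N_{r_i}(\partial Q)\subset N_{\delta_{i'}}(\partial Q)$ and is reassigned to the family of $\partial Q$; since the height strictly decreases, this descent terminates and the families $\F_{\delta_i,r_i}(\partial P_{i,j})$ genuinely cover $\F_0$, each with $\delta_i<\delta_0(r_i)$ as Lemma \ref{lemmodulemajo} requires. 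If you replace your uniform choice of $(\delta,r)$ by this descending scheme (or any equivalent mechanism that reassigns curves hugging a smaller parabolic limit set to that smaller set's family), the rest of your argument goes through as written.
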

\begin{proof}
 
i) First, we recall the following notation. For $\delta,r >0$ and for  $\partial P$   a connected parabolic limit set, $\F_{\delta,r}(\partial P)$ is the set of all the curves in $\F_0$ satisfying:  \liste{ 
\item $\gamma \subset N_\delta(\partial P)$,
\item $\gamma \not\subset N_r(\partial P')$ for any connected parabolic limit set $\partial P' \varsubsetneq \partial P$.}

Now, as it is done in \cite[as a remark of Corollary 6.2.]{BourdonKleinerCLP} in boundaries of Coxeter groups, we  observe that $\F_0$ splits into a finite   union   \[\F_0 =\F_{\delta_1, r_1}(\partial P_1) \cup \dots \cup\F_{\delta_N, r_N}(\partial P_N)\]
 with $\delta_i< \delta_0(r_i)$. 
 
Indeed, let  $\mathcal{P}=\{\partial P_1, \dots, \partial P_N\}$ be the finite set (see Proposition \ref{proptaillepara}) of all the parabolic limit sets of diameter larger than $d_0$. For $\partial P \in \mathcal{P}$ we call      \emph{height} of  $\partial P$ the maximal length of a sequence in $\mathcal{P}$  of the form
\[ \partial P'_0 \varsubsetneq  \partial P'_1 \varsubsetneq \cdots \varsubsetneq \partial P'_i = \partial P.\] 
 Now, we  index $\mathcal{P}$ thanks to the height
 \[\mathcal{P}=\{\partial P_{0,1}, \dots, \partial P_{0,N_0}, \dots, \partial P_{i,j}, \dots, \partial P_{M,1},  \dots, \partial P_{M,N_M} \}\]
 where $i$ is the height of $\partial P_{i,j}$. We fix a small $r_0>0$ and $\delta_0< \delta_0(r_0)$. Then by induction on the height we set 
\[r_{i+1} =\delta_i \text{ and } \delta_{i+1} < \min \{\delta_0(r_{i+1}),  \delta_{i}\}. \]

Now let  $\partial P \in \mathcal{P}$ be of height $i>0$. By  construction, for any $i'<i$ we have  $r_i<\delta_{i'}$. Hence for any $ \partial P' \varsubsetneq \partial P$ of height $i'$ we have  $N_{r_i}(\partial P') \subset N_{\delta_{i'}}(\partial P')$ and \[\F_0=\bigcup^M_{i=0} \bigcup^{N_i}_{j=1} \F_{\delta_i, r_i}(\partial P_{i,j}). \]

ii) Let $\partial P$ be one of the  parabolic limit sets appearing in the preceding decomposition of $\F_0$ and $\delta,r>0$ be the corresponding constants. As in the beginning of the proof of Theorem \ref{theocourbedanspara},  we can assume that $x_0 \subset\cv{\partial P}$. Again the multiplicative constant resulting from this assumption only depends on $d_0$. Now pick $B\in \apo$ such that $\partial B \cap \partial P\neq \emptyset$ and fix a curve $\gamma$ and $\epsilon >0$ so that the hypothesis of Lemma \ref{lemmodulemajo} are satisfied. Then there exists   $C= C(p,\gamma,r,\epsilon)$  such that for every $k\geq 1$ \[\modcomb{p}{\F_{\delta,r}(\partial P),G_k}  \leq C \cdot\mathrm{Mod}^B_p(\F_0^B,G^B_0).\]

 iii) With    Fact \ref{factchangeapartmod}, we observe that  the weighted modulus on the right-hand side of the preceding inequality is independent of the choice of $B\in \apo$.   Finally, by   Proposition \ref{propmodbase} (2) and the two first parts of this proof, there exists a constant $D=D(p)$ such that  such that for every $k\geq 1$  \[\modcomb{p}{\F_0,G_k}  \leq   D \cdot\modcombwg{\F^A_0 }.\]
\end{proof} 
    
Note that for the moment we cannot prove a converse inequality between the modulus. Indeed, in the proof of  Lemma \ref{lemmodulemajo}   the use of   Theorem \ref{theocourbedanspara}   is a key point. As we said before, we cannot  prove  an analogue of Theorem \ref{theocourbedanspara} for the weighted modulus.

Nevertheless, we can define a critical exponent in connection with the weighted modulus as it is done in   Subsection \ref{subsecdimconf}. Then Theorem \ref{theomajomodul} can be used to  helps us   understand this new critical exponent.

\begin{prop}
There exists $p_0\geq 1$ such that  for $p\geq p_0$ the weighted modulus $\modcombw{p}{\F_0^A,G^A_{k}} $ goes to zero as $k$ goes to infinity.

\end{prop}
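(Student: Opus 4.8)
The plan is to mimic the proof of Proposition~\ref{propdefexpocrit}: for every large $p$ we exhibit an $\F^A_0$-admissible function on $G^A_k$ whose weighted $p$-mass decays geometrically in $k$. Beyond what is needed in the unweighted case, two estimates are required --- an exponential upper bound on $\#G^A_k$ and an exponential upper bound on the weights $q(w)$ --- and both are already available.

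First I would record the counting estimate. The apartment $A$ is a thin building and $\partial A\simeq\partial W$, the boundary of $W$; since $W$ is hyperbolic (hyperbolicity of a graph product depends only on $\mathcal{G}$), $\partial A$ equipped with a visual metric is doubling by Proposition~\ref{propdefselfsim}. Exactly as in the proof of Proposition~\ref{propdefexpocrit}, the doubling condition together with the definition of an approximation yields an integer $N'\geq 1$ such that each $w\in G^A_k$ meets at most $N'$ elements of $G^A_{k+1}$, whence $\#G^A_k\leq K\cdot (N')^{k}$ for all $k\geq 0$, with $K=\#G^A_0$. Next, by Proposition~\ref{prop poidshadows} every $w\in G^A_k$ is a shadow $w_x$ with $\dc{x_0,x}=k$, and its weight satisfies $q(w)=\prod_{i=1}^k q_i-1\leq q^{k}$, where $q=\max\{q_1,\dots,q_n\}$. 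Finally, as in the proof of Proposition~\ref{prop minofonctionmini} there are constants $K'>0$ and $c>1$ such that for all $k$ large enough the constant function $\rho\colon w\in G^A_k\longmapsto K'c^{-k}$ is $\F^A_0$-admissible (if $\F^A_0=\emptyset$ the statement is trivial).

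Combining the three estimates, for $k$ large enough one obtains
\[\modcombw{p}{\F_0^A,G^A_k}\leq WM_p^A(\rho)=\sum_{w\in G^A_k}q(w)\,\rho(w)^p\leq K\cdot (N')^{k}\cdot q^{k}\cdot(K')^p c^{-pk}=(K')^p\,K\,\Big(\frac{N'q}{c^{p}}\Big)^k.\]
Now choose $p_0\geq 1$ with $c^{p_0}>N'q$, i.e.\ $p_0>\log(N'q)/\log c$. Then for every $p\geq p_0$ one has $N'q/c^{p}<1$, so the right-hand side tends to $0$ as $k\to\infty$; this proves the proposition. (Alternatively, one may bypass the computation by combining the inequality $\modcombwg{\F^A_0}\leq(q-1)^k\,\modcombapg{\F^A_0}$ recorded above with Proposition~\ref{propdefexpocrit} applied to $\partial A\simeq\partial W$.)

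There is no genuine obstacle here: the argument is essentially the unweighted one with the weights folded in. The only point deserving attention is that, since no analogue of Proposition~\ref{propdouble} holds for the weighted modulus, one must keep working with the specific shadow approximation $\{G^A_k\}_{k\geq 0}$, reading off the bound on $\#G^A_k$ from its construction (Proposition~\ref{propapprox}) and the bound on the weights from Proposition~\ref{prop poidshadows}.
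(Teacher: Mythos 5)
Your proof is correct and follows essentially the same route as the paper: an exponential bound on $\#G^A_k$ from doubling, a constant admissible function decaying geometrically in $k$, and an exponential bound on the weights, giving a weighted mass of the form $C\big(\tfrac{N'q}{c^p}\big)^k$ which vanishes for $p$ large. The alternative you mention in parentheses (using $\modcombwg{\F^A_0}\leq(q-1)^k\,\modcombapg{\F^A_0}$ together with the unweighted estimate) is in fact exactly how the paper phrases its computation, so the two arguments coincide up to bookkeeping.
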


\begin{proof}
 This proof is the same as the proof of Proposition \ref{propdefexpocrit}.
We recall that  $\kappa$ is the constant of the approximations $\{G_k\}_{k\geq 0}$ and  $\{G^A_k\}_{k\geq 0}$.
 
 According to the doubling condition and the definition of an approximation, there exists an integer $N'$ such that each element $w\in G^A_k$ is covered by at most $N'$ elements of $G^A_{k+1}$. As a consequence, if   $K>0$ is the cardinality $G_0$, then \[\# G^A_k \leq K \cdot N'^k \text{ for any } k \geq 1.\]
 
Moreover, as we saw in the proof of Proposition \ref{prop minofonctionmini}, there exists a constant $K'>0$ such that the constant function $\rho : v\in G^A_k \longrightarrow \rho(v) = K'\cdot 2^{-k}\in [0,+\infty)$ is $\F^A_0$-admissible.

As a consequence \[\modcombapfo\leq C \cdot \Big(\frac{N'}{2^p}\Big)^k,\]
where $C$ is a positive constant. Hence we obtain 
\[\modcombwfo\leq (q-1)^k \cdot \modcombapfo\leq C \cdot \Big(\frac{(q-1)N'}{2^p}\Big)^k,\]
 Thus, for $p$ large enough, $\modcombwfo$ goes to zero.
\end{proof}

It is now natural to define a critical exponent for the weighted modulus in the apartment.  
 
\begin{déf} \label{defexpocritpoid}
The \emph{critical exponent $Q_W$} \index{Critical exponent} of the weighted modulus in $\partial A$ is defined as follows  \[Q_W = \inf \{p\in [1, +\infty) : \lim_{k\rightarrow  + \infty} \modcombwg{\F^A_0 }= 0\}.\] \index{$Q_W$}
\end{déf}  
To avoid confusion, we   use the following notation \liste{
\item $Q$ for the critical exponent associated with the usual modulus $\modcombg{\cdot}$ in $\borg$\index{$Q$},
\item $Q_A$ for the critical exponent associated with the usual modulus \index{$Q_A$} $\modcombapg{\cdot} $ in $\partial A$,
\item $Q_W$ for the critical exponent associated with the weighted modulus $\modcombwg{\cdot}$ in $\partial A$. }

 We recall that $Q$ and $Q_A$ are respectively the conformal dimension of $\borg$ and of $\partial A\simeq \partial W$ (see Theorem \ref{theocarrasco}).  The inequalities between the different modulus imply the following corollary.

    \begin{coro} 
    \label{coroexposentcritique} The following inequalities hold
  \[Q_A \leq Q \leq Q_W.\] 
   \end{coro}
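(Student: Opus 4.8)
The statement $Q_A \leq Q \leq Q_W$ is a direct consequence of the chain of inequalities between the three combinatorial modulus already established, together with the definition of the critical exponents as infima of the set of exponents $p$ for which the corresponding modulus of $\F_0$ (resp. $\F_0^A$) tends to $0$. The plan is to translate each modulus inequality into the corresponding inequality between critical exponents, using the monotonicity of $p \mapsto \mathrm{Mod}_p$ at fixed scale and the observation that an inequality $\mathrm{Mod}_p(\mathcal{A},G_k) \leq C(p) \cdot \mathrm{Mod}_p(\mathcal{B},G'_k)$ valid for all $k$, with $C(p)$ uniform on compact sets of exponents, forces $\lim_k \mathrm{Mod}_p(\mathcal{B},G'_k) = 0 \implies \lim_k \mathrm{Mod}_p(\mathcal{A},G_k) = 0$, hence $\{p : \lim_k \mathrm{Mod}_p(\mathcal{B}) = 0\} \subset \{p : \lim_k \mathrm{Mod}_p(\mathcal{A}) = 0\}$, and therefore the critical exponent on the left side is $\leq$ the one on the right.

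\textbf{Step 1: $Q \leq Q_W$.} First I would invoke Theorem \ref{theomajomodul}: for each $p \geq 1$ there is a constant $D = D(p)$, uniform when $p$ ranges over a compact subset of $[1,+\infty)$, such that $\modcomb{p}{\F_0,G_k} \leq D \cdot \modcombwg{\F^A_0}$ for every $k \geq 1$. Fix $p > Q_W$. By Definition \ref{defexpocritpoid}, $\lim_{k} \modcombwg{\F^A_0} = 0$, so the inequality gives $\lim_k \modcomb{p}{\F_0,G_k} = 0$, whence $p \geq Q$ by Definition \ref{defexpocrit}. Letting $p \downarrow Q_W$ yields $Q \leq Q_W$.

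\textbf{Step 2: $Q_A \leq Q$.} Here I would use the elementary inequality recorded just before Fact \ref{factchangeapartmod}, namely $\modcombapg{\F^A} \leq \modcombwg{\F^A}$ for any curve family $\F^A$ in $\partial A$ and any $k$. Combined with Theorem \ref{theomajomodul}, this does \emph{not} immediately relate $Q_A$ to $Q$; instead the cleaner route is to compare $\F_0^A$ directly with $\F_0$ inside $\borg$ via the retraction $\pi$. Since $\pi$ restricted to $\partial A$ is the identity and $\F_0^A \subset \F_0$ as curve families in $\borg$ (every curve of diameter $\geq d_0$ lying in $\partial A$ is in particular such a curve in $\borg$), while the approximation $G_k^A$ is, by construction in Subsection \ref{subsec choiceapprox} and Fact \ref{fact approximapart}, the trace on $\partial A$ of $G_k$, one has $\modcomb{p}{\F_0^A, G_k^A} \leq \modcomb{p}{\F_0, G_k}$ for each $k$ (a $\F_0$-admissible $\rho$ on $G_k$ restricts to a $\F_0^A$-admissible function on $G_k^A$ with no larger $p$-mass, up to the bounded overlap of $G_k$ elements over a fixed element of $G_k^A$, which contributes only a constant depending on $\kappa$). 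Hence for $p > Q$ we get $\lim_k \modcombapg{\F^A_0} = 0$, so $p \geq Q_A$, and letting $p \downarrow Q$ gives $Q_A \leq Q$. Finally, recalling from Theorem \ref{theocarrasco} that $Q = \confdim{\borg}$ and $Q_A = \confdim{\partial A} = \confdim{\partial W}$, the corollary is proved.

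\textbf{Main obstacle.} The only delicate point is the passage from a modulus inequality to an inequality of critical exponents when the multiplicative constant depends on $p$: one must check that this dependence does not spoil the argument. This is precisely why Theorem \ref{theomajomodul} (and behind it Theorems \ref{theocourbedanspara} and \ref{theocontrolimmappart}) was stated with the constant $C$ \emph{chosen independent of $p$ on compact subsets of $[1,+\infty)$}; since each comparison is applied at a single fixed $p$, or with $p$ ranging over a compact neighborhood of the relevant critical value, the constant is bounded and the implication $\lim_k \mathrm{Mod}_p(\cdot) = 0$ propagates. I would make this uniformity explicit in the write-up, as it is the one place where the argument could otherwise fail. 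Everything else is bookkeeping with the definitions.
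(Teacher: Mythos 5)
Your proof is correct and follows essentially the same route as the paper: sandwich the three moduli via $\modcombapfo\leq \modcombfo\leq D\cdot\modcombwg{\F^A_0}$ (the paper gets the left inequality by citing Proposition \ref{propmodbase}(1), i.e.\ $\F_0^A\subset\F_0$, while you spell out the transfer between $G_k^A$ and $G_k$ with a bounded-overlap constant) and then pass to the critical exponents exactly as you do. The $p$-uniformity worry is harmless but unnecessary, since for each fixed $p$ a $k$-independent constant already suffices to propagate $\lim_k \mathrm{Mod}_p = 0$.
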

    
 \begin{proof}
 With  Proposition \ref{propmodbase} (1) and Theorem \ref{theomajomodul}, one has \[\modcombapfo\leq \modcombfo \leq  D \cdot\modcombwg{\F^A_0 }.\]
The inequalities between the critical exponents follow. \end{proof}

\section{Application to buildings of constant thickness}
\label{sec applictethickeness}
 
In this section we use the notation and the assumptions from the previous section.  In particular,  the   self-similar metric on $\borg$ is $d(\cdot,\cdot)$. We fix $d_0$   a small constant compared with $\dia{\borg}$ and with the constant of approximate self-similarity.  Then $\F_0$ is the set of curves of diameter larger than $d_0$.   The notation $\delta_0(\cdot)$ still refers to the increasing function   in   Theorem \ref{theocourbedanspara}.

  As before we fix an apartment $A\in \apo$  and $\F^A_0$ is the set of curves in $\partial A$ of diameter larger than $d_0$. 
  
  We assume that $\Sigma$ is of constant thickness $q\geq3$. This means that $\Gamma$ is the graph product given by the pair $(\mathcal{G}, \{\Z/q\Z\}_{i=1,\dots,n})$. 
As before $\{G_k^A\}_{k\geq 0}$ and $\{G_k\}_{k\geq 0}$ are the approximations of $\partial A$ and $\borg$ provided by Fact \ref{fact approximapart}.  We already noticed that, with the constant thickness assumption, we obtain for  $k\geq 0$ and    $\F^A$   a set of curves contained in $\partial A$  \[\modcombwg{\F^A} =q^k \modcombapg{\F^A}, \]where the modulus in small letters designates the usual modulus computed in $\partial A$. In particular, this means that  from  Theorem \ref{theocourbedanspara} applied  to $\modcombapg{\cdot}$ we can obtain analogous inequalities for   $\modcombwg{\cdot}$.

 The constant thickness allows us to  control by bellow the combinatorial modulus of $\F_0$ by the weighted-combinatorial modulus of  $\F_0^A$. Combining with  the results of Subsection \ref{subsec conseq}, we obtain a full control of the two modulus.

  \begin{theo} \label{theomajomodulreciproque}
For any $p\geq 1$, there exists a constant $D=D(p)$ such that for every $k \geq 1$
\[D^{-1}\cdot \modcombwfo  \leq   \modcomb{p}{\F_0,G_k}  \leq D\cdot  \modcombwg{\F^A_0}.\] 
In particular $Q_W= Q$.
\end{theo}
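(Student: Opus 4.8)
Since the right-hand inequality $\modcomb{p}{\F_0,G_k}\leq D\cdot\modcombwg{\F^A_0}$ is exactly Theorem \ref{theomajomodul}, the plan is to prove the left-hand one, $\modcombwfo\leq D\cdot\modcomb{p}{\F_0,G_k}$. The key role of the constant thickness assumption is that it makes the weighted modulus proportional to the usual modulus in the apartment: for \emph{any} family of curves $\F^A$ in $\partial A$ one has $\modcombwg{\F^A}=(q-1)^k\,\modcombapg{\F^A}$, the factor $(q-1)^k$ depending only on the scale $k$ and not on $\F^A$ (a consequence of Proposition \ref{prop poidshadows}). Consequently Theorem \ref{theocourbedanspara}, whose proof relies on Proposition \ref{propdouble} and is therefore stated only for the usual combinatorial modulus, transfers verbatim to $\modcombwg{\cdot}$ computed in $\partial A$. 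Moreover $W$, being the graph product attached to $(\mathcal{G},\{\Z/2\Z\}_{i})$, has the same nerve as $\Gamma$ and is hence itself an infinite hyperbolic graph product with arcwise connected boundary $\partial W\simeq\partial A$, so every result of Sections \ref{seccurves inPLS}--\ref{sec bord topo met} — in particular Theorem \ref{theocourbedanspara} — may be used inside $\partial A$.

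First I would decompose $\F_0^A$ just as in step i) of the proof of Theorem \ref{theomajomodul}, but carried out inside $\partial A$: there are only finitely many parabolic limit sets $\partial Q_1,\dots,\partial Q_N$ of $\partial A$ of diameter at least $d_0$ (Proposition \ref{proptaillepara}), and indexing them by height and choosing $r_j,\delta_j$ downward along the partial order with $\delta_j<\delta_0(r_j)$ one obtains $\F_0^A=\bigcup_{j=1}^{N}\F^A_{\delta_j,r_j}(\partial Q_j)$. Since Proposition \ref{propmodbase}(2) holds for the weighted modulus as well, this yields $\modcombwfo\leq\sum_{j=1}^{N}\modcombwg{\F^A_{\delta_j,r_j}(\partial Q_j)}$.

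Next, for each $j$ I would chain three estimates. Exactly as in the proof of Theorem \ref{theomajomodul}, fix a non-constant curve $\eta_j\subset\partial Q_j$ whose smallest parabolic limit set in $\partial A$ is $\partial Q_j$ and that is not contained in $\overline{N_{r_j}}(\partial Q')$ for any connected parabolic limit set $\partial Q'\varsubsetneq\partial Q_j$, together with $\epsilon_j>0$ small enough — finitely many smallness conditions, hence realizable — that after the harmless rescaling of Proposition \ref{propscale} one has $\U_{\epsilon_j}(\eta_j)\subset\F_0$ and the hypotheses of Theorems \ref{theocourbedanspara} and \ref{theocontrolimmappart} are satisfied for $\eta_j$. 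The transferred Theorem \ref{theocourbedanspara}, applied inside $\partial A$, gives $\modcombwg{\F^A_{\delta_j,r_j}(\partial Q_j)}\leq C_j\,\modcombwg{\U_{\epsilon_j}^A(\eta_j)}$; the upper bound of Theorem \ref{theocontrolimmappart} gives $\modcombwg{\U_{\epsilon_j}^A(\eta_j)}\leq C'_j\,\modcomb{p}{\U_{\epsilon_j}(\eta_j),G_k}$; and Proposition \ref{propmodbase}(1) together with $\U_{\epsilon_j}(\eta_j)\subset\F_0$ gives $\modcomb{p}{\U_{\epsilon_j}(\eta_j),G_k}\leq\modcomb{p}{\F_0,G_k}$. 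Multiplying these finitely many constants and summing over $j$ produces $D'=D'(p)$ with $\modcombwfo\leq D'\,\modcomb{p}{\F_0,G_k}$ for all large $k$; the finitely many remaining scales are absorbed into $D'$ because each modulus is strictly positive. Taking $D$ to be the larger of $D'$ and the constant of Theorem \ref{theomajomodul} gives the two-sided estimate, the uniformity in $p$ over compact subsets of $[1,+\infty)$ being inherited from the corresponding clauses of Theorems \ref{theocourbedanspara} and \ref{theocontrolimmappart}. Finally $Q_W=Q$ follows at once: for each fixed $p$ the two moduli tend to $0$ with $k$ simultaneously, so Definitions \ref{defexpocrit} and \ref{defexpocritpoid} give the same number — equivalently, combine $Q\leq Q_W$ from Corollary \ref{coroexposentcritique} with the reverse inequality just obtained.

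The real obstacle is not computational but the structural point isolated in the first paragraph, and it is precisely where constant thickness is indispensable: for variable thickness the weight $q(w)$ genuinely depends on $w$ — on the types of the building-walls crossed by a minimal gallery to the shadow $w$ — the weighted modulus is pinned to one chosen approximation of $\partial A$ and admits no analogue of Proposition \ref{propdouble}, so Theorem \ref{theocourbedanspara}, the engine of the whole argument, cannot be run for it. Once $q$ is constant this difficulty vanishes, and what remains is the bookkeeping needed to make the finitely many ``small enough'' choices of $r_j,\delta_j,\epsilon_j$ mutually compatible over the finitely many pieces.
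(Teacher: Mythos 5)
Your proof is correct and follows essentially the same route as the paper: the right-hand bound is Theorem \ref{theomajomodul}, and the left-hand bound is obtained by decomposing $\F_0^A$ into the families $\F^A_{\delta,r}(\partial Q)$, transferring Theorem \ref{theocourbedanspara} to the weighted modulus via the constant-thickness proportionality, and then chaining Theorem \ref{theocontrolimmappart} with Proposition \ref{propmodbase}. Your observation that constant thickness is exactly what allows the weighted analogue of Theorem \ref{theocourbedanspara} matches the paper's own remark at the start of Section \ref{sec applictethickeness}.
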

\begin{proof}
 The right-hand side inequality is given by Theorem \ref{theomajomodul}. The proof is almost the same  for the left-hand side inequality. Indeed, $\F^A_0$ admits a decomposition analogous to the decomposition used at the beginning of the proof of   Theorem \ref{theomajomodul}. With a fixed such decomposition and with  Proposition \ref{propmodbase} (2), it is sufficient to prove that for any parabolic limit set $\partial Q \subset \partial A$ and any $\delta,r>0$ with $\delta<\delta_0(r)$, there exists a constant $C=C(p,\partial Q,r)$   such that for every $k\geq 1$
\[\modcombwg{\F_{\delta,r}^A(\partial Q)}  \leq C \cdot  \modcombfo.  \] 
To this end, fix $\eta$ a non-constant curve in $\partial Q$ and  $\epsilon>0$ such that  the hypotheses of   Theorem \ref{theocourbedanspara} in $\partial A$ and of   Theorem \ref{theocontrolimmappart} are satisfied.  
  Then  there exist two constants $K $ and $K'$ depending only on $p$, $\eta$, $r$ and $\epsilon$ such that for every $k\geq 1$
  \[\modcombwg{\F_{\delta,r}^A(\partial Q)}  \leq K \cdot  \modcombwg{\mathcal{U}^A_\epsilon (\eta)}\leq K' \cdot  \modcombg{\mathcal{U}_\epsilon (\eta)}.\]
  Finally, as in the beginning the proof of Theorem \ref{theocourbedanspara} we can assume without loss of generality that for $\epsilon >0$ small enough   $\mathcal{U}_\epsilon(\eta)\subset \F_0$.   Again the multiplicative constant resulting from this assumption only depends on $d_0$. This assumption and Proposition \ref{propmodbase} (1) provide the desired inequality.
 
The equality of the critical exponents is an immediate consequence of the inequalities between the modulus. 
   \end{proof}
 
\begin{rem} In the case where $\Sigma$ is a right-angled Fuchsian building of constant thickness, M. Bourdon gave the explicit value of the conformal dimension of $\borg$.
\begin{theo}[\cite{BourdonImHyperDimConfRigi}] Let $\Gamma$ be the  graph product associated  with a pair $(C_n,\{\Z/q\Z\}_{i=1,\dots, n})$ where $C_n$ is a cyclic graph of length $n\geq 5$ and $q\geq 2$, then \[\confdim{ \borg}= 1+ \frac{\log(q-1)}{\mathrm{Arg} \cosh \frac{n-2}{2}} .\]\end{theo}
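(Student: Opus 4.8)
The plan is to bypass the explicit construction of an optimal metric and instead evaluate the critical exponent produced by the machinery of this paper. By Theorem~\ref{theocarrasco} the conformal dimension $\confdim{\borg}$ equals the critical exponent $Q$ of the combinatorial modulus of $\F_0$ in $\borg$, and, since the building has constant thickness $q$, Theorem~\ref{theomajomodulreciproque} gives $Q=Q_W$, the critical exponent of the weighted modulus $\modcombwfo$ computed in the boundary of a fixed apartment $A$. (For $q=2$ the building is thin, $\borg\simeq\partial A\simeq\s^1$ and $\confdim{\borg}=1$, which matches the stated value since $\log(q-1)=0$; so assume $q\geq3$.) It therefore suffices to prove $Q_W=1+\frac{\log(q-1)}{\mathrm{Arg}\cosh\frac{n-2}{2}}$. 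I would describe $\partial A$ concretely: the apartment $A$ is the Davis complex of the right-angled Coxeter group $W=W_{C_n}$, realized as a tiling of $\h^2$ by a right-angled hyperbolic $n$-gon, and since $n\geq5$ the group is hyperbolic with $\partial A\simeq\s^1$. I work with the shadow approximation $\{G^A_k\}$ of Subsection~\ref{subsec approxshadow}, the visual exponent normalized so that level $k$ corresponds to combinatorial depth $k$, i.e. the level $k$ elements are exactly the shadows $w_x$ with $\dc{x_0,x}=k$.

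Two structural facts drive the computation. First, by Proposition~\ref{prop poidshadows} together with constant thickness, \emph{every} level $k$ shadow $w$ has weight exactly $q(w)=(q-1)^k$, so the weight is a constant that factors out of the weighted $p$-mass on scale $k$. Second, by Proposition~\ref{prop ballsshadows} the level $k$ shadows are arcs of comparable size which partition $\s^1$, hence their number is $n_k:=\#\{x\in\ch:\dc{x_0,x}=k\}$, the spherical growth function of $W_{C_n}$. Now I would compute the weighted modulus of $\F^A_0$. A curve of diameter at least $d_0$ in $\s^1$ always runs through a sub-arc of definite size and therefore meets a number of level $k$ shadows comparable to $n_k$, with constants independent of $k$. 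By the rotational symmetry of the circle the constant density $\rho\equiv c_k$ with $c_k\asymp n_k^{-1}$ is admissible and optimal up to a multiplicative constant (a non-uniform density either fails admissibility on a thin arc or carries wasted mass), which yields
\[\modcombwfo\asymp(q-1)^k\cdot n_k\cdot n_k^{-p}=(q-1)^k\,n_k^{\,1-p}.\]

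It then remains to identify the growth rate $h:=\lim_k\frac1k\log n_k$; this is the classical core of the argument. From the rational growth series of a right-angled Coxeter group, summing over the spherical subsets of $S$ — the empty set, the $n$ vertices, and the $n$ commuting pairs, there being no larger ones because $C_n$ is triangle-free for $n\geq5$ — one gets $\frac1{W(t)}=1-\frac{nt}{1+t}+\frac{nt^2}{(1+t)^2}=\frac{t^2-(n-2)t+1}{(1+t)^2}$, so $e^{h}$ is the reciprocal of the least positive root of $t^2-(n-2)t+1$, namely $e^h=\frac{n-2}{2}+\sqrt{\left(\frac{n-2}{2}\right)^2-1}=e^{\mathrm{Arg}\cosh\frac{n-2}{2}}$, i.e. $h=\mathrm{Arg}\cosh\frac{n-2}{2}$. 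Consequently $(q-1)^k n_k^{1-p}\asymp e^{k[\log(q-1)+(1-p)h]}$ tends to $0$ exactly when $p>1+\frac{\log(q-1)}{h}$, so $Q_W=1+\frac{\log(q-1)}{\mathrm{Arg}\cosh\frac{n-2}{2}}$, and the theorem follows by the reductions above.

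The main obstacle is the lower bound in the comparison $\modcombwfo\asymp(q-1)^k n_k^{\,1-p}$: one must rule out densities concentrated on short arcs and show the uniform density is essentially optimal. On $\s^1$ this is a standard symmetrization/overlap argument, but it is the only genuinely analytic point; the remaining subtleties are organizational — verifying that all level $k$ shadows really have comparable size and exact weight $(q-1)^k$ (which relies on $\borg$ being connected and on constant thickness, and on the fact that Proposition~\ref{propdouble} is not needed here since we keep the one distinguished approximation throughout) and handling the degenerate case $q=2$ — while the evaluation of $h$ is the pleasant closed-form computation above. Alternatively, and this is the route actually taken in \cite{BourdonImHyperDimConfRigi}, the statement is equivalent to saying that the visual metric of exponent $h=\mathrm{Arg}\cosh\frac{n-2}{2}$ is Ahlfors-regular of dimension $1+\frac{\log(q-1)}{h}$ and that, by the Bourdon--Pajot Poincaré inequality, $\borg$ is Loewner at that dimension, so Theorem~\ref{theo Tyson} forces the conformal dimension to equal it.
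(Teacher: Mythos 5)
This statement is not proved in the paper at all: it is quoted as a known theorem of Bourdon (\cite{BourdonImHyperDimConfRigi}) in a remark after Theorem \ref{theomajomodulreciproque}, so your argument is necessarily a different route, namely a derivation from the paper's own machinery. As such it is essentially sound: Theorem \ref{theocarrasco} gives $\confdim{\borg}=Q$, Theorem \ref{theomajomodulreciproque} gives $Q=Q_W$ for constant thickness $q\geq 3$ (with $q=2$ handled separately, correctly), constant thickness plus Proposition \ref{prop poidshadows} gives the exact weight $(q-1)^k$, and your evaluation of the growth rate of $W_{C_n}$ via the rational growth series is correct: $W(t)=(1+t)^2/(t^2-(n-2)t+1)$, whence $e^h+e^{-h}=n-2$ and $h=\mathrm{Arg}\cosh\frac{n-2}{2}$. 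Bourdon's original proof instead exhibits an explicit Ahlfors-regular visual metric and (with Bourdon--Pajot) the Loewner property, which yields strictly more (an optimal metric, rigidity); your route buys a short computation showing that Theorem \ref{theomajomodulreciproque} really does reduce the conformal dimension of the building to elementary data of the apartment.

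Two points in your modulus estimate $\modcombwfo\asymp(q-1)^k\,n_k^{1-p}$ deserve tightening. First, you have the difficulty located in the wrong place: the lower bound needs no symmetrization, since applying H\"older to a single curve of $\F^A_0$ gives $M_p(\rho)\geq(\#G^A_k)^{1-p}$ for every admissible $\rho$ (this is the elementary bound already recorded in Section \ref{secmodudef}). The step that genuinely needs input is the upper bound, i.e.\ admissibility of the constant function $c_k\asymp n_k^{-1}$ together with $\#G^A_k\asymp n_k$: both amount to saying that every level-$k$ shadow carries measure $\asymp e^{-hk}$ and every arc of visual diameter $\geq d_0$ carries measure bounded below, which is Ahlfors-regularity of the combinatorial visual metric on $\partial A\simeq\s^1$ of dimension $h/\alpha$ (Coornaert's Patterson--Sullivan estimates, or Bourdon's explicit description of the metric for the right-angled $n$-gon group). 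You assert the counting as if it followed from Proposition \ref{prop ballsshadows} alone; it does not, since that proposition controls diameters, not measures, and the minimal covering $G^A_k$ is only a subfamily of the level-$k$ shadows. With that reference supplied, and with the observation that the comparison constants depend on $p$ and $d_0$ but not on $k$ (so only the exponential rate matters for the critical exponent), your argument is complete.
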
 

\end{rem}

\section{Dimension 3 and 4 right-angled buildings with boundary satisfying the CLP}
\label{sec result}

In a well chosen case, the symmetries of the Davis chamber, that extend to the boundary of an apartment, provide a strong control of the weighted modulus. This lead to the proof of the main theorem of this  article. 
  
 Here we still assume that $\Gamma$ is of constant thickness $q\geq3$. As usual,  $W$ is the Coxeter group, associated with $\Gamma$. As before $\{G_k^A\}_{k\geq 0}$ and $\{G_k\}_{k\geq 0}$  are the approximations of $\partial A$ and $\borg$ provided by Fact \ref{fact approximapart}.

  In this subsection, we assume that $W$ is the group generated by the reflections about the faces of a compact right-angled polytope $D\subset \h^d$. 
  
 Now we write $\mathrm{Ref}(\h^d)$ the group generated by all the hyperbolic reflections in $\h^d$. In the following we designate by $\mathrm{Ref}(D)$ the stabilizer of $D$ under the action of $\mathrm{Ref}(\h^d)$.

  Now, with additional assumptions on the regularity of $D$, we prove that $\borg$ satisfies the CLP.
 
  \begin{theo} \label{theoprincip}
 Let $\Gamma$ be a graph product of constant thickness $q\geq3$. Assume that $W$ is the group generated by the reflections about the faces of a compact right-angled polytope $D\subset \h^d$. Moreover, assume   that the quotient of $D$ by $R_{ef}(D)$ is a simplex in $\h^d$. Then $\partial \Gamma$ satisfies the CLP. \end{theo}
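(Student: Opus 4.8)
The plan is to verify that $\partial\Gamma$, equipped with a self-similar visual metric and with the shadow approximation $\{G_k\}$ fixed in Subsection~\ref{subsec choiceapprox}, satisfies the two hypotheses of Proposition~\ref{propcritCLP}; since $\Gamma$ has constant thickness this then yields the CLP with exponent $Q=Q_W$ by Theorem~\ref{theomajomodulreciproque}. For $p=1$, exactly as in the proof of Theorem~\ref{theoapplic au imm de dim2}, it suffices to exhibit for every $N$ a family of $N$ pairwise disjoint curves of diameter larger than $d_0$ in $\borg$: the thickness $q\geq 3$ furnishes $N$ apartments through $x_0$ with pairwise bounded intersections, hence, $\Sigma$ being hyperbolic and apartments being convex, with pairwise disjoint boundary spheres of diameter comparable to $\dia{\borg}$, each of which carries such a curve; thus $\modcomb{1}{\F_0,G_k}\geq N$ for $k$ large and $\modcomb{1}{\F_0,G_k}$ is unbounded.

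For $p\geq 1$ the task is to bound $\modcomb{p}{\F_0,G_k}$ by $C\cdot\modcomb{p}{\mathcal{U}_\epsilon(\eta),G_k}$ for an arbitrary non-constant curve $\eta\subset\borg$ and every $\epsilon>0$, with $C$ independent of $k$ and uniform for $p$ in compact subsets of $[1,+\infty)$. Fix an apartment $A\in\apo$. By Theorem~\ref{theomajomodulreciproque}, $\modcomb{p}{\F_0,G_k}\asymp\modcombwg{\F^A_0}$, and since all panels have thickness $q$ the weights are constant in $w$, so $\modcombwg{\cdot}$ equals $\modcombapg{\cdot}$ up to a factor depending only on $k$. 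Writing $\F^A_0$ as the finite union $\bigcup_i\F^A_{\delta_i,r_i}(\partial Q_i)$ over the parabolic limit sets of $\partial A$ of diameter at least $d_0$ (Proposition~\ref{proptaillepara}), arranged as in the proof of Theorem~\ref{theomajomodul} with $\delta_i<\delta_0(r_i)$, and applying Theorem~\ref{theocourbedanspara} inside the thin building $\partial A$ to $\modcombapg{\cdot}$, each term $\modcombwg{\F^A_{\delta_i,r_i}(\partial Q_i)}$ is comparable, with constants depending only on $p$ and uniform on compacts, to $\modcombwg{\mathcal{U}^A_{\epsilon_i}(\eta_i)}$ for any curve $\eta_i\subset\partial Q_i$ not confined to a smaller parabolic limit set. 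Conversely, the inequalities in the proof of Theorem~\ref{theomajomodulreciproque} give $\modcombwg{\F^A_{\delta,r}(\partial Q)}\leq C\cdot\modcomb{p}{\mathcal{U}_\epsilon(\eta'),G_k}$ for such a curve $\eta'\subset\partial Q$, and Theorem~\ref{theocourbedanspara} in $\borg$ shows $\modcomb{p}{\mathcal{U}_\epsilon(\eta'),G_k}\asymp\modcomb{p}{\F_{\delta,r}(\partial P),G_k}$ for the parabolic limit set $\partial P$ of $\borg$ with $\partial P\cap\partial A=\partial Q$. Hence the whole problem reduces to comparing, up to uniform constants, the weighted moduli of neighborhoods of two arbitrary non-constant curves in $\partial A\simeq\partial W$.

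The decisive ingredient is the symmetry group of $D$. Every $\phi\in\mathrm{Ref}(D)$ permutes the codimension-one faces of $D$ and preserves the dihedral angles, hence induces a type-permuting automorphism of $(W,S)$ and, since all $q_i$ equal $q$, of $\Gamma$; realized on each chamber as the symmetry $\phi$ of the Davis chamber, it is an isometry of $\Sigma$ fixing $x_0$, which preserves the apartment realized by $\h^d$ and extends to a uniformly bi-Lipschitz homeomorphism of $\borg$ and of $\partial A$ respecting the approximation $\{G^A_k\}$ and the weights. Let $G=\langle\Gamma,\mathrm{Ref}(D)\rangle$ act on $\partial A$. The hypothesis that $D/\mathrm{Ref}(D)$ is a simplex says precisely that $\mathrm{Ref}(D)$ acts transitively on the flags of $D$, so that the nerve of $(W,S)$ is homogeneous under the induced automorphisms and any face of $D$ may be carried onto any other of the same dimension. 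Using this, together with the elements of $\Gamma$ and the scaling maps of the approximately self-similar structure (Proposition~\ref{propscale}), the plan is to establish the analogue of property~$(P)$ inside $\partial A$: for each non-constant $\eta'\subset\partial A$ and each $\epsilon>0$ there is a finite $F\subset G$ such that, from every $\gamma\in\F^A_0$, the set $\bigcup_{f\in F}f(\gamma)$ contains a curve in $\mathcal{U}^A_\epsilon(\eta')$. Indeed $\gamma$ lies in $N_{\delta_i}(\partial Q_i)$ for some $i$, hence, after a bounded translation and by Proposition~\ref{propLI}, is an $(L,I_i)$-curve; applying well-chosen symmetries from $\mathrm{Ref}(D)$ together with suitable elements of $\Gamma$ spreads such a curve across panels of every type near $x_0$, producing an $(L,S)$-curve, after which Proposition~\ref{propdupli} with $P=\Gamma$ and $I=S$ lets one follow $\eta'$. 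The convexity estimate of the weighted mass carried out in the proof of Theorem~\ref{theocourbedanspara} then yields $\modcombwg{\F^A_0}\leq C\cdot\modcombwg{\mathcal{U}^A_\epsilon(\eta')}$ with $C$ depending only on $\#F$, the bi-Lipschitz constants of the $f\in F$ and the doubling constant, hence uniform for $p$ on compacts. Combining with the reductions of the previous paragraph and with $\modcomb{p}{\F_0,G_k}\asymp\modcombwg{\F^A_0}$ produces the inequality demanded by Proposition~\ref{propcritCLP}, which concludes.

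I expect the symmetry step to be the main obstacle: one must make rigorous that $G=\langle\Gamma,\mathrm{Ref}(D)\rangle$ is transitive enough on curves — concretely, that finitely many $G$-translates of a curve confined to a proper parabolic limit set can be glued into an $(L,S)$-curve — and it is precisely here that the hypothesis that $D/\mathrm{Ref}(D)$ is a simplex, equivalently the full flag-homogeneity of the nerve, is indispensable; without it the mechanism of Example~\ref{ex translation PLS} is an obstruction. A secondary, routine point to be checked is that every multiplicative constant above is independent of the scale $k$ and can be chosen uniformly for $p$ in any compact subset of $[1,+\infty)$, which is inherited from the corresponding clauses of Theorems~\ref{theocourbedanspara}, \ref{theocontrolimmappart} and \ref{theomajomodulreciproque}, and that the passage between an arbitrary curve of $\borg$ and a curve supported in an apartment boundary — via Theorem~\ref{theocourbedanspara} in $\borg$, Lemma~\ref{lemmodulemajo}, and the converse estimate from the proof of Theorem~\ref{theomajomodulreciproque} — goes through without loss.
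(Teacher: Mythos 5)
Your overall architecture coincides with the paper's: reduce to Proposition \ref{propcritCLP}, handle $p=1$ with disjoint apartment boundaries as in Theorem \ref{theoapplic au imm de dim2}, reduce via Theorem \ref{theocourbedanspara} to a curve $\eta\subset\partial A$, and pass between $\modcomb{p}{\cdot,G_k}$ in $\borg$ and the weighted modulus in $\partial A$ by Theorems \ref{theomajomodul}, \ref{theocontrolimmappart} and \ref{theomajomodulreciproque}. But at the decisive step --- controlling $\modcombapg{\F^A_0}$ by $\modcombapg{\mathcal{U}^A_\epsilon(\eta)}$ for an arbitrary non-constant $\eta\subset\partial A$ --- your argument has a genuine gap, and you acknowledge it yourself. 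You propose to let $G=\langle\Gamma,\mathrm{Ref}(D)\rangle$ act on $\partial A$, spread a curve trapped in a proper parabolic limit set of $\partial A$ into an $(L,S)$-curve by finitely many symmetric translates, and then invoke Proposition \ref{propdupli}. Two problems. First, $\Gamma$ does not stabilize the apartment $A$, so $\langle\Gamma,\mathrm{Ref}(D)\rangle$ does not act on $\partial A$, and the translates you take leave $\partial A$, which breaks the weighted-modulus bookkeeping; the group that does act is $\langle W,\mathrm{Ref}(D)\rangle$. Second, and more seriously, flag-transitivity of $\mathrm{Ref}(D)$ does not by itself glue the translates: the following mechanism of Propositions \ref{propLI} and \ref{propdupli} works because a rotation about a building-wall crossing $\cv{\gamma}$ fixes a point of $\gamma$, so $\gamma$ and its image meet; a symmetry of $D$ has its mirror inside the chamber $x_0$, $\gamma$ need not cross it, and $f(\gamma)\cap\gamma$ may well be empty --- this is exactly the obstruction of Example \ref{ex translation PLS} transplanted to the apartment, and nothing in your sketch removes it.

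The paper resolves this step by a different and much shorter device (Lemma \ref{lemcox}): since $D/\mathrm{Ref}(D)$ is a compact simplex $T\subset\h^d$, the reflection group $W_T$ of $T$ contains $W$ with finite index and acts geometrically on $A$, so $\partial A$ is identified with $\partial W_T$; because $T$ is a simplex, every proper special subgroup of $W_T$ is finite, hence $\partial W_T$ has no proper parabolic limit set, and the Coxeter-group analogue of Theorem \ref{theocourbedanspara} (Corollary 6.2 of Bourdon--Kleiner) applies verbatim to give $\modcombapg{\F^A_0}\leq C\cdot\modcombapg{\mathcal{U}^A_\epsilon(\eta)}$ with no decomposition into the parabolic pieces of $W$ and no hand-made gluing; constant thickness then upgrades this to the weighted modulus. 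If you want to salvage your route, you would in effect have to redo the Bourdon--Kleiner following argument for the walls of $W_T$ (which do cross $\cv{\gamma}$ in abundance), i.e.\ reprove what the paper simply cites; as written, your ``spread and glue'' step is the missing idea, not a routine verification.
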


Now we assume that the hypotheses of the preceding theorem hold and we  use the following notation.

\begin{nota}
\text{  }
\liste{\item $T$ is the hyperbolic simplex in $\h^d$ isometric to $D/R_{ef}(D)$.
     \item $W_T$ is the hyperbolic reflection group generated by the reflections about the codimension 1 faces of $T$.
}
\end{nota} 
  
We notice that $W$ is a finite index subgroup of $W_T$. Indeed, $W$ is a subgroup of $W_T$ and they both act discretely on $\h^d$ with finite co-volume.
Then $W_T$ acts by polyhedral isometries on an apartment of $\Sigma$. Indeed,  a reflection about a face  of $T$ either preserves $D$, or is a reflection about a face of $D$. In particular, it  preserves the tilling of $\h^d$ by $D$.

Thanks to  the constant thickness and the results in the preceding section, we only need to study the usual combinatorial modulus in the apartment to prove the theorem.

\begin{lem} \label{lemcox}
Let $p\geq 1$ and let $A\in \apo$. Let $\eta$ be a non-constant curve in $\partial A$. Then there exists a constant $C = C(p,\eta,\epsilon)$ such that for every $k\geq 1$ \[\modcombapg{\F^A_0}\leq C \cdot \modcombapg{\mathcal{U}^A_\epsilon (\eta)}.\]
Furthermore, when $p$ belongs to a compact subset of $[1, + \infty)$ the constant  $C$ may be chosen independent of $p$.
\end{lem}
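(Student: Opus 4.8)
The plan is to reduce Lemma \ref{lemcox} to the setting of a curve family contained in a parabolic limit set of $\partial A$, and then to exploit the symmetries of the simplex $T$ (equivalently, the action of $W_T$ on the apartment) together with the elements of $W$ to follow any curve by any other. First I would decompose $\F^A_0$ in the same way as in the proof of Theorem \ref{theomajomodul}: by Proposition \ref{proptaillepara} there are finitely many parabolic limit sets $\partial Q_1,\dots,\partial Q_N$ of $\partial A$ of diameter at least $d_0$, and a nested choice of constants $\delta_i<\delta_0(r_i)$ gives a finite decomposition $\F^A_0=\bigcup_{i,j}\F^A_{\delta_i,r_i}(\partial Q_{i,j})$. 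By Proposition \ref{propmodbase}(2) it then suffices to bound $\modcombap{p}{\F^A_{\delta,r}(\partial Q)}$ from above by $C\cdot\modcombap{p}{\mathcal{U}^A_\epsilon(\eta)}$ for each such piece, with $C$ independent of $k$ (and locally independent of $p$). Since $\partial A$ is a thin right-angled building (a Coxeter boundary), Theorem \ref{theocourbedanspara} applies verbatim inside $\partial A$, giving $\modcombap{p}{\F^A_{\delta,r}(\partial Q)}\leq C\cdot\modcombap{p}{\mathcal{U}^A_\epsilon(\eta')}$ for any curve $\eta'\subset\partial Q$ and small $\epsilon$; the issue that remains is to pass from an arbitrary curve $\eta'$ inside a given parabolic limit set to the arbitrary curve $\eta$ in the statement, which may lie anywhere in $\partial A$.

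The key step is therefore a ``following curves'' argument in $\partial A$, analogous to property $(P)$ of Section \ref{sec stepsofproof}, but now using the enlarged symmetry group: the group generated by $W$ acting on $A$ and by the reflections of $W_T$ that stabilize the tiling of $\h^d$ by copies of $D$. The point is that $W_T$ is generated by reflections about the faces of the simplex $T$, and these reflections are isometries of $(\partial A,d)$ (in fact uniform quasi-Moebius homeomorphisms, hence bi-Lipschitz for a visual metric up to the usual caveats). Since $W$ has finite index in $W_T$, and $W_T$ acts on $\partial A$ with a ``small'' fundamental domain (the boundary of the simplex $T$), these extra symmetries are exactly what is needed to move a curve concentrated near one parabolic limit set of $\partial A$ to one concentrated near any prescribed building-wall boundary. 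Concretely, I would show: for every non-constant curve $\eta\subset\partial A$ and every $\epsilon>0$ there is a finite set $F$ of bi-Lipschitz maps of $\partial A$ (products of elements of $W$ and reflections of $W_T$) such that for any curve $\gamma$ in a suitable $\F^A_{\delta,r}(\partial Q)$, the union $\bigcup_{f\in F}f(\gamma)$ contains a curve in $\mathcal{U}^A_\epsilon(\eta)$. This is where Proposition \ref{propdupli} (applied inside $\partial A$ to reach a good curve inside a parabolic limit set) is combined with a chain of $W_T$-symmetries realizing the ``highly symmetric'' structure of $T$; the verification that $T$ being a simplex forces enough symmetry is the geometric heart of the argument, and is where the hypothesis ``$D/\mathrm{Ref}(D)$ is a simplex'' is genuinely used.

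Once such an $F$ is produced, the modulus inequality follows by the now-standard convexity computation already used in the proof of Theorem \ref{theocourbedanspara}: given a $\mathcal{U}^A_\epsilon(\eta)$-admissible function $\rho$ on $G^A_k$, set $\rho'(w)=\sum_{f\in F}\sum_{fv\cap w\neq\emptyset}\rho(v)$; then $\rho'$ is $\F^A_{\delta,r}(\partial Q)$-admissible because $L_{\rho'}(\gamma)\geq L_\rho(\theta)$ for the curve $\theta\subset\bigcup_{f\in F}f(\gamma)$ lying in $\mathcal{U}^A_\epsilon(\eta)$, and $WM$-type bounds give $M_p(\rho')\leq N^p\cdot\#F\cdot M_p(\rho)$ with $N$ depending only on $\#F$, the bi-Lipschitz constants of the $f$'s, and the doubling constant of $\partial A$. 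Tracking constants through Theorem \ref{theocourbedanspara} shows $C$ depends only on $p,\eta,\epsilon$ (through $F$), is independent of $k$, and is uniform when $p$ ranges over a compact subset of $[1,+\infty)$, since $C$ has the shape $\lambda\cdot N^p$ with $\lambda,N$ independent of $p$. The main obstacle I expect is the second paragraph: establishing that the simplex hypothesis really yields a finite symmetry set $F$ that can steer curves across all of $\partial A$, i.e.\ carrying out the explicit combinatorics of how the reflections of $W_T$ permute the building-walls of $A$ and interact with the $W$-orbits; this is the step that distinguishes the dodecahedron and $120$-cell cases (Corollary \ref{coroprincip}) from general right-angled polytopes.
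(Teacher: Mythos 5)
Your overall strategy (decompose $\F^A_0$ into families $\F^A_{\delta,r}(\partial Q)$ attached to parabolic limit sets of $\partial A$, control each piece by Theorem \ref{theocourbedanspara} applied in the apartment, then bridge from a curve $\eta'\subset\partial Q$ to the given $\eta$ by ``following curves'' with maps coming from $W$ and the reflections of $W_T$) has the right geometric input, but its central step is not proved: you never construct the finite family $F$ of bi-Lipschitz maps of $\partial A$ steering an arbitrary $\gamma\in\F^A_{\delta,r}(\partial Q)$ into $\mathcal{U}^A_\epsilon(\eta)$ when $\eta$ lies outside (or in a different) parabolic limit set, and you yourself flag this as the main obstacle. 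That step is exactly the content of the lemma once the rest of the paper's machinery is in place, and it is not a routine verification: for a general right-angled polytope $D$ it is false (this is why the paper needs the hypothesis that $D/\mathrm{Ref}(D)$ is a simplex, and why the conclusion is restricted to the dodecahedron and the $120$-cell). So as written the proposal has a genuine gap precisely at its load-bearing point; everything before and after it (the decomposition as in Theorem \ref{theomajomodul}, the admissibility/convexity computation $M_p(\rho')\leq N^p\cdot\#F\cdot M_p(\rho)$, the uniformity in $p$ on compacta) is standard and unproblematic.

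The paper closes this gap by a change of Coxeter structure rather than by an explicit steering construction. Since $W$ has finite index in $W_T$ and $W_T$ preserves the tiling of $\h^d$ by copies of $D$, the group $W_T$ acts geometrically on the apartment $A$, so $\partial A$ is identified with $\partial W_T$, and the approximation $\{G^A_k\}$ and its modulus are unchanged under this identification. Because the Coxeter polytope of $W_T$ is a compact simplex, every proper special (hence parabolic) subgroup of $W_T$ is finite, so $\partial W_T=\partial A$ contains \emph{no} proper parabolic limit set; consequently, for any non-constant curve $\eta\subset\partial A$ the smallest parabolic limit set containing it is all of $\partial A$, and the Coxeter-group analogue of Theorem \ref{theocourbedanspara} (Corollary 6.2 of \cite{BourdonKleinerCLP}, applied to $W_T$) gives directly
\[
\modcombapg{\F^A_0}\leq C(p,\eta,\epsilon)\cdot \modcombapg{\mathcal{U}^A_\epsilon(\eta)},
\]
with the stated uniformity in $p$. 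In other words, the symmetries of $T$ that you wanted to exploit by hand are absorbed into the group $W_T$ itself, and the ``following curves'' argument you would need is exactly the one already carried out inside parabolic limit sets in the Bourdon--Kleiner result; no decomposition of $\F^A_0$ into $W$-parabolic pieces is needed. If you want to salvage your route, the efficient fix is the same observation: replace $W$-parabolic limit sets by $W_T$-parabolic limit sets, at which point your bridging step becomes vacuous.
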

\begin{figure}[ht!]\labellist
\small\hair 2pt
 
\pinlabel $\alpha$  at 510 400 
 \pinlabel $\beta$  at 655 530 
  \pinlabel $\gamma$  at 460 730 
  \pinlabel $\delta$  at 655 310 
    \pinlabel $\theta$  at 905 190 
   \pinlabel $\omega$  at 855 360 
\endlabellist
\centering  \includegraphics[height=4cm]{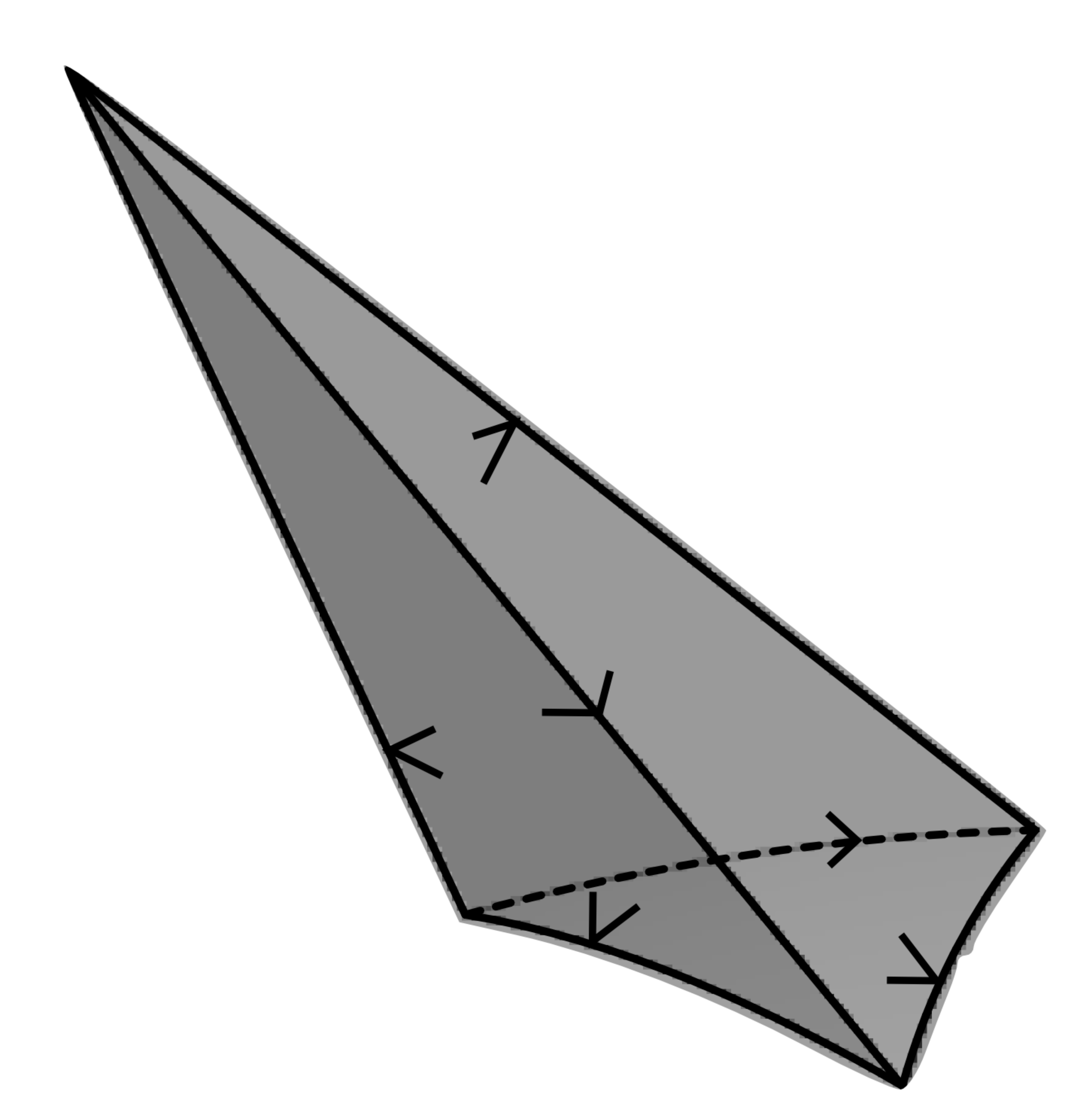}\caption{If $D$ is a dodecahedron, $T$ is the hyperbolic tetrahedron with dihedral angles $\alpha = \pi/5, \beta=\pi/3, \gamma=\delta=\omega= \pi/2$ and $\theta=\pi/4,$ }\label{fig:D24}\end{figure}

\begin{proof} To prove this lemma, we first use the fact that
 $\partial W_T$ is identified with $\partial A$ as  $W_T$ acts geometrically on $A$. Hence  the combinatorial visual metric on $\partial A$ defines a  self-similar metric $d_{W_T}$ on $\partial W_T$. A $\kappa$-approximation $\{G_k^A\}_{k\geq 0}$ of $\partial A$ induces a $\kappa$-approximation on $\partial W_T$ with same the modulus. 

 On the other hand,  $\partial W_T$ contains no proper parabolic limit set. Indeed, the Coxeter polytope $T$ of $W_T$ is a simplex so all the proper parabolic subgroups of $W_T$ are finite.   In particular, for any non-constant curve $\eta\subset \partial W_T$, the smallest parabolic subset containing $\eta$ is  $\partial W_T$. 

 As a consequence, by \cite[Corollary 6.2.]{BourdonKleinerCLP}    we get that for every $\epsilon>0$, there exists $C = C(p,\eta,\epsilon)$ such that for every $k\geq 1$
 \[\modcombapg{\F_0^A} \leq C \cdot \modcombapg{\mathcal{U}^A_\epsilon (\eta)}.\]
 The  Corollary 6.2 in \cite{BourdonKleinerCLP}  is the equivalent for Coxeter groups of our Theorem \ref{theocourbedanspara} for graph products. 
\end{proof}
 
\begin{proof}[Proof of Theorem \ \ref{theoprincip}.]
 We  check that the hypotheses  of Proposition \ref{propcritCLP} are satisfied. 
 To prove that  $\modcomb{1}{\F_0,G_k}$ is unbounded, it is enough to prove that there exist   $N$ disjoint curves of diameter larger than $d_0$ in $\borg$ for every $N \in \N$ as we did at the beginning of the proof of Theorem \ref{theoapplic au imm de dim2}.
 
Now we let $p\geq 1$, $\eta$ be a non-constant curve in $\borg$, and $\epsilon>0$.  Without loss of generality, we can assume that there exists $A\in \ap$  such that $\eta\subset \partial A$. Indeed, as a consequence of Theorem \ref{theocourbedanspara} if $\partial P$ is  the smallest parabolic limit set containing $\eta$. For $\eta'$ a curve such that $\partial P$ is  the smallest parabolic limit set containing $\eta'$, then, up to a multiplicative constant, the behavior of $\modcombg{\mathcal{U}_\epsilon(\eta)}$ and $\modcombg{\mathcal{U}_{\epsilon}(\eta')}$ when $k$ goes to infinity are the same. The multiplicative constant resulting from this assumption only depends on $\eta$.
 
 Using the same arguments as in the beginning of the proof of Theorem \ref{theocourbedanspara}, we can also assume,  without loss of generality, that for $\epsilon >0$ small enough   
 \liste{\item $\mathcal{U}_\epsilon(\eta)\subset \F_0$,
 \item   $x_0  \in \Ch{A}$. }
Again the multiplicative constant resulting from these assumptions only depends on $d_0$.  
 
Then, thanks to  the constant  thickness, the inequality of   Lemma \ref{lemcox} becomes 
\[\modcombwfo\leq C \cdot \modcombwg{\mathcal{U}^A_\epsilon (\eta)},\]
where $C$ depends only on $p$, $\eta$ and $\epsilon$.

 Finally, it is enough to apply   Theorem \ref{theomajomodul} to the left-hand term  and   Theorem \ref{theocontrolimmappart} to the right-hand term of the previous inequality to complete the proof.
\end{proof}

  \begin{coro} \label{coroprincip}
 Let $\Sigma$ be a building of constant thickness   $q\geq 3$. Assume that the Coxeter group of $\Sigma$ is the reflection group of the right-angled dodecahedron in $\mathbb{H}^3$ or  the reflection group of the right-angled 120-cells in $\mathbb{H}^4$, then $\partial \Sigma$ satisfies the CLP.  
  \end{coro}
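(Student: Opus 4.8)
The plan is to obtain this corollary as a direct specialization of Theorem~\ref{theoprincip}. Concretely, for $D$ the right-angled dodecahedron in $\h^3$ (so $d=3$) and for $D$ the right-angled $120$-cell in $\h^4$ (so $d=4$), I would check two things: first, that the graph product $\Gamma$ of constant thickness $q\geq 3$ with Coxeter group $W_D$ falls within the standing assumptions used throughout Sections~\ref{sec LFRAHB}--\ref{sec result} (infinite, hyperbolic, with arcwise connected boundary); and second, that $D/\mathrm{Ref}(D)$ is a simplex of $\h^d$, which is precisely the extra hypothesis of Theorem~\ref{theoprincip}.

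For the first point: the existence of the compact right-angled dodecahedron of $\h^3$ is classical (Andreev), and the compact right-angled $120$-cell of $\h^4$ is likewise a classical construction; in both cases $W_D$ is a cocompact reflection group of $\h^d$, hence an infinite word-hyperbolic group. By Meier's criterion recalled in Subsection~\ref{subsec def gp rab} (which depends only on the underlying graph $\mathcal{G}$, the same for $W_D$ and for $\Gamma$), the graph product $\Gamma$ is hyperbolic and the right-angled building $\Sigma$ is Gromov hyperbolic. By the theorem of Dymara--Osajda (\cite{DymOsaj}), $\partial\Sigma$ is homeomorphic to the Menger space $\mu^{d-1}$, which is connected, so $\borg$ is arcwise connected and $\Gamma$ satisfies all the hypotheses imposed on graph products in this article.

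For the second point I would first identify $\mathrm{Ref}(D)$ with the combinatorial symmetry group of $D$, using that a compact right-angled hyperbolic polytope is rigid, so that every combinatorial automorphism of $D$ is induced by an isometry of $\h^d$ and conversely. When $D$ is the dodecahedron this group is $[5,3]$, of order $120$, and a fundamental domain for its action on $D$ is the characteristic (barycentric) simplex: coning the barycentric triangle of a pentagonal face to the centre of $D$ produces the hyperbolic tetrahedron $T$ with dihedral angles $\pi/5,\pi/3,\pi/4$ and three right angles, as in Figure~\ref{fig:D24} (equivalently $W_T=[5,3,4]$, one of the compact Coxeter tetrahedron groups of $\h^3$). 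When $D$ is the $120$-cell the symmetry group is $H_4=[5,3,3]$, of order $14400$, and the analogous coning construction yields its characteristic simplex, a compact Coxeter $4$-simplex of $\h^4$. In both cases $D/\mathrm{Ref}(D)$ is a simplex, so Theorem~\ref{theoprincip} applies and $\partial\Sigma$ satisfies the CLP.

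The only step requiring genuine care is this last geometric identification: one must verify that the characteristic simplices of these two regular polytopes are non-degenerate hyperbolic simplices carrying the asserted dihedral-angle data — a finite computation from the Coxeter diagrams $[5,3]$ and $[5,3,3]$ together with the count of polytopes meeting around a codimension-$2$ face in the corresponding $\h^d$-tessellation — and that the quotient map $D\to D/\mathrm{Ref}(D)$ has simplicial image. Everything else is an immediate invocation of Theorem~\ref{theoprincip}.
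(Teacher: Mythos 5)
Your proposal is correct and follows the same route as the paper: the corollary is obtained as a direct specialization of Theorem~\ref{theoprincip}, the only substantive input being that the quotient of the right-angled dodecahedron (resp.\ $120$-cell) by $\mathrm{Ref}(D)$ is the characteristic simplex, which the paper only indicates via Figure~\ref{fig:D24} and the remark citing Coxeter. Your explicit verification of the standing hypotheses (hyperbolicity via Meier's criterion and connectedness of $\partial\Sigma$ via Dymara--Osajda) and of the simplex condition via the symmetry groups $[5,3]$ and $[5,3,3]$ simply fills in details the paper leaves implicit.
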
 
  
  \begin{rem}The hyperbolic $120$-cell was described by H.S.M. Coxeter in \cite{CoxeterPolyt} (see also \cite[Appendix B.2.]{DavisBook}). It has been used by M.W. Davis to  build a compact hyperbolic $4$-manifold in \cite{DavisManif}.\end{rem}
  
\printindex
  \bibliography{Biblio}

\begin{thebibliography}{GdlH90}

\bibitem[AB08]{AbramBrown}
Peter Abramenko and Kenneth~S. Brown.
\newblock {\em Buildings}, volume 248 of {\em Graduate Texts in Mathematics}.
\newblock Springer, New York, 2008.
\newblock Theory and applications.

\bibitem[BK02]{BonkKleinerQuasiSymParamofSpheres}
Mario Bonk and Bruce Kleiner.
\newblock Quasisymmetric parametrizations of two-dimensional metric spheres.
\newblock {\em Invent. Math.}, 150(1):127--183, 2002.

\bibitem[BK05]{BonkKleinerConfDimGromHypergrps}
Mario Bonk and Bruce Kleiner.
\newblock Conformal dimension and {G}romov hyperbolic groups with 2-sphere
  boundary.
\newblock {\em Geom. Topol.}, 9:219--246, 2005.

\bibitem[BK13]{BourdonKleinerCLP}
Marc Bourdon and Bruce Kleiner.
\newblock Combinatorial modulus, the combinatorial {L}oewner property, and
  {C}oxeter groups.
\newblock {\em Groups Geom. Dyn.}, 7(1):39--107, 2013.

\bibitem[Bou97]{BourdonImHyperDimConfRigi}
M.~Bourdon.
\newblock Immeubles hyperboliques, dimension conforme et rigidit\'e de
  {M}ostow.
\newblock {\em Geom. Funct. Anal.}, 7(2):245--268, 1997.

\bibitem[Bow98]{BowditchCutPoints}
Brian~H. Bowditch.
\newblock Cut points and canonical splittings of hyperbolic groups.
\newblock {\em Acta Math.}, 180(2):145--186, 1998.

\bibitem[BP99]{BourdonPajotPoinc}
Marc Bourdon and Herv{\'e} Pajot.
\newblock Poincar\'e inequalities and quasiconformal structure on the boundary
  of some hyperbolic buildings.
\newblock {\em Proc. Amer. Math. Soc.}, 127(8):2315--2324, 1999.

\bibitem[BP00]{BourdonPajotRigi}
Marc Bourdon and Herv{\'e} Pajot.
\newblock Rigidity of quasi-isometries for some hyperbolic buildings.
\newblock {\em Comment. Math. Helv.}, 75(4):701--736, 2000.

\bibitem[Can94]{CannonCombiRiemMapTheo}
James~W. Cannon.
\newblock The combinatorial {R}iemann mapping theorem.
\newblock {\em Acta Math.}, 173(2):155--234, 1994.

\bibitem[Cap14]{CapraceAutomRightAngled}
Pierre-Emmanuel Caprace.
\newblock Automorphism groups of right-angled buildings: simplicity and local
  splittings.
\newblock {\em Fund. Math.}, 224(1):17--51, 2014.

\bibitem[Car11]{Carra}
Matias Carrasco.
\newblock Thèse de doctorat : Jauge conforme des espaces métriques compacts.
\newblock {\em Université de {P}rovence}, 2011.

\bibitem[CDP90]{CoorDelPapa}
M.~Coornaert, T.~Delzant, and A.~Papadopoulos.
\newblock {\em G\'eom\'etrie et th\'eorie des groupes}, volume 1441 of {\em
  Lecture Notes in Mathematics}.
\newblock Springer-Verlag, Berlin, 1990.
\newblock Les groupes hyperboliques de Gromov. [Gromov hyperbolic groups], With
  an English summary.

\bibitem[Cha07]{CharneyRAAG}
Ruth Charney.
\newblock An introduction to right-angled {A}rtin groups.
\newblock {\em Geom. Dedicata}, 125:141--158, 2007.

\bibitem[Cla13]{ClaParaResidinBuildings}
A.~Clais.
\newblock Parallel residues in buildings admitting a group action.
\newblock {\em Pre-print arXiv:1312.5541}, 2013.

\bibitem[Cox73]{CoxeterPolyt}
H.~S.~M. Coxeter.
\newblock {\em Regular polytopes}.
\newblock Dover Publications, Inc., New York, third edition, 1973.

\bibitem[CS98]{CanSwenCurv}
J.~W. Cannon and E.~L. Swenson.
\newblock Recognizing constant curvature discrete groups in dimension {$3$}.
\newblock {\em Trans. Amer. Math. Soc.}, 350(2):809--849, 1998.

\bibitem[Dav85]{DavisManif}
Michael~W. Davis.
\newblock A hyperbolic {$4$}-manifold.
\newblock {\em Proc. Amer. Math. Soc.}, 93(2):325--328, 1985.

\bibitem[Dav98]{DavisCAT}
Michael~W. Davis.
\newblock Buildings are {${\rm CAT}(0)$}.
\newblock In {\em Geometry and cohomology in group theory ({D}urham, 1994)},
  volume 252 of {\em London Math. Soc. Lecture Note Ser.}, pages 108--123.
  Cambridge Univ. Press, Cambridge, 1998.

\bibitem[Dav08]{DavisBook}
Michael~W. Davis.
\newblock {\em The geometry and topology of {C}oxeter groups}, volume~32 of
  {\em London Mathematical Society Monographs Series}.
\newblock Princeton University Press, Princeton, NJ, 2008.

\bibitem[DM02]{DavisMeierTopo}
Michael~W. Davis and John Meier.
\newblock The topology at infinity of {C}oxeter groups and buildings.
\newblock {\em Comment. Math. Helv.}, 77(4):746--766, 2002.

\bibitem[DO07]{DymOsaj}
Jan Dymara and Damian Osajda.
\newblock Boundaries of right-angled hyperbolic buildings.
\newblock {\em Fund. Math.}, 197:123--165, 2007.

\bibitem[GdlH90]{GhysHarpe}
{\'E}tienne Ghys and Pierre de~la Harpe.
\newblock Espaces m\'etriques hyperboliques.
\newblock In {\em Sur les groupes hyperboliques d'apr\`es {M}ikhael {G}romov
  ({B}ern, 1988)}, volume~83 of {\em Progr. Math.}, pages 27--45. Birkh\"auser
  Boston, Boston, MA, 1990.

\bibitem[GP01]{GabPauImmeubles}
Damien Gaboriau and Fr{\'e}d{\'e}ric Paulin.
\newblock Sur les immeubles hyperboliques.
\newblock {\em Geom. Dedicata}, 88(1-3):153--197, 2001.

\bibitem[Ha{\"{\i}}09a]{HaissinskyEmpilCercles}
Peter Ha{\"{\i}}ssinsky.
\newblock Empilements de cercles et modules combinatoires.
\newblock {\em Ann. Inst. Fourier (Grenoble)}, 59(6):2175--2222, 2009.

\bibitem[Ha{\"{\i}}09b]{HaissinskyGeomQConf}
Peter Ha{\"{\i}}ssinsky.
\newblock G\'eom\'etrie quasiconforme, analyse au bord des espaces m\'etriques
  hyperboliques et rigidit\'es [d'apr\`es {M}ostow, {P}ansu, {B}ourdon,
  {P}ajot, {B}onk, {K}leiner{$\ldots$}].
\newblock {\em Ast\'erisque}, (326):Exp. No. 993, ix, 321--362 (2010), 2009.
\newblock S{\'e}minaire Bourbaki. Vol. 2007/2008.

\bibitem[Hei01]{HeinonenLect}
Juha Heinonen.
\newblock {\em Lectures on analysis on metric spaces}.
\newblock Universitext. Springer-Verlag, New York, 2001.

\bibitem[HK98]{HeinKoskQConf}
Juha Heinonen and Pekka Koskela.
\newblock Quasiconformal maps in metric spaces with controlled geometry.
\newblock {\em Acta Math.}, 181(1):1--61, 1998.

\bibitem[HP03]{HaglundPaulinImmeubles}
Fr{\'e}d{\'e}ric Haglund and Fr{\'e}d{\'e}ric Paulin.
\newblock Constructions arborescentes d'immeubles.
\newblock {\em Math. Ann.}, 325(1):137--164, 2003.

\bibitem[KB02]{KapoBenak}
Ilya Kapovich and Nadia Benakli.
\newblock Boundaries of hyperbolic groups.
\newblock In {\em Combinatorial and geometric group theory ({N}ew {Y}ork,
  2000/{H}oboken, {NJ}, 2001)}, volume 296 of {\em Contemp. Math.}, pages
  39--93. Amer. Math. Soc., Providence, RI, 2002.

\bibitem[KK]{KeithKleiner}
Stephen Keith and Bruce Kleiner.
\newblock In preparation.

\bibitem[KL04]{KeithLaaksoConfAssDim}
S.~Keith and T.~Laakso.
\newblock Conformal {A}ssouad dimension and modulus.
\newblock {\em Geom. Funct. Anal.}, 14(6):1278--1321, 2004.

\bibitem[Kle06]{KleinerAsymptoticGeom}
Bruce Kleiner.
\newblock The asymptotic geometry of negatively curved spaces: uniformization,
  geometrization and rigidity.
\newblock In {\em International {C}ongress of {M}athematicians. {V}ol. {II}},
  pages 743--768. Eur. Math. Soc., Z\"urich, 2006.

\bibitem[Loe59]{LoewnerConfCapa}
Charles Loewner.
\newblock On the conformal capacity in space.
\newblock {\em J. Math. Mech.}, 8:411--414, 1959.

\bibitem[Mei96]{MeierWhen}
John Meier.
\newblock When is the graph product of hyperbolic groups hyperbolic?
\newblock {\em Geom. Dedicata}, 61(1):29--41, 1996.

\bibitem[MT10]{MackayTysonConfDim}
John~M. Mackay and Jeremy~T. Tyson.
\newblock {\em Conformal dimension}, volume~54 of {\em University Lecture
  Series}.
\newblock American Mathematical Society, Providence, RI, 2010.
\newblock Theory and application.

\bibitem[Mun75]{MunkresTopo}
James~R. Munkres.
\newblock {\em Topology: a first course}.
\newblock Prentice-Hall, Inc., Englewood Cliffs, N.J., 1975.

\bibitem[Pan89]{PansuDimconf}
Pierre Pansu.
\newblock Dimension conforme et sph\`ere \`a l'infini des vari\'et\'es \`a
  courbure n\'egative.
\newblock {\em Ann. Acad. Sci. Fenn. Ser. A I Math.}, 14(2):177--212, 1989.

\bibitem[Ron89]{RonanBuildings}
Mark Ronan.
\newblock {\em Lectures on buildings}, volume~7 of {\em Perspectives in
  Mathematics}.
\newblock Academic Press, Inc., Boston, MA, 1989.

\bibitem[Sul82]{SullivanDiscConfGrpsandDyna}
Dennis Sullivan.
\newblock Discrete conformal groups and measurable dynamics.
\newblock {\em Bull. Amer. Math. Soc. (N.S.)}, 6(1):57--73, 1982.

\bibitem[Tit74]{TitsBuildingsLectureNotes}
Jacques Tits.
\newblock {\em Buildings of spherical type and finite {BN}-pairs}.
\newblock Lecture Notes in Mathematics, Vol. 386. Springer-Verlag, Berlin-New
  York, 1974.

\bibitem[Tys98]{TysonQCandQS}
Jeremy Tyson.
\newblock Quasiconformality and quasisymmetry in metric measure spaces.
\newblock {\em Ann. Acad. Sci. Fenn. Math.}, 23(2):525--548, 1998.

\bibitem[V{\"a}i71]{VaisalaLecture}
Jussi V{\"a}is{\"a}l{\"a}.
\newblock {\em Lectures on {$n$}-dimensional quasiconformal mappings}.
\newblock Lecture Notes in Mathematics, Vol. 229. Springer-Verlag, Berlin-New
  York, 1971.

\bibitem[V{\"a}i85]{VaisalaQMmaps}
Jussi V{\"a}is{\"a}l{\"a}.
\newblock Quasi-{M}\"obius maps.
\newblock {\em J. Analyse Math.}, 44:218--234, 1984/85.

\bibitem[Vuo88]{VurionenLecture}
Matti Vuorinen.
\newblock {\em Conformal geometry and quasiregular mappings}, volume 1319 of
  {\em Lecture Notes in Mathematics}.
\newblock Springer-Verlag, Berlin, 1988.

\end{thebibliography}
  \bibliographystyle{alpha}
               
\end{document}